\title{Descent and cyclotomic redshift\\for chromatically localized algebraic $K$-theory}
\begin{document}

% \date{\today}.
\maketitle

\begin{abstract}
    We prove that $\Tnp$-localized algebraic $K$-theory satisfies descent for $\pi$-finite $p$-group actions on stable $\infty$-categories of chromatic height up to $n$, extending a result of Clausen--Mathew--Naumann--Noel for finite $p$-groups.
    Using this, we show that it sends $\Tn$-local Galois extensions to $\Tnp$-local Galois extensions.
    Furthermore, we show that it sends cyclotomic extensions of height $n$ to cyclotomic extensions of height $n+1$, extending a result of Bhatt--Clausen--Mathew for $n=0$.
    As a consequence, we deduce that $\Knp$-localized $K$-theory satisfies hyperdescent along the cyclotomic tower of any $\Tn$-local ring.
    Counterexamples to such cyclotomic hyperdescent for $\Tnp$-localized $K$-theory were constructed by Burklund, Hahn, Levy and the third author, thereby disproving the telescope conjecture.
\end{abstract}

\begin{figure}[ht!]
    \centering
    \includegraphics[width=110mm]{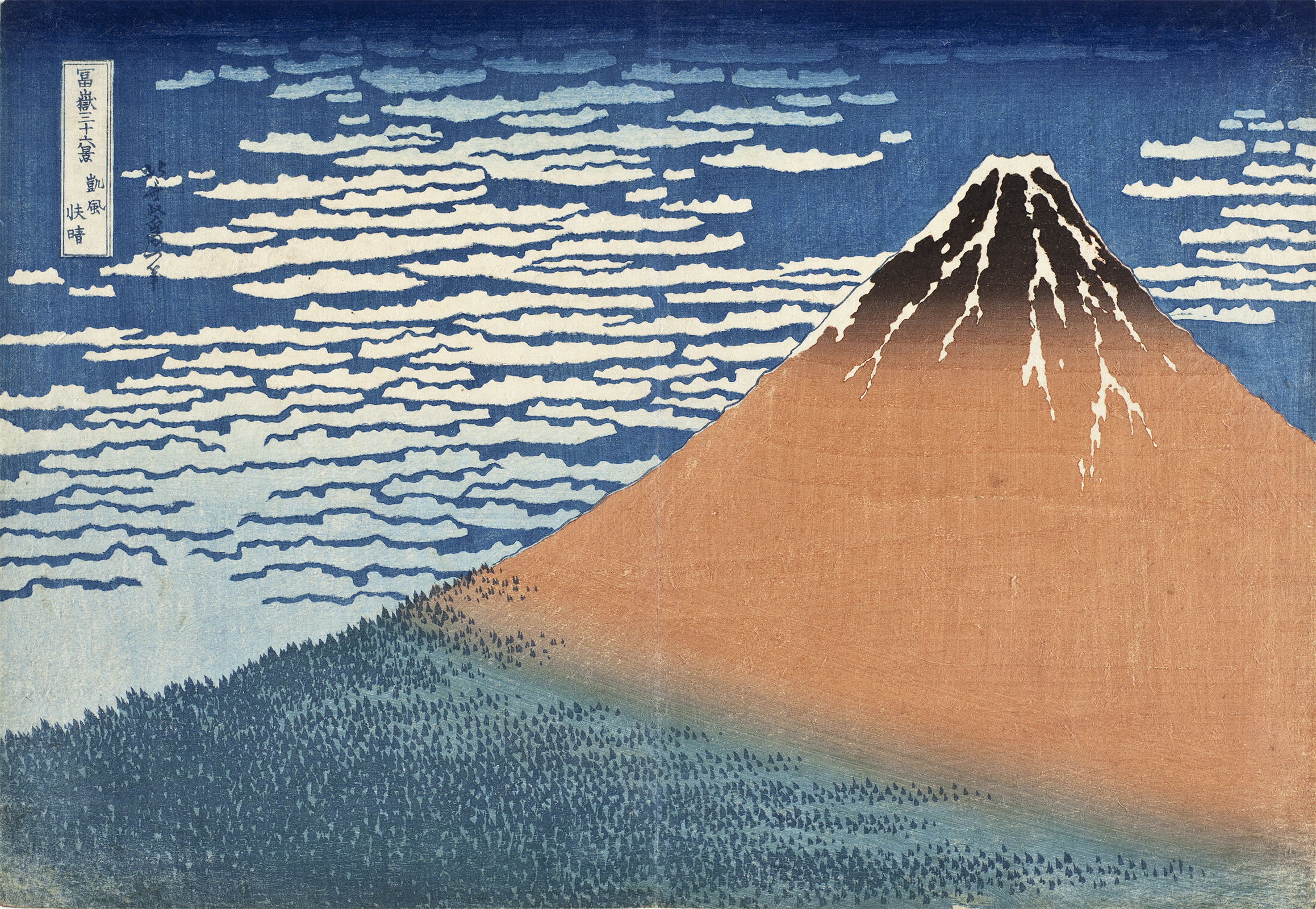}
    \caption*{Fine Wind, Clear Morning (Red Fuji) by Katsushika Hokusai.}
\end{figure}

\newpage
\tableofcontents{}
\newpage

\section{Introduction}

\subsection{Background}

The algebraic $K$-theory of rings, ring spectra and, more generally, of stable $\infty$-categories, is a long-studied invariant, situated at the junction of many fields of mathematics ranging from number theory to differential topology.
It is, however, notoriously difficult to compute, as among other things it possesses fairly weak descent properties. In particular, it fails to satisfy Galois descent even for ordinary fields.
The intricate nature of algebraic $K$-theory stems from the fact that its construction involves categorification -- passing from a ring spectrum to its $\infty$-category of modules. Thus, loosely speaking, the algebraic $K$-theory spectrum does not live in the same characteristic as the original ring spectrum. The field of chromatic homotopy theory provides a precise formalization of the notion of characteristic for spectra in the form of the chromatic height filtration. 
Based on computational evidence in chromatic height $1$, Ausoni--Rognes \cite{AR02, ausoni2008chromatic} formulated the far-reaching redshift conjecture, out of which emerged a wider philosophy, predicting the interaction of algebraic $K$-theory with chromatic height.
The conjecture states roughly that the process of categorification increases chromatic height by one.
Furthermore, Ausoni and Rognes conjectured that algebraic $K$-theory localized at the highest chromatic height posited by the redshift conjecture does satisfy descent for the Galois extensions of commutative ring spectra introduced by Rognes in \cite{RognesGal} and further studied by Mathew in \cite{AkhilGalois}.

Recent years have seen several breakthroughs in the study of such redshift phenomena \cite{hahn-wilson,yuan2021examples,ausoni2022adjunction,Null}.
Of particular relevance to this paper are the results of Clausen--Mathew--Naumann--Noel \cite{clausen2020descent} and Land--Mathew--Meier--Tamme \cite{land2020purity}, establishing Galois descent for chromatically localized algebraic $K$-theory.
Recall that an idempotent complete stable $\infty$-category $\cC$ is called \emph{$\Lnf$-local} if all of its mapping spectra are $\Lnf$-local, which roughly means that they are concentrated in chromatic heights lower or equal $n$.

\begin{thm}[{\cite[Theorem C and Proposition 4.1]{clausen2020descent}}]\label{CMNN1}
    Let $\cC$ be an $\Lnf$-local $\infty$-category acted by a finite $p$-group $G$, then there are canonical isomorphisms
    \[
        \LTnp K(\cC^{hG}) \iso \LTnp K(\cC)^{hG},
        \qquad \LTnp K(\cC)_{hG} \iso \LTnp K(\cC_{hG}).
    \]
\end{thm}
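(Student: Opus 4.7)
My proof plan is to deduce both descent statements from a single chromatic vanishing result for the algebraic $K$-theory of the categorical Tate construction.

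\emph{Reduction to $G = C_p$.} Every nontrivial finite $p$-group admits a normal subgroup of index $p$; applying the resulting short exact sequences $1 \to H \to G \to C_p \to 1$ iteratively reduces both claims to the case $G = C_p$ by induction on $|G|$, using the $H$-case on $\cC$ and the $C_p$-case on $\cC^{hH}$.

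\emph{The Tate Verdier sequence.} For $\cC$ equipped with a $C_p$-action, the fundamental Verdier sequence of stable $\infty$-categories
\[
    \cC_{hC_p} \xrightarrow{\mathrm{Nm}} \cC^{hC_p} \longrightarrow \cC^{tC_p}
\]
exhibits the categorical Tate $\cC^{tC_p}$ as the Verdier quotient. Applying localizing algebraic $K$-theory yields a fiber sequence of spectra, and comparison with the analogous norm map on $\LTnp K(\cC)$ carrying its induced $C_p$-action shows that both isomorphisms in the theorem follow from the single vanishing
\[
    \LTnp K(\cC^{tC_p}) \simeq 0 \qquad \text{for every } \Lnf\text{-local } \cC \text{ with } C_p\text{-action.}
\]

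\emph{The vanishing (the crux).} The unit of $\cC^{tC_p}$ has endomorphism ring $\mathrm{End}_{\cC}(\mathbb{1})^{tC_p}$, and for $\cC$ that is $\Lnf$-local this endomorphism ring has strictly lower chromatic height by the Tate vanishing results of Greenlees--Sadofsky and Hovey--Sadofsky. I would then promote this height drop to a $K$-theoretic vanishing, either by a thick-subcategory / Morita argument or by exhibiting $\LTnp K(\cC^{tC_p})$ as a module over a ring spectrum that is itself $\LTnp$-acyclic. The main obstacle is precisely this last propagation step: the unit in $\cC^{tC_p}$ need not be a compact generator, so the passage from endomorphism ring to the whole category is nontrivial, and most likely requires Mathew's descendability framework applied to the Tate endomorphism ring together with a careful compact-generation argument for $\cC^{tC_p}$.
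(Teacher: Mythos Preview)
This theorem is not proved in the paper under review: it is a result of Clausen--Mathew--Naumann--Noel, stated in the introduction as \cite[Theorem C and Proposition 4.1]{clausen2020descent} and used as a black box (the $m=1$ base case in the proof of \Cref{main-thm}). There is therefore no proof in the paper to compare against directly, but your strategy does align with how CMNN actually argue, so let me evaluate it on those terms.

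Your reduction to $G=C_p$ and the use of the Verdier sequence $\cC_{hC_p}\to \cC^{hC_p}\to \cC^{tC_p}$ are correct and standard. The step where you pass from $\LTnp K(\cC^{tC_p})=0$ to \emph{both} assembly and coassembly being isomorphisms needs a bit more than you indicate: the Tate vanishing only gives $\LTnp K(\cC_{hC_p})\simeq \LTnp K(\cC^{hC_p})$, and one then uses the $1$-semiadditivity of $\SpTnp$ (so the norm on $\LTnp K(\cC)$ is invertible) together with a splitting argument to separate the two statements. This is routine but should be said.

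The genuine gap is in your vanishing step. Your phrasing ``the unit of $\cC^{tC_p}$ has endomorphism ring $\End_\cC(\one)^{tC_p}$'' does not parse: $\cC$ is merely an object of $\CatLnf$ with a $C_p$-action, not a monoidal category, so there is no unit to speak of, and even for module categories the hom-formula in the Tate quotient is more delicate than a single $(-)^{tC_p}$. The clean route, which is exactly what CMNN do, is to show that $\cC^{tC_p}$ is $\Lnmf$-local as an object of $\Catperf$ (using that $\cC$ is linear over $\Perf(\Lnf\Sph)$ and Kuhn's blueshift $(\Lnf\Sph)^{tC_p}\in \Lnmf\Sp$), and then invoke the purity theorem---this is precisely \cite[Theorem C]{clausen2020descent}, the other half of the citation, which you never name. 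Once you know $\cC^{tC_p}\in \Cat_{\Lnmf}$, purity gives $\LTnp K(\cC^{tC_p})=0$ immediately; no descendability, no compact-generation subtleties, no Morita argument is needed. Your proposed machinery is aimed at the wrong obstacle.
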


For example, for every $\Lnf$-local ring $R$, the $\infty$-category $\Perf(R)$ is $\Lnf$-local. Thus, combined with Galois descent for perfect modules, this readily implies the following:

\begin{cor}[{\cite[Corollary 4.16]{clausen2020descent}}]\label{CMNN2}
    Let $R \to S$ be a $\Tn$-local $G$-Galois extension where $G$ is a finite $p$-group.
    Then there is a canonical isomorphism
    \[
        \LTnp K(R) \iso (\LTnp K(S))^{hG}.
    \]
\end{cor}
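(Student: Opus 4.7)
The plan is to realize the corollary as a direct instance of \Cref{CMNN1} applied to the perfect module category of $S$, equipped with the Galois action. Concretely, I would set $\cC := \Perf(S)$ and endow it with the $G$-action induced by functoriality from the action of $G$ on $S$ as a commutative $R$-algebra. With this setup the statement should fall out of two inputs: (i) the identification $\Perf(S)^{hG} \simeq \Perf(R)$ coming from Galois descent for perfect modules, and (ii) the $\Lnf$-locality of $\cC$, which allows us to feed it into \Cref{CMNN1}.

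First I would check (ii): since $S$ is by hypothesis $\Tn$-local, its mapping spectra, and hence all mapping spectra in $\Perf(S)$, are $\Tn$-local, in particular $\Lnf$-local; so $\Perf(S)$ is $\Lnf$-local in the sense required. \Cref{CMNN1} then yields a canonical equivalence
\[
    \LTnp K(\Perf(S)^{hG}) \iso \LTnp K(\Perf(S))^{hG}.
\]
Identifying $K(\Perf(S)) \simeq K(S)$ and, via (i), $K(\Perf(S)^{hG}) \simeq K(R)$, gives the desired equivalence $\LTnp K(R) \iso (\LTnp K(S))^{hG}$ after $\Tnp$-localization.

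The main obstacle is the justification of step (i). A $\Tn$-local $G$-Galois extension is defined inside $\Tn$-local spectra, so the natural descent equivalence it supplies is $\Mod_R^{\Tn} \simeq (\Mod_S^{\Tn})^{hG}$, whereas the $K$-theory appearing in the statement is computed from absolute perfect $R$-modules. To bridge this gap I would argue that the Galois datum---in particular the dualizability of $S$ over $R$ and the splitting $S \otimes_R S \simeq \prod_G S$---holds already at the level of $R$-modules in spectra (not just $\Tn$-locally), so that Mathew's Galois descent for modules \cite{AkhilGalois} applies directly and descends to dualizable objects, yielding $\Perf(R) \simeq \Perf(S)^{hG}$ as an equivalence of stable $\infty$-categories with $G$-action. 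Once this categorical identification is in hand, the rest is a clean invocation of \Cref{CMNN1}, and no further $K$-theoretic input is needed.
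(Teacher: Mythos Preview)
Your overall strategy---apply \Cref{CMNN1} to $\cC=\Perf(S)$ and combine with a Galois descent identification of the fixed points---is exactly the route the paper sketches in the sentence preceding \Cref{CMNN2}. You also correctly flag the one nontrivial point: a $\Tn$-local $G$-Galois extension is defined internally to $\SpTn$, so the descent equivalence one gets directly is $\Mod_R(\SpTn)\simeq \Mod_S(\SpTn)^{hG}$, not an identification of absolute perfect module categories.

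The gap is in how you resolve that subtlety. You assert that the Galois axioms (dualizability of $S$ over $R$ and the splitting $S\otimes_R S\simeq \prod_G S$) hold already in $R$-modules in $\Sp$. This is not justified: the tensor product $S\otimes_R S$ in $\Sp$ need not agree with its $\Tn$-localization, and $S$ being dualizable in $\Mod_R(\SpTn)$ does not imply it is perfect over $R$ in $\Sp$. So you cannot simply invoke Mathew's descent to conclude $\Perf(R)\simeq \Perf(S)^{hG}$.

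The clean fix---and the one the paper uses when it later proves the stronger \Cref{galois-comp}---is to work with dualizable $\Tn$-local modules instead. By \cite[Proposition 4.15]{clausen2020descent} the inclusion $\Perf(R)\hookrightarrow \Mod_R(\SpTn)^\dbl$ induces an isomorphism on $\KTnp$, and Galois descent in $\SpTn$ gives $\Mod_R(\SpTn)^\dbl \simeq (\Mod_S(\SpTn)^\dbl)^{hG}$ (via \cite[Proposition 4.6.1.11]{HA}). Feeding $\cC = \Mod_S(\SpTn)^\dbl$ into \Cref{CMNN1} then yields the conclusion. So your argument becomes correct once you replace $\Perf(-)$ by $\Mod_{(-)}(\SpTn)^\dbl$ and cite \cite[Proposition 4.15]{clausen2020descent} at both ends.
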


It is known that \cref{CMNN1} may fail for arbitrary finite groups $G$ (e.g.\ of order prime to $p$), but the question of whether \cref{CMNN2} holds in this generality is still open. More generally, one can ask to what extent chromatically localized algebraic $K$-theory satisfies \textit{hyperdescent} for profinite Galois extensions. Of particular interest is the case of the Lubin--Tate spectrum $E_n$, which is a profinite Galois extension of the $K(n)$-local sphere with Morava stabilizer Galois group $\GG_n$. Namely, it is natural to ask whether the functor $U\mapsto \LTnp K(E_n^{hU})$ on open subgroups $U \le \GG_n$ corresponds to a hypersheaf on the site of continuous finite $\GG_n$-sets. As noted in \cite[Example 4.17]{clausen2020descent}, even on the pro-$p$ part of $\GG_n$, where \cref{CMNN2} implies that the resulting functor is a sheaf, the condition of being a \textit{hypersheaf} is not automatically implied. 

\begin{rem}
    To put this discussion in context, we note that for ordinary commutative rings (and schemes), Galois descent is a special case of \'{e}tale descent, which is a much studied subject in algebraic $K$-theory. In particular, the \'{e}tale sheafification of algebraic $K$-theory (known as \'{e}tale $K$-theory) is closely related to its chromatic $L_1$-localization, as observed by Waldhausen \cite[\S4]{waldhausen1984algebraic}. Furthermore, questions of hyperdescent for \'{e}tale $K$-theory are of fundamental importance to the construction of spectral sequences relating algebraic $K$-theory to \'{e}tale cohomology. We refer the reader to \cite{clausen2021hyperdescent} for the state-of-the-art results in this direction. 
\end{rem}

\subsection{Higher Descent}

Our first main result is an extension of \Cref{CMNN1} in a different direction, by considering actions of \textit{higher groups}. However, as we shall explain in the next subsection, this seemingly unrelated generalization also has implications to the above questions about Galois (hyper)descent for ordinary (pro)finite groups.
To state our results, we say that a group $G$ in spaces is an \emph{$m$-finite $p$-group} if it is $m$-truncated and all of its homotopy groups are finite $p$-groups,
and that $G$ is a \emph{$\pi$-finite $p$-group} if it is an $m$-finite $p$-group for some $m$.

\begin{theorem}[{Higher Descent, \cref{main-thm-cor}}]\label{main-thm-intro}
    Let $\cC$ be an $\Lnf$-local $\infty$-category acted by a $\pi$-finite $p$-group $G$, then there are canonical isomorphisms
    \[
        \LTnp K(\cC^{hG}) \iso \LTnp K(\cC)^{hG},
        \qquad \LTnp K(\cC)_{hG} \iso \LTnp K(\cC_{hG}).
    \]
\end{theorem}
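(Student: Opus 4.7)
The plan is to argue by induction on the truncation level $m$ of the $\pi$-finite $p$-group $G$. The base case $m = 0$, where $G$ is an ordinary finite $p$-group, is precisely \cref{CMNN1}.

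For the inductive step, I would exploit the Postnikov fiber sequence of group objects
\[
    K(A, m) \to G \to \tau_{\le m-1} G,
\]
where $A := \pi_m G$ is a finite abelian $p$-group. A $G$-action on an $\Lnf$-local $\infty$-category $\cC$ restricts to a $K(A,m)$-action, and decomposes the fixed-point construction as
\[
    \cC^{hG} \simeq (\cC^{h K(A,m)})^{h \tau_{\le m-1} G},
\]
with the analogous formula for orbits. Since the property of being $\Lnf$-local is preserved under limits (and colimits) of stable $\infty$-categories, $\cC^{h K(A,m)}$ is itself $\Lnf$-local, so the inductive hypothesis applied to the residual $\tau_{\le m-1} G$-action reduces the problem to the case $G = K(A, m)$. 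A further induction on $|A|$, using the fiber sequences $K(\mathbb{Z}/p, m) \to K(A, m) \to K(A', m)$ coming from a filtration of $A$ with cyclic-of-order-$p$ subquotients, cuts this down to the essential case $G = K(\mathbb{Z}/p, m)$ with $m \ge 1$.

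The heart of the argument is therefore the Eilenberg--MacLane case. Here I would expect the decisive input to be a higher-group analogue of the nilpotence-of-Tate / $p$-ambidexterity that underlies the original CMNN argument. Concretely, the $\infty$-category of $\LTnp$-local spectra is $\pi$-finite $p$-ambidextrous (a $T(n+1)$-local form of Hopkins--Lurie ambidexterity, available thanks to recent work on $T(n)$-local higher semiadditivity), so homotopy fixed points and orbits of such actions agree up to a canonical invertible twist. Combined with the additivity of $\LTnp K$ on split Verdier sequences of $\Lnf$-local $\infty$-categories, this should allow one to mimic the CMNN strategy: reduce the descent assertion to the vanishing of $\LTnp K$ on a suitable Tate-type construction and then verify that vanishing directly.

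The main obstacle will be establishing the analogue of the nilpotence of the $C_p$-Tate construction in the setting of a $K(\mathbb{Z}/p, m)$-action with $m \ge 1$. In the classical case this nilpotence is a statement about $\mathbb{E}_\infty$-rings and their module $\infty$-categories, powered by the May nilpotence theorem; its higher-categorical upgrade -- controlling the iterated-delooping Tate norms in a single stroke across all chromatic heights up to $n+1$ -- is what genuinely distinguishes the present theorem from \cref{CMNN1}. Once such a structural nilpotence is in hand, the rest of the argument should follow the CMNN template essentially verbatim.
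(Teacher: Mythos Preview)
Your reduction outline is sound and in fact matches the paper's: induct on the truncation level, use Postnikov fibrations to peel off one Eilenberg--MacLane layer at a time, and reduce to $G = K(\ZZ/p,m)$ with $m \ge 1$. The gap is exactly where you flag it: you do not prove the Eilenberg--MacLane case, and the route you propose --- a higher-group analogue of the CMNN Tate-nilpotence argument --- is left as a hope (``should allow'', ``once such a structural nilpotence is in hand''). No such higher Tate-nilpotence statement is established or invoked in the paper, and there is no indication that the May-nilpotence machinery extends in the direction you suggest. As written, the proposal is a reduction to an unproved lemma.

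The paper's argument for the key case is entirely different and uses three ingredients absent from your sketch. First, it replaces $\CatLnf$ by the full subcategory $\CatMnf$ of $n$-monochromatic categories via a monochromatization functor that preserves all limits and colimits and leaves $\KTnp$ unchanged (purity); the point is that $\CatMnf$ is itself $\infty$-semiadditive, so one may pass freely between limits and colimits and reduce to \emph{constant} $A$-indexed colimits with $A$ concentrated in a single degree. Second, Schwede--Shipley writes any monochromatic category as a filtered colimit of $\Perf(R)$ for monochromatic ring spectra $R$, reducing to the assembly map $\KTnp(R)[A] \to \KTnp(R[\Omega A])$. Third --- and this is the decisive step, suggested by Mathew --- for $m \ge 2$ one writes $A \simeq \colim_{\Delta^\op}(\Omega A)^{\bullet}$ via the bar construction, applies the inductive hypothesis to the $(m-1)$-finite levels, and concludes using the Land--Mathew--Meier--Tamme theorem that $\KTnp$ of ring spectra preserves \emph{sifted} colimits. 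This sifted-colimit input replaces any Tate-type vanishing entirely; without it (or something playing the same role) your inductive step does not close.
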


From this, we deduce the following:

\begin{cor}[{\cref{galois-comp}}]\label{galois-comp-intro}
    Let $R \to S$ be a $\Tn$-local $G$-Galois extension where $G$ is an $n$-finite\footnote{Recall that $G$ is $n$-finite if and only if $BG$ is $(n+1)$-finite. See the discussion above \cref{galois-comp} for an elaboration on the truncatedness assumption.} $p$-group. 
    Then $\LTnp K(R) \to \LTnp K(S)$ is a $\Tnp$-local $G$-Galois extension.
\end{cor}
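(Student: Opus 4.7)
The plan is to verify the two conditions defining a $\Tnp$-local $G$-Galois extension of commutative ring spectra for the map $\LTnp K(R) \to \LTnp K(S)$: the descent condition $\LTnp K(R) \iso \LTnp K(S)^{hG}$, and the unramifiedness condition $\LTnp K(S) \otimes_{\LTnp K(R)} \LTnp K(S) \iso \LTnp K(S)^{G}$, where the cotensor on the right is with the underlying space of the higher group $G$.

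For the descent condition, Galois descent for perfect modules gives an equivalence of $G$-equivariant $\Lnf$-local $\infty$-categories $\Perf(R) \iso \Perf(S)^{hG}$, where the $G$-action on $\Perf(S)$ is induced from that on $S$. Since $G$ is $n$-finite and hence a $\pi$-finite $p$-group, the homotopy fixed-points half of \cref{main-thm-intro} applies to give
\[
    \LTnp K(R) \iso \LTnp K(\Perf(S)^{hG}) \iso \LTnp K(S)^{hG}.
\]

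For the unramifiedness, I plan to exploit the fact that base-changing the Galois extension $R \to S$ along itself trivializes the action: we have $S \otimes_R S \iso S^{G}$ as $G$-equivariant $S$-algebras, which translates at the level of perfect module categories to $\Perf(S) \otimes_{\Perf(R)} \Perf(S) \iso \Perf(S)^{G}$. Applying $\LTnp K$ to the right-hand side, I expect to recover $\LTnp K(S)^{G}$ by combining \cref{main-thm-intro} with chromatic ambidexterity for $\pi$-finite $p$-group limits in the $\Tnp$-local category, which equates such limits with the corresponding colimits and allows the cotensor to be computed after localization.

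The main obstacle will be identifying $\LTnp K$ of the tensor product $\Perf(S) \otimes_{\Perf(R)} \Perf(S)$ with $\LTnp K(S) \otimes_{\LTnp K(R)} \LTnp K(S)$, since $K$-theory is only lax symmetric monoidal and $\LTnp$ is not obviously multiplicative on such base changes. This compatibility should follow by presenting the tensor product of module categories as a descent-theoretic construction whose totalization is controlled by the $G$-action, thereby reducing the claim back to the higher descent theorem and the fixed-points formula already established; but verifying that $\LTnp K$ respects this pushout in $\Tnp$-local commutative ring spectra is the delicate step where most of the work will lie.
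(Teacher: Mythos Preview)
Your approach to the first Rognes--Galois condition is essentially the paper's: Galois descent at the level of module categories (the paper uses dualizable $\Tn$-local modules rather than $\Perf$, but these agree after $\KTnp$ by \cite[Proposition~4.15]{clausen2020descent}), followed by the higher descent theorem. That part is fine.

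For the second condition, however, you are working much harder than necessary, and the gap you identify is real and unfilled. You correctly note that identifying $\KTnp(S \otimes_R S)$ with $\KTnp(S) \otimes_{\KTnp(R)} \KTnp(S)$ is the crux, and your sketch of reducing this to ``a descent-theoretic construction controlled by the $G$-action'' does not close the gap: $\KTnp$ is only lax symmetric monoidal, and nothing in the paper establishes that it preserves such relative tensor products. The paper avoids this entirely. The key point you are missing is that in $\Sp_{T(n+1)}$, once $BG$ is an $(n+1)$-finite $p$-space, the second Rognes--Galois condition is \emph{automatic} given the first, by \cite[Corollary~7.31]{barthel2022chromatic} (this is a consequence of affineness for $\pi$-finite $p$-spaces in telescopically localized categories). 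Thus the paper's proof simply verifies the fixed-point condition as you did and then invokes this structural fact about $\Sp_{T(n+1)}$; no multiplicativity argument is needed.
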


Note that the first condition in Rognes' definition of a $G$-Galois extension is that the source is canonically isomorphic to the $G$-fixed points of the target. Thus, \Cref{galois-comp-intro} is simultaneously a generalization and a strengthening of \Cref{CMNN2}. On the other hand, in our case the second condition in Rognes' definition of a $G$-Galois extension is superfluous, via \cite[Proposition 2.27]{barthel2022chromatic}. 

\cref{main-thm-intro} is closely related to the higher semiadditivity of $\Tn$-local spectra (which indeed features in its proof as explained later in the introduction). In \cite{moshe2021higher} the authors have constructed and studied the \textit{universal} higher semiadditive approximation of algebraic $K$-theory. In \cref{sa-K}, we deduce that for $\Lnf$-local $\infty$-categories, 
this higher semiadditive algebraic $K$-theory coincides $\Tnp$-locally with (ordinary) algebraic $K$-theory.

\subsection{Cyclotomic Redshift}

As already mentioned above, aside from their intrinsic interest, \Cref{main-thm-intro} and \Cref{galois-comp-intro}
also have applications to some particular cases of Galois descent with respect to \textit{ordinary} groups. We begin by recalling that in \cite{bhatt2020remarks}, Bhatt--Clausen--Mathew studied $\To$-local $K$-theory, with a focus on discrete (commutative) rings.
In particular, they have shown a result which can be phrased as follows (see also \cite{mitchell2000topological, dwyer1998k} for closely related results):

\begin{thm}[{{\cite[Theorem 1.4]{bhatt2020remarks}}}]
\label{BCM_cycloshift}
    Let $R$ be a commutative ring, then there is a $\ZZ_p^\times$-equivariant isomorphism
    \[
        \LTo K(R[\omega_{p^\infty}]) \simeq \LTo K(R) \otimes \KUp
        \qin \CAlg(\Sp_{\To}),
    \]
    where $\ZZ_p^\times$ acts on the $p^\infty$-th cyclotomic extension of $R$ and on the $p$-complete complex $K$-theory spectrum $\KUp$ by Galois automorphisms and the Adams operations respectively.
\end{thm}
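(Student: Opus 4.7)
The plan is to split the proof into two independent ingredients: an equivariant identification of the universal cyclotomic piece $\LTo K(\ZZ[\omega_{p^\infty}]) \simeq \KUp$, together with a projection/base-change formula that passes from this universal case to arbitrary $R$.

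For the universal piece, I would work after inverting $p$, which is invisible to $\LTo$. The extension $\ZZ[1/p] \to \ZZ[\omega_{p^\infty}][1/p]$ is ind-étale and pro-Galois with group $\ZZ_p^\times$. By Thomason's descent theorem for $\LTo K$, it suffices to compute $\LTo K(\ZZ[\omega_{p^n}][1/p])$ and take the filtered colimit. Using the classical Quillen--Lichtenbaum description of $\LTo K$ of a ring of $p$-adic integers as the fiber of $\psi^{\ell} - 1$ on $\KUp$ for an appropriate $\ell$ depending on $n$, one sees that the fibers become trivial in the colimit and the resulting $\EE_\infty$-ring is $\KUp$ itself. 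The $\ZZ_p^\times$-action, acting at finite level through the cyclotomic character on the parameter $\ell$, translates in the colimit into the Adams operations on $\KUp$.

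For the base-change step, I would prove that the canonical map
\[
    \LTo K(R) \otimes_{\LTo K(\ZZ)} \LTo K(\ZZ[\omega_{p^\infty}]) \too \LTo K(R[\omega_{p^\infty}])
\]
is an equivalence, by reducing via continuity of $\LTo K$ to the finite-level statement for $R[\omega_{p^n}] = R \otimes_{\ZZ} \ZZ[\omega_{p^n}]$ (using flatness of $\ZZ[\omega_{p^n}]$ over $\ZZ$). At each finite level, $\Perf(\ZZ[\omega_{p^n}][1/p])$ is a dualizable Galois module category over $\Perf(\ZZ[1/p])$, so this reduces to the descent/co-descent formalism from \Cref{CMNN1} applied to the Galois group (the $p$-part of $(\ZZ/p^n)^\times$), together with a direct analysis of the tame factor. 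Combining with the first step and using that $\LTo K(\ZZ) \to \KUp$ is itself a $\ZZ_p^\times$-Galois extension of $T(1)$-local $\EE_\infty$-rings, the relative tensor product collapses to the absolute one, and the Galois action on the left is matched with the Adams operation action on the right since the $\ZZ_p^\times$-action is trivial on the $R$-factor.

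The main obstacle is the projection formula: algebraic $K$-theory of tensor products of rings is notoriously ill-behaved, and without chromatic localization the above map is far from being an isomorphism. The key input that rescues the situation is the finite-Galois structure of each level of the cyclotomic tower, which makes the module category $\Perf(\ZZ[\omega_{p^n}])$ dualizable and thereby allows one to commute $\LTo K$ past the tensor product. Combined with continuity of $\LTo K$ along filtered colimits of rings, this yields the full pro-Galois statement at infinity.
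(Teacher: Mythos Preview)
This theorem is quoted from \cite{bhatt2020remarks} and is not proved in the paper; there is no direct proof here to compare against. That said, the $n=0$ case of the paper's main result \Cref{cyclo-main} does re-derive the statement for rings with $p$ invertible, and the route taken there is structurally different from yours. The paper never computes a universal case over $\ZZ$ and never invokes a projection formula. Instead it works with the full group algebra: \Cref{CMNN1} supplies the assembly isomorphism $\LTo K(R)[BC_{p^r}] \simeq \LTo K(R[C_{p^r}])$ directly for each $R$, and one then checks (\Cref{cyclo-for-cnsp}, \Cref{idemp-unique}) that the idempotent splitting $R[C_{p^r}] \simeq R \times R[\omega_{p^r}]$ at height~$0$ is carried to the idempotent splitting of $\Sph_{\To}[BC_{p^r}]$ at height~$1$. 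The identification $\cyc{\Sph_{\To}}{p^\infty}{1} \simeq \KUp$ is imported from \cite{Westerland}, so the hard computation is pushed into chromatic homotopy theory rather than into the $K$-theory of number rings.

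Your base-change step contains a real gap. Dualizability of $\Perf(\ZZ[\omega_{p^n}][1/p])$ over $\Perf(\ZZ[1/p])$ does not force the lax monoidal functor $\LTo K$ to satisfy
\[
    \LTo K(R) \otimes_{\LTo K(\ZZ)} \LTo K(\ZZ[\omega_{p^n}]) \iso \LTo K(R[\omega_{p^n}]).
\]
What \Cref{CMNN1} buys you is only that each side is, on the $p$-primary part of the Galois group, a Galois extension of $\LTo K(R)$; but two abelian $G$-Galois extensions of the same base need not be isomorphic, and identifying them is precisely a Kummer-theoretic question for which your sketch offers no mechanism. The idempotent-matching argument in the paper is exactly what closes this gap, and it does so without ever passing through a relative tensor product. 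Finally, your last move of collapsing $\otimes_{\LTo K(\ZZ)}$ to the absolute tensor product presupposes $\LTo K(\ZZ) \simeq \Sph_{\To}$, which is itself a nontrivial input from Quillen--Lichtenbaum that you have not isolated.
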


In \cite{carmeli2021chromatic} (later extended in \cite{barthel2022chromatic}) a higher height analogue of cyclotomic extensions was studied in the context of higher semiadditive $\infty$-categories.
Let $\cC$ be a presentably symmetric monoidal stable $\infty$-semiadditive $\infty$-category of height $n$, and let $R \in \CAlg(\cC)$.
Then, there is some idempotent in the group algebra $R[B^n C_{p^\infty}]$, constructed using higher semiadditive integrals.
This idempotent splits off a direct factor called the (height $n$) \emph{$p^\infty$-cyclotomic extension} of $R$, and denoted by $\cyc{R}{p^\infty}{n}$.
As the name suggests, height $0$ cyclotomic extensions reproduce ordinary cyclotomic extensions.

\begin{example}\label{ex:cyclo-Kn}
    In the case $\cC = \Sp_{\Kn}$ and $R = \Sph_{\Kn}$, the $p^\infty$-cyclotomic extension $\cyc{\Sph_{\Kn}}{p^\infty}{n}$ is the ring $R_n$ studied by Westerland in \cite{Westerland}.
    Namely, it is the (continuous) homotopy fixed points of $E_n$ by the kernel of $\det\colon \Morex_n \to \ZZ_p^\times$, whence it is a $\ZZ_p^\times$-Galois extension of $\Sph_{\Kn}$.
    Therefore, the case $\cC = \Sp_{\Tn}$ provides a $\Tn$-local lift of this Galois extension.
\end{example}

Specializing this example to height $n = 1$, we see that $\cyc{\Sph_{\To}}{p^\infty}{1} \simeq \KUp$.
Consequently, for commutative rings $R$ with $p \in R^\times$, \Cref{BCM_cycloshift}
can be rephrased as a $\ZZ_p^\times$-equivariant isomorphism
\[
    \LTo K(\cyc{R}{p^\infty}{0}) \simeq \cyc{\LTo K(R)}{p^\infty}{1}.
\]
Our second main theorem is the extension of this isomorphism to higher chromatic heights.

\begin{theorem}[{Cyclotomic Redshift, \cref{cyclo-main}}]\label{cyclo-main-intro}
    Let $R$ be a $\Tn$-local ring spectrum, then there is a $\ZZ_p^\times$-equivariant isomorphism
    \[
        \LTnp K(\cyc{R}{p^\infty}{n}) \simeq \cyc{\LTnp K(R)}{p^\infty}{n+1}.
    \]
\end{theorem}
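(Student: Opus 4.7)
The plan is to reduce to the finite cyclotomic layers $\cyc{R}{p^k}{n}$, which are direct summands of the group algebra $R[B^n C_{p^k}]$ cut out by idempotents built from the semiadditive structure on $\Sp_{\Tn}$, and to understand $\LTnp K$ of these group algebras directly. The core step is to establish a natural, $(\ZZ/p^k)^\times$-equivariant equivalence of commutative $\Tnp$-local ring spectra
    \[
        \LTnp K(R[B^n C_{p^k}]) \simeq \LTnp K(R)[B^{n+1} C_{p^k}],
    \]
    which already encodes a form of redshift at the level of group algebras. I would obtain this in three moves. Viewing $G := B^n C_{p^k}$ as a grouplike $E_\infty$-space (abelian since $C_{p^k}$ is), it is an $n$-truncated $\pi$-finite $p$-group. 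Letting $G$ act trivially on the $\Lnf$-local $\infty$-category $\Perf(R)$, \Cref{main-thm-intro} gives
    \[
        \LTnp K(R[B^n C_{p^k}]) = \LTnp K(\Perf(R)^{hG}) \simeq \LTnp K(R)^{hG}.
    \]
    Since $\LTnp K(R)$ is $\Tnp$-local and $\Sp_{\Tnp}$ is higher semiadditive for $\pi$-finite $p$-spaces, ambidexterity converts $\LTnp K(R)^{hG}$ into $\LTnp K(R)_{hG}$; cofixed points of the trivial action compute the tensor with $\Sigma^\infty_+ BG$, yielding $\LTnp K(R)[B^{n+1} C_{p^k}]$. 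Naturality in $C_{p^k}$ supplies the $\mathrm{Aut}(C_{p^k}) = (\ZZ/p^k)^\times$-equivariance.

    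Both sides of this equivalence carry a cyclotomic idempotent cutting out $\cyc{R}{p^k}{n}$ and $\cyc{\LTnp K(R)}{p^k}{n+1}$ respectively, each constructed by the same semiadditive recipe as an integral against a primitive $(\ZZ/p^k)^\times$-character. Since the redshift equivalence is itself built purely from higher descent and ambidexterity, it is compatible with this semiadditive integration, so the two idempotents must correspond. Restricting to the cyclotomic summand gives a $(\ZZ/p^k)^\times$-equivariant equivalence $\LTnp K(\cyc{R}{p^k}{n}) \simeq \cyc{\LTnp K(R)}{p^k}{n+1}$. Taking the filtered colimit over $k$, using that $\cyc{R}{p^\infty}{n}$ is the filtered colimit of the $\cyc{R}{p^k}{n}$ and that $\LTnp K$ commutes with such colimits of rings, then produces the desired $\ZZ_p^\times$-equivariant equivalence.

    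The main obstacle is the matching of cyclotomic idempotents. Two points require care: first, one must upgrade the redshift equivalence from an equivalence of spectra to one of commutative algebras, so that idempotent decompositions exist and correspond on both sides; and second, one must verify that the semiadditive integration defining the source idempotent is genuinely transported to its height $n+1$ analogue on the target under the equivalence produced by descent and ambidexterity. This amounts to a naturality statement for higher descent and ambidexterity with respect to the ambient higher-semiadditive structure, after which everything reduces to naturality and the passage to a filtered colimit.
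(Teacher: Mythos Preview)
Your overall strategy matches the paper's: establish the assembly isomorphism
\(\LTnp K(R)[B^{n+1}C_{p^k}] \simeq \LTnp K(R[B^nC_{p^k}])\), match the cyclotomic idempotents on both sides, and pass to the colimit. There is one slip and one genuine difference worth flagging.

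\textbf{The slip.} You write \(\LTnp K(R[B^nC_{p^k}]) = \LTnp K(\Perf(R)^{hG})\) for the trivial action of \(G=B^nC_{p^k}\). That identification is wrong as stated: \(\Perf(R[\Omega A])\simeq \Perf(R)[A]\) identifies the group algebra with the \emph{orbits} \(\Perf(R)_{hG}\), not the fixed points. Your route then needs ambidexterity in \(\Cat_{\Lnf}\) to pass from \(_{hG}\) to \(^{hG}\), which is not available (only \(\Cat_{\Mnf}\) is \(\infty\)-semiadditive). The fix is simply to use the colimit half of \Cref{main-thm-intro} directly: \(\LTnp K(\Perf(R)[BG]) \simeq \LTnp K(R)[BG]\), which already gives the desired equivalence without ever passing through fixed points.

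\textbf{The substantive difference.} Your plan for matching idempotents is to argue that the cyclotomic idempotent is built from a ``semiadditive integral against a primitive character'' and that the redshift equivalence, being built from descent and ambidexterity, must respect this integration. This is plausible but, as you note, hard to make precise: the idempotents live in \(\pi_0\) of the ring spectra, and one would have to track exactly how higher-semiadditive cardinalities transport under \(\KTnp\) and the assembly map. The paper avoids this entirely by matching the \emph{other} factor. The decomposition \(R[\Sigma^n\Fp]\simeq R\times \cyc{R}{p}{n}\) has its projection onto the trivial factor \(R\) induced by the map of spectra \(\Sigma^n\Fp\to 0\); this is pure naturality in \(M\) and is therefore automatically respected by the assembly map. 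A short lemma (uniqueness of complementary idempotents in a product decomposition) then forces the cyclotomic summands to correspond. After reducing to \(R=\Sph_{\Tn}\) and \(M=\Sigma^{n+1}\Fp\) using that these are initial, this yields the full result with no semiadditive bookkeeping whatsoever. You should replace your ``compatible with semiadditive integration'' step with this argument.
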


Higher height roots of unity, classified by higher cyclotomic extensions, were also used in \cite{barthel2022chromatic} for constructing higher height analogous of the discrete Fourier transform and Kummer theory. We show in \cref{fourier-comp} and \cref{kummer-comp} respectively, that these constructions are also suitably intertwined by the functor $\LTnp K$.    

The proof of \Cref{cyclo-main-intro} proceeds by applying \cref{main-thm-intro} to $G = B^n C_{p^\infty}$ to obtain an isomorphism
\[
    \LTnp K(R)[B^{n+1} C_{p^\infty}] \iso 
    \LTnp K(R[B^n C_{p^\infty}]).
\]
We then show that the idempotents splitting the corresponding cyclotomic extensions agree (equivariantly), implying the result.
In fact, the analogue of \Cref{cyclo-main-intro} also holds for (and follows from) the finite cyclotomic extensions $\cyc{R}{p^r}{n}$ for every integer $r \ge 0$.
In particular, taking $r =1$ and odd prime $p$, we obtain for every height $n$ a non-trivial instance of a $\Tn$-local Galois extension of order \textit{prime to $p$}, which is mapped to a $\Tnp$-local Galois extension by $\LTnp K$.

\subsection{Hyperdescent and the Telescope Conjecture}\label{tel-conj}

Cyclotomic redshift (\cref{cyclo-main-intro}) also has implications for hyperdescent for algebraic $K$-theory and the telescope conjecture.
The starting point of this discussion is the question of faithfulness of the cyclotomic extensions over $\Tn$-local ring spectra.
While each of the finite extensions $\cyc{R}{p^r}{n}$  is faithful over such a ring spectrum $R$, the infinite one $\cyc{R}{p^\infty}{n}$ has a priori no reason to be.
In \cite[\S7.3]{barthel2022chromatic}, the authors constructed the universal localization of $\SpTn$ in which infinite cyclotomic extensions become faithful, called the \textit{cyclotomic completion}.
Moreover, the cyclotomic completion is shown to be smashing and in the case of an odd prime $p$ given by the formula\footnote{The case $p=2$ requires only a slight modification.}
\[
    \cycomp{R} \simeq
    \cyc{R}{p^\infty}{n}^{hG} \qin 
    \Alg(\SpTn)
\]
for the action of the discrete dense subgroup
\[
    G := \Fp^\times \times \ZZ \leq 
    \ZZ_p^\times.
\]
Also observe that by \cref{ex:cyclo-Kn}, we have $\cyc{\Sph_{\Kn}}{p^\infty}{n} \simeq R_n$, and it follows from Devinatz--Hopkins theory   \cite{devinatz2004homotopy} that $R_n^{hG} \simeq \Sph_{\Kn}$  (see also \cite[Theorem 1.1]{mor2023picard}).
Therefore, all $\Kn$-local spectra are cyclotomically complete.

In the present paper, we show that cyclotomic completion is intimately related to hyperdescent.
The compatible system of finite cyclotomic extensions $\cyc{R}{p^r}{n}$ assembles into a sheaf on the site of continuous finite $\ZZ_p^\times$-sets, whose stalk is the $p^\infty$-cyclotomic extension $\cyc{R}{p^\infty}{n}$.
In \cref{cyclo-sheaf-galois}, we show that this sheaf is a hypersheaf if and only if $R$ is cyclotomically complete.

Cyclotomic redshift (\cref{cyclo-main-intro}) allows us to transfer questions about cyclotomic extensions between height $n$ and height $n+1$.
In particular, it shows that the sheaf $\LTnp K(\cyc{R}{p^r}{n})$ is isomorphic to the sheaf $\cyc{\LTnp K(R)}{p^r}{n+1}$.
Using the fact that every $\Knp$-local spectrum is cyclotomically complete, we thus obtain a non-trivial instance of hyperdescent for $\Knp$-localized algebraic $K$-theory.

\begin{theorem}[{Cyclotomic Hyperdescent, \cref{Knp-hyper}}]\label{cyc-descent}
    Let $R$ be a $\Tn$-local ring spectrum, then the sheaf determined by the values $\LKnp K(\cyc{R}{p^r}{n})$ is a hypersheaf.
\end{theorem}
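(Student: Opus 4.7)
The plan is to transport the question one chromatic height upward via cyclotomic redshift, and then to invoke the fact that $\Knp$-local rings are automatically cyclotomically complete at height $n+1$, so that their finite cyclotomic-extension sheaves are automatically hypersheaves.

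First, I would apply the finite-level form of Cyclotomic Redshift (\cref{cyclo-main-intro}), which supplies for each $r\ge 0$ a $\ZZ_p^\times$-equivariant isomorphism
\[
    \LTnp K(\cyc{R}{p^r}{n}) \simeq \cyc{\LTnp K(R)}{p^r}{n+1}.
\]
Functoriality in $r$ assembles these into an equivalence of sheaves of $\Tnp$-local ring spectra on the site of continuous finite $\ZZ_p^\times$-sets. The symmetric monoidal localization $\LKnp$ commutes with the formation of group algebras $(-)[B^{n+1}C_{p^r}]$ and with splitting off the natural idempotent summand defining the height $n+1$ cyclotomic extension; applying it therefore produces an equivalence of sheaves
\[
    \LKnp K(\cyc{R}{p^r}{n}) \simeq \cyc{\LKnp K(R)}{p^r}{n+1}.
\]

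Next, I would invoke the hyperdescent criterion for cyclotomic completion (\cref{cyclo-sheaf-galois}): for a $\Tnp$-local ring spectrum $S$, the sheaf $\{\cyc{S}{p^r}{n+1}\}$ on continuous finite $\ZZ_p^\times$-sets is a hypersheaf if and only if $S$ is cyclotomically complete at height $n+1$. Apply this with $S=\LKnp K(R)$, which is $\Knp$-local by construction. By the height $n+1$ analogue of the fact recalled in the introduction (Devinatz--Hopkins theory gives $\cyc{\Sph_{\Knp}}{p^\infty}{n+1}^{hG}\simeq \Sph_{\Knp}$), the $\Knp$-local sphere is cyclotomically complete, and the smashing nature of cyclotomic completion then implies that every $\Knp$-local spectrum, in particular $\LKnp K(R)$, is cyclotomically complete. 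Combining the sheaf equivalence above with this criterion yields the theorem.

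The main obstacle, beyond simply quoting the earlier theorems, is upgrading Cyclotomic Redshift from a statement at each finite level $r$ to a genuinely \emph{sheaf}-theoretic equivalence: the $\ZZ_p^\times$-equivariant isomorphisms must be coherently compatible with the transition maps between the cyclotomic extensions for varying $r$, and respect the $\ZZ_p^\times$-action at the level of the sheaf rather than only stalkwise. This should reduce to naturality in the coefficient ring of the higher semiadditive idempotent defining the cyclotomic extension, which is built universally over presentably symmetric monoidal stable $\infty$-categories of the appropriate height; spelling out this coherence, and the matching of the resulting idempotent in $\LTnp K(R)[B^{n+1}C_{p^r}]$ with the one produced by the height $n+1$ construction, is where I would expect the substantive verification to live.
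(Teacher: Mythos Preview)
Your proposal is correct and follows essentially the same route as the paper: identify $\LKnp K(\cyc{R}{p^{(-)}}{n})$ with the height-$(n+1)$ cyclotomic sheaf of $\LKnp K(R)$ via cyclotomic redshift, then invoke \cref{cyclo-sheaf-galois} together with the fact that $\Knp$-local spectra are cyclotomically complete. The coherence obstacle you flag is precisely what the paper packages as \cref{cyclosheaf-shift}, whose proof reduces (via \cref{inter-galois}) to the naturality of the comparison map in \cref{cyclo-main} with respect to the transition maps $\ZZ/p^{r+1}\onto\ZZ/p^r$; so your expectation about where the substantive verification lives is accurate.
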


It is natural to ask whether the hyperdescent along the cyclotomic tower holds already on the level of telescopic localizations, or, equivalently, whether $\LTnp K(R)$ is cyclotomically complete for every $\Tn$-local ring spectrum $R$.
This touches upon the subtle distinction between the $\Tn$-local and the $\Kn$-local categories, which is the subject of Ravenel's long-standing telescope conjecture.
Ravenel originally conjectured that  $\SpKn = \SpTn$ for all heights and primes, but while it is known to hold for $n = 0,1$, it was soon suspected to be false for higher chromatic heights.
Burklund, Hahn, Levy and the third author \cite{Telefalse} constructed \emph{counterexamples} to hyperdescent of the $\Tnp$-localized $K$-theory of cyclotomic towers for every $n \geq 1$ and every prime number.
Therefore, combined with \Cref{cyc-descent}, \emph{disproving} the telescope conjecture at height $2$ and above.

For the convenience of the reader, we now give a rough sketch of the strategy of \cite{Telefalse} and its relation to cyclotomic completion.
For simplicity of the exposition, we focus on the case $n = 2$ and odd prime $p$.
In this case we take $R = \Sph_{K(1)}$, and the goal is to show that the sheaf of cyclotomic extensions $L_{T(2)} K(\cyc{\Sph_{K(1)}}{p^r}{1})$ is not a hypersheaf.
As explained above, this is equivalent to showing that $L_{T(2)}K(\Sph_{K(1)})$ is not cyclotomically complete, namely, that the cyclotomic completion map
\[
    L_{T(2)}K(\Sph_{K(1)})
    \too \cycomp{L_{T(2)}K(\Sph_{K(1)})}
\]
is not an isomorphism.
Since the $p^\infty$-cyclotomic extension of $\Sph_{K(1)}$ is $\KUp$, cyclotomic redshift (\Cref{cyclo-main-intro}) provides a $\ZZ_p^\times$-equivariant isomorphism
\[
    \cyc{L_{T(2)}K(\Sph_{K(1)})}{p^\infty}{2} 
    \iso L_{T(2)}K(\KUp).
\]
Hence, the cyclotomic completion is given by 
\[
    \cycomp{L_{T(2)}K(\Sph_{K(1)})} \simeq
    L_{T(2)}K(\KUp)^{hG}.
\]
Moreover, we have $\Sph_{K(1)} \simeq (\KUp)^{hG}$, and it is not hard to see that the cyclotomic completion map identifies with the assembly map of $G$-fixed points
\[
    L_{T(2)}K((\KUp)^{hG}) \too 
    L_{T(2)}K(\KUp)^{hG}.
\]
One can further show that the assembly map for fixed points with respect to the finite subgroup $\Fp^\times \leq G$ is an 
isomorphism in this case, so, writing $L = (\KUp)^{h\Fp^\times}$
for the non-connective $p$-complete Adams summand, we can identify the cyclotomic completion map with the assembly map for $\ZZ$-fixed points (see \cref{iwa-comp})
\[
    L_{T(2)}K(L^{h\ZZ}) \too 
    L_{T(2)}K(L)^{h\ZZ}.
\]
Using trace methods and the seminal computations of \cite{AR02} of the topological cyclic homology of the connective $p$-complete Adams summand $\ell$, this map is shown \textit{not} to be an isomorphism, disproving the telescope conjecture.
The argument for higher heights requires a more involved variant of these ideas, replacing $\ell$ with certain $\mathbb{E}_3$-forms of $\BPn$ constructed by \cite{hahn-wilson}.

%\hrulefill

%Being able to identify the resulting Galois extension as the corresponding height $n+1$ cyclotomic extension has further implications.
%For instance, combined with \Cref{ex:cyclo-Kn} it shows that after $\Knp$-localization we get
%\[
%    \LKnp K(\cyc{R}{p^r}{n}) \simeq 
%    \LKnp K(R) \otimes R_{n+1}^{h(p^r\ZZ_p^\times)}
%\]
%for all integers $r\ge 0$.
%Using the Devinatz--Hopkins theorem \cite{devinatz2004homotopy}, we show that the sheaf on the site of continuous finite $\ZZ_p^\times$-sets corresponding to the right-hand side is a hypersheaf.
%We thus obtain a non-trivial instance of \textit{hyperdescent} for $\Knp$-localized algebraic $K$-theory.

%\begin{cor}[{\cref{Knp-hyper}}]\label{cyc-descent}
%    Let $R$ be a $\Tn$-local ring spectrum, then the sheaf determined by the values $\LKnp K(\cyc{R}{p^r}{n})$ is a hypersheaf.
%\end{cor}

\subsection{Outline of the Proof}\label{outline-proof}

Let $\Catperf$ be the $\infty$-category of stable idempotent complete $\infty$-categories, and let $\CatLnf \subset \Catperf$ denote the full subcategory of $\Lnf$-local $\infty$-categories.
\Cref{main-thm-intro} can be rephrased as saying that the functor
\[
    \LTnp K\colon \CatLnf \too \SpTnp
\]
preserves all $\pi$-finite $p$-space indexed colimits and limits.
Observe that if the space is discrete, i.e.\ $0$-finite, then this is precisely the preservation of finite products.
The case of $1$-finite $p$-spaces is precisely \Cref{CMNN1}.
The argument then proceeds inductively on the level of truncatedness $m \geq 2$.
The proof can be divided into three steps, which we now describe.

\subsubsection{Reduction to Constant Colimits of Monochromatic $\infty$-Categories}

A key ingredient in this reduction is the categorical analogue of \emph{monochromatization}.
Recall that a spectrum $X \in \Sp$ is called $n$-monochromatic if it is $\Lnf$-local and $\Lnmf(X) = 0$.
The inclusion of $n$-monochromatic spectra $\Mnf\Sp \into \Lnf\Sp$ admits a right adjoint $\Mnf\colon \Lnf\Sp \too \Mnf\Sp$, which fits into a natural exact sequence
\[
    \Mnf(X) \too X \too \Lnmf(X).
\]
Analogously, we say that an $\infty$-category $\cC \in \Catperf$ is \emph{$n$-monochromatic} if it is $\Lnf$-local and $\Lnmf(\cC) = 0$, where $\Lnmf\colon \Catperf \to \Cat_{\Lnmf}$ is the left adjoint of the inclusion. 
We show that the inclusion $\CatMnf \into \CatLnf$ of the $n$-monochromatic $\infty$-categories admits a right adjoint $\Mnf\colon \CatLnf \too \CatMnf$, which fits into a natural exact (i.e.\ Verdier) sequence
\[
    \Mnf(\cC) \too \cC \too \Lnmf(\cC).
\]
This construction enjoys three key properties:

\begin{enumerate}[ref=Property~(\arabic*)]    
    \item\label{Mnf-lim-colim-intro} The functor $\Mnf\colon \CatLnf \to \CatMnf$ preserves all limits and colimits (see \cref{Mnf-lim-colim}).
    
    \item\label{K-monochrom-iso-intro} For every $\cC \in \CatLnf$, the inclusion $\Mnf(\cC) \hookrightarrow \cC$ induces a ``purity isomorphism'' (see \cref{K-monochrom-iso} and \cite[Theorem C]{clausen2020descent})
    \[
        \LTnp K(\Mnf(\cC)) \iso \LTnp K(\cC).
    \]

    \item\label{CatMnf-inf-sa-intro} The $\infty$-category $\CatMnf$ is $\infty$-semiadditive (see \cref{CatMnf-inf-sa}).

\end{enumerate}

Turning back to the proof of \cref{main-thm-intro}, \ref{Mnf-lim-colim-intro} and \ref{K-monochrom-iso-intro} together imply that it suffices to prove that the functor restricted to $n$-monochromatic $\infty$-categories
\[
    \LTnp K\colon \CatMnf \too \SpTnp
\]
preserves $\pi$-finite $p$-space indexed limits and colimits.
Now, by \ref{CatMnf-inf-sa-intro}, this functor is between two \textit{$\infty$-semiadditive} $\infty$-categories, from which we gain two things:
First, the preservation of $\pi$-finite $p$-space indexed colimits implies the same for limits, allowing us to consider only colimits.
Second, it suffices to consider only \emph{constant} colimits concentrated in a homotopy degree $m$ (see \cref{red-const} and \cref{red-concentrated}).
In other words, we are reduced to showing that for any $n$-monochromatic $\infty$-category $\cC$ and $\pi$-finite $p$-space $A$ concentrated in homotopy degree $m$ the assembly map
\[
    \LTnp K(\cC)[A] \too \LTnp K(\cC[A])
\]
is an isomorphism.

\subsubsection{Reduction to Categories of Modules}

By the Schwede--Shipley theorem \cite{schwede2003stable}, any $\cC \in \Catperf$ is a filtered colimit of $\infty$-categories of the form $\Perf(R)$, where $R$ is the endomorphism ring spectrum of some object of $\cC$ (see \cref{Schwede-Shipley}).
Since algebraic $K$-theory commutes with filtered colimits, we are reduced to considering only such $\infty$-categories.
Moreover, as our $\cC$ is $n$-monochromatic, the ring spectra $R$ are $n$-monochromatic as well.
Thus, we are reduced to showing that for any $n$-monochromatic ring spectrum $R$ and $\pi$-finite $p$-space $A$ concentrated in homotopy degree $m$ the map
\[
    \LTnp K(R)[A] \too \LTnp K(R[\Omega A])
\]
is an isomorphism.

\subsubsection{Proof of the Special Case}

We thank Akhil Mathew for suggesting the following inductive argument.
The key ingredient facilitating the inductive step is \cite[Corollary 4.31]{land2020purity}, stating that the $\Tnp$-localized algebraic $K$-theory of ring spectra preserves sifted colimits when $n \geq 1$.
The base of the induction is $m=1$, which is provided by \Cref{CMNN1}.
When $m \geq 2$, the space $\Omega A$ is connected, therefore, we can present $A$ and $\Omega A$ compatibly via their bar constructions
\[
    A \simeq \colim_{\Delta^\op} (\Omega A)^\bullet,
    \qquad
    \Omega A \simeq \colim_{\Delta^\op} (\Omega^2 A)^\bullet.
\]
Each of the spaces $(\Omega A)^k$ is an ($m-1$)-finite $p$-space, for which the result already holds by the inductive hypothesis, and we finish by the preservation of sifted colimits.

\subsection{Acknowledgements}

We thank the anonymous referee for carefully reading the manuscript and for the numerous helpful comments and suggestions.
We thank Akhil Mathew for suggesting the argument for the proof of the key special case of the main theorem described above. We also thank Dustin Clausen for useful conversations and Maxime Ramzi for several useful comments, suggestions and references. The first and third named authors would like to thank the Massachusetts Institute of Technology for its hospitality, in which part of this paper were written. 
The first and last named authors would like to thank the University of Copenhagen for its hospitality during and after the ``Masterclass on Topological Hochschild Homology and Zeta Values'', in which parts of this paper were written, and to Omer Carmeli for his hospitality and for stimulating conversations.
The second author is partially supported by the Danish National Research Foundation through the
Copenhagen Centre for Geometry and Topology (DNRF151). The third author was supported by ISF1588/18, BSF 2018389 and the ERC under the European Union's Horizon 2020 research and innovation program (grant agreement No.\ 101125896).

\section{Monochromatic Categories}

In this section we set up the categorical framework for the higher descent results in chromatically localized algebraic $K$-theory. That is, we define $\CatLnf$, $\CatMnf$ and other related categories, and study them by means of various characterizations, closure properties, examples, adjunctions etc.
In particular, we establish 
\ref{Mnf-lim-colim-intro}, \ref{K-monochrom-iso-intro} and \ref{CatMnf-inf-sa-intro} discussed above.
Here and in the rest of the paper we use the term `category' to mean `$\infty$-category'.

\subsection{Recollections of Stable Categories}

We begin by setting notation for some categories of stable categories and recalling some well known facts about them. 

\begin{defn} 
    We denote by $\mdef{\Catperf} \subset \Cat$ the (non-full) subcategory of small, idempotent complete stable categories and exact functors between them, endowed with the Lurie tensor product.
\end{defn}

\begin{defn}
    We denote by $\mdef{\Prlstw}\subset \Prlst$ the (non-full) symmetric monoidal subcategory of compactly generated stable categories and colimit preserving functors between them that take compact objects to compact objects (equivalently, that have a colimit preserving right adjoint).
\end{defn}

We recall the following from \cite{HA}:

\begin{prop}\label{ind-w}
    The inclusion $\Prlstw \into \Prlst$ is colimit preserving. Moreover, the ind-completion functor
    \[
    \Ind\colon \Catperf \too \Prlst
    \]
    is colimit preserving and symmetric monoidal, and factors through a symmetric monoidal equivalence
    \[
        \Ind\colon \Catperf
        \adj \Prlstw
        \noloc (-)^\omega.
    \]
\end{prop}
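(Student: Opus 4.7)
The plan is to first establish the equivalence $\Ind\colon \Catperf \simeq \Prlstw$ with inverse $(-)^\omega$, and then deduce the colimit preservation statements from it together with the standard description of colimits in $\Prlst$ via limits of right adjoints.

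For the equivalence, I would invoke the classical fact, due to Lurie, that Ind-completion gives an equivalence between small idempotent-complete $\infty$-categories with finite colimits and compactly generated presentable $\infty$-categories, with inverse $(-)^\omega$ and with morphisms on the right being colimit-preserving functors that preserve compact objects. This restricts to the stable subcategories on both sides, since $\Ind$ of a stable category is stable and the compact objects of a stable compactly generated category form a stable idempotent-complete subcategory. Moreover, in the stable setting, a colimit-preserving functor between compactly generated stable $\infty$-categories preserves compact objects if and only if its right adjoint preserves filtered colimits, equivalently (by stability and the automatic preservation of finite limits), all colimits. For the symmetric monoidal structure, the Lurie tensor products on $\Catperf$ and $\Prlst$ corepresent exact-in-each-variable and presentably-bilinear functors respectively; the symmetric monoidal refinement of Ind-completion constructed in Lurie's \emph{Higher Algebra} matches these universal properties and promotes the above equivalence to a symmetric monoidal one.

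It remains to verify that the inclusion $\Prlstw \hookrightarrow \Prlst$ preserves colimits; the colimit preservation of $\Ind$ then follows by composing the equivalence with the inclusion. A colimit in $\Prlst$ is computed by passing to right adjoints and forming a limit in the opposite category. For a diagram in $\Prlstw$, the observation in the previous paragraph shows that each transition right adjoint already preserves all colimits, so this limit can be interpreted inside $\Prlst$. One then verifies that the images of the compact generators of the terms collectively form a set of compact generators of the colimit, exhibiting it as an object of $\Prlstw$ through which the inclusion preserves the colimit.

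The main obstacle will be this last verification: that compact objects of each term remain compact in the $\Prlst$-colimit and collectively generate it. This requires unwinding the formula for mapping spectra in a limit of $\infty$-categories and leveraging that all right adjoints in the diagram preserve filtered colimits; while this is the only non-formal input of the proof, it is by now a standard computation.
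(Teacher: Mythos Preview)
Your proposal is correct and ultimately relies on the same input from \emph{Higher Algebra} as the paper. The paper's proof is terser: it simply cites \cite[Lemma~5.3.2.9]{HA} for the colimit preservation of $\Prlstw \hookrightarrow \Prlst$, \cite[Remark~4.8.1.8]{HA} for the colimit preservation and symmetric monoidality of $\Ind$, and \cite[Lemma~5.3.2.11(3)]{HA} for the equivalence $\Catperf \simeq \Prlstw$.

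The main difference is one of economy. You propose to \emph{re-derive} the fact that the inclusion $\Prlstw \hookrightarrow \Prlst$ preserves colimits by analyzing the limit of right adjoints and checking that compact objects stay compact and generate; this is precisely the content of \cite[Lemma~5.3.2.9]{HA}, so there is no need to unwind it by hand. Similarly, you deduce the colimit preservation of $\Ind$ as a composite of the equivalence with the inclusion, whereas the paper gets it directly from \cite[Remark~4.8.1.8]{HA}. Your route is logically sound and perhaps more self-contained, but the paper's route is shorter since all three facts are already recorded in \emph{Higher Algebra} and can simply be cited.
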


\begin{proof}
    The first part is \cite[Lemma 5.3.2.9]{HA}. The second follows from \cite[Remark 4.8.1.8]{HA},
    and the third follows from \cite[Lemma 5.3.2.11(3)]{HA}.
\end{proof}

\subsubsection{Exact Sequences}

We give a brief recollection of exact sequences in $\Catperf$, also known as \emph{Verdier sequences}.
For a comprehensive treatment we refer the reader to \cite[Appendix A]{hermitian-2}.

The category $\Catperf$ is pointed, with zero object the trivial category $\pt$. Accordingly, we can talk about null sequences in $\Catperf$.

\begin{defn}
    A null sequence
    \[
        \cC \too[F] \cD \too[G] \cE
    \] 
    in $\Catperf$ is called an \mdef{exact sequence} if it is both a fiber and a cofiber sequence.
\end{defn}

\begin{rem}
    Note that, unlike in the case of stable categories such as $\Sp$, it is a \emph{property} of a composable pair of morphisms in $\Catperf$ to be a null sequence: It is equivalent to the property that the composition $GF$ carries every object of $\cC$ to a zero object of $\cE$. 
    Accordingly, it is a property of such a pair to be an exact sequence.
\end{rem}

\begin{rem}\label{verd_seq_equiv_chars}
    There are other equivalent characterizations of exact sequences.
    For example, a sequence as in the above definition is an exact sequence if and only if $F$ is fully faithful and $G$ exhibit $\cE$ as a localization (or ``Verdier quotient'') of $\cD$ with respect to the morphisms whose (co)fibers lie in $\cC$.
    This is also equivalent to $F$ being fully faithful and the sequence being a cofiber sequence.
\end{rem}

\subsubsection{Perfect Modules}

One source of examples of categories in $\Catperf$ is perfect modules over ring spectra. 
We recall the following construction from \cite{HA}:

\begin{prop}\label{LMod}
    The construction taking a ring spectrum to the category of its left module spectra assembles into a symmetric monoidal functor
    $$
        \LMod\colon \Alg(\Sp) \too \Prlstw.
    $$
\end{prop}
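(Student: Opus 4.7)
The proof amounts to assembling results from Lurie's Higher Algebra \cite{HA}, so the plan is primarily one of extraction and verification rather than original argument.

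First, I would invoke Lurie's construction of left modules as a functor: the general theory in \cite[\S4.5]{HA} associates to each $R \in \Alg(\Sp)$ its presentable stable category $\LMod_R$ of left modules, and to each algebra map $f\colon R \to S$ the extension-of-scalars functor $S \otimes_R (-)\colon \LMod_R \to \LMod_S$, producing a functor $\LMod\colon \Alg(\Sp) \to \Prlst$. The codomain is indeed $\Prlst$ because $\LMod_R$ is presentable and stable, and extension of scalars is a left adjoint, with right adjoint given by restriction along $f$.

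To refine the target to $\Prlstw$, I would check compact generation and preservation of compact objects. For each $R \in \Alg(\Sp)$, the module $R$ itself is a compact generator of $\LMod_R$, so $\LMod_R$ is compactly generated. For a map $f\colon R \to S$, the right adjoint to $S \otimes_R (-)$, namely restriction along $f$, preserves all colimits; hence by the characterization of morphisms in $\Prlstw$ recalled above (left adjoints with colimit-preserving right adjoints), the functor $S \otimes_R (-)$ lies in $\Prlstw$.

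Finally, the symmetric monoidal enhancement is the content of \cite[\S4.8.5]{HA}. The central input is the canonical identification $\LMod_{R \otimes S} \simeq \LMod_R \otimes \LMod_S$ in $\Prlst$, together with the unit equivalence $\LMod_{\Sph} \simeq \Sp$; this equips $\LMod$ with the structure of a symmetric monoidal functor $\Alg(\Sp) \to \Prlst$ for the symmetric monoidal structure on $\Alg(\Sp)$ inherited from $\Sp$. Since $\Prlstw \subset \Prlst$ is symmetric monoidal by the definition recalled above, and the underlying functor factors through $\Prlstw$ by the previous step, this yields the desired symmetric monoidal functor. The main subtlety lies in the bookkeeping inherent in Lurie's operadic construction of the symmetric monoidal structure on $\LMod$, but no genuine obstacle is expected since all inputs are standard.
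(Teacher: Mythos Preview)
Your proposal is correct and follows essentially the same approach as the paper: both assemble the symmetric monoidal functor from \cite[\S4.8.5]{HA} and then verify the factorization through $\Prlstw$ by checking compact generation of $\LMod_R$ and colimit-preservation of the restriction right adjoint. The paper merely cites more precise statements (e.g.\ \cite[Remark 4.8.5.17, Proposition 7.2.4.2, Corollary 4.2.3.7(2)]{HA}) and orders the steps slightly differently, first obtaining the symmetric monoidal functor to $\Mod_\Sp(\Catall)$ and then successively refining the target.
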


\begin{proof}
    By \cite[Remark 4.8.5.17]{HA}, there is a symmetric monoidal functor
    $$
        \LMod\colon \Alg(\Sp) \too \Mod_\Sp(\Catall)
    $$
    where $\Catall$ is the category of cocomplete categories and colimit preserving functors.
    As in \cite[Notation 4.8.5.10]{HA}, $\LMod_R$ is presentable, thus, $\LMod$ factors through $\Mod_\Sp(\Prl)$, which by \cite[Proposition 4.8.2.18]{HA} is equivalent to $\Prlst$.
    By \cite[Proposition 7.2.4.2]{HA}, $\LMod_R$ is compactly generated.
    Moreover, by \cite[Corollary 4.2.3.7(2)]{HA}, for every morphism $R \to S$, the extension of scalars functor $\LMod_R \to \LMod_S$ admits a right adjoint which is itself a left adjoint. Therefore, $\LMod$ factors through the symmetric monoidal subcategory $\Prlstw \subset \Prlst$.
\end{proof}

\begin{defn}
    We define the symmetric monoidal functor
    \[
    \mdef{\Perf}\colon \Alg(\Sp) \to \Catperf
    \]
    as the composition
    \[
        \Alg(\Sp)
        \too[\LMod] \Prlstw
        \too[(-)^\omega]
        \Catperf,
    \]
    which takes a ring spectrum to the category of its perfect (i.e.\ compact) module spectra. 
\end{defn}

We now record the following well known corollary of the Schwede--Shipley theorem \cite{schwede2003stable}.

\begin{prop}\label{Schwede-Shipley}
    Every $\cC \in \Catperf$ is a filtered colimit of categories of the form $\Perf(R)$, where $R$ is the endomorphism spectrum of an object in $\cC$.
\end{prop}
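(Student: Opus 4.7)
The plan is to realize $\cC$ as the filtered union of its thick subcategories generated by a single object, each of which the Schwede--Shipley theorem identifies with $\Perf$ of an endomorphism ring.

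First, for each object $X \in \cC$, I would let $\mathrm{Thick}_{\cC}(X) \subseteq \cC$ denote the smallest stable, idempotent complete full subcategory containing $X$. Since $\cC$ itself is stable and idempotent complete, $\mathrm{Thick}_{\cC}(X)$ is a well-defined object of $\Catperf$ and the inclusion is a morphism in $\Catperf$. By construction $X$ is a compact generator of $\Ind(\mathrm{Thick}_{\cC}(X))$, with endomorphism ring spectrum $R_X := \mathrm{End}_{\cC}(X)$. The Schwede--Shipley theorem \cite{schwede2003stable} then produces an equivalence $\Ind(\mathrm{Thick}_{\cC}(X)) \simeq \LMod_{R_X}$ in $\Prlstw$, which restricts on compact objects, via the equivalence $(-)^\omega \colon \Prlstw \simeq \Catperf$ of \cref{ind-w}, to an equivalence $\mathrm{Thick}_{\cC}(X) \simeq \Perf(R_X)$ in $\Catperf$.

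Next, I would organize these subcategories into a filtered diagram. The poset of subcategories of the form $\mathrm{Thick}_{\cC}(X)$, ordered by inclusion, is filtered: given $X, Y \in \cC$, both $\mathrm{Thick}_{\cC}(X)$ and $\mathrm{Thick}_{\cC}(Y)$ are contained in $\mathrm{Thick}_{\cC}(X \oplus Y)$, as $X$ and $Y$ are retracts of $X \oplus Y$. Since every object of $\cC$ lies in its own thick closure, the fully faithful inclusions $\mathrm{Thick}_{\cC}(X) \hookrightarrow \cC$ assemble into a cone
\[
    \colim_{X \in \cC} \mathrm{Thick}_{\cC}(X) \too \cC
\]
in $\Catperf$, and I would show that this cone is an equivalence.

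For the last point I would transport the claim along the symmetric monoidal equivalence $\Ind \colon \Catperf \simeq \Prlstw$ of \cref{ind-w}, under which the question becomes whether the corresponding cone $\colim_X \Ind(\mathrm{Thick}_{\cC}(X)) \to \Ind(\cC)$ is an equivalence in $\Prlstw$; by \cref{ind-w} again, the inclusion $\Prlstw \hookrightarrow \Prlst$ preserves colimits, so it suffices to check this in $\Prlst$. There the statement reduces to the standard fact that a filtered colimit of fully faithful colimit-preserving inclusions whose union exhausts all compact objects of the target is the target itself; this is where I expect the main (mild) subtlety to lie, as one must carefully use that filtered colimits in $\Prlst$ along fully faithful inclusions are computed as unions at the level of underlying categories, together with the fact that every object of $\cC$ lies in some $\mathrm{Thick}_{\cC}(X)$ and that mapping spectra in $\mathrm{Thick}_{\cC}(X)$ agree with those in $\cC$.
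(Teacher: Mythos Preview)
Your proposal is correct and follows essentially the same approach as the paper: both index over the filtered poset of thick subcategories of $\cC$ generated by a single object, identify each such subcategory with $\Perf(\End(X))$ via Schwede--Shipley, and conclude that the filtered colimit recovers $\cC$. The paper is somewhat terser at the final step, simply asserting $\colim_{\cC_0 \in P} \cC_0 \iso \cC$ from the observation that every object lies in its own thick closure, whereas you spell out the passage through $\Prlstw$ and $\Prlst$; but the substance is the same.
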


\begin{proof}
    Let $P$ denote the collection of thick subcategories of $\cC$ generated by a single object, which is a filtered poset with respect to inclusion.
    For each thick subcategory $\cC_0 \in P$ generated by an object $X \in \cC_0$, we have $\cC_0 \simeq \Ind(\cC_0)^\omega$ by \cref{ind-w}.
    This implies that $X \in \Ind(\cC_0)$ is a compact generator, which by the Schwede--Shipley theorem \cite{schwede2003stable} (see also \cite[Theorem 7.1.2.1]{HA}) implies that
    \[
        \cC_0
        \simeq \Ind(\cC_0)^\omega
        \simeq \LMod_{\End(X)}^\omega(\Sp)
        =: \Perf(\End(X)).
    \]
    Since every $X \in \cC$ belongs to the thick subcategory generated by $X$, we have
    \[
        \colim_{\cC_0 \in P} \cC_0 \iso \cC,
    \]
    and hence $\cC$ is a filtered colimit of categories of the required form.
\end{proof}

\subsection{Categorical Localization and Monochromatization}

\subsubsection{Localization of Stable Categories}\label{subsec-lcat}

A symmetric monoidal localization $L\colon \Sp \to L\Sp$ exhibits $L\Sp$ as an idempotent algebra in $\Prlst$ in the sense of \cite[Definition 4.8.2.1]{HA}.
By \cite[Proposition 4.8.2.10]{HA}, it is a property of $\cC \in \Prl$ to be a module over $L\Sp$, which is precisely that all the mapping spectra of $\cC$ are in $L\Sp$ (see, e.g., \cite[Proposition 5.2.10]{AmbiHeight}). We can similarly consider the analogous notion for $\Catperf$.

\begin{defn}
    Let $L\colon \Sp \to L\Sp$ be a symmetric monoidal localization.
    We let $\mdef{\Cat_L} \subset \Catperf$ be the full subcategory of those categories all of whose mapping spectra are in $L\Sp$. We refer to objects of $\Cat_L$ as \tdef{$L$-local categories}.
\end{defn}

This notion is closely related to being $L\Sp$-linear in the presentable context.
We recall that since $L\Sp$ is a localization of spectra, it is an idempotent algebra in $\Prlst$, and thus being a module over it is a property.
Namely, $\cD \in \Prlst$ is a module over $L\Sp$ if and only if all mapping spectra of $\cD$ are in $L\Sp$.
In particular, given $\cC \in \Catperf$, if $\Ind(\cC)$ is a module over $L\Sp$ in $\Prlst$, then all the mapping spectra of $\cC \subset \Ind(\cC)$ are $L$-local, i.e.\ $\cC$ is $L$-local.
However, the converse does not always hold:

\begin{example}
    Consider the $p$-completion functor $L := (-)_p\colon \Sp \to \Spp$.
    The category $\Perf(\Sph_p)$ of perfect modules over the $p$-complete sphere has $p$-complete mapping spectra, and hence is $L$-local.
    On the other hand, $\Ind(\Perf(\Sph_p)) \simeq \Mod_{\Sph_p}(\Sp)$ is not $\Spp$-linear, as for example
    \[
        \hom(\Sph_p, \Sph_p[1/p]) \simeq \Sph_p[1/p]
    \]
    which is not $p$-complete.
\end{example}

The connection between $L$-local categories and $L\Sp$-linear categories in $\Prlst$ is tightened when $L$ is further assumed to be \emph{smashing}, via the equivalence between $\Catperf$ and $\Prlstw$ from \cref{ind-w}.
When $L$ is smashing, it preserves compact objects and so exhibits $L\Sp$ as an idempotent algebra in $\Prlstw$, and hence being a module over it in $\Prlstw$ is a property.
In fact, it is the same property as in $\Prlst$, that is, $\cD \in \Prlstw$ is a module over $L\Sp$, if and only if the map
\[
    \cD \otimes \Sp \too \cD \otimes L\Sp 
\]
is an equivalence, which is the same condition whether we view it in $\Prlst$ or $\Prlstw$.

Applying the equivalence $(-)^\omega$ of \cref{ind-w} to $L\Sp \in \Prlstw$, we obtain an idempotent algebra 
\[
    \Perf(L\Sph) = L\Sp^\omega \qin \Catperf.
\]
Again by \cite[Proposition 4.8.2.10]{HA}, it is a property of a category in $\Catperf$ to admit a module structure over $\Perf(L\Sph)$, and we now show that all properties described above are equivalent.

\begin{prop}\label{Char_CatL}
    Let $L\colon \Sp \to L\Sp$ be a smashing localization.
    For $\cC \in \Catperf$ the following are equivalent:
    \begin{enumerate}
        \item $\cC \in \Cat_L$.
    
        \item $\cC$ is a module over $\Perf(L\Sph)$ in $\Catperf$.

        \item $\Ind(\cC)$ is a module over $L\Sp$ in $\Prlst$.
    \end{enumerate}
\end{prop}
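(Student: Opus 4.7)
The plan is to deduce all three conditions from the fact that being a module over an idempotent algebra is a \emph{property}, detected by the unit map $\cD \to \cD \otimes L\Sp$ being an equivalence. Since $L$ is smashing, $L\Sph$ is compact in $\Sp$, so $L\Sp = \Mod_{L\Sph}(\Sp)$ lies in $\Prlstw$. As $\Prlstw \hookrightarrow \Prlst$ is a symmetric monoidal inclusion, the idempotent algebra structure on $L\Sp$ in $\Prlst$ restricts to one in $\Prlstw$. Applying the symmetric monoidal equivalence $(-)^\omega \colon \Prlstw \simeq \Catperf$ of \cref{ind-w}, we obtain the corresponding idempotent algebra $\Perf(L\Sph) = L\Sp^\omega \in \Catperf$.

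For $(1) \iff (3)$, I would apply \cite[Proposition 4.8.2.10]{HA} to $\cD = \Ind(\cC)$: the existence of a module structure over $L\Sp$ is equivalent to all mapping spectra of $\cD$ being $L$-local, as recalled in the discussion preceding the definition of $\Cat_L$. It then remains to observe that mapping spectra of $\Ind(\cC)$ are $L$-local if and only if those of $\cC$ are. Since $\cC \hookrightarrow \Ind(\cC)$ is fully faithful, one direction is immediate; for the converse, any mapping spectrum in $\Ind(\cC)$ is obtained from mapping spectra in $\cC$ by cofiltered limits in the first variable and filtered colimits in the second, and since $L$ is smashing the subcategory $L\Sp \subset \Sp$ is closed under both limits and colimits.

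For $(2) \iff (3)$, the symmetric monoidal equivalence $\Ind \colon \Catperf \simeq \Prlstw$ of \cref{ind-w} identifies modules over $\Perf(L\Sph)$ in $\Catperf$ with modules over $L\Sp$ in $\Prlstw$. Finally, for any $\cD \in \Prlstw$, being an $L\Sp$-module in $\Prlstw$ coincides with being an $L\Sp$-module in $\Prlst$: in both cases this is the property that the unit $\cD \to \cD \otimes L\Sp$ is an equivalence, and the two tensor products agree because the inclusion is symmetric monoidal.

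The main obstacle is largely bookkeeping, namely ensuring that $L\Sp$ is coherently an idempotent algebra in all three ambient categories and that the various module categories are compared consistently. The smashing hypothesis enters precisely in guaranteeing $L\Sp \in \Prlstw$, without which $\Perf(L\Sph)$ would not even be defined as in condition (2); the only genuinely non-formal input is the closure of $L\Sp$ under both limits and filtered colimits, which is again a consequence of smashing.
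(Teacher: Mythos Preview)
Your proposal is correct and follows essentially the same route as the paper: the equivalence $(2)\iff(3)$ via the symmetric monoidal equivalence $\Ind\colon \Catperf \simeq \Prlstw$ together with the observation that the $L\Sp$-module property is the same in $\Prlstw$ and $\Prlst$, and the equivalence $(1)\iff(3)$ via the limit/colimit description of mapping spectra in $\Ind(\cC)$ combined with the closure of $L\Sp \subset \Sp$ under limits and colimits (using smashing for the latter). The paper organizes the same ingredients slightly differently (proving $(3)\Rightarrow(1)$ and $(1)\Rightarrow(3)$ separately), but the content is identical.
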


\begin{proof}
    We first show that (2) and (3) are equivalent.
    Recall from \cref{ind-w} that there is a symmetric monoidal equivalence
    \[
        \Ind\colon \Catperf
        \adj \Prlstw
        \noloc (-)^\omega.
    \]
    Thus, condition (2) is equivalent to $\Ind(\cC)$ being an $L\Sp$-module in $\Prlstw$, which as explained above, is the same as being an $L\Sp$-module in $\Prlst$, i.e.\ condition (3).

    Assuming condition (3), all the mapping spectra in $\Ind(\cC)$ are in $L\Sp$, and since $\cC \subset \Ind(\cC)$ is a full stable subcategory, the same holds for $\cC$, i.e.\ we get condition (1). 

    Finally, assuming (1), to prove (3) it suffices to show that all the mapping spectra in $\Ind(\cC)$ are in $L\Sp$. 
    Given $X,Y\in \Ind(\cC)$ we can write $X = \colim X_a$ and $Y = \colim Y_b$ as filtered colimits of objects in the essential image of $\cC$. 
    We thus get
    \[
        \hom(X,Y) \simeq 
        \invlim_a \hom(X_a, \colim_b Y_b) \simeq
        \invlim_a \colim_b \hom(X_a, Y_b) \qin \Sp,
    \]
    where the second step uses the fact that the essential image of $\cC$ in $\Ind(\cC)$ consists of compact objects. 
    Since $L\Sp$ is a smashing localization, it is closed under both limits and colimits in $\Sp$ and hence we get condition (3).
\end{proof}

\begin{cor}\label{CatL-closure}
    Let $L\colon \Sp \to L\Sp$ be a smashing localization.
    The full subcategory $\Cat_L \subset \Catperf$ is symmetric monoidal, closed under (small) limits and colimits, and the left adjoint $\Catperf \to \Cat_L$ to the inclusion $\Cat_L \into \Catperf$ is given by tensoring with $\Perf(L\Sph)$ and hence is canonically symmetric monoidal.
\end{cor}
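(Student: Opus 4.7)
The plan is to deduce everything from Proposition \ref{Char_CatL} combined with the general theory of idempotent algebras from \cite[\S4.8.2]{HA}. First I would observe that $L\Sp$ is an idempotent algebra in $\Prlst$, as is standard for any smashing localization. Since the localization is smashing it preserves compact objects, so $L\Sp$ lies in the symmetric monoidal subcategory $\Prlstw$, and the unit map $\Sp \to L\Sp$ is a morphism there; hence $L\Sp$ is also idempotent in $\Prlstw$. Applying the symmetric monoidal equivalence $(-)^\omega\colon \Prlstw \simeq \Catperf$ of \Cref{ind-w} yields that $\Perf(L\Sph) = (L\Sp)^\omega$ is an idempotent commutative algebra in $\Catperf$.

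Next, by \Cref{Char_CatL} the full subcategory $\Cat_L \subset \Catperf$ coincides with the essential image of $\Mod_{\Perf(L\Sph)}(\Catperf) \to \Catperf$. For any idempotent algebra $A$ in a presentably symmetric monoidal category, \cite[Proposition 4.8.2.10]{HA} and the surrounding discussion show that the forgetful functor $\Mod_A \to \Catperf$ is fully faithful, that its essential image consists of those $X$ for which $X \otimes A \to X$ (or equivalently $X \to X \otimes A$) is an equivalence, that the left adjoint to the inclusion is given by $X \mapsto X \otimes A$, and that this left adjoint is canonically symmetric monoidal. Specialized to $A = \Perf(L\Sph)$, this yields both the symmetric monoidal structure on $\Cat_L$ and the explicit formula for the reflector.

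Closure under limits is automatic: the inclusion $\Cat_L \hookrightarrow \Catperf$ admits the left adjoint $-\otimes \Perf(L\Sph)$, hence preserves all limits, so limits in $\Catperf$ of diagrams in $\Cat_L$ remain in $\Cat_L$. For closure under colimits, note that tensoring with $\Perf(L\Sph)$ in $\Catperf$ is itself a left adjoint (the tensor product in the symmetric monoidal category $\Catperf$ preserves colimits in each variable), so for any diagram $\{\cC_i\}$ in $\Cat_L$ the canonical map
\[
    (\colim_i \cC_i) \otimes \Perf(L\Sph)
    \simeq \colim_i \bigl(\cC_i \otimes \Perf(L\Sph)\bigr)
    \simeq \colim_i \cC_i
\]
is an equivalence, exhibiting $\colim_i \cC_i$ as $\Perf(L\Sph)$-local, i.e.\ in $\Cat_L$.

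There is no serious obstacle here; the only point requiring a modicum of care is the transfer of the idempotent algebra structure across the chain $\Prlst \supset \Prlstw \simeq \Catperf$, which is handled by the observation that $L$ being smashing implies $L\Sp \in \Prlstw$ together with the symmetric monoidal equivalence of \Cref{ind-w}.
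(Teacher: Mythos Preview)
Your proposal is correct and follows essentially the same approach as the paper: both identify $\Cat_L$ with $\Mod_{\Perf(L\Sph)}(\Catperf)$ via \Cref{Char_CatL} and read off all the claims from there. The only cosmetic difference is that for closure under limits and colimits the paper appeals directly to \cite[Corollary 4.2.3.3 and Corollary 4.2.3.5]{HA} (the forgetful functor from modules creates limits and colimits), whereas you unwind these in the special idempotent case by hand; your argument is a tad longer but perhaps more self-contained.
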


\begin{proof}
    By \Cref{Char_CatL}, the inclusion $\Cat_L \into \Catperf$ identifies with the forgetful functor $\Mod_{\Perf(L\Sph)}(\Catperf) \to \Catperf$.
    The claims about the symmetric monoidal structure follow immediately, and the claims about limits and colimits follow from \cite[Corollary 4.2.3.3 and Corollary 4.2.3.5]{HA}.
\end{proof}

By abuse of notation, we denote by $L\colon \Catperf \to \Cat_L$
also the symmetric monoidal left adjoint constructed in \Cref{CatL-closure}. 
The following example is of principal interest to our study:

\begin{example}
    For every $n$ and an (implicit) prime $p$, we let $\Tn$ be the telescope on some finite spectrum of type $n$ and define the finite chromatic localization
    \[
        \Lnf \colon \Sp \too \Lnf \Sp 
    \]
    to be the Bousfield localization with respect to $T(0)\oplus \dots \oplus \Tn$.
    Following the conventions of \cite{clausen2020descent,land2020purity}, we choose $T(0) = \Sph[1/p]$ to be the telescope of $v_0 = p$ on the sphere spectrum.
    Recall from \cite{miller1992finite} that for all $n\ge 0$, the finite chromatic localizations are smashing. 
    Hence, the full subcategory $\mdef{\CatLnf} \subset \Catperf$ of \tdef{$\Lnf$-local categories} identifies with modules over $\Perf(\Lnf \Sph)$.
\end{example}

\subsubsection{Monochromatization of Stable Categories}

Recall that a  spectrum $X$ is called \emph{$n$-monochromatic} if it is $\Lnf$-local and  $\Lnmf (X) = 0$ (where, by convention, $L_{-1}^f(X)$ is defined to be the zero object, so that every $L_0^f$-local spectrum is $0$-monochromatic).
The inclusion of the full subcategory $\mdef{\Mnf\Sp} \into \Lnf\Sp$ of $n$-monochromatic spectra admits a right adjoint 
\[
    \mdef{\Mnf}\colon \Lnf \Sp \too \Mnf \Sp,
\] 
which fits into a natural exact sequence 
\[
    \Mnf(X) \too X \too \Lnmf(X).
\]
The functor $\Mnf$ is both limit preserving, being a right adjoint, and colimit preserving, as it is given by tensoring with $\Mnf(\Sph)$ and $\Mnf\Sp$ is closed under colimits in $\Sp$. 
We shall now discuss a categorical analogue of this setup. 

\begin{defn}
    A category $\cC \in \Catperf$ is \tdef{$n$-monochromatic} if it is $\Lnf$-local and $\Lnmf(\cC) = 0$. 
    We denote by $\mdef{\CatMnf} \subset \CatLnf$ the full subcategory of $n$-monochromatic categories. 
    For every $\cC \in \CatLnf$, we denote by $\mdef{\Mnf(\cC)} \to \cC$ the fiber of the localization map $\cC \to \Lnmf(\cC)$ formed in $\Catperf$.
\end{defn}

We remark that every $L_0^f$-local category is by convention $0$-monochromatic.
Recall that the localization $\PerfLnf \to \PerfLnmf$ participates in an exact sequence (see for example \cite[Lemma 3.7]{land2020purity}), and its fiber is, by construction, the monochromatization of $\PerfLnf$, which we can describe explicitly.

\begin{prop} \label{ex:monochromatics_in_spectra}
    $\Mnf(\PerfLnf)$ is the thick subcategory of $\PerfLnf$ generated by $\Tn$.
\end{prop}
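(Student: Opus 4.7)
The plan is to reduce to a purely spectral statement and then invoke the Hopkins--Smith thick subcategory theorem. From the exact sequence $\Mnf(\PerfLnf) \to \PerfLnf \to \PerfLnmf$ provided by the definition together with the fact (cited from \cite{land2020purity}) that the second map is a Verdier quotient, \cref{verd_seq_equiv_chars} identifies $\Mnf(\PerfLnf)$ with the full subcategory of those $X \in \PerfLnf$ whose image in $\PerfLnmf$ vanishes, i.e.\ those with $\Lnmf X = 0$ in $\Sp$. The goal therefore becomes: a compact $\Lnf$-local spectrum $X$ lies in the thick subcategory of $\PerfLnf$ generated by $\Tn$ if and only if $\Lnmf X = 0$.

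Next I would verify that $\Tn$ itself is $n$-monochromatic and compact, which gives one inclusion. Fix a finite type-$n$ spectrum $F$ with a $v_n$-self-map $v$; Miller's theorem on finite localizations then gives $\Lnf F \simeq v^{-1} F = \Tn$. In particular $\Tn \in \PerfLnf$, since the smashing finite localization $\Lnf$ preserves compact objects. Using the basic relation $\Lnmf \simeq \Lnmf \circ \Lnf$ (valid since $\Lnf$-acyclics are $\Lnmf$-acyclic) together with $T(i)_* F = 0$ for $i < n$, we conclude $\Lnmf \Tn \simeq \Lnmf F = 0$. Thickness of $\{X \in \PerfLnf : \Lnmf X = 0\}$ then forces the thick subcategory generated by $\Tn$ to sit inside $\Mnf(\PerfLnf)$.

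The reverse containment is where the thick subcategory theorem enters. Because $\Lnf$ is a finite smashing localization, every $X \in \PerfLnf$ is of the form $\Lnf Y$ for some $Y \in \Sp^\omega$. If $X \in \Mnf(\PerfLnf)$ then $\Lnmf Y \simeq \Lnmf \Lnf Y = \Lnmf X = 0$, so $Y$ has type $\geq n$. By Hopkins--Smith, $Y$ lies in the thick subcategory of $\Sp^\omega$ generated by $F$; applying the exact functor $\Lnf$ then places $X = \Lnf Y$ in the thick subcategory of $\PerfLnf$ generated by $\Lnf F \simeq \Tn$, as desired.

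The only external input with real content is the Hopkins--Smith thick subcategory theorem; the rest is structural unwinding. The subtle bookkeeping point, and the thing I expect to have to state carefully, is to make clear which ambient category the phrase ``thick subcategory generated by $\Tn$'' refers to, since $\Tn$ is compact in $\Lnf\Sp$ but not in $\Sp$, and to justify the specific model $\Lnf F \simeq \Tn$ used to witness its compactness.
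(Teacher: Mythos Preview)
Your proof is correct and follows essentially the same strategy as the paper's: both identify $\Mnf(\PerfLnf)$ via the Verdier quotient $\PerfLnf \to \PerfLnmf$ and invoke the Hopkins--Smith thick subcategory theorem. The only difference is packaging---the paper tensors the exact sequence $\cK_n \to \Perf(\Sph) \to \PerfLnmf$ with $\PerfLnf$ rather than arguing object-by-object via essential surjectivity of $\Lnf$---but the substance is identical.
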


\begin{proof}
    First recall that by the thick subcategory theorem, the kernel $\cK_n$ of the localization $\Perf(\Sph) \to \PerfLnmf$ is the thick subcategory generated from any finite spectrum of type $n$.
    This participates in an exact sequence
    \[
        \cK_n \too \Perf(\Sph) \too \PerfLnmf.
    \]
    By \cite[Lemma 3.12]{land2019k} and \cref{CatL-closure}, tensoring the above exact sequence with $\PerfLnf$ gives the exact sequence
    \[
        \cK_n \otimes \PerfLnf \too \PerfLnf \too \PerfLnmf.
    \]
    We thus see that $\Mnf(\PerfLnf)$ is generated by the $\Lnf$-localization of any finite spectrum of type $n$, that is, any telescope $\Tn$.
\end{proof}

The monochromatization of a general category reduces to the above basic case using the following:

\begin{prop}\label{mnf-exact}
    Let $\cC \in \CatLnf$.
    The sequence
    \[
        \Mnf(\cC) \too \cC \too \Lnmf (\cC)
    \]
    is exact in $\Catperf$, and
    \[
        \Mnf(\cC) \simeq \Mnf(\PerfLnf) \otimes \cC.
    \]
\end{prop}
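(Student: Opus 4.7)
The plan is to bootstrap from the exact sequence for $\PerfLnf$ already established in the proof of \cref{ex:monochromatics_in_spectra}, by tensoring it with $\cC$.

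Recall from that proof that the kernel $\cK_n \to \Perf(\Sph) \to \PerfLnmf$ is an exact sequence in $\Catperf$, where $\cK_n$ is the thick subcategory generated by any finite spectrum of type $n$. By \cite[Lemma 3.12]{land2019k}, tensoring in $\Catperf$ with a fixed object preserves exact sequences. First I would tensor the above with $\PerfLnf$ to recover
\[
    \Mnf(\PerfLnf) \too \PerfLnf \too \PerfLnmf,
\]
and then tensor once more with $\cC$ to obtain an exact sequence
\[
    \Mnf(\PerfLnf) \otimes \cC \too \PerfLnf \otimes \cC \too \PerfLnmf \otimes \cC
\]
in $\Catperf$.

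Next I would identify the terms. By \Cref{Char_CatL} and \Cref{CatL-closure}, $\cC$ being $\Lnf$-local means it is a module over $\Perf(\Lnf\Sph) = \PerfLnf$, so the unit map gives $\PerfLnf \otimes \cC \iso \cC$. For the third term, $\Lnmf$ is smashing and hence by \Cref{CatL-closure} its associated categorical localization is given by tensoring with $\PerfLnmf$, so $\PerfLnmf \otimes \cC \simeq \Lnmf(\cC)$. Substituting these identifications, the sequence becomes
\[
    \Mnf(\PerfLnf) \otimes \cC \too \cC \too \Lnmf(\cC),
\]
which is exact in $\Catperf$.

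In particular, the first term is the fiber of the second map in $\Catperf$, which by definition is $\Mnf(\cC)$. This simultaneously yields the desired formula $\Mnf(\cC) \simeq \Mnf(\PerfLnf) \otimes \cC$ and the exactness of the monochromatic sequence. I do not foresee a significant obstacle: the entire argument is a module-theoretic reduction, and the only nontrivial input, namely the compatibility of tensor products with Verdier sequences, is supplied by \cite[Lemma 3.12]{land2019k}. The one point requiring slight care is ensuring that the map $\PerfLnf \otimes \cC \to \cC$ arising from the tensored sequence is the canonical equivalence and not merely an abstract isomorphism, which follows by naturality of the unit of the $\Perf(\Lnf\Sph)$-module structure guaranteed by \Cref{CatL-closure}.
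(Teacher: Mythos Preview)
Your proposal is correct and follows essentially the same route as the paper: both start from the exact sequence $\Mnf(\PerfLnf) \to \PerfLnf \to \PerfLnmf$, tensor it with $\cC$ using \cite[Lemma 3.12]{land2019k} together with \Cref{CatL-closure}, identify the middle and right terms as $\cC$ and $\Lnmf(\cC)$, and read off the fiber as $\Mnf(\cC)$. The paper's proof is simply a terser version of yours, citing the base exact sequence as already established rather than re-deriving it by tensoring $\cK_n \to \Perf(\Sph) \to \PerfLnmf$ with $\PerfLnf$.
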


\begin{proof}
    As mentioned above, the sequence
    \[
        \Mnf(\PerfLnf) \too \PerfLnf \too \PerfLnmf
    \]
    is exact.
    By \cite[Lemma 3.12]{land2019k} and \cref{CatL-closure} again, tensoring it with $\cC$ gives an exact sequence
    \[
        \Mnf(\PerfLnf) \otimes \cC \too \cC \too \Lnmf(\cC),
    \]
    which also exhibits the first term as $\Mnf(\cC)$.
\end{proof}

The notation $\Mnf(\cC)$ is justified by the following:

\begin{prop}\label{monochrom}
    Let $\cC \in \CatLnf$.
    The category $\Mnf(\cC)$ is $n$-monochromatic, and the canonical map $\Mnf(\cC) \to \cC$ exhibits it as the coreflection of $\cC$ into $\CatMnf$.
\end{prop}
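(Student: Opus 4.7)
The plan is to deduce both assertions from the exact sequence
\[
\Mnf(\cC) \too \cC \too \Lnmf(\cC)
\]
provided by \cref{mnf-exact}, together with the identification $\Mnf(\cC) \simeq \Mnf(\PerfLnf) \otimes \cC$ from the same result.

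First I would verify that $\Mnf(\cC)$ lies in $\CatMnf$. The $\Lnf$-locality is immediate: $\Mnf(\PerfLnf)$ is a thick subcategory of the $\Lnf$-local category $\PerfLnf$ and hence itself $\Lnf$-local, and by \cref{CatL-closure} the tensor product $\Mnf(\PerfLnf) \otimes \cC$ stays in $\CatLnf$. For the vanishing $\Lnmf(\Mnf(\cC)) = 0$, I would apply the smashing localization $\Lnmf$, which is given by tensoring with $\PerfLnmf$ and hence preserves exact sequences in $\Catperf$ by \cite[Lemma 3.12]{land2019k}. This yields the exact sequence
\[
\Lnmf(\Mnf(\cC)) \too \Lnmf(\cC) \too \Lnmf(\Lnmf(\cC)),
\]
whose right-hand map is an equivalence by idempotence of $\Lnmf$, so its fiber $\Lnmf(\Mnf(\cC))$ vanishes.

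Second, I would establish the coreflection property. Let $\cD \in \CatMnf$. Since the monochromatic sequence is in particular a fiber sequence in $\Catperf$, mapping out of $\cD$ yields a fiber sequence of spaces
\[
\operatorname{Map}_{\Catperf}(\cD, \Mnf(\cC)) \too \operatorname{Map}_{\Catperf}(\cD, \cC) \too \operatorname{Map}_{\Catperf}(\cD, \Lnmf(\cC)).
\]
It thus suffices to show that the rightmost term is contractible. But $\Lnmf$ is left adjoint to the inclusion $\CatLnmf \hookrightarrow \CatLnf$, so
\[
\operatorname{Map}_{\Catperf}(\cD, \Lnmf(\cC)) \simeq \operatorname{Map}_{\CatLnmf}(\Lnmf(\cD), \Lnmf(\cC)),
\]
and since $\cD$ is $n$-monochromatic we have $\Lnmf(\cD) = 0$, so this mapping space is contractible. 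Consequently, the leftmost map in the fiber sequence is an equivalence, exhibiting $\Mnf(\cC) \to \cC$ as the coreflection into $\CatMnf$. The argument is essentially formal given \cref{mnf-exact} and \cref{CatL-closure}; the only subtle point is that the smashing tensor-localization $\Lnmf$ preserves exact sequences, which is already invoked in the proof of \cref{mnf-exact}.
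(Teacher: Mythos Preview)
Your proof is correct and follows essentially the same approach as the paper: both use the exact sequence from \cref{mnf-exact}, tensor with $\PerfLnmf$ (invoking \cite[Lemma 3.12]{land2019k}) to kill $\Lnmf(\Mnf(\cC))$, and then deduce the coreflection property from the fiber sequence on mapping spaces together with $\Lnmf(\cD)=0$. The only cosmetic difference is that you verify $\Lnf$-locality of $\Mnf(\cC)$ via the tensor formula $\Mnf(\PerfLnf)\otimes\cC$ and closure of $\CatLnf$ under tensor products, whereas the paper uses closure under limits (fibers); both are immediate from \cref{CatL-closure}.
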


\begin{proof}
    For the first part, observe that $\Lnmf(\cC)$ is in particular $\Lnf$-local and hence $\Mnf(\cC)$, being the fiber of $\Lnf$-local categories, is $\Lnf$-local.
    By \cref{mnf-exact}, the sequence
    \[
        \Mnf(\cC) \too \cC \too \Lnmf(\cC)
    \]
    is exact.
    Thus, by \cite[Lemma 3.12]{land2019k} and \cref{CatL-closure} once more, tensoring it with $\PerfLnmf$ gives an exact sequence
    \[
        \Lnmf(\Mnf(\cC)) \too \Lnmf(\cC) \iso \Lnmf(\cC),
    \]
    and in particular $\Lnmf(\Mnf(\cC)) = 0$.
    That is, $\Mnf(\cC)$ is $n$-monochromatic.

    We now prove the second part.
    Let $\cD$ be any $n$-monochromatic category.
    Applying $\Map(\cD, -)$ to the exact sequence
    \[
        \Mnf(\cC) \too \cC \too \Lnmf(\cC)
    \]
    gives a fiber sequence
    \[
        \Map(\cD, \Mnf(\cC)) \too 
        \Map(\cD, \cC) \too 
        \Map(\cD, \Lnmf(\cC)).
    \]
    Since $\Lnmf(\cD) = 0$, we get 
    \[
        \Map(\cD, \Lnmf(\cC)) \simeq 
        \Map(\Lnmf(\cD), \Lnmf(\cC)) \simeq
        \Map(0, \Lnmf(\cC)) \simeq \pt.
    \]
    Thus, the map 
    $\Map(\cD, \Mnf(\cC)) \to \Map(\cD, \cC)$
    is an isomorphism, exhibiting $\Mnf(\cC)$ as the coreflection of $\cC$ into $\CatMnf$.
\end{proof}

\begin{cor}\label{Mnf-lim-colim}
     The full subcategory $\CatMnf \subseteq \CatLnf$ is closed under colimits and the coreflection $\Mnf \colon \CatLnf \to \CatMnf$ preserves all (small) limits and colimits.
\end{cor}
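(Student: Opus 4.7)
The plan is to split the corollary into three independent claims—closure of $\CatMnf$ under colimits in $\CatLnf$, limit preservation by $\Mnf$, and colimit preservation by $\Mnf$—and to dispatch each using a single key fact from earlier in the section.

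First I would show that $\CatMnf$ is closed under small colimits in $\CatLnf$. By definition, $\cC \in \CatLnf$ lies in $\CatMnf$ precisely when $\Lnmf(\cC) = 0$, where $\Lnmf\colon \CatLnf \to \Cat_{\Lnmf}$ is the left adjoint to the inclusion (as recalled at the beginning of this subsection). Being a left adjoint, $\Lnmf$ preserves colimits. Hence for any small diagram $\{\cC_i\}$ in $\CatMnf$, the colimit taken in $\CatLnf$ (which by \cref{CatL-closure} agrees with the colimit in $\Catperf$) satisfies $\Lnmf(\colim_i \cC_i) \simeq \colim_i \Lnmf(\cC_i) \simeq 0$, and therefore lies in $\CatMnf$.

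Next, $\Mnf \colon \CatLnf \to \CatMnf$ preserves all small limits because \cref{monochrom} exhibits it as the right adjoint to the fully faithful inclusion $\CatMnf \hookrightarrow \CatLnf$. For colimit preservation, I would invoke the tensor product identification from \cref{mnf-exact}, namely $\Mnf(\cC) \simeq \Mnf(\PerfLnf) \otimes \cC$. It thus suffices to show that tensoring with a fixed object of $\Catperf$ preserves small colimits. This can be checked by transporting across the symmetric monoidal equivalence $\Ind\colon \Catperf \simeq \Prlstw$ of \cref{ind-w}: on the right hand side $(-)\otimes X$ admits a right adjoint computed by internal Hom in $\Prlst$, hence preserves colimits, and by \cref{ind-w} the colimits in $\Prlstw$ are computed in $\Prlst$. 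Combined with the closure result of the first step, $\Mnf$ therefore takes colimit diagrams in $\CatLnf$ to colimit diagrams in $\CatMnf$.

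The only place that demands care—rather than constituting a genuine obstacle—is keeping track of which ambient category the colimits are being computed in. The formula $\Mnf(\cC) \simeq \Mnf(\PerfLnf) \otimes \cC$ describes $\Mnf$ as an endofunctor of $\Catperf$ (landing in $\CatLnf$), so the colimit-preservation we obtain is \emph{a priori} for colimits computed in $\Catperf$; it is the closure statements of \cref{CatL-closure} and step one that identify these with colimits in $\CatLnf$ and in $\CatMnf$ respectively, closing the loop.
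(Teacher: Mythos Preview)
Your proof is correct and follows essentially the same route as the paper: both use \cref{monochrom} for limit preservation and the tensor formula of \cref{mnf-exact} for colimit preservation. The only minor variation is that you argue closure under colimits via the left adjoint $\Lnmf$, whereas the paper deduces it from the inclusion $\CatMnf \hookrightarrow \CatLnf$ being itself a left adjoint (with right adjoint $\Mnf$).
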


\begin{proof}
    By \Cref{monochrom}, the functor $\Mnf$ is the right adjoint to the inclusion $\CatMnf \into \CatLnf$. 
    Hence, the inclusion preserves colimits and $\Mnf$ preserves limits. 
    By \Cref{mnf-exact}, the composition of the functor $\Mnf$ with the colimit preserving fully faithful inclusion $\CatMnf \into \CatLnf$ is given by tensoring with $\Mnf(\PerfLnf)$ and hence preserves colimits. It follows that $\Mnf$ preserves colimits as well.
\end{proof}

\begin{prop} \label{equiv_char_monochrom}
For $\cC\in \Catperf$, the following are equivalent:
\begin{enumerate}
\item $\cC$ belongs to $\CatMnf$. 
\item $F(n+1)\otimes X = 0$ for every $X\in \cC$, and $\cC$ is generated as a stable idempotent complete category from objects of the form $F(n)\otimes X$ for $X\in \cC$.  
\item All the mapping spectra between objects of $\cC$ are $n$-monochromatic. 
\end{enumerate}
\end{prop}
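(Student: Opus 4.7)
The plan is to prove the equivalences via the loop $(1) \Leftrightarrow (2) \Rightarrow (3) \Rightarrow (1)$. For $(1) \Leftrightarrow (2)$, the condition ``$F(n+1) \otimes X = 0$ for every $X \in \cC$'' is a reformulation of $\cC \in \CatLnf$: since $\PerfLnf$ is the Verdier quotient of $\Perf(\Sph)$ by the thick subcategory generated by $F(n+1)$, tensoring this exact sequence with $\cC$ shows that $\cC \otimes \PerfLnf \simeq \cC$ if and only if $F(n+1) \otimes X \simeq 0$ for all $X \in \cC$. Granted $\cC \in \CatLnf$, one has $F(n) \otimes X \simeq \Tn \otimes X$ inside $\cC$, and combining \cref{mnf-exact} with \cref{ex:monochromatics_in_spectra} identifies the fully faithful inclusion $\Mnf(\cC) \hookrightarrow \cC$ with the thick subcategory of $\cC$ generated by $\{\Tn \otimes X : X \in \cC\}$ (using that the tensor product in $\Catperf$ preserves thick generation). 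The second half of (2) is thus equivalent to $\Mnf(\cC) = \cC$, which by the exact sequence in \cref{mnf-exact} is in turn equivalent to $\Lnmf(\cC) = 0$.

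For $(1) \Rightarrow (3)$, assuming (1) (and hence (2)), mapping spectra in $\cC$ are automatically $\Lnf$-local. To establish $\Lnmf$-acyclicity, I would tensor the cofiber sequence $\Mnf \Sph \to \Lnf \Sph \to \Lnmf \Sph$ in $\Lnf \Sp$ with any $X \in \cC$ to obtain a cofiber sequence $\Mnf \Sph \otimes X \to X \to \Lnmf \Sph \otimes X$ in $\cC$. By (2), $X$ lies in the thick subcategory generated by $\{\Tn \otimes X' : X' \in \cC\}$, and for these generators $\Lnmf \Sph \otimes (\Tn \otimes X') \simeq 0$ because $\Tn$ is $\Lnmf$-acyclic and $\Lnmf$ is smashing; closing under thick operations yields $\Lnmf \Sph \otimes X \simeq 0$ for every $X \in \cC$. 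Consequently $X \simeq \Mnf \Sph \otimes X$, and by adjunction
\[
\hom_\cC(X, Y) \simeq \hom_\cC(\Mnf \Sph \otimes X, Y) \simeq \hom_\Sp(\Mnf \Sph, \hom_\cC(X, Y)) \simeq \Mnf \hom_\cC(X, Y),
\]
so the mapping spectrum is its own monochromatization and hence $n$-monochromatic.

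For $(3) \Rightarrow (1)$, mapping spectra being $n$-monochromatic in particular gives $\cC \in \CatLnf$. By Schwede--Shipley (\cref{Schwede-Shipley}), $\cC$ is a filtered colimit of categories $\Perf(R_i)$ with each $R_i$ the endomorphism spectrum of an object of $\cC$, and hence $n$-monochromatic by hypothesis. The symmetric monoidality of $\Perf \colon \Alg(\Sp) \to \Catperf$ (from \cref{LMod} and \cref{ind-w}) then gives
\[
\Lnmf(\Perf(R_i)) \simeq \Perf(R_i) \otimes \PerfLnmf \simeq \Perf(R_i \otimes \Lnmf \Sph) \simeq \Perf(\Lnmf R_i) \simeq 0,
\]
so $\Perf(R_i) \in \CatMnf$, and closure of $\CatMnf$ under colimits (\cref{Mnf-lim-colim}) forces $\cC \in \CatMnf$. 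The main obstacle in the whole argument is the $(1) \Leftrightarrow (2)$ step, specifically identifying $\Mnf(\cC)$ as a subcategory of $\cC$ with the thick subcategory generated by $\{\Tn \otimes X\}$; this rests on the explicit description of $\Mnf(\PerfLnf)$ from \cref{ex:monochromatics_in_spectra} together with the compatibility of $\otimes$ in $\Catperf$ with thick generation.
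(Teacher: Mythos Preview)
Your overall structure matches the paper's cycle $(1)\Rightarrow(2)\Rightarrow(3)\Rightarrow(1)$, and your treatments of $(1)\Leftrightarrow(2)$ and $(3)\Rightarrow(1)$ are essentially the paper's arguments (the latter almost verbatim, via Schwede--Shipley and symmetric monoidality of $\Perf$). The issue is in your $(2)\Rightarrow(3)$ step.

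The chain
\[
\hom_\cC(X,Y)\simeq \hom_\cC(\Mnf\Sph\otimes X,Y)\simeq \hom_\Sp(\Mnf\Sph,\hom_\cC(X,Y))\simeq \Mnf\hom_\cC(X,Y)
\]
breaks at the last isomorphism. The identification $\hom_\Sp(\Mnf\Sph,W)\simeq \Mnf W$ is not valid for general $W\in\Lnf\Sp$: the right-hand side is $\Mnf\Sph\otimes W$, while the left-hand side is an internal hom, and $\Mnf\Sph$ is not dualizable. What your chain actually establishes (via the fiber sequence $\Mnf\Sph\to\Lnf\Sph\to\Lnmf\Sph$) is that $\hom(\Lnmf\Sph,\hom_\cC(X,Y))=0$, a \emph{colocality} condition; this is not the same as the required $\Lnmf$-\emph{acyclicity} $\Lnmf\Sph\otimes\hom_\cC(X,Y)=0$. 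Also note that $\Mnf\Sph$ and $\Lnmf\Sph$ are not in $\PerfLnf$, so the cofiber sequence you form lives in $\Ind(\cC)$, not in $\cC$; this is harmless for the earlier steps but worth saying.

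Your argument is repairable along the same lines: since $X$ is compact in $\Ind(\cC)$ and $\Lnmf\Sph$ is a filtered colimit of objects of $\PerfLnf$ (hence of dualizable objects in $\Lnf\Sp$), one has
\[
\Lnmf\hom_\cC(X,Y)=\Lnmf\Sph\otimes\hom_\cC(X,Y)\simeq \hom_{\Ind(\cC)}(X,\Lnmf\Sph\otimes Y)=\hom_{\Ind(\cC)}(X,0)=0.
\]
The paper instead avoids $\Mnf\Sph$ entirely and argues directly with the \emph{finite} spectrum $F(n)$: for fixed $X$, the set of $Y$ with $\hom(X,Y)$ $\Lnmf$-acyclic is thick, so by (2) it suffices to take $Y=F(n)\otimes Y'$, and then $\hom(X,F(n)\otimes Y')\simeq F(n)\otimes\hom(X,Y')$ (finiteness of $F(n)$) is visibly $\Lnmf$-acyclic. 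This is both shorter and stays inside $\cC$.
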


\begin{proof} 

\underline{$(1)\implies (2)$}: For the first condition, if $\cC$ is $n$-monochromatic then it is in particular $\Lnf$-local, and hence tensored over $\PerfLnf$. Since 
$\Lnf(F(n+1)) = 0$, this implies that $F(n+1)\otimes X = 0$ for all $X\in \cC$. For the second condition,
by \Cref{monochrom} and \Cref{mnf-exact} we have 
\[
\cC\simeq \Mnf(\cC) \simeq \Mnf(\PerfLnf)\otimes \cC.
\]
Consequently, $\cC$ is generated from tensor products of the form $M\otimes X$ for $M\in \Mnf(\PerfLnf)$ and $X\in \cC$. Since by \Cref{ex:monochromatics_in_spectra}, $\Mnf(\PerfLnf)$ is the thick subcategory of $\PerfLnf$ generated by $\Tn \simeq \Lnf(F(n))$, we deduce that $\cC$ is generated by the objects of the form $F(n)\otimes X$ as claimed.   

\underline{$(2)\implies (3)$}: We wish to show that for every $X,Y\in 
\cC$, the mapping spectrum $\hom(X,Y)$ is $n$-monochromatic. First, since $Y\otimes F(n+1) = 0$ and $F(n+1)$ is finite, we see that
\[
\hom(X,Y)\otimes F(n+1) \simeq \hom(X,Y\otimes F(n+1)) \simeq \hom(X,0) = 0. 
\]
This implies that $\hom(X,Y)$ is $\Lnf$-local.
It remains to show that it is also $\Lnmf$-acyclic.

For fixed $X\in \cC$, the objects $Y\in \cC$ for which $\hom(X,Y)$ is $\Lnmf$-acyclic is thick.
Since by assumption $\cC$ is generated from objects of the from $Y\otimes F(n)$, it suffices to show that $\hom(X,Y\otimes F(n))$ is $\Lnmf$-acyclic.
Since $F(n)$ is finite, we get 
\[
\hom(X,Y\otimes F(n))\simeq \hom(X,Y)\otimes F(n),
\]
which is $\Lnmf$-acyclic since $F(n)$ is. 

\underline{$(3)\implies (1)$}: First, since the mapping spectra of $\cC$ are in $\Mnf\Sp\subseteq \Lnf\Sp$, the category $\cC$ is $\Lnf$-local. It remains to show that it is also $\Lnmf$-acyclic, namely, that $\Lnmf(\cC) = 0$. 
Since $\Lnmf(\cC)$ is generated by the image of the localization functor $\Lnmf\colon \cC \to \Lnmf(\cC)$, it would suffice to show that $\Lnmf(X) = 0$ for every $X\in \cC$. 
For every such $X$, we have a fully faithful exact embedding $\Perf(\End(X))\to \cC$ sending $\End(X)$ to $X$, so by the commutativity of the square 
\[
\xymatrix{
\Perf(\End(X))\ar[r]\ar[d] & \cC\ar[d] \\
\Lnmf(\Perf(\End(X)))\ar[r] & \Lnmf(\cC) \\
}
\]
it would suffice to show that $\Lnmf(\Perf(\End(X))) =0$. Finally, by our assumption on $\cC$, the ring spectrum $\End(X)$ is $n$-monochromatic. Thus, using  \Cref{CatL-closure} we get that 
\begin{align*}
    \Lnmf(\Perf(\End(X))) 
    &\simeq \PerfLnmf\otimes \Perf(\End(X))\\
    &\simeq \Perf(\Lnmf\Sph \otimes \End(X))\\
    &\simeq \Perf(\Lnmf(\End(X)))\\
    &= 0.
\end{align*}
\end{proof}

\subsection{Monochromatization and Purity}

We now study the connection between categorical monochromatization and chromatically localized algebraic $K$-theory, starting with some recollections of algebraic $K$-theory.

\begin{defn}
    We let $\mdef{\KTnp}\colon \Catperf \to \SpTnp$ denote the composition
    \[
        \Catperf \too[\ \ K\ \ ] \Sp \too[\LTnp] \SpTnp,
    \]
    where $K$ is the algebraic $K$-theory functor.
\end{defn}

\begin{rem}
    In the definition of $\KTnp$, it does not matter whether one works with connective or non-connective K-theory, since $\Tnp$-localization vanishes on bounded above spectra.
\end{rem}

We remind the reader that the functor $K$ preserves filtered colimits, sends exact sequences in $\Catperf$ to exact sequences in $\Sp$, and admits a canonical lax symmetric monoidal structure (see for example \cite{blumberg2013universal,blumberg2014uniqueness}).
Since $\LTnp$ is symmetric monoidal, colimit preserving and exact, $\KTnp$ inherits these properties and structure.

Recall that for $R \in \Alg(\Sp)$, one defines
\[
    K(R) := K(\Perf(R)).
\]
Similarly, we denote
\[
    \KTnp(R) := \KTnp(\Perf(R)),
\]
which we consider as a functor $\Alg(\Sp) \to \SpTnp$.
Note that for $R \in \CAlg(\SpTn)$, by \cite[Proposition 4.15 and Theorem C]{clausen2020descent}, the inclusion $\Perf(R) \into \cMod_R^\dbl$ to dualizable $\Tn$-local modules induces an isomorphism
\[
    \KTnp(R) \iso \KTnp(\cMod_R^\dbl).
\]

\begin{rem}
    The argument works verbatim for $R \in \Alg(\SpTn)$ (i.e., in the non-commutative case) when one replaces the category $\cMod_R^\dbl$ of dualizable $\Tn$-local modules by the category $\cLMod_R^\ldbl$ of left dualizable left $\Tn$-local modules.    
\end{rem}

One of the key results of \cite{clausen2020descent} and \cite{land2020purity} is the ``purity theorem'', which implies that if $\cC$ is $\Lnmf$-local then $\KTnp(\cC) = 0$.
As an immediate consequence, we obtain the following categorical analogue:

\begin{prop}\label{K-monochrom-iso}
    Let $\cC \in \CatLnf$. Then the inclusion $\Mnf(\cC) \into \cC$ induces an isomorphism 
    \[
        \KTnp(\Mnf(\cC)) \iso \KTnp(\cC).
    \]
\end{prop}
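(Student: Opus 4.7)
The plan is to apply algebraic $K$-theory to the Verdier sequence produced by monochromatization, and then invoke the purity theorem recalled just above the statement to kill the quotient term.

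More precisely, by \Cref{mnf-exact}, for every $\cC \in \CatLnf$ the sequence
\[
    \Mnf(\cC) \too \cC \too \Lnmf(\cC)
\]
is exact in $\Catperf$. Since the functor $K \colon \Catperf \to \Sp$ sends exact sequences to cofiber sequences, and since $\LTnp$ is exact, applying $\KTnp$ yields a cofiber sequence
\[
    \KTnp(\Mnf(\cC)) \too \KTnp(\cC) \too \KTnp(\Lnmf(\cC))
\]
in $\SpTnp$. The category $\Lnmf(\cC)$ is $\Lnmf$-local by construction, so the purity theorem of Clausen--Mathew--Naumann--Noel (and Land--Mathew--Meier--Tamme) recalled in the paragraph preceding the statement gives $\KTnp(\Lnmf(\cC)) = 0$. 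Consequently the first map in the cofiber sequence is an isomorphism, which is the desired conclusion.

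There is essentially no obstacle: the whole argument is a two-line formal consequence of (i) the exactness of the monochromatization sequence established in \Cref{mnf-exact}, (ii) the fact that $\KTnp$ takes exact sequences in $\Catperf$ to cofiber sequences in $\SpTnp$, and (iii) the purity statement $\KTnp(\cD) = 0$ for $\Lnmf$-local $\cD$. The only mild point worth spelling out is that purity is stated in \cite{clausen2020descent, land2020purity} for ring spectra or suitable categories, but it extends to all of $\CatLnmf$ either by reducing via \Cref{Schwede-Shipley} to the case of $\Perf(R)$ with $R$ an $\Lnmf$-local ring spectrum and using that $\KTnp$ commutes with filtered colimits, or by quoting the categorical version directly.
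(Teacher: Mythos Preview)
Your proof is correct and matches the paper's argument essentially verbatim: apply \Cref{mnf-exact} to get the exact sequence, use that $\KTnp$ preserves exact sequences, and kill the quotient term via the purity theorem \cite[Theorem C]{clausen2020descent}. The extra caution about reducing purity to the case of ring spectra is unnecessary since the cited result is already stated for categories, but it does no harm.
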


\begin{proof}
    By \Cref{mnf-exact}, there is an exact sequence 
    \[
        \Mnf(\cC) \too \cC \too \Lnmf(\cC). 
    \]
    Since $\KTnp$ preserves exact sequences, we get an exact sequence
    \[
        \KTnp(\Mnf(\cC)) \too \KTnp(\cC) \too \KTnp(\Lnmf(\cC)).
    \]
    Finally, by \cite[Theorem C]{clausen2020descent} the right term $\KTnp(\Lnmf(\cC))$ vanishes and hence the left morphism becomes an isomorphism $\KTnp(\Mnf(\cC)) \iso \KTnp(\cC)$.
\end{proof}

In other words, the restriction of the functor $\KTnp\colon \Catperf \to \SpTnp$ to the full subcategory $\CatLnf \sseq \Catperf$ factors through the limit and colimit preserving  reflection $\Mnf\colon \CatLnf \to \CatMnf$.

\subsection{Higher Semiadditivity}

One advantage of the category $\CatMnf$ over $\CatLnf$ is that it is \textit{$\infty$-semiadditive}. To establish that, we shall compare $\CatMnf$ to the category of compactly generated $\Tn$-local categories, where we can exploit the natural symmetric monoidal structure.

\subsubsection{Compactly Generated $\Tn$-local Categories}

Recall from the discussion in the beginning of \cref{subsec-lcat} that $\SpTn \in \Prlst$ is an idempotent algebra classifying the property of having $\Tn$-local mapping spectra.
We also recall that in this situation, the category of modules over $\SpTn$ in $\Prlst$ has a symmetric monoidal structure with a different unit, but the same tensor product.

\begin{defn}
    Let $\mdef{\PrlTn} \subset \Prlst$ denote the full subcategory on those categories whose mapping spectra are $\Tn$-local, endowed with the symmetric monoidal structure of $\Mod_{\SpTn}(\Prlst)$.
    Similarly, we let $\mdef{\PrlTnw} := \Prlstw \cap \PrlTn$ denote the full subcategory of $\Prlstw$ on those categories whose mapping spectra are $\Tn$-local.
\end{defn}

We now recall the following facts about $n$-monochromatization of spectra.

\begin{prop}[{\cite[Theorem 3.3]{bousfield2001telescopic}}]\label{mnf-sptn}
    The composition
    \[
        \Mnf\Sp \too \Sp \too \SpTn
        \qin \Prl
    \]
    is an equivalence.
\end{prop}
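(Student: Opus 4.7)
The plan is to show that as full subcategories of $\Sp$, we have $\Mnf\Sp=\SpTn$. Once established, both sides of the composition in the statement are the same presentable stable subcategory of $\Sp$ with colimits inherited from $\Sp$, so the composition is automatically an equivalence in $\Prl$.

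The foundational computational input, drawn from the nilpotence theorem, is that $T(i)\otimes F(n)=0$ for all $i<n$ and every type-$n$ finite spectrum $F(n)$. Indeed, the defining $v_i$-self map of $F(i)$ tensored with $\id_{F(n)}$ is a $v_i$-self map on the type-$n$ spectrum $F(i)\otimes F(n)$, and is thus nilpotent; its telescope therefore vanishes. Passing to colimits over the $v_i$-self map of $F(i)$ itself, this yields $\Lnmf \Tn \simeq 0$ and hence $\Lnmf\Sph\otimes\Tn\simeq 0$.

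For the inclusion $\SpTn\subseteq\Mnf\Sp$, a $\Tn$-local $X$ is automatically $\Lnf$-local, since any $\Lnf$-acyclic spectrum is in particular $\Tn$-acyclic. The subtle point is the vanishing $\Lnmf X=0$, which is the content of Bousfield's theorem. I would combine the fiber sequence $\Mnf X\to\Lnf X\to\Lnmf X$ with the observation that $\Lnmf X$ is $\Tn$-acyclic (by the key input, $\Lnmf X\otimes\Tn \simeq \Lnmf\Sph\otimes X\otimes\Tn = 0$). Applying $L_{T(n)}$ to this sequence and using the $\Tn$-locality of $X$ together with $L_{T(n)}\Lnmf X=0$, one obtains the chromatic fracture square
\[
\Lnf X\simeq L_{T(n)} X\times_{L_{T(n)}\Lnmf X}\Lnmf X\simeq X\times_0\Lnmf X,
\]
which forces $\Lnmf X=0$ as the top row is the identity map with trivial fiber.

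The reverse inclusion $\Mnf\Sp\subseteq\SpTn$ follows formally once the forward inclusion is known: if $Y\in\Mnf\Sp$ is $\Tn$-acyclic, then $T(i)\otimes Y=0$ for all $i\le n$ (for $i<n$ by $\Lnmf$-acyclicity of $Y$, for $i=n$ by assumption), so $\Lnf Y=0$ and hence $Y=0$; applied to the fiber $\fib(Y\to L_{T(n)} Y)$, which lies in $\Mnf\Sp$ by the forward inclusion and closure of $\Mnf\Sp$ under fibers, this forces $Y\simeq L_{T(n)} Y$. The main obstacle is the vanishing $\Lnmf X=0$ for $\Tn$-local $X$, the technical heart of Bousfield's classical result being cited, since it requires controlling the a priori non-smashing localization $L_{T(n)}$ alongside the smashing localizations $\Lnf$ and $\Lnmf$.
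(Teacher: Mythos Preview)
The paper does not prove this proposition; it simply cites Bousfield. Your proposal, however, contains a genuine error: the plan to show that $\Mnf\Sp$ and $\SpTn$ coincide as full subcategories of $\Sp$ cannot work, because they do not. For instance, at $n=1$ the spectrum $L_{T(1)}\Sph \simeq L_{K(1)}\Sph$ is $T(1)$-local but not $1$-monochromatic: since $\pi_{-1}L_{K(1)}\Sph \cong \ZZ_p$, one has $L_0^f(L_{K(1)}\Sph) = (L_{K(1)}\Sph)[1/p] \neq 0$. The proposition asserts only that the composite \emph{functor} is an equivalence of categories, not that it is an inclusion between equal subcategories.

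The specific flaw lies in your fracture square. The pullback square decomposing $\Lnf$ in terms of $L_{T(n)}$ and $\Lnmf$ has bottom-right corner $\Lnmf L_{T(n)} X$, not $L_{T(n)} \Lnmf X$; these differ precisely because $L_{T(n)}$ is not smashing. While you correctly observe that $L_{T(n)}\Lnmf X = 0$, the vanishing of $\Lnmf L_{T(n)} X$ for $T(n)$-local $X$ is exactly the assertion $\SpTn \subseteq \Mnf\Sp$ you are trying to establish, so the argument is circular. (Equivalently: your square is a pullback if and only if $\Mnf X$ is $T(n)$-local for every $\Lnf$-local $X$, which is again the point at issue.) Bousfield's actual argument instead exhibits $L_{T(n)}\colon \Mnf\Sp \adj \SpTn \noloc \Mnf$ as an adjoint equivalence: the counit is an isomorphism by applying $L_{T(n)}$ to the fiber sequence $\Mnf Y \to Y \to \Lnmf Y$, and the unit is an isomorphism because any monochromatic $T(n)$-acyclic spectrum is $\Lnf$-acyclic and hence zero.
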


\begin{prop}\label{CatMnf_PrlTnw}
    A category $\cC \in \CatLnf$ is $n$-monochromatic if and only if $\Ind(\cC)$ has $\Tn$-local mapping spectra.
    Namely, the equivalence of \cref{ind-w} restricts to an equivalence
    \[
        \Ind\colon \CatMnf
        \adj \PrlTnw
        \noloc (-)^\omega.
    \]
\end{prop}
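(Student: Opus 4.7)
The plan is to combine the mapping-spectrum characterization of $\CatMnf$ from \cref{equiv_char_monochrom} with \cref{mnf-sptn}, which identifies $n$-monochromatic and $\Tn$-local spectra, and then to transfer the condition between $\cC$ and $\Ind(\cC)$.

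First, I would use \cref{mnf-sptn} to show that $\Mnf\Sp$ and $\SpTn$ coincide as full subcategories of $\Sp$. Given $X \in \Mnf\Sp$, the fact that $\LTn\colon \Mnf\Sp \to \SpTn$ is an equivalence forces the localization map $X \to \LTn(X)$ to be an isomorphism in $\Sp$, so $X$ is $\Tn$-local; conversely, essential surjectivity shows that every $\Tn$-local spectrum is isomorphic to an $n$-monochromatic one. In particular, this common subcategory is closed under small limits (being reflective in $\Sp$) and under filtered colimits (as $\LTn$ is smashing).

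Second, I would relate mapping spectra in $\cC$ and $\Ind(\cC)$. Since $\cC \hookrightarrow \Ind(\cC)$ is fully faithful, $\Tn$-locality of mapping spectra in $\Ind(\cC)$ implies the same for $\cC$. For the converse, any $X, Y \in \Ind(\cC)$ may be written as $X \simeq \colim_a X_a$ and $Y \simeq \colim_b Y_b$ with $X_a, Y_b \in \cC$, and compactness of the $X_a$ in $\Ind(\cC)$ yields
\[
    \hom_{\Ind(\cC)}(X, Y) \simeq \lim_a \colim_b \hom_\cC(X_a, Y_b) \qin \Sp,
\]
so $\Tn$-locality transfers from $\cC$ to $\Ind(\cC)$ via the closure properties established above.

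Combining both steps with the equivalence (1)$\Leftrightarrow$(3) of \cref{equiv_char_monochrom}, we obtain $\cC \in \CatMnf$ iff all mapping spectra of $\Ind(\cC)$ are $\Tn$-local. By the $\Prlst$-analogue of \cref{Char_CatL} recalled at the start of \cref{subsec-lcat}, this is equivalent to $\Ind(\cC) \in \PrlTn$, and since $\Ind(\cC) \in \Prlstw$ by \cref{ind-w}, this in turn amounts to $\Ind(\cC) \in \PrlTnw$. Therefore the equivalence $\Ind\colon \Catperf \adj \Prlstw$ of \cref{ind-w} restricts to the claimed equivalence $\CatMnf \adj \PrlTnw$. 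The main substantive input is the first step, since one must convert \cref{mnf-sptn} from an abstract equivalence of categories into an equality of subcategories of $\Sp$; the remainder is a formal transfer through the $\Ind/(-)^\omega$ equivalence.
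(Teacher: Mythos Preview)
Your overall strategy—reduce to the mapping-spectrum characterization via \cref{equiv_char_monochrom}, identify $n$-monochromatic spectra with $\Tn$-local spectra, and transfer between $\cC$ and $\Ind(\cC)$—is sound in outline, but the execution has two real problems. First, your argument for Step~1 is not valid: knowing that $\LTn\colon \Mnf\Sp \to \SpTn$ is an equivalence does \emph{not} by itself force the localization map $X \to \LTn X$ to be an isomorphism for $X \in \Mnf\Sp$; an equivalence between two full subcategories of $\Sp$ induced by restricting a localization functor need not imply the subcategories coincide. Likewise, essential surjectivity only gives $Y \simeq \LTn X$ for some $X \in \Mnf\Sp$, not $Y \simeq X$ in $\Sp$. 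The equality $\Mnf\Sp = \SpTn$ \emph{is} true, but it needs an extra input—for instance, first show $\Mnf\Sp \subseteq \SpTn$ using that $\Tn$ is $\Tn$-local (via dualizability of a finite type-$n$ spectrum) and compactly generates $\Mnf\Sp$ (\cref{ex:monochromatics_in_spectra}), after which the equivalence forces equality. Second, your claim that ``$\LTn$ is smashing'' is false for $n \ge 1$; the closure under filtered colimits you need in Step~2 is still correct, but the reason is that $\Mnf\Sp$, as the $\Lnmf$-acyclics inside the smashing localization $\Lnf\Sp$, is closed under all colimits in $\Sp$.

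The paper's proof takes a different route that sidesteps both issues. Rather than comparing $\Mnf\Sp$ and $\SpTn$ as subcategories of $\Sp$, it applies $\Ind$ to the exact sequence $\Mnf(\cC) \to \cC \to \Lnmf(\cC)$ of \cref{mnf-exact}, rewrites $\Ind(\Mnf(\cC))$ as $\Mnf\Sp \otimes \Ind(\cC)$, and then tensors the composite equivalence of \cref{mnf-sptn} (viewed in $\Prl$) with $\Ind(\cC)$. A 2-out-of-3 argument shows $\cC$ is $n$-monochromatic iff $\Ind(\cC) \to \SpTn \otimes \Ind(\cC)$ is an equivalence, i.e.\ iff $\Ind(\cC)$ has $\Tn$-local mapping spectra. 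This never invokes \cref{equiv_char_monochrom} and uses \cref{mnf-sptn} only as an abstract equivalence in $\Prl$, not as an identification of subcategories.
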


\begin{proof}
    For any $\cC \in \CatLnf$, by \cref{mnf-exact}, we have an exact sequence
    \[
        \Mnf(\cC) \too \cC \too \Lnmf (\cC).
    \]
    Taking $\Ind$, by the $\infty$-categorical analogue of Thomason--Neeman localization theorem \cite[A.3.11. Theorem]{hermitian-2}, we get an exact sequence (of not necessarily small categories)
    \[
        \Ind(\Mnf(\cC)) \too \Ind(\cC) \too \Ind(\Lnmf (\cC)).
    \]

    Note that in particular for $\cC = \PerfLnf$, the fact that this is a fiber sequence implies that $\Ind(\Mnf(\PerfLnf)) \simeq \Mnf\Sp$.

    Going back to the general case, we see that $\cC$ is $n$-monochromatic if and only if $\Ind(\Mnf(\cC)) \to \Ind(\cC)$ is an equivalence.
    By \cref{monochrom} we have
    \[
        \Mnf(\cC) \simeq \Mnf(\PerfLnf) \otimes \cC.
    \]
    Using the fact that $\Ind$ is symmetric monoidal and the result for $\PerfLnf$, we see that $\cC$ is $n$-monochromatic if and only if
    \[
        \Mnf\Sp \otimes \Ind(\cC) \too \Ind(\cC)
    \]
    is an equivalence.
    By tensoring the equivalence from \cref{mnf-sptn} with $\Ind(\cC)$, we get that the composition
    \[
        \Mnf\Sp \otimes \Ind(\cC) \too \Ind(\cC) \too \SpTn \otimes \Ind(\cC)
        \qin \Prl
    \]
    is an equivalence.
    Thus, by $2$-out-of-$3$, we see that $\cC$ is $n$-monochromatic, if and only if $\Ind(\cC) \to \SpTn \otimes \Ind(\cC)$ is an equivalence.
    Since $\SpTn \in \Prl$ is the idempotent algebra classifying the property of having $\Tn$-local mapping spectra (see \cite[Proposition 5.2.10]{AmbiHeight}), we obtain the result.
\end{proof}

\begin{prop}\label{prltnw-prltn}
    $\PrlTnw \subset \PrlTn$ is a symmetric monoidal subcategory closed under all colimits, and in particular its symmetric monoidal structure distributes over all colimits.
\end{prop}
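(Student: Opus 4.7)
The key observation is that since $\SpTn \in \CAlg(\Prlst)$ is idempotent, the tensor product on $\PrlTn = \Mod_{\SpTn}(\Prlst)$ agrees with the ambient one on $\Prlst$ (only the unit differs, changing from $\Sp$ to $\SpTn$). So to prove that $\PrlTnw$ is a symmetric monoidal subcategory of $\PrlTn$, I would verify two things: closure under the $\Prlst$-tensor, and that the unit $\SpTn$ of $\PrlTn$ lies in $\PrlTnw$. The first is immediate: by \cref{ind-w} the subcategory $\Prlstw \subset \Prlst$ is symmetric monoidal, and by construction $\PrlTn \subset \Prlst$ is a symmetric monoidal subcategory, so for $\cC, \cD \in \PrlTnw = \Prlstw \cap \PrlTn$ the tensor $\cC \otimes \cD$ lies in both $\Prlstw$ and $\PrlTn$. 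For the unit, the computation made in the proof of \cref{CatMnf_PrlTnw} (via \cref{mnf-sptn}) shows that $\Ind(\Mnf(\PerfLnf)) \simeq \SpTn$, so in particular $\SpTn \in \Prlstw$, and of course its mapping spectra are $\Tn$-local.

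For closure under colimits, I would argue that both $\Prlstw$ and $\PrlTn$ are closed under (small) colimits in $\Prlst$. The first is again part of \cref{ind-w}. For the second, the reflection $\SpTn \otimes (-) \colon \Prlst \to \PrlTn$ is a left adjoint to the fully faithful inclusion, and acts as the identity on $\SpTn$-modules by idempotency; so for any diagram $\{\cC_i\}$ in $\PrlTn$,
\[
    \SpTn \otimes \colim_{\Prlst} \cC_i
    \simeq \colim_{\Prlst}(\SpTn \otimes \cC_i)
    \simeq \colim_{\Prlst} \cC_i,
\]
showing the $\Prlst$-colimit is already in $\PrlTn$. Intersecting these two closure properties shows $\PrlTnw = \Prlstw \cap \PrlTn$ is closed under colimits in $\Prlst$, and hence in $\PrlTn$ (whose colimits, by the same argument, are computed in $\Prlst$).

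Finally, the distributivity assertion follows formally: the tensor on $\PrlTn$ preserves colimits separately in each variable, since this holds on $\Prlst$ and $\PrlTn \into \Prlst$ preserves both colimits and tensor; as $\PrlTnw \into \PrlTn$ is closed under colimits and tensors, the same property descends. The whole argument is essentially bookkeeping; the only mildly delicate point to keep straight is the interplay between the two unit objects $\Sp$ and $\SpTn$ — i.e., confirming that the $\PrlTn$-unit $\SpTn$ itself sits in $\Prlstw$ — which is precisely what the monochromatic/telescopic identification $\Mnf\Sp \simeq \SpTn$ from \cref{mnf-sptn} is needed for.
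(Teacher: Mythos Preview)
Your proof is correct. The symmetric monoidal subcategory portion is essentially the paper's argument: closure under tensor follows from $\PrlTnw = \Prlstw \cap \PrlTn$ with both factors symmetric monoidal in $\Prlst$, and you verify the unit $\SpTn$ lies in $\Prlstw$. Your route to the latter, via $\Ind(\Mnf(\PerfLnf)) \simeq \Mnf\Sp \simeq \SpTn$, is a valid alternative to the paper's direct observation that $\Tn \simeq \LTn F(n)$ is a compact generator of $\SpTn$.

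Where you genuinely diverge is in the colimit closure. The paper reduces to showing $\PrlTnw \subset \Prlstw$ is closed under colimits, and then invokes the equivalence $\CatMnf \simeq \PrlTnw$ together with \cref{Mnf-lim-colim} (closure of $\CatMnf$ in $\CatLnf$). You instead argue directly at the level of $\Prlst$: both $\Prlstw$ and $\PrlTn$ are closed under $\Prlst$-colimits (the first by \cref{ind-w}, the second because $\SpTn \otimes (-)$ is a smashing reflection), so their intersection is as well; since $\PrlTn$ is full in $\Prlst$, the universal property in $\PrlTnw$ follows. This is more elementary and self-contained --- it avoids the detour through the categorical monochromatization machinery --- while the paper's route has the virtue of tying the result back into the $\CatMnf$ framework that is used throughout.
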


\begin{proof}
    To show that $\PrlTnw \subset \PrlTn$ is a symmetric monoidal subcategory, it suffices to check that it contains the unit, and that morphisms are closed under the tensor product.
    Note that $\Tn$ is a compact generator of $\SpTn$ because $\Tn \simeq \LTn F(n)$ where $F(n)$ is a finite type $n$ spectrum, which by the thick subcategory theorem is a compact generator of $\Lnmf$-acyclic spectra of which $\SpTn$ is a smashing localization.
    Thus, $\SpTn \in \PrlTnw$.
    For closure of morphisms under tensor product, recall that $\Prlstw$ and $\PrlTn$ have the same tensor product as $\Prlst$, and $\PrlTnw$ is their intersection.

    To see that $\PrlTnw \subset \PrlTn$ is closed under colimits, consider the following commutative diagram:
    % https://q.uiver.app/?q=WzAsNixbMSwxLCJcXFBybFRudyJdLFsxLDAsIlxcUHJsc3R3Il0sWzIsMSwiXFxQcmxUbiJdLFsyLDAsIlxcUHJsc3QiXSxbMCwxLCJcXENhdE1uZiJdLFswLDAsIlxcQ2F0cGVyZiJdLFs1LDEsIlxcc2ltIl0sWzQsMCwiXFxzaW0iXSxbNCw1LCIiLDEseyJzdHlsZSI6eyJ0YWlsIjp7Im5hbWUiOiJob29rIiwic2lkZSI6InRvcCJ9fX1dLFswLDEsIiIsMSx7InN0eWxlIjp7InRhaWwiOnsibmFtZSI6Imhvb2siLCJzaWRlIjoidG9wIn19fV0sWzAsMiwiIiwxLHsic3R5bGUiOnsidGFpbCI6eyJuYW1lIjoiaG9vayIsInNpZGUiOiJ0b3AifX19XSxbMSwzLCIiLDEseyJzdHlsZSI6eyJ0YWlsIjp7Im5hbWUiOiJob29rIiwic2lkZSI6InRvcCJ9fX1dLFsyLDMsIiIsMSx7InN0eWxlIjp7InRhaWwiOnsibmFtZSI6Imhvb2siLCJzaWRlIjoidG9wIn19fV1d
    \[\begin{tikzcd}
    	\Catperf & \Prlstw & \Prlst \\
    	\CatMnf & \PrlTnw & \PrlTn
    	\arrow["\sim", from=1-1, to=1-2]
    	\arrow["\sim", from=2-1, to=2-2]
    	\arrow[hook, from=2-1, to=1-1]
    	\arrow[hook, from=2-2, to=1-2]
    	\arrow[hook, from=2-2, to=2-3]
    	\arrow[hook, from=1-2, to=1-3]
    	\arrow[hook, from=2-3, to=1-3]
    \end{tikzcd}\]
    Both horizontal morphisms in the left square are equivalences by \cref{ind-w} and \cref{CatMnf_PrlTnw}.
    Colimits in $\PrlTn$ are computed as in $\Prlst$ because it is a smashing localization, and the same holds for $\Prlstw$ by \cref{ind-w}.
    Thus, by the commutativity of the right square, it suffices to check that $\PrlTnw \subset \Prlstw$ is closed under colimits.
    This follows from the commutativity of the left square and \cref{Mnf-lim-colim}.
\end{proof}

\begin{thm}\label{CatMnf-inf-sa}
    For all $n\ge 0$, the category $\CatMnf$ is $\infty$-semiadditive.
\end{thm}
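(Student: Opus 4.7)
The plan is to transport the question to a presentable model and then reduce it to the $\infty$-semiadditivity of $\SpTn$. By \cref{CatMnf_PrlTnw}, the equivalence $\CatMnf \simeq \PrlTnw$ reduces the claim to showing that $\PrlTnw$ is $\infty$-semiadditive. I would approach this in two stages: first establish $\infty$-semiadditivity of the ambient $\infty$-category $\PrlTn \simeq \Mod_{\SpTn}(\Prl)$, and then descend to the subcategory $\PrlTnw$ using the closure properties of \cref{prltnw-prltn}.

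For the ambient step, I would combine the Carmeli--Schlank--Yanovski theorem that $\SpTn \in \CAlg(\Prl)$ is $\infty$-semiadditive with the general principle that $\Mod_{\cR}(\Prl)$ inherits $\infty$-semiadditivity from $\cR \in \CAlg(\Prl)$: concretely, the integration maps on $\SpTn$ tensor up to ambidexterity data on every $\SpTn$-linear presentable category, and these assemble coherently into norms on $\pi$-finite diagrams valued in $\PrlTn$. For the descent step, let $A$ be a $\pi$-finite space and $F\colon A \to \PrlTnw$ a diagram. By \cref{prltnw-prltn}, the colimit $\colim_A F$ computed in $\PrlTnw$ agrees with the colimit computed in $\PrlTn$; by $\infty$-semiadditivity of $\PrlTn$ this coincides with $\lim_A F$ in $\PrlTn$ via the canonical norm; and since $\PrlTnw \hookrightarrow \PrlTn$ is fully faithful, any limit in $\PrlTn$ whose underlying object lies in $\PrlTnw$ automatically represents the limit in $\PrlTnw$. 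Hence the norm map in $\PrlTnw$ is an isomorphism, yielding the desired $\infty$-semiadditivity.

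The main obstacle is the transfer from $\infty$-semiadditivity of $\SpTn$ (a statement about $\pi$-finite limits and colimits inside this category) to $\infty$-semiadditivity of $\PrlTn$ (a statement about the meta-level $\infty$-category of $\SpTn$-linear presentable stable categories). This transfer is not entirely formal and relies on the compatibility of higher norms with tensor products of presentable categories. If a direct reference is unavailable, a viable fallback is to argue by induction on the level of truncation, bootstrapping from $m$-semiadditivity of $\PrlTn$ to $(m+1)$-semiadditivity using the higher-semiadditive induction machinery developed in \cite{AmbiHeight}.
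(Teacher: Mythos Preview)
Your overall plan---reduce to $\PrlTnw$ via \cref{CatMnf_PrlTnw}, then leverage the known $\infty$-semiadditivity of $\PrlTn$---matches the paper's approach. However, your descent step contains a genuine error: the inclusion $\PrlTnw \into \PrlTn$ is \emph{not} fully faithful. It is a non-full symmetric monoidal subcategory, since morphisms in $\PrlTnw$ are required to preserve compact objects (equivalently, to have colimit-preserving right adjoints), whereas morphisms in $\PrlTn$ are arbitrary colimit-preserving functors. Consequently, your claim that ``any limit in $\PrlTn$ whose underlying object lies in $\PrlTnw$ automatically represents the limit in $\PrlTnw$'' is unjustified, and the argument that the norm isomorphism in $\PrlTn$ descends to $\PrlTnw$ breaks down.

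The paper repairs exactly this point. It proceeds by induction on $m$, using the criterion from \cite[Proposition~2.3.4]{TeleAmbi} that $m$-semiadditivity of a symmetric monoidal category with colimit-distributing tensor is equivalent to self-duality of $\one[A]$ for every $m$-finite $A$. The ambidexterity pairing $\varepsilon\colon \SpTn[A]\otimes\SpTn[A]\to\SpTn$ lives in $\PrlTnw$ by construction; the issue is whether the coevaluation $\eta\colon \SpTn\to\SpTn[A]\otimes\SpTn[A]$, which a priori only exists in $\PrlTn$, is actually a morphism in $\PrlTnw$. The paper verifies this by writing $\eta$ explicitly as the composite $\SpTn\xrightarrow{A^*}\SpTn^A\xrightarrow{\Delta_!}\SpTn^{A\times A}$ and observing that the right adjoint $A_*$ of $A^*$ agrees with $A_!$ (by $\SpTn$-ambidexterity of $A$), hence admits a further right adjoint. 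This is precisely the extra work your proposal glosses over; your fallback suggestion of an inductive argument is on the right track, but it must be carried out for $\PrlTnw$ directly, not merely for $\PrlTn$.
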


\begin{proof}
    By \Cref{CatMnf_PrlTnw}, we have $\CatMnf \simeq \PrlTnw$, so it suffices to prove that $\PrlTnw$ is $\infty$-semiadditive.
    We shall prove that $\PrlTnw$ is $m$-semiadditive by induction on $m$, starting with $m=-2$ where the condition is vacuous. 
    Assume $\PrlTnw$ is $(m-1)$-semiadditive.
    By \cref{prltnw-prltn}, it is symmetric monoidal and the tensor product distributes over colimits, so by \cite[Proposition 2.3.4]{TeleAmbi}, for every $m$-finite space $A$ we have a canonical ambidexterity pairing
    \[
        \varepsilon\colon 
        \SpTn[A] \otimes \SpTn[A] \too 
        \SpTn \qin \PrlTnw.
    \]
    Furthermore, $A$ is $\PrlTnw$-ambidextrous if and only if $\varepsilon$ is non-degenerate in the sense that it exhibits $\SpTn[A]$ as self-dual. 
    Now, again by \cref{prltnw-prltn}, the inclusion $\PrlTnw \into \PrlTn$ is colimit preserving and exhibits the source as a symmetric monoidal (non-full) subcategory of the target, which is $\infty$-semiadditive by \cite[Example 4.3.11]{AmbiKn} and \cite[Remark 5.3.2]{AmbiHeight}.
    In particular, $\varepsilon$ is also the canonical ambidexterity pairing in $\PrlTn$, and there exists 
    \[
        \eta \colon 
        \SpTn \too 
        \SpTn[A] \otimes \SpTn[A] \qin \PrlTn,
    \]
    which satisfies together with $\varepsilon$ the zigzag identities.
    It thus suffices to show that $\eta$ belongs to $\PrlTnw$.
    Namely, that its right adjoint itself admits a further right adjoint. 

    The ambidexterity pairing of any algebra in $\PrlTn$ is the same as in $\Prl$ itself, which under the identification
    \[
        \SpTn[A]\otimes \SpTn[A] \simeq 
        \SpTn[A \times A] \simeq 
        \SpTn^{A \times A}
    \]
    is given by (e.g., see  \cite[Proposition 3.17]{harpaz2020ambidexterity})
    \[
        \varepsilon \colon 
        \SpTn^{A \times A} \too[\Delta^*]
        \SpTn^A \too[A_!]
        \SpTn
    \]
    and 
    \[
        \eta \colon 
        \SpTn \too[A^*]
        \SpTn^A \too[\Delta_!]
        \SpTn^{A \times A},
    \]
    where $\Delta \colon A \to A \times A$ is the diagonal. Since $A$ is $\SpTn$-ambidextrous, the right adjoint $A_*$ of $A^*$ is isomorphic to $A_!$ and hence admits a further right adjoint (note that by induction, we already know this for $\Delta^*$, which is $(m-1)$-finite). It follows that $\eta$ belongs to $\PrlTnw$, which completes the inductive step and hence the proof. 
\end{proof}

\begin{rem}
    As a consequence of the arguments appearing above, $\pi$-finite limits in   $\PrlTnw$ are computed as in $\CAT_\infty$. By the equivalence of $\PrlTnw$ with $\CatMnf$, we obtain an explicit formula for computing $\pi$-finite limits in $\CatMnf$. Namely, for a $\pi$-finite space $A$ and an $A$-local system $\{\cC_a\}_{a \in A}$ in $\CatMnf$, the limit over $A$ can be computed as 
    \[
        \invlim_{a \in A} \cC_a 
        \simeq \big(\invlim_{a \in A} \Ind (\cC_a)\big)^\omega.
    \]
    However, the forgetful functor $\CatMnf \to \Cat_\infty$ does \textit{not} preserve $\pi$-finite limits. Indeed, in light of the above formula, if the forgetful functor was $\pi$-finite limit preserving, the limit of an $A$-shaped diagram of compact objects in a compactly generated $T(n)$-local $\infty$-category $\cC$, would have been itself compact. 
    A counterexample to this was communicated to us by Robert Burklund. Consider $\cC = \cMod_{E_1}$ and $A = BC_p$.
    Starting with $E_1$ with the trivial $C_p$-action, by the chromatic Fourier transform, its $C_p$-equivariant self maps are given by
    \[
        \pi_0\Map_{C_p}(E_1 , E_1) \simeq 
        \pi_0\Map(E_1,E_1^{BC_p}) \simeq 
        \pi_0(E_1^{BC_p}) \simeq
        \pi_0(E_1[C_p]) \simeq
        \ZZ_p[x]/(x^p - 1).
    \]
    We consider the $C_p$-equivariant self map $f\colon E_1 \to E_1$ corresponding to $1+x+\dots+x^{p-1}$ and take $X$ to be its cofiber in $C_p$-equivariant $K(1)$-local $E_1$-modules. On the one hand, as $x$ is mapped to the identity under the forgetful map
    \[
        \Map_{C_p}(E_1 , E_1) \too \Map(E_1 , E_1),
    \]
    as a non-equivariant $E_1$-module we have $X \simeq E_1/p$, which is compact. On the other hand, the homotopy fixed points module $X^{hC_p}$ is the cofiber of the map $E_1[C_p] \to E_1[C_p]$ given by multiplication by $1+x+\dots+x^{p-1}$, which is $E_1^{p-1} \oplus \Sigma E_1$. Since it is not a $p$-torsion object it is not compact.
    
\end{rem}

\subsubsection{Consequences of Higher Semiadditivity}

To illustrate the usefulness of \Cref{CatMnf-inf-sa}, we shall record the following general reductions for proving preservation of $\pi$-finite colimits for a functor between higher semiadditive categories, which will be used in the proof of \Cref{main-thm-intro}.

\begin{prop}\label{red-const}
    Let $F \colon \cC \to \cD$ be a functor between $p$-typically $m$-semiadditive categories.
    If $F$ preserves constant $m$-finite $p$-space indexed (co)limits, then it preserves all $m$-finite $p$-space indexed (co)limits (i.e., it is $p$-typically $m$-semiadditive).
\end{prop}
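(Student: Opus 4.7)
The plan is to prove the proposition by induction on $m$, reducing preservation of arbitrary $m$-finite $p$-space indexed (co)limits by $F$ to preservation of constant ones via a Postnikov decomposition of the indexing space together with an averaging argument on the fiber. First, observe that since both $\cC$ and $\cD$ are $p$-typically $m$-semiadditive, every $m$-finite $p$-space $A$ is ambidextrous in both, and the corresponding norm isomorphism $A_! \iso A_*$ identifies $A$-indexed colimits with $A$-indexed limits; hence preservation of colimits and limits by $F$ are equivalent conditions, and it suffices to treat only the colimit case. The base case $m = -2$ is trivial since every $(-2)$-finite space is contractible and any diagram on it is tautologically constant.

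For the inductive step, the hypothesis of preservation of constant $m$-finite $p$-space indexed colimits restricts in particular to preservation of constant $(m-1)$-finite ones, so by the inductive hypothesis $F$ is $p$-typically $(m-1)$-semiadditive. Given an $m$-finite $p$-space $A$ and a diagram $X \colon A \to \cC$, I would use the Postnikov truncation $p \colon A \to B := \tau_{\le m-1} A$ to write $\colim_A X \simeq \colim_B(p_! X)$, with $B$ being $(m-1)$-finite and each fiber of $p$ equivalent to $K(\pi, m)$ for a finite abelian $p$-group $\pi$. Since $F$ preserves the $B$-indexed colimit by the inductive hypothesis, the task reduces to showing that $F$ preserves $K(\pi, m)$-indexed colimits for each such $\pi$.

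For this remaining case, I would invoke the averaging formalism of higher semiadditivity: in a $p$-typically $m$-semiadditive category the categorical cardinality of any $m$-finite $p$-space is invertible in the endomorphism ring of the identity functor, and together with the norm isomorphism this produces a canonical averaging idempotent on $\cC^A$ whose image agrees with the essential image of the constant diagram functor $A^* \colon \cC \to \cC^A$. This exhibits the colimit $A_! X$ of an arbitrary diagram $X$ as a retract of the constant $A$-colimit on $A_! X \in \cC$, with the splitting constructed from the adjunction counit $A_! A^* \to \id_\cC$ rescaled by the inverse cardinality. Applying $F$ to this retract structure and using the hypothesis (preservation of constant $A$-colimits) together with idempotent completeness of $\cD$, the splitting persists in $\cD$ and yields the desired identification $F(\colim_A X) \simeq \colim_A F(X)$.

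The hard part will be the construction and verification of the averaging idempotent for $A = K(\pi, m)$ with $m \ge 2$, which generalizes the classical averaging $|G|^{-1} \sum_{g \in G} g$ for finite groups to actions of higher $p$-groups. Making this precise relies on the higher semiadditive integration machinery developed in the work of Harpaz and of Carmeli--Schlank--Yanovski; equipped with those results, the reduction via the Postnikov tower together with the retract argument produces the statement formally.
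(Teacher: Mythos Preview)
Your inductive setup and the reduction of limits to colimits via the norm are correct, and the Postnikov reduction to Eilenberg--MacLane fibers is valid (though, as it turns out, unnecessary). The gap is in the heart of the argument: the claim that in a $p$-typically $m$-semiadditive category the cardinality $|A|$ of an $m$-finite $p$-space is invertible is \emph{false}. For example, $\SpTn$ is $\infty$-semiadditive but $|C_p| = p$ is not a unit there when $n \ge 1$. Without this invertibility there is no averaging idempotent on $\cC^A$ projecting onto constant diagrams, and the rescaled splitting you describe is unavailable. Even granting that $A_!X$ is a retract of $A_!A^*A_!X$ (which is true from the triangle identity of $A_! \dashv A^*$, no rescaling needed), merely knowing that this retract persists under $F$ does not by itself identify $F A_! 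X$ with $A_! F X$; you still have to produce both a section and a retraction of the specific assembly map $\beta_!\colon A_!F \to FA_!$, and your sketch does not do this.

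The paper's proof shares your inductive shell but replaces the averaging step with two separate mechanisms that require no cardinality inversion. The inductive hypothesis makes $F$ $(m-1)$-semiadditive, which by \cite[Theorem~3.2.3]{TeleAmbi} yields a commuting norm square relating $\beta_!$ and $\beta_*$; since both norms are isomorphisms (by $m$-semiadditivity of $\cC$ and $\cD$), this exhibits $\beta_!$ as a split monomorphism. For the other half, one uses the \emph{wrong-way} adjunction $A^* \dashv A_!$ in $\cC$ (available because $A_! \simeq A_*$): its triangle identity gives a factorization of $\id_{FA_!}$ through $FA_!A^*A_!$, and since $A^*A_!X$ is a constant diagram the hypothesis identifies $FA_!A^*A_! \simeq A_!FA^*A_!$, from which one reads off a section of $\beta_!$. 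No reduction to $K(\pi,m)$ is needed; the argument works uniformly for all $m$-finite $p$-spaces $A$.
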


\begin{proof}
    For $m = -2$ there is nothing to prove. Assume by induction that $F$ is $p$-typically $(m-1)$-semiadditive. Hence, by \cite[Theorem 3.2.3]{TeleAmbi}, for every $m$-finite $p$-space $A$ we have a commutative norm diagram
    \[\begin{tikzcd}
    	{FA_!} && {FA_*} \\
    	{A_!F} && {A_*F},
    	\arrow["\beta_!", from=2-1, to=1-1]
    	\arrow["\beta_*", from=1-3, to=2-3]
    	\arrow["{\Nm^{\cD}}", from=2-1, to=2-3]
    	\arrow["{\Nm^{\cC}}", from=1-1, to=1-3]
    	\arrow["\sim"', from=2-1, to=2-3]
    	\arrow["\sim"', from=1-1, to=1-3]
    \end{tikzcd}\]
    in which the horizontal maps are isomorphisms because $\cC$ and $\cD$ are $p$-typically $m$-semiadditive. It follows that $\beta_!$ admits a \emph{left homotopy inverse}. On the other hand, using the wrong way adjunction $A^* \dashv A_!$ in $\cC$, we have the following diagram:
    \[\begin{tikzcd}
    	&& {A_!F A^*A_!} && {A_!F} \\
    	{FA_!} && {FA_!A^*A_!} && {FA_!.}
    	\arrow["{\mu_!}", from=2-1, to=2-3]
    	\arrow["{\nu_!}", from=2-3, to=2-5]
    	\arrow["{\beta_!}", from=1-5, to=2-5]
    	\arrow["{\beta_!}", from=1-3, to=2-3]
    	\arrow["\wr"', from=1-3, to=2-3]
    	\arrow["{\nu_!}", from=1-3, to=1-5]
    	\arrow[curve={height=18pt}, Rightarrow, no head, from=2-1, to=2-5]
    \end{tikzcd}\]
    Hence, $\beta_!$ admits a \emph{section}. Therefore, $\beta_!$ and $\beta_*$ are isomorphisms. 
\end{proof}

\begin{prop}\label{red-concentrated}
    Let $\cC, \cD$ be categories admitting $m$-finite $p$-space indexed colimits, and let $F\colon \cC \to \cD$ be a functor commuting with $(m-1)$-finite $p$-space indexed colimits.
    Then, $F$ commutes with (constant) $m$-finite $p$-space indexed colimits if and only if it commutes with (constant) colimits indexed by $m$-finite $p$-spaces concentrated in homotopy degree $m$.
\end{prop}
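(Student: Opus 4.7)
The forward implication is immediate, since colimits indexed by $m$-finite $p$-spaces concentrated in homotopy degree $m$ form a subclass of $m$-finite $p$-space indexed colimits. For the converse, my plan is to decompose an arbitrary $m$-finite $p$-space along its Postnikov truncation, so that the resulting colimit factors through an inner colimit indexed by a space concentrated in degree $m$ and an outer $(m-1)$-finite colimit. Explicitly, for an $m$-finite $p$-space $A$, let $p\colon A \to B := \tau_{\leq m-1}(A)$ be the Postnikov map. Then $B$ is $(m-1)$-finite, and each homotopy fiber $F_b := p^{-1}(b)$ is $(m-1)$-connected and $m$-truncated with $\pi_m F_b$ a finite $p$-group; in particular each $F_b$ is itself an $m$-finite $p$-space concentrated in homotopy degree $m$.

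For any diagram $c\colon A \to \cC$, the standard left Kan extension formula along $p$ gives
\[
    \colim_A c \;\simeq\; \colim_{b \in B}\, \colim_{F_b} c|_{F_b}.
\]
In the general (non-constant) case, I would apply $F$ to this equivalence, commute it past the inner colimit over each $F_b$ using the hypothesis on colimits indexed by spaces concentrated in degree $m$, and then past the outer colimit over $B$ using the standing assumption that $F$ preserves $(m-1)$-finite $p$-space indexed colimits. The naturality of the first step in $b \in B$, which is needed to assemble the pointwise identifications into a genuine map of $B$-indexed diagrams before applying the outer commutation, follows from the naturality of the assembly map in its indexing space.

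The constant case requires a small additional remark. If $c$ is constant with value $X \in \cC$, then each inner colimit is itself a colimit of a constant diagram, yet the resulting outer diagram $b \mapsto \colim_{F_b} X$ on $B$ is \emph{not} constant — the fibers $F_b$ vary with $b$ (even for connected $A$, the monodromy of the Postnikov fibration is in general nontrivial). Nevertheless, the hypothesis on constant colimits over spaces concentrated in degree $m$ ensures that the assembly map $\colim_{F_b} F(X) \to F(\colim_{F_b} X)$ is an isomorphism for each $b$, and these fit into an equivalence of $B$-indexed diagrams by naturality. The outer colimit over $B$ is then a (non-constant) $(m-1)$-finite colimit, to which the full strength of the standing hypothesis applies, yielding the desired assembly isomorphism. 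The main point to verify carefully is the naturality of the assembly equivalences in $b$, which follows formally from the functoriality of the assembly construction in its indexing space; I expect no further obstacle.
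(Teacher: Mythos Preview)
Your proposal is correct and follows essentially the same approach as the paper: decompose along the Postnikov truncation $A \to \tau_{\le m-1}A$, handle the fibers (concentrated in degree $m$) by hypothesis, and the base by the standing $(m-1)$-finite assumption. The paper phrases the fiber step via Beck--Chevalley for the pullback square over each point $b$ of the base, which is exactly your pointwise left Kan extension formula together with the naturality you invoke; your explicit remark that in the constant case the outer $B$-diagram becomes non-constant (so one genuinely needs the full, not merely constant, $(m-1)$-finite hypothesis) is a point the paper compresses into the phrase ``follows by pre-composition with $A^*$''.
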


\begin{proof}
    The only if part is clear.
    For the other direction, let $A$ be some $m$-finite $p$-space, and we shall show that $F$ commutes with $A_!$.
    Let $B := A_{\leq m-1}$, and $f\colon A \to B$ be the canonical map.
    Since $A_! \simeq B_! f_!$, it suffices to check that $F$ commutes with both of them.
    By the inductive hypothesis, $F$ indeed commutes with $B_!$, thus it remains to show that it commutes with $f_!$.
    This can be checked fiber-wise on the target, namely, for any $b \in B$, we need to show that $b^* f_! F \to b^* F f_! \simeq F b^* f_!$ is an isomorphism.
    Consider the pullback:
    % https://q.uiver.app/?q=WzAsNCxbMCwwLCJBX2IiXSxbMSwwLCJcXHB0Il0sWzAsMSwiQSJdLFsxLDEsIkIiXSxbMiwzLCJmIiwyXSxbMSwzLCJiIl0sWzAsMiwiaV9iIiwyXSxbMCwxXV0=
    \[\begin{tikzcd}
    	{A_b} & \pt \\
    	A & B
    	\arrow["f"', from=2-1, to=2-2]
    	\arrow["b", from=1-2, to=2-2]
    	\arrow["{i_b}"', from=1-1, to=2-1]
    	\arrow[from=1-1, to=1-2]
    \end{tikzcd}\]
    The Beck--Chevalley condition guarantees that $b^* f_! \simeq A_{b!} i_b^*$.
    Under this isomorphism, the condition we need to check is that $A_{b!} i_b^* F \to F A_{b!} i_b^*$ is an isomorphism.
    Since $F$ commutes with $i_b^*$, it suffices to show that it commutes with $A_{b!}$, which indeed holds since by construction $A_b$ is $m$-finite and concentrated in homotopy degree $m$, so this follows by the hypothesis.
    
    The case of constant colimits follows by pre-composition with $A^*$.
\end{proof}

\section{Higher Descent}

\subsection{Higher Categorical Descent}

We are finally in position to prove \Cref{main-thm-intro}, namely, that $\KTnp$ preserves $\pi$-finite $p$-space (co)limits of $\Lnf$-local categories. This is an immediate corollary of the following:

\begin{thm}\label{main-thm}
    The functor
    \[
        \KTnp\colon \CatMnf \too \SpTnp
    \]
    is $p$-typically $\infty$-semiadditive. That is, it preserves $\pi$-finite $p$-space indexed limits and colimits.
\end{thm}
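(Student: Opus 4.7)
The plan is to follow the three-step strategy sketched in \cref{outline-proof}. Since both $\CatMnf$ (by \cref{CatMnf-inf-sa}) and $\SpTnp$ are $p$-typically $\infty$-semiadditive, it suffices to check preservation of $\pi$-finite $p$-space indexed \emph{colimits}; preservation of the corresponding limits then follows formally from ambidexterity. I proceed by induction on the truncation level $m$ of the indexing space. The case $m \le 0$ reduces to preservation of finite (co)products, which is automatic since $\KTnp$ takes values in the stable category $\SpTnp$, and the case $m = 1$ is the content of \cref{CMNN1} restricted along the inclusion $\CatMnf \hookrightarrow \CatLnf$.

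For the inductive step, assume the claim holds for $(m-1)$-finite $p$-spaces with $m \ge 2$, and fix an $m$-finite $p$-space $A$. By \cref{red-const} combined with \cref{red-concentrated}, it suffices to show that for every $\cC \in \CatMnf$ and every $m$-finite $p$-space $A$ concentrated in homotopy degree $m$, the assembly map
\[
    \KTnp(\cC)[A] \too \KTnp(\cC[A])
\]
is an isomorphism. Using \cref{Schwede-Shipley}, $\cC$ is a filtered colimit of categories $\Perf(R)$ where each $R$ is an endomorphism ring spectrum of an object of $\cC$; by \cref{equiv_char_monochrom}, these $R$ are $n$-monochromatic. Since $\KTnp$ commutes with filtered colimits in $\Catperf$ and $(-)[A]$ preserves colimits, I reduce to $\cC = \Perf(R)$ for an $n$-monochromatic ring spectrum $R$. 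Invoking the symmetric monoidal equivalence $\Perf(R)[A] \simeq \Perf(R[\Omega A])$ of \cref{perf-R-A}, the assembly map to verify becomes
\[
    \KTnp(R)[A] \too \KTnp(R[\Omega A]).
\]

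Since $A$ is concentrated in degree $m \ge 2$, it is pointed and simply connected, so the bar construction presents it as a simplicial realization $A \simeq \lvert (\Omega A)^{\bullet} \rvert$ in $\Spcn$ with each $(\Omega A)^k$ an $(m-1)$-finite $p$-space. Pushing this resolution through both sides of the assembly map, the right-hand side unfolds as a sifted $\Delta^\op$-indexed colimit of $\KTnp$ applied to ring spectra built from $R$ and the terms $(\Omega A)^k$; by the inductive hypothesis (applied at the categorical level to $\Perf(R)$ and each $(m-1)$-finite space $(\Omega A)^k$), the assembly map is an isomorphism termwise. The crucial input needed to close the induction is \cite[Corollary 4.31]{land2020purity}, which asserts that $\KTnp\colon \Alg(\Sp) \to \SpTnp$ preserves sifted colimits for $n \ge 1$; the case $n = 0$ follows from the classical results of \cite{bhatt2020remarks}.

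The principal difficulty lies in the clean execution of this final step: one must verify that the bar resolution for $A$ transports correctly through both the construction $R \mapsto R[\Omega A]$ and through $\KTnp$ on ring spectra, so that the categorical inductive hypothesis can be invoked termwise. The reason it is essential to first pass through Schwede--Shipley to ring spectra, rather than working directly in $\Catperf$, is that sifted colimit preservation holds for $\KTnp$ on $\Alg(\Sp)$ but \emph{fails} on $\Catperf$; it is this interplay, and its resolution via the Schwede--Shipley reduction, that makes the induction go through.
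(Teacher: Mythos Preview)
Your proposal follows the paper's argument essentially verbatim: both categories are $\infty$-semiadditive so it suffices to treat colimits; reduce via \cref{red-const} and \cref{red-concentrated} to constant colimits over spaces concentrated in a single degree $m$; reduce to $\Perf(R)$ via Schwede--Shipley; rewrite the assembly map in terms of group algebras via \cref{perf-R-A}; and close the induction by resolving $A$ as a bar construction and invoking sifted-colimit preservation of $\KTnp$ on ring spectra. Two small points you gloss over but which the paper checks explicitly: that each $\Perf(R_i)$ actually lies in $\CatMnf$ (needed to apply the inductive hypothesis to it), and that $R[\Omega(-)]$ preserves sifted colimits of pointed connected spaces (needed for the lower-right leg of the comparison diagram).

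There is, however, one genuine gap. Your treatment of the case $n = 0$ is incorrect: you write that it ``follows from the classical results of \cite{bhatt2020remarks},'' but that paper proves a cyclotomic redshift isomorphism for discrete rings and says nothing about sifted-colimit preservation for $K_{T(1)}$ on ring spectra, nor about the assembly map for $m$-finite $p$-spaces with $m \ge 2$. Since \cite[Corollary~4.31]{land2020purity} genuinely requires $n \ge 1$, your bar-resolution argument simply does not apply when $n = 0$.

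The paper's fix is short and uses semiadditive height rather than sifted colimits. For $n = 0$ the ring $R$ is $0$-monochromatic, hence has $p$ inverted; since $A$ is concentrated in degree $m \ge 2$, the space $\Omega A$ is a \emph{connected} $\pi$-finite $p$-space, and its reduced $\Sph[1/p]$-homology vanishes, so $R[\Omega A] \simeq R$. On the other side, $\Sp_{T(1)}$ has semiadditive height $1$, so by \cite[Theorem~D]{AmbiHeight} the constant $A$-colimit is trivial, i.e.\ $X[A] \simeq X$ for all $X \in \Sp_{T(1)}$. The assembly map thus identifies with the identity of $K_{T(1)}(R)$. You should replace the appeal to \cite{bhatt2020remarks} with this argument.
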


\begin{proof}
%    Recall from \cref{Mnf-lim-colim} that $\CatMnf \subset \CatLnf$ is closed under colimits, and that
%    \[
%        \Mnf \colon \CatLnf \too \CatMnf
%    \]
%    preserves all limits and colimits.
%    Moreover, by \cref{K-monochrom-iso}, for $\cC \in \CatLnf$ we have
%    \[
%        \KTnp(\Mnf(\cC)) \iso \KTnp(\cC).
%    \]
%    We thus see that the two functors in question preserve $m$-finite $p$-space indexed colimits together.
First,
by \Cref{CatMnf-inf-sa} and \cite[Theorem A]{TeleAmbi} the categories $\CatMnf$ and $\SpTnp$ are both $\infty$-semiadditive. Therefore, by (the $p$-typical analogue of) \cite[Corollary 3.2.4]{AmbiHeight} it suffices to show that $\KTnp$ preserves $\pi$-finite $p$-space indexed colimits. By \Cref{red-const} and \Cref{red-concentrated} it is enough to consider constant colimits indexed by $\pi$-finite $p$-spaces concentrated in a fixed homotopical degree $m$. Namely, we have to show that for such a space $A$ the assembly map 
\[
\KTnp(\cC)[A]\too \KTnp(\cC[A])
\]
is an isomorphism.

For the case $m = 0$ observe that $\KTnp$ is exact and in particular commutes with finite coproducts.
For $m=1$, since $\cC$ is in $\CatMnf$, and in particular it is $\Lnf\Sph$-linear, the result follows from \cite[Theorem 4.12(1)]{clausen2020descent} applied to $R=\Lnf\Sph$. We proceed by induction on $m\ge 2$. 
    
   First, we reduce to the case where $\cC$ is of the form $\Perf(R)$ for a ring spectrum $R\in \Mnf\Sp$.
    Indeed, by \cref{Schwede-Shipley} we may write $\cC$ as a filtered colimit of the form 
    \[
    \cC \simeq \colim\Perf(R_i) \qin \Catperf,
    \]
    where the ring spectra $R_i$ are all endomorphism rings of objects of $\cC$. 
    Since $\cC \in \CatMnf$, we know by \Cref{equiv_char_monochrom}(3) that each $R_i$ belongs to $\Mnf\Sp$. Since each of the categories $\Perf(R_i)$ is a full subcategory of $\cC$, by \Cref{equiv_char_monochrom}(3) again we deduce that they all belong to $\CatMnf$. 
    Also, recall that
    \[
        \KTnp\colon \Catperf \too \SpTnp
    \]
    preserves filtered colimits.
    Thus, by naturality of the assembly map, to show that
    \[
        \KTnp(\cC)[A] \too 
        \KTnp(\cC[A])
        \qin \SpTnp
    \]
    is an isomorphism for every $\cC \in \CatMnf$ it suffices to show that
    \[
        \KTnp(\Perf(R))[A] \too
        \KTnp(\Perf(R)[A])
        \qin \SpTnp
    \]
    is an isomorphism for every $n$-monochromatic ring spectrum $R$. 
    
    Note that once $m \geq 1$ and $A$ is concentrated in homotopy degree $m$, it is connected, whence by \cref{perf-R-A} this map assumes the form
    \[
        \KTnp(R)[A] \too
        \KTnp(R[\Omega A])
        \qin \SpTnp.
    \]
    Moreover, when $m\ge 2$ the space $\Omega A$ is also connected. 
    
    First we deal with the case where $R$ is of height $n = 0$.
    In this case, since $\Omega A$ is a connected $\pi$-finite space we have $R[\Omega A] \simeq R$.
    Additionally, by \cite[Theorem D]{AmbiHeight}, the constant colimits over $A$ in $\Sp_{\To}$ do not change the object, i.e.\ $\colim_A X \simeq X$ naturally in $X\in \Sp_{\To}$ (see also the discussion under \cite[Definition 2.4.1]{AmbiHeight}).
    Hence, our assembly map identifies with the identity map of $K_{T(1)}(R)$ and in particular it is an isomorphism.
    
    From now on we assume that $n \geq 1$.
    Using the bar construction, we can write $A \simeq \colim_{\Delta^\op} A_k$ where $A_k := (\Omega A)^k$.
    Consider the commutative diagram:
    % https://q.uiver.app/?q=WzAsNSxbMSwwLCJcXGNvbGltX3tcXERlbHRhXlxcb3B9IEtfe1QobisxKX0oUltcXE9tZWdhIEFfa10pIl0sWzEsMSwiS197VChuKzEpfShcXGNvbGltX3tcXERlbHRhXlxcb3B9IFJbXFxPbWVnYSBBX2tdKSJdLFsxLDIsIktfe1QobisxKX0oUltcXE9tZWdhIFxcY29saW1fe1xcRGVsdGFeXFxvcH0gQV9rXSkiXSxbMCwwLCJcXGNvbGltX3tcXERlbHRhXlxcb3B9IEtfe1QobisxKX0oUilbQV9rXSJdLFswLDIsIktfe1QobisxKX0oUilbXFxjb2xpbV97XFxEZWx0YV5cXG9wfSBBX2tdIl0sWzMsMF0sWzMsNF0sWzAsMV0sWzQsMl0sWzEsMl1d
    \[\begin{tikzcd}
    	{\colim_{\Delta^\op} K_{T(n+1)}(R)[A_k]} & {\colim_{\Delta^\op} K_{T(n+1)}(R[\Omega A_k])} \\
    	& {K_{T(n+1)}(\colim_{\Delta^\op} R[\Omega A_k])} \\
    	{K_{T(n+1)}(R)[\colim_{\Delta^\op} A_k]} & {K_{T(n+1)}(R[\Omega \colim_{\Delta^\op} A_k])}
    	\arrow[from=1-1, to=1-2]
    	\arrow[from=1-1, to=3-1]
    	\arrow[from=1-2, to=2-2]
    	\arrow[from=3-1, to=3-2]
    	\arrow[from=2-2, to=3-2]
    \end{tikzcd}\]
    The functor $\Spc_* \to \Spc$ which forgets the pointing preserves sifted colimits by \cite[Proposition 4.4.2.9]{HA}, and the inclusion $\Spc^{\geq 1} \into \Spc$ also preserves sifted colimits as explained in the proof of \cite[Proposition 1.4.3.9]{HA}, thus so does the functor $\Spc^{\geq 1}_* \to \Spc$.
    Therefore, there is no ambiguity as to where the colimit in the two bottom objects are computed.
    
    Our goal is to show that the bottom map is an isomorphism.
    We show that all other morphisms are isomorphisms, which implies the result by the commutativity of the diagram.
    The top map is an isomorphism by the inductive hypothesis, as $A_k$ is an $(m-1)$-finite $p$-space.
    The left map is an isomorphism because the functor $\KTnp(R)[-]$ preserves all colimits.
    Since $n \geq 1$, by \cite[Corollary 4.31]{land2020purity}, the functor
    \[
        \KTnp\colon \Alg(\Sp) \too \SpTnp
    \]
    preserves sifted colimit, showing that the upper right morphism is an isomorphism.
    Finally, for the bottom right morphism, note that the forgetful functor $\Alg(\Sp) \to \Sp$ and the functor $\Omega(-)$ both preserve sifted colimits (see \cite[Proposition 3.2.3.1]{HA} and \cite[Corollary 5.2.6.18]{HA}), and $R[-]$ commutes with all colimits, so that $R[\Omega(-)]$ preserves sifted colimits.
\end{proof}

\begin{cor}\label{main-thm-cor}
    The functor 
    \[
        \KTnp\colon \CatLnf \too \SpTnp
    \] 
    preserves $\pi$-finite $p$-space indexed limits and colimits. 
\end{cor}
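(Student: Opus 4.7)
The plan is to reduce the corollary directly to \Cref{main-thm} by factoring $\KTnp\colon \CatLnf \to \SpTnp$ through the coreflection onto monochromatic categories. Concretely, I would first observe that \Cref{K-monochrom-iso} produces, for every $\cC \in \CatLnf$, a natural isomorphism
\[
    \KTnp(\Mnf(\cC)) \iso \KTnp(\cC),
\]
so that the restriction of $\KTnp$ to $\CatLnf$ factors (up to canonical isomorphism) as the composition
\[
    \CatLnf \xrightarrow{\ \Mnf\ } \CatMnf \xrightarrow{\ \KTnp\ } \SpTnp.
\]

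Next, I would invoke \Cref{Mnf-lim-colim}, which asserts that the coreflection $\Mnf$ preserves all small limits and colimits; in particular it preserves $\pi$-finite $p$-space indexed limits and colimits. Combined with \Cref{main-thm}, which gives that $\KTnp\colon \CatMnf \to \SpTnp$ is $p$-typically $\infty$-semiadditive, the composition displayed above preserves $\pi$-finite $p$-space indexed limits and colimits. This yields the statement of the corollary.

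There is no genuine obstacle here beyond assembling the pieces: the whole content is packaged into the purity isomorphism of \Cref{K-monochrom-iso} and the bicontinuity of $\Mnf$ from \Cref{Mnf-lim-colim}. The only point worth being careful about is verifying that the isomorphism $\KTnp \circ i \simeq \KTnp|_{\CatMnf} \circ \Mnf$ (where $i$ is the inclusion $\CatMnf \hookrightarrow \CatLnf$) is natural in $\cC$, which is immediate from the construction of the map in \Cref{K-monochrom-iso} as the one induced by the counit $\Mnf(\cC) \to \cC$ of the adjunction $i \dashv \Mnf$.
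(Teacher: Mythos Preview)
Your proposal is correct and follows essentially the same argument as the paper: factor $\KTnp$ on $\CatLnf$ as $\Mnf$ followed by $\KTnp$ on $\CatMnf$ via \Cref{K-monochrom-iso}, then use \Cref{Mnf-lim-colim} and \Cref{main-thm} to conclude. Your added remark on the naturality of the purity isomorphism is a small clarification the paper leaves implicit, but the overall approach is identical.
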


\begin{proof}
By \Cref{K-monochrom-iso} we can present the functor $\KTnp\colon \CatLnf\to \SpTnp$ as the composition 
\[
    \CatLnf \too[\Mnf] \CatMnf \too[\KTnp] \SpTnp. 
\]
Now, the first functor preserves all limits and colimits by \Cref{Mnf-lim-colim} and the second preserves all $\pi$-finite $p$-space indexed limits and colimits by \Cref{main-thm}. Hence, the composition preserves these limits and colimits as well. 
\end{proof}

\begin{rem}
    By \Cref{Char_CatL}, an equivalent formulation of \Cref{main-thm-cor} is that the functor
    \[
        \KTnp((-)^\omega) \colon 
        \Prl_{\Lnf,\omega} \too \SpTn
    \]
    preserves $\pi$-finite $p$-space indexed limits and colimits. 
\end{rem}

\begin{cor}\label{KTnp-R-A-Lnf}
    There is a map
    \[
        \KTnp(R)[A] \too \KTnp(R[\Omega A])
        \qin \SpTnp,
    \]
    lax symmetric monoidally natural in $A \in \Spc^{\geq 1}_*$ and $R \in \Alg(\Lnf\Sp)$.
    Furthermore, when $A$ is a sifted colimit of pointed connected $\pi$-finite $p$-spaces, it is an isomorphism.
\end{cor}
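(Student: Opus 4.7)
The plan is to assemble the map out of three pieces already available in the paper, and then to deduce the isomorphism statement from \cref{main-thm-cor} together with a check that both sides commute with sifted colimits in $A$.

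For the construction of the map, I would first apply the general assembly-map machinery of \cref{asm-maps-sec} to the lax symmetric monoidal functor $\KTnp\colon \Catperf \to \SpTnp$, obtaining for $\cC \in \CAlg(\Catperf)$ a lax symmetric monoidally natural map $\KTnp(\cC)[A]\to \KTnp(\cC[A])$ in $A \in \Spc$. Specializing to $\cC = \Perf(R)$ and using the symmetric monoidal identification $\Perf(R)[A] \simeq \Perf(R[\Omega A])$ of \cref{perf-R-A} (in $A \in \Spc^{\geq 1}_*$ and $R \in \Alg(\Sp)$) yields the desired map
\[
    \KTnp(R)[A] \too \KTnp(\Perf(R)[A]) \iso \KTnp(R[\Omega A]),
\]
which is lax symmetric monoidally natural in both variables by combining these two naturalities. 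No assumption on $R$ is used in this step.

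For the ``furthermore'', I would first treat the case where $A$ itself is a pointed connected $\pi$-finite $p$-space. Since $R \in \Alg(\Lnf\Sp)$, the category $\Perf(R)$ lies in $\CatLnf$, so \cref{main-thm-cor} applies and gives that the assembly map $\KTnp(\Perf(R))[A] \to \KTnp(\Perf(R)[A])$ is an isomorphism. Composing with the isomorphism of \cref{perf-R-A} gives the claim in this case.

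It remains to extend the isomorphism along sifted colimits of such $A$, which I would do by verifying that both sides are functors of $A \in \Spc^{\geq 1}_*$ preserving sifted colimits. The left-hand side $A \mapsto \KTnp(R)[A] = \colim_A \KTnp(R)$ preserves all colimits. For the right-hand side, the functor $\Omega\colon \Spc^{\geq 1}_* \to \Alg(\Spc)$ preserves sifted colimits (since, as recalled in the proof of \cref{main-thm}, the forgetful functors $\Spc_* \to \Spc$ and $\Spc^{\geq 1} \hookrightarrow \Spc$ do, together with \cite[Corollary 5.2.6.18]{HA}), the group algebra $R[-]\colon \Alg(\Spc) \to \Alg(\Sp)$ is a left adjoint and hence preserves all colimits, and finally $\KTnp\colon \Alg(\Sp) \to \SpTnp$ preserves sifted colimits by \cite[Corollary 4.31]{land2020purity} when $n \geq 1$ (with the $n=0$ case being handled as in the proof of \cref{main-thm}, where constant $\pi$-finite $p$-space indexed colimits in $\Sp_{T(1)}$ are trivial). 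The main subtlety here is simply bookkeeping the variance and naturality in $A$; no new ingredient beyond those already established in Sections 2 and 3 and \cref{main-thm-cor} is required.
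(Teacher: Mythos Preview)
Your approach is essentially identical to the paper's: construct the map as the assembly map of $\KTnp$ composed with the identification of \cref{perf-R-A}, invoke \cref{main-thm-cor} for $\pi$-finite $p$-spaces, and then extend to sifted colimits by checking that both sides preserve them in $A$.

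One small correction: your parenthetical handling of the $n=0$ case in the sifted-colimit step is not right. The argument from the proof of \cref{main-thm} for $n=0$ only treats individual $\pi$-finite $p$-spaces concentrated in a single degree $m\geq 2$ (using that $\Omega A$ is then connected), and says nothing about preservation of sifted colimits. The fix is simply that \cite[Corollary 4.31]{land2020purity} applies to $L_{T(m)}K$ for all $m\geq 1$, hence to $\KTnp=L_{T(n+1)}K$ for all $n\geq 0$; no separate argument for $n=0$ is needed here. The paper's own proof cites \cite[Corollary 4.31]{land2020purity} without the $n\geq 1$ restriction for exactly this reason.
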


\begin{proof}
    As explained in the proof of \cref{main-thm}, the map is the assembly map of the lax symmetric monoidal functor $\KTnp$, together with the equivalence
    \[
        \Perf(R[\Omega A]) \simeq \Perf(R)[A]
    \]
    from \cref{perf-R-A}.
    By \cref{main-thm}, it is an isomorphism for $\pi$-finite $p$-spaces.
    Since $\KTnp$ preserves sifted colimits of ring spectra by \cite[Corollary 4.31]{land2020purity}, and $R[\Omega(-)]$ preserves sifted colimits of pointed connected spaces, we get that the assembly map is an isomorphism for sifted colimits of pointed connected $\pi$-finite $p$-spaces as well.
\end{proof}

\begin{rem}\label{KTnp-cMod}
    Combining \cref{Mod-R-dbl-F}, \cite[Proposition 4.15]{clausen2020descent}, and the fact that for modules in spectra perfect objects and dualizable objects coincide, for $R \in \CAlg(\SpTn)$ and $M \in \Spcn$, the assembly map
    \[
        \KTnp(R)[\Sigma M] \too \KTnp(R[M])
    \]
    from \cref{KTnp-R-A-Lnf} is equivalent to the composition
    \begin{align*}
        \KTnp(\cMod_R^\dbl)[\Sigma M]
        &\too \KTnp(\cMod_R^\dbl[\Sigma M])\\
        &\too \KTnp(\cMod_R[\Sigma M]^\dbl)\\
        &\iso \KTnp(\cMod_{\LTn R[M]}^\dbl).
    \end{align*}
    Thus, when $M$ is a filtered colimit of $\pi$-finite $p$-spectra, all three maps are isomorphisms.
    Indeed, the first morphism is an isomorphism since $\KTnp$ preserves both filtered colimits and $\pi$-finite $p$-space shaped colimits by \cref{main-thm-cor}.
    Moreover, by \cref{KTnp-R-A-Lnf}, the composition is an isomorphism, so that the second morphism is an isomorphism as well.
\end{rem}

\begin{cor}\label{KTnp-CAlg-lim}
    The functor
    \[
        \KTnp\colon \Alg(\SpTn) \too \Alg(\SpTnp)
    \]
    preserves $n$-finite $p$-space indexed limits.
    In particular, for every $R \in \Alg(\SpTn)$ and an $n$-finite $p$-space $A$ there is an isomorphism
    \[
        \KTnp(R^A) \iso \KTnp(R)^A.
    \]
\end{cor}

\begin{proof}
    Let $R\colon A \to \Alg(\SpTn)$ be a diagram indexed by some space $A$.
    The assembly map is the following composition
    \begin{align*}
        \KTnp(\invlim_A R)
        &\iso \KTnp(\cLMod_{\invlim_A R}^\ldbl)\\
        &\too[(1)] \KTnp((\invlim_A \cLMod_R)^\ldbl)\\
        &\iso \KTnp(\invlim_A \cLMod_R^\ldbl)\\
        &\too[(2)] \invlim_A \KTnp(\cLMod_R^\ldbl)\\
        &\iso \invlim_A \KTnp(R).
    \end{align*}
    By \cite[Theorem 7.29]{barthel2022chromatic}, the space $A$ is $\SpTn$-affine, namely, the map denoted by $(1)$ is an isomorphism.
    By \cref{main-thm}, the map denoted by $(2)$ is an isomorphism.
\end{proof}

\cref{main-thm} asserts that a certain version of $\Tnp$-localized $K$-theory is $\infty$-semiadditive.
In \cite{moshe2021higher}, the universal $p$-typically $m$-semiadditive version of algebraic $K$-theory was studied.
This functor has a $\Tnp$-localized version, which assigns to any stable category admitting $m$-finite $p$-space indexed colimits $\cC$ a $\Tnp$-local spectrum $\KmTnp(\cC)$.

\begin{cor}\label{sa-K}
    Let $\cC \in \CatLnf$ be an $\Lnf$-local category admitting $m$-finite $p$-space indexed colimits, then
    \[
        \KmTnp(\cC) \simeq \KTnp(\cC)
        \qin \SpTnp.
    \]
    In particular, for any $R \in \Alg(\SpTn)$, we have
    \[
        \KmTnp(R) \simeq \KTnp(R)
        \qin \SpTnp.
    \]
\end{cor}

\begin{proof}
    By \cite[Corollary 6.14]{moshe2021higher}, we need to check that the functor
    \[
        \Span(\Smfin) \too \SpTnp,
        \qquad A \mapsto \KTnp(\cC^A)
    \]
    satisfies the $m$-Segal condition.
    Namely, that
    \[
        \KTnp(\cC^A) \too \KTnp(\cC)^A
    \]
    is an isomorphism for every $m$-finite $p$-space $A$, which holds by \cref{main-thm}.
    
    For $R \in \Alg(\SpTn)$, recall from \cite[Remark 6.22]{moshe2021higher} that 
    \[
        \KmTnp(R) \simeq \KmTnp(\cLMod_R^\ldbl),
    \]
    and recall that 
    \[
        \KTnp(R) \simeq \KTnp(\cLMod_R^\ldbl),
    \] 
    so this is the special case $\cC = \cLMod_R^\ldbl$.
\end{proof}

\subsection{Higher Galois Descent}

We now turn to the implications of higher descent for $\KTnp$ to preservation of Galois extensions and in particular prove \Cref{galois-comp-intro} from the introduction (as \Cref{galois-comp}). 

Given a symmetric monoidal category $\cC$ and a weakly $\cC$-ambidextrous (pointed) connected space $BG \oto{q} \pt$ with diagonal denoted by $BG \oto{\Delta} BG \times BG$, a $G$-equivariant commutative algebra $R \colon BG \to \CAlg(\cC)$ is said to be \emph{Galois} if it satisfies the two conditions of \cite[Definition 4.1.3]{RognesGal} (see also \cite[Definition 2.25]{barthel2022chromatic} for the present formulation):
\begin{enumerate}
    \item The mate of the unit $\one \to q_*R =: R^{hG}$ is an isomorphism.
    \item The mate of the multiplication $R\otimes R \to \Delta_*R =: \coind{G}{R}$ is an isomorphism.
\end{enumerate}

Here we are using the notation $\coind{A}{X}$ to mean the constant $A$-shaped limit on $X$, where $A$ is an arbitrary space.

\begin{rem}
    In \cite{RognesGal}, Rognes develops the theory of $G$-Galois extensions under the assumption that $\one_\cC[G]$ is dualizable in $\cC$. 
    By \cite[Corollary 2.7]{carmeli2021chromatic},
    this condition is implied by the weak $\cC$-ambidexterity of $BG$ (i.e., the $\cC$-ambidexterity of $G$). 
    We also note that for $\cC$ semiadditive (e.g., stable), the space $BG$ is weakly $\cC$-ambidextrous for all finite discrete groups $G$.  
\end{rem}

By applying the functor
\[
    \Mod_{(-)}(\cC) \colon 
    \CAlg(\cC) \too 
    \CAlg_\cC(\Prl),
\]
to $R$, we get a $G$-equivariant structure on the $\cC$-linear symmetric monoidal category $\Mod_R(\cC)$ and an induced symmetric monoidal functor, which we denote by a slight abuse of notation by
\[
    R\otimes(-) \colon 
    \cC \too 
    \Mod_R(\cC)^{hG}.
\]
It is a classical fact, commonly referred to as \emph{Galois descent}, that for an ordinary finite group $G$ and an ordinary $G$-Galois extension $R$, this functor is an equivalence. We shall show that it is in fact always an equivalence provided $R$ is faithful. Similar results under somewhat different assumptions can be found in \cite{AkhilGalois,banerjee2017galois,gepner2021brauer}.
We begin by generalizing the standard maneuver of identifying the category of ``descent data for $G$'' $\Mod_R(\cC)^{hG}$  with that of ``$G$-semilinear $R$-modules'' $\Mod_R(\cC^{BG})$. This is a general fact unrelated to Galois extensions. 

\begin{prop}\label{Shay}
    Let $\cC \in \CAlg(\Prl)$ and let $R \in \CAlg(\cC)^A$ for some space $A$. There is a natural equivalence of $\cC$-linear symmetric monoidal categories
    \[
        \invlim_A \Mod_{a^*R}(\cC) \simeq \Mod_R(\cC^A).
    \]
\end{prop}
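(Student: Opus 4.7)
The plan is to build the equivalence from the canonical evaluation functors at points of $A$, and then to verify it is an equivalence via monadic descent over the common base $\cC^A$.

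For each point $a \in A$, the pullback $a^* \colon \cC^A \to \cC$ is a symmetric monoidal left adjoint, so it sends the algebra $R$ to $a^*R$ and induces a $\cC$-linear symmetric monoidal functor $a^* \colon \Mod_R(\cC^A) \to \Mod_{a^*R}(\cC)$. Since $A$ is a space, these assemble---by post-composing the diagram $R \colon A \to \CAlg(\cC)$ with the functor $\Mod_{(-)}(\cC) \colon \CAlg(\cC) \to \CAlg_\cC(\Prl)$---into a diagram $a \mapsto \Mod_{a^*R}(\cC)$ together with a natural cone from the constant category $\Mod_R(\cC^A)$. Taking the limit yields the comparison
\[
    \Phi \colon \Mod_R(\cC^A) \too \invlim_{a \in A} \Mod_{a^*R}(\cC) \qin \CAlg_\cC(\Prl),
\]
evidently functorial in $\cC$, $R$, and $A$.

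To verify that $\Phi$ is an equivalence, I observe that both sides admit canonical forgetful functors down to $\cC^A \simeq \invlim_{a \in A} \cC$ intertwined by $\Phi$: the underlying-object functor of $R$-modules on the source, and the limit of the pointwise underlying-object functors on the target. The former is monadic with monad $R \otimes (-)$ by the standard monadicity of modules over an algebra. For the latter, each pointwise forgetful $\Mod_{a^*R}(\cC) \to \cC$ is monadic with monad $a^*R \otimes (-)$, and these assemble over $A$ into a global monadic adjunction whose monad on $\cC^A$ is computed pointwise and hence equals $R \otimes (-)$. Since $\Phi$ commutes with both forgetful functors and the two monads agree, $\Phi$ is an equivalence of categories over $\cC^A$; as it was constructed as a morphism in $\CAlg_\cC(\Prl)$, it is an equivalence there as well.

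The main technical obstacle is verifying that the forgetful functor out of $\invlim_{a \in A} \Mod_{a^*R}(\cC)$ is monadic with the expected monad, i.e.\ that the pointwise monadic adjunctions assemble to a global monadic adjunction over $\cC^A$. One route is a direct application of the Barr--Beck--Lurie criterion, using that conservativity and the creation of $U$-split colimits are pointwise properties which persist under limits over a space. A more conceptual route is to invoke the compatibility of the $\Mod$ construction with limits in its algebra and ambient-category arguments (cf.\ \cite{HA}).
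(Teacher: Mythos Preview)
Your argument is correct, but the paper takes a different route. Rather than comparing monads over $\cC^A$, the paper works entirely inside $\CAlg_{\cC^A}(\Prl)$: it writes $\cC^A \simeq \invlim_A \cC$ there, invokes the fact that the tensor product in $\Mod_{\cC^A}(\Prl)$ commutes with space-indexed limits (since such limits coincide with colimits in that category), and then applies Lurie's base-change identification $\Mod_R(\cC^A)\otimes_{\cC^A}\cC \simeq \Mod_{a^*R}(\cC)$ from \cite[Theorem 4.8.5.16]{HA} to get
\[
    \Mod_R(\cC^A) \simeq \Mod_R(\cC^A)\otimes_{\cC^A}\invlim_A\cC \simeq \invlim_A\bigl(\Mod_R(\cC^A)\otimes_{\cC^A}\cC\bigr) \simeq \invlim_A \Mod_{a^*R}(\cC).
\]
This keeps the $\cC$-linear symmetric monoidal structure manifest throughout and avoids any Barr--Beck verification, at the cost of importing the (nontrivial) ambidexterity of space-shaped diagrams in $\Mod_{\cC^A}(\Prl)$. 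Your monadicity approach is more elementary in that it only uses that limits over a groupoid are computed pointwise and that each forgetful functor $\Mod_{a^*R}(\cC)\to\cC$ is monadic; the one point you should make precise is not just that both monads on $\cC^A$ are abstractly $R\otimes(-)$, but that the canonical map of monads induced by $\Phi$ is the identity --- equivalently, that $\Phi$ carries free $R$-modules to the pointwise free modules, which follows immediately since each component $a^*$ of $\Phi$ is symmetric monoidal.
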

\begin{proof}
    First, we write $\cC^A$ as the limit of the constant $A$-shaped diagram on $\cC$ in $\CAlg(\Prl)$. The cone maps exhibit for each $a \in A$ the corresponding copy of $\cC$ in the diagram as a commutative $\cC^A$-algebra, via the symmetric monoidal functor $a^* \colon \cC^A \to \cC$. Consequently, we have $\cC^A = \invlim_A \cC$ in $\CAlg_{\cC^A}(\Prl)$.
    We claim that the tensor product in $\CAlg_{\cC^A}(\Prl)$ commutes with space-shaped limits.
    Indeed, since space-shaped limits and colimits in $\Mod_{\cC^A}(\Prl)$ coincide, they are preserved by the tensor product \cite[Example 4.26]{cnossen2022characters}, and the forgetful $\CAlg_{\cC^A}(\Prl) \to \Mod_{\cC^A}(\Prl)$ is symmetric monoidal and preserves limits, the same holds in $\CAlg_{\cC^A}(\Prl)$.
    We can thus compute,
    \[
        \Mod_R(\cC^A) \simeq 
        \Mod_R(\cC^A) \otimes_{\cC^A} \cC^A \simeq 
        \Mod_R(\cC^A) \otimes_{\cC^A} \invlim_A \cC \simeq
        \invlim_A \big(\Mod_R(\cC^A) \otimes_{\cC^A} \cC \big).
    \]
    Finally, when $\cC$ is viewed as a commutative $\cC^A$-algebra via the symmetric monoidal functor $a^* \colon \cC^A \to \cC$, we have by \cite[Theorem 4.8.5.16]{HA},
    \[
        \Mod_R(\cC^A) \otimes_{\cC^A} \cC \simeq 
        \Mod_{a^* R}(\cC)
    \]
    and therefore 
    \[
        \Mod_R(\cC^A) \simeq  \invlim_A \Mod_{a^* R}(\cC).
    \]
\end{proof}

In addition to the maps $BG \oto{q} \pt$ and $BG \oto{\Delta} BG \times BG$, it will be handy to have a name for the basepoint $\pt \oto{e} BG$ as well. However, for a $G$-equivariant object $X$ in $\cC^{BG}$, we shall denote the underlying non-equivariant object $e^*X$ in $\cC$ also by $\underline{X}$.
In the notation above for a $G$-Galois extension $R$, we have the following:
\begin{lem}\label{Galois_Coind}
    Let $\cC \in \CAlg(\Prl)$, let $BG$ be a weakly $\cC$-ambidextrous pointed connected space, and let $R\colon BG \to \CAlg(\cC)$ be a faithful $G$-Galois extension. 
    For every $X \in \Mod_R(\cC^{BG})$ we have a natural $\cC$-linear isomorphism
    \[
        \underline{R}\otimes X \simeq
        \coind{G}{\underline{X}} \qin \cC^{BG},
    \]
    Namely, $G$ acts on the left hand side on the $X$ coordinate alone, and on the right hand side simply 
    by permuting the $G$-factors (i.e.\ the co-induced object).
\end{lem}
\begin{proof}
    An $R$-module structure on $X$ provides a map 
    \[
        \Delta^*(R\boxtimes X) \simeq 
        R\otimes X 
        \too X
        \qin \cC^{BG},
    \]
    whose mate is a natural transformation in $X$,
    \[
        f_X \colon R\boxtimes X \too 
        \Delta_* X
        \qin \cC^{BG \times BG}.
    \]
    Forgetting the action of $G$ on the $R$-coordinate of $R\otimes X$ corresponds to applying the functor
    \[
        (e\times \Id)^* \colon
        \cC^{BG \times BG} \too 
        \cC^{BG}.
    \]
    Using the Beck--Chevalley condition for the pullback square of spaces
    \[\begin{tikzcd}
    	\pt & BG \\
    	BG & {BG\times BG},
    	\arrow["\Delta", from=2-1, to=2-2]
    	\arrow["e"', from=1-1, to=2-1]
    	\arrow["e", from=1-1, to=1-2]
    	\arrow["{e\times \Id}", from=1-2, to=2-2]
    \end{tikzcd}\]
    we get the map
    \[
        g_X \colon 
        \underline{R} \otimes X \iso
        (e\times \Id)^*(R\boxtimes X) \too[f_X]
        (e\times \Id)^*\Delta_* X \iso
        e_*e^* X =: \coind{G}{\underline{X}}
    \]
    of functors
    \[
        \Mod_R(\cC^{BG}) \too
        \cC^{BG}.
    \]
    It remains to show that $g_X$ is an isomorphism for all $X$.
    Since $e$ and $\Delta$ are $\cC$-ambidextrous, by \cite[Corollary 2.34]{barthel2022chromatic}, $e_*$ is $\cC^{BG}$-linear and colimit preserving, $\Delta_*$ is $\cC^{BG\times BG}$-linear, and colimit preserving and $g_X$ is a natural transformation of $\cC^{BG}$-linear colimit preserving functors. Consequently, it suffices to show that it is an isomorphism for $X = R$ for which it follows from the fact that $f_R$ is an isomorphism, as it is exactly the second Rognes--Galois condition.
\end{proof}

From this we deduce descent for Galois extensions. 

\begin{prop}\label{Galois_Descent}
    Let $\cC \in \CAlg(\Prl)$ and let $BG$ be a weakly $\cC$-ambidextrous pointed connected space. Every faithful $G$-Galois extension $R\colon BG \to \CAlg(\cC)$ induces a symmetric monoidal equivalence 
    \[
        \cC \iso \Mod_R(\cC)^{hG}.
    \]
\end{prop}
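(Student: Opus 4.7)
The functor in question is extension of scalars $\Phi\colon \cC \to \Mod_R(\cC)^{hG}$, sending $X$ to the $R$-module $R \otimes X$ equipped with its canonical $G$-equivariant structure coming from the Galois action on $R$. The plan is to realize both $\cC$ and $\Mod_R(\cC)^{hG}$ as totalizations of a common cosimplicial diagram of $\infty$-categories, using faithfully flat descent on one side and a \v{C}ech-nerve description on the other, and to identify the comparison map with $\Phi$.

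By Proposition~\ref{Shay}, $\Mod_R(\cC)^{hG} \simeq \Mod_R(\cC^{BG})$ as $\cC$-linear symmetric monoidal categories, and under this identification $\Phi$ becomes the base-change functor $X \mapsto R \otimes p^*X$ along $p\colon BG \to \pt$. Since $R$ is faithful, Lurie's faithfully flat descent theorem (see, e.g., \cite[Proposition 5.29]{AkhilGalois}) gives a symmetric monoidal equivalence
\[
    \cC \iso \invlim_{[n]\in\Delta} \Mod_{R^{\otimes(n+1)}}(\cC),
\]
where the cosimplicial structure on the right is the Amitsur complex of $\unit_\cC \to R$.

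Iterated application of the second Galois condition $R \otimes R \simeq R^G$ --- which is the content of Lemma~\ref{Galois_Coind} specialized to $X = R$ with its regular $R$-module structure --- identifies the cosimplicial commutative $\cC$-algebra $[n] \mapsto R^{\otimes(n+1)}$ with the \v{C}ech nerve $[n]\mapsto R^{G^n}$ of the canonical pointed map $\pt \to BG$ with coefficients in $R$. Correspondingly, for each $n$,
\[
    \Mod_{R^{\otimes(n+1)}}(\cC) \simeq \Mod_{R^{G^n}}(\cC) \simeq \Mod_R(\cC)^{G^n},
\]
and the totalization of the right-hand side recovers precisely the $G$-homotopy fixed points $\Mod_R(\cC)^{hG}$. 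Composing these identifications yields the desired symmetric monoidal equivalence $\cC \simeq \Mod_R(\cC)^{hG}$, and tracing through the construction verifies that it coincides with $\Phi$.

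The principal technical obstacle is promoting the termwise isomorphisms $R^{\otimes(n+1)} \simeq R^{G^n}$ to an equivalence of cosimplicial commutative $\cC$-algebras, compatibly with all coface and codegeneracy maps. The cleanest route is to produce a comparison map between the two augmented cosimplicial algebras --- both augmented over $R$ via the multiplication $R\otimes R \to R$ --- using the universal property of the \v{C}ech nerve and the Galois data, and then to verify that this comparison is a termwise equivalence by the second Galois condition. Once this identification is in place, passing to module categories and totalizing is routine.
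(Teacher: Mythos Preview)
Your approach via the Amitsur complex and \v{C}ech descent is genuinely different from the paper's argument. The paper works directly with the adjunction: after using \Cref{Shay} to rewrite the target as $\Mod_R(\cC^{BG})$, it shows that the composite $\cC \xrightarrow{q^*} \cC^{BG} \xrightarrow{R\otimes(-)} \Mod_R(\cC^{BG})$ has an invertible unit and a conservative right adjoint. Both checks are carried out after tensoring with $R$ (legitimate by faithfulness), where \Cref{Galois_Coind} supplies the key identification $R \otimes X \simeq X^G$ in $\cC^{BG}$ that makes both computations immediate. No cosimplicial machinery enters.

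Your route has a gap worth flagging. The step ``since $R$ is faithful, descent gives $\cC \simeq \invlim_\Delta \Mod_{R^{\otimes(\bullet+1)}}(\cC)$'' requires $R$ to be \emph{descendable} in Mathew's sense, not merely faithful; in stable homotopy theory these are distinct conditions, and the results in \cite{AkhilGalois} that yield this totalization equivalence are stated under descendability hypotheses. For finite discrete $G$ one knows that faithful Galois extensions are descendable, but the proposition is stated for an arbitrary weakly $\cC$-ambidextrous $BG$, and in that generality the implication ``faithful Galois $\Rightarrow$ descendable'' is not automatic---it would itself require an argument, presumably using ambidexterity to exhibit $\one$ in the thick $\otimes$-ideal generated by $R$. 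The paper's direct unit-and-conservativity argument sidesteps this question entirely. The cosimplicial comparison you flag as the principal obstacle is real but tractable along the lines you indicate; the descendability issue is the more substantive missing ingredient.
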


\begin{proof}
    Under the equivalence in \Cref{Shay}, the functor 
    \[
        R \otimes (-) \colon 
        \cC \too 
        \Mod_R(\cC)^{hG},
    \]
    corresponds to the composition of left adjoints
    \[
        \cC \too[\ q^*\ ]
        \cC^{BG} \too[R\otimes(-)]
        \Mod_R(\cC^{BG}).
    \]
    We shall show that the right adjoint of this composition is conservative and that the unit of the adjunction is an isomorphism.
    
    The unit map is given for every $X \in \cC$ by the composition
    \[
        u_X \colon 
        X \too
        q_*q^* X \too
        q_*(R \otimes q^* X) =: (R\otimes X)^{hG}.
    \]
    Since $R$ is faithful, it suffices to show that $u_X$ becomes an isomorphism after post-composition with the functor of tensoring with $e^*R$. We thus have, 
    \[
        e^*R \otimes q_*(R \otimes q^*X) \simeq
        q_*(q^*e^*R \otimes  (R \otimes q^*X)) \simeq
        q_*(e_*e^*(R \otimes q^*X)) \simeq
        e^* R \otimes X,
    \]
    where the first isomorphism follows from the fact that tensoring with the dualizable object $e^*R$ (Galois extensions are dualizable by \cite[Proposition 6.2.1]{RognesGal}) commutes with $q_*$, the second is \Cref{Galois_Coind} and the third follows from $qe = \Id$. 
    We get that $e^*R \otimes u$ is a natural $\cC$-linear endomorphism of the functor $e^*R \otimes -$. Thus, to show that it is an isomorphism, it suffices to show that it is an isomorphism on $X = \one$, but $u_\one$ is already an isomorphism by the first Rognes--Galois condition. 
    
    To show that the right adjoint is conservative, it again suffices to do so after post-composition with the functor of tensoring with $e^*R$.
    By the same argument as before we have for every $M\in \Mod_R(\cC^{BG})$ a natural isomorphism
    \[
        e^*R \otimes q_*M \simeq
        q_*(q^*e^*R \otimes M) \simeq
        q_*(e_*e^*M) \simeq
        e^* M,
    \]
    and $e^*$ is conservative. 
\end{proof}
This 
Galois descent result has the following consequence for the functoriality of Galois extensions under symmetric monoidal colimit preserving functors.  

\begin{prop}\label{colim_sym_mon_preserve_Gal}
Let $F\colon \cC \to \cD$ be a morphism in $\calg(\Prl)$ and let $BG$ be a weakly $\cC$-ambidextrous pointed connected space. Then, the functor $F\colon \calg(\cC)^{BG}\to \calg(\cD)^{BG}$ carries $G$-Galois extensions to $G$-Galois extensions. 
\end{prop}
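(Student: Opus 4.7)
The plan is to verify directly that $F(R)\colon BG \to \CAlg(\cD)$ satisfies the two Rognes--Galois conditions. The crux is that a morphism in $\CAlg(\Prl)$ is both symmetric monoidal and colimit-preserving, and such functors are automatically compatible with ambidextrous right adjoints.

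First I would observe that, since $\cD$ becomes $\cC$-linear via the symmetric monoidal functor $F$, the space $BG$ is also weakly $\cD$-ambidextrous: this follows because the $\cD$-ambidexterity data for $q\colon BG \to \pt$ and $\Delta\colon BG \to BG \times BG$ may be obtained by applying $F$ to the $\cC$-ambidexterity data, using that $F$ preserves colimits and the norm pairings are pushed forward from $\cC$ (this is essentially the compatibility of ambidexterity with base change along symmetric monoidal colimit-preserving functors, cf.\ the discussion around \cite[Proposition 2.3.4]{TeleAmbi}). In particular the Beck--Chevalley maps
\[
    F q_! \too q_! F, \qquad F \Delta_! \too \Delta_! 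F
\]
are isomorphisms, and then ambidexterity upgrades these to isomorphisms $F q_* \iso q_* F$ and $F \Delta_* \iso \Delta_* F$ on the relevant objects.

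Next I would apply $F$ to the first Rognes--Galois isomorphism $\one_\cC \iso q_* R$ and use lax unitality of $F$ together with the identification $F q_* R \simeq q_* F(R)$ of the previous step to produce an isomorphism $\one_\cD \iso q_* F(R)$. One needs to check that this is indeed the mate of the unit map $\one_\cD \to F(R)$ in $\cD^{BG}$, which follows from naturality of the mate construction with respect to symmetric monoidal functors (the unit of the ambidextrous adjunction $q^* \dashv q_*$ in $\cD$ is obtained from the one in $\cC$ by applying $F$ and using the Beck--Chevalley isomorphism). Similarly, applying $F$ to the second Rognes condition $R\otimes R \iso \Delta_* R$, and using strong symmetric monoidality of $F$ on the left together with the identification $F \Delta_* R \simeq \Delta_* F(R)$ on the right, yields the second isomorphism $F(R) \otimes F(R) \iso \Delta_* F(R)$, and again naturality of mates identifies it with the mate of the multiplication of $F(R)$.

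The main obstacle is the bookkeeping of Step~1, i.e.\ confirming that weak ambidexterity transfers along $F$ and that the ambidextrous norm in $\cD$ identifies with $F$ applied to the norm in $\cC$. Once this is in place, Steps~2 and~3 are formal consequences of naturality of the Beck--Chevalley transformation and of the lax symmetric monoidal structure on the right adjoint (which exists because $F$ is strong symmetric monoidal on the left adjoint side). No faithfulness hypothesis is needed, since we are only propagating the two defining conditions rather than invoking \cref{Galois_Descent} itself.
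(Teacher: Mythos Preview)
Your verification of the second Rognes--Galois condition is correct and matches the paper's argument: since $F$ is symmetric monoidal and colimit-preserving, weak $\cC$-ambidexterity of $BG$ transfers to $\cD$, and $F$ commutes with $\Delta_*$ because the fibers of $\Delta$ are $G$, which is $\cC$-ambidextrous. This is exactly what the paper does, citing \cite[Corollary 3.3.2]{TeleAmbi} and \cite[Proposition 2.1.8]{AmbiHeight}.

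The gap is in your treatment of the first condition. Weak $\cC$-ambidexterity of $BG$ means precisely that $G \simeq \Omega BG$ is $\cC$-ambidextrous (equivalently, that $\Delta\colon BG \to BG \times BG$ is $\cC$-ambidextrous); it does \emph{not} say that $q\colon BG \to \pt$ is $\cC$-ambidextrous. Consequently your claimed identification $Fq_* \iso q_*F$, i.e.\ $F(R^{hG}) \simeq F(R)^{hG}$, is not available from the hypotheses: the argument you sketch in Step~1 produces ambidexterity data for $\Delta$, but there is no ambidexterity data for $q$ to push forward. So the step ``apply $F$ to $\one_\cC \iso q_*R$ and use $Fq_*R \simeq q_*F(R)$'' does not go through.

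The paper avoids this obstacle by a genuinely different maneuver for the first condition: rather than commuting $F$ past $q_*$, it invokes \cref{Galois_Descent} to get $\Mod_R(\cC^{BG}) \simeq \cC$, base-changes this equivalence along $F$ in $\Mod_\cC(\Prl)$ using that the tensor product there preserves $BG$-shaped limits (these are colimits in $\Prl$), and then reads off $F(R)^{hG} \simeq \one_\cD$ as the endomorphisms of the unit in $\Mod_{F(R)}(\cD^{BG}) \simeq \cD$. This route never needs $F$ to commute with $(-)^{hG}$. (Note that \cref{Galois_Descent} is stated for \emph{faithful} Galois extensions, so the paper's proof implicitly uses faithfulness even though the proposition as stated does not; your remark that ``no faithfulness hypothesis is needed'' is therefore optimistic in either approach.)
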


\begin{proof}
Let $R$ be a $G$-Galois extension in $\cC$. We wish to show that $F(R)$ is a $G$-Galois extension in $\cD$. First, since $F$ is symmetric monoidal and colimit preserving, by \cite[Corollary 3.3.2]{TeleAmbi} the space $BG$ is also weakly $\cD$-ambidextrous and by \cite[Proposition 2.1.8]{AmbiHeight} the functor $F$ commutes with $G$-shaped limits. As a result, the functor  $F$ takes the map 
\[
R\otimes R \too \coind{G}{R}
\]
to the map
\[
F(R)\otimes F(R) \too \coind{G}{F(R)}.
\]
Since the former is an isomorphism by the assumption that $R$ is $G$-Galois, we deduce that the latter is also an isomorphism. 

It remains to show that the unit $\one_\cD\to F(R)^{hG}$ is an isomorphism. Regarding $\cD$ as a $\cC$-algebra via $F$, by \Cref{Galois_Descent} and the fact that the tensor product in  $\Mod_\cC$ preserves $BG$-shaped limits in each coordinate, we obtain a symmetric monoidal equivalence 
\[
\Mod_{F(R)}(\cD^{BG})\simeq \Mod_{F(R)}(\cC^{BG}\otimes_{\cC}\cD) \simeq \Mod_R(\cC^{BG})\otimes_{\cC}\cD \simeq \cC\otimes_{\cC} \cD \simeq \cD.
\]
Taking the endomorphism objects of the units from both sides we get the desired identification 
\[
F(R)^{hG}\simeq \one_\cD.
\]
\end{proof}

We turn to the main result of this section, regarding preservation of $T(n)$-local $G$-Galois extensions $R \to S$ under the functor $\KTnp$ for $G$ a $\pi$-finite $p$-group. By \cite[Proposition 3.2.3]{AmbiHeight}, we have canonical equivalences
\[
    \Sp_{\Tn}^{BG} \simeq 
    \Sp_{\Tn}^{\tau_{\le n+1} (BG)} \simeq
    \Sp_{\Tn}^{B(\tau_{\le n}G)}.
\] 
Thus, without loss of generality, we may restrict attention to \textit{$n$-finite} $p$-groups $G$. In case $G$ is further assumed to be \textit{$(n-1)$-finite},  we get by \Cref{KTnp-CAlg-lim} an isomorphism
\[
    \KTnp(R) \iso \KTnp(S)^{hG}.
\]
Namely, $\KTnp(R) \to \KTnp(S)$ satisfies the first Rognes--Galois condition. The second Rognes--Galois condition is satisfied automatically, by \cite[Corollary 7.31]{barthel2022chromatic}\footnote{This is part of the theory of \textit{affineness} developed in \cite[\S2]{barthel2022chromatic}.}, making $\KTnp(S)$ a $G$-Galois extension of $\KTnp(R)$. Using our results on Galois descent, we can bring the above reasoning to bear on the general $n$-finite case.

\begin{thm}\label{galois-comp}
    Let $n \geq 0$, and let $G$ be an $n$-finite $p$-group. For every $\Tn$-local $G$-Galois extension $R \to S$, the induced $\Tnp$-local $G$-extension 
    \[
        \KTnp(R) \too \KTnp(S)
    \]
    is Galois. 
\end{thm}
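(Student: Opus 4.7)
The plan is to verify only the first of Rognes' two Galois conditions, namely that the canonical comparison map
\[
    \KTnp(R) \too \KTnp(S)^{hG}
\]
is an isomorphism in $\CAlg(\SpTnp)$; the second condition then follows automatically via \cite[Proposition 2.27]{barthel2022chromatic}, as remarked after \Cref{galois-comp-intro}.

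Using the isomorphism $\KTnp(R) \simeq \KTnp(\cMod_R^\dbl)$ from \cite[Proposition 4.15]{clausen2020descent} (and similarly for $S$), it suffices to work at the level of module categories. The Galois structure exhibits $S$ as a $G$-equivariant commutative algebra in $\SpTn$ with $R \simeq S^{hG}$. By Galois descent for modules (\Cref{Galois_Descent} applied with $\cC = \cMod_R(\SpTn)$ and $S$ as the Galois object, once faithfulness of $R \to S$ is established in the usual way), we obtain a $G$-equivariant symmetric monoidal equivalence
\[
    \cMod_R \simeq \cMod_S^{hG} \qin \CAlg(\PrlTn).
\]
Since dualizability in a limit of presentably symmetric monoidal categories is detected pointwise on underlying objects, passing to dualizable subcategories yields
\[
    \cMod_R^\dbl \simeq \bigl(\cMod_S^\dbl\bigr)^{hG} \qin \CAlg(\CatLnf)^{BG}.
\]

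Now, since $G$ is an $n$-finite $p$-group, the space $BG$ is $(n+1)$-finite and in particular a $\pi$-finite $p$-space. Hence \Cref{main-thm-cor} implies that $\KTnp \colon \CatLnf \to \SpTnp$ preserves $BG$-shaped limits, and the lax symmetric monoidal structure on $\KTnp$ lifts this preservation to $\CAlg(\SpTnp)$. Assembling the identifications,
\[
    \KTnp(R) \simeq \KTnp\bigl((\cMod_S^\dbl)^{hG}\bigr) \simeq \KTnp(\cMod_S^\dbl)^{hG} \simeq \KTnp(S)^{hG},
\]
which is the required isomorphism.

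The principal technical subtlety is the Galois descent equivalence $\cMod_R \simeq \cMod_S^{hG}$ in the higher-group setting (in particular verifying the faithfulness hypothesis of \Cref{Galois_Descent}), together with threading the $G$-equivariant commutative algebra structures through the passage to dualizable objects and through the application of \Cref{main-thm-cor}. These are multiplicative naturality checks which have to be carried out carefully so that the resulting equivalence is manifestly the canonical Rognes comparison map, rather than merely an abstract isomorphism of the two sides.
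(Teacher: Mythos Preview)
Your proposal is correct and follows essentially the same route as the paper: apply Galois descent (\Cref{Galois_Descent}) to obtain $\cMod_R \simeq (\cMod_S)^{hG}$, pass to dualizable objects, then invoke the higher descent theorem to commute $\KTnp$ with $(-)^{hG}$, with the second Rognes condition handled by the cited affineness result. The only cosmetic differences are that the paper cites \Cref{main-thm} directly (since $\cMod_S^\dbl$ is already $n$-monochromatic by \Cref{equiv_char_monochrom}(3)) rather than \Cref{main-thm-cor}, and cites \cite[Corollary 7.31]{barthel2022chromatic} rather than \cite[Proposition 2.27]{barthel2022chromatic} for the second Galois condition; the paper also does not pause over the faithfulness hypothesis you flag.
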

\begin{proof}
    By \Cref{Galois_Descent}, we have a canonical equivalence
    \[
        \Mod_R(\SpTn) \iso \Mod_S(\SpTn)^{hG}.
    \]
    Passing to dualizable objects we get an equivalence 
    \[
        \Mod_R(\SpTn)^\dbl \iso (\Mod_S(\SpTn)^{hG})^{\dbl}.
    \]
    By \cite[Proposition 4.6.1.11]{HA}, there is a canonical equivalence 
    \[
        (\Mod_S(\SpTn)^{hG})^\dbl \simeq
        (\Mod_S(\SpTn)^\dbl)^{hG}.
    \]
    Therefore
    \begin{align*}
        \KTnp(R)
        &\simeq \KTnp(\Mod_R(\SpTn)^\dbl)\\ 
        &\simeq \KTnp((\Mod_S(\SpTn)^\dbl)^{hG})\\
        &\simeq \KTnp(\Mod_S(\SpTn)^\dbl)^{hG}\\
        &\simeq \KTnp(S)^{hG}  
    \end{align*}
    where the first and last isomorphisms follow from \cite[Proposition 4.15]{clausen2020descent} and the third isomorphism follows from \Cref{main-thm}.
    This is the first Rognes--Galois condition for $\KTnp(S)$ as a $G$-extension of $\KTnp(R)$ in $\SpTnp$.
    Finally, by \cite[Corollary 7.31]{barthel2022chromatic}, the second Rognes--Galois condition is automatically satisfied.
\end{proof}

% \begin{rem}\label{rem:Galois_Shift}
%     By \cite[Proposition 3.2.3]{AmbiHeight}, for any $\pi$-finite $p$-group $G$, we have canonical equivalences
%     \[
%         \Sp_{\Tn}^{BG} \simeq 
%         \Sp_{\Tn}^{\tau_{\le n+1} (BG)} \simeq
%         \Sp_{\Tn}^{B(\tau_{\le n}G)}.
%     \] 
%     Hence, it doesn't add much to the discussion to consider more general $\pi$-finite $p$-groups $G$ in the above theorem. However, we warn the reader that  without the $n$-truncatedness assumption the claim as stated fails. Using \cite[Proposition 3.2.3]{AmbiHeight} once more, one easily checks that in any height $n\ge 0$, the trivial action of $G = B^{n+1}C_p$ on any $T(n)$-local commutative ring spectrum $R$ exhibits it as a $G$-Galois extension of itself, but the same is not true for $K_{T(n+1)}(R)$.
%     We also note that if $G$ happens to be $(n-1)$-finite, the conclusion of \Cref{galois-comp} follows immediately from \Cref{KTnp-CAlg-lim} combined with the corresponding affineness argument as in the proof above.
% \end{rem}

\section{Cyclotomic Redshift}

In this section we apply the higher descent results of the previous section to establish the compatibility of the chromatically localized algebraic $K$-theory functor with higher cyclotomic extensions (see \cref{cyclo-main}), the chromatic Fourier transform (see \cref{fourier-comp}) and Kummer theory (see \cref{kummer-comp}).
Roughly put, these constructions depend on the height parameter, and the functor $\KTnp$ intertwines the corresponding construction in height $n$ and in height $n+1$.\

Since all ring spectra in this section will be $\Tn$-local (or $\Tnp$-local), for
brevity we use the notation $R[\Omega A]$ for the group algebra computed in the $\Tn$-local (or $\Tnp$-local) category rather than in spectra.

\subsection{Cyclotomic Extensions}

In \cite{carmeli2021chromatic}, for any $R \in \Alg(\SpTn)$ the authors constructed a height $n$ analogue of the $p^r$-cyclotomic extension $\cyc{R}{p^r}{n}$, which is a direct factor of $R[B^n C_{p^r}]$. In this subsection we prove \cref{cyclo-main}, saying that there is an isomorphism
\[
    \cyc{\KTnp(R)}{p^r}{n+1} \iso \KTnp(\cyc{R}{p^r}{n}).
\]
For the convenience of the reader, we now give an informal account of the argument detailed in the rest of this subsection.
First, the statement easily reduces to the case $r = 1$ and commutative algebra $R$ (in fact, to $R = \Sph_{\Tn}$).
In the commutative case, the group algebra $R[B^n C_p]$ decomposes into the product of $\cyc{R}{p}{n}$ and $R$, with the projection to the second factor given by applying $R[-]$ to the map $B^n C_p \to \pt$.
Second, the isomorphism
\[
    \KTnp(R)[A] \iso \KTnp(R[\Omega A])
\]
from \cref{KTnp-R-A-Lnf} shows that the projection map $\KTnp(R)[B^{n+1} C_p] \to \KTnp(R)$ identifies with the image of the projection map $R[B^n C_p] \to R$ under $\KTnp$.
Finally, by the uniqueness of direct complements we get the required isomorphism
\[
    \cyc{\KTnp(R)}{p}{n+1} \iso \KTnp(\cyc{R}{p}{n}).
\]

\subsubsection{Generalities on Decompositions}

We begin with some general facts regarding decompositions of commutative algebras into a product. First, we observe that each of the two factors in the decomposition determines the other.

\begin{lem}\label{idemp-unique}
    Let $\cC$ be a symmetric monoidal stable category, and let $R \in \CAlg(\cC)$.
    Assume that we are given two decompositions
    \[
        R \simeq R_1 \times R_2,
        \quad R \simeq \overline{R}_1 \times \overline{R}_2
    \]
    in $\calg(\cC)$ and an isomorphism of $R_1$ and $\overline{R}_1$ under $R$.
    Then, there is an isomorphism of $R_2$ and $\overline{R}_2$ under $R$ as well, namely an isomorphism of decompositions.
\end{lem}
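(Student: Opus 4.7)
My plan is to exploit the fact that each factor in such a decomposition is automatically an idempotent algebra, and that idempotent algebras are determined by their underlying object.

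I would work inside $\calg(\Mod_R(\cC))$, the category of commutative $R$-algebras, in which $R$ is the initial object and both decompositions naturally live. First, I would verify that each of $R_1, R_2, \overline{R}_1, \overline{R}_2$ is an idempotent $R$-algebra, meaning that the multiplication $R_i \otimes_R R_i \to R_i$ is an isomorphism. Since the forgetful functor $\calg(\Mod_R(\cC)) \to \Mod_R(\cC)$ preserves limits, the product $R_1 \times R_2$ in $\calg$ underlies the biproduct $R_1 \oplus R_2$ in the stable category $\Mod_R(\cC)$. The two summands are mutually annihilated by $\otimes_R$, from which one reads off $R_i \otimes_R R_i \simeq R_i$.

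Next, I would invoke the classical fact from \cite[\S 4.8.2]{HA} that the forgetful functor from idempotent algebras to the ambient category is fully faithful; equivalently, being an idempotent algebra is a property of the underlying object, and any morphism in the ambient category between two such objects is automatically an algebra map. Applied here, this means that to produce an isomorphism $R_2 \simeq \overline{R}_2$ in $\calg(\Mod_R(\cC))$ it suffices to produce one at the level of $R$-modules under $R$.

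The key identification is then $R_2 \simeq \operatorname{fib}(p_1 \colon R \to R_1)$ in $\Mod_R(\cC)$, and similarly $\overline{R}_2 \simeq \operatorname{fib}(\overline{p}_1 \colon R \to \overline{R}_1)$, which is immediate from the biproduct decomposition of $R$ in the stable category. The given isomorphism $R_1 \simeq \overline{R}_1$ under $R$ identifies $p_1$ with $\overline{p}_1$, so taking fibers yields an isomorphism $R_2 \simeq \overline{R}_2$ in $\Mod_R(\cC)$ compatible with the projections from $R$, and by the previous paragraph this upgrades uniquely to an isomorphism in $\calg(\Mod_R(\cC))$. I do not anticipate serious obstacles: the argument is built entirely from standard facts about idempotent algebras and biproducts in stable categories, and the only thing to check carefully is that the fiber identification of $R_2$ is compatible with the structure map from $R$, which holds automatically from the splitting.
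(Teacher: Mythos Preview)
Your approach via idempotent algebras is sound in outline and genuinely different from the paper's, which works directly with idempotents in $\pi_0 R$. However, there is a real gap at the step you flag as ``automatic.'' The fact you invoke from \cite[\S4.8.2]{HA} is that the forgetful functor from idempotent commutative algebras to \emph{pointed} objects (i.e.\ to $(\Mod_R)_{R/}$) is fully faithful; it is \emph{not} fully faithful into $\Mod_R$ itself. Thus to upgrade an $R$-module isomorphism $R_2 \simeq \overline{R}_2$ to an algebra isomorphism you must produce it \emph{under} $R$, i.e.\ compatibly with the unit maps $p_2\colon R \to R_2$ and $\overline{p}_2\colon R \to \overline{R}_2$. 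But taking fibers of $p_1$ and $\overline{p}_1$ only gives the identification \emph{over} $R$, compatibly with the inclusions $i_2,\overline{i}_2$. In a general biproduct the retraction of a given split monomorphism is not unique, so this passage is not automatic.

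The gap is easily filled. One option: since $R_1 \otimes_R R_2 = 0$, we have $\hom_{\Mod_R}(R_1,R_2)=0$ (as $R_2$ is $R_2$-local and $R_1$ is $R_2$-acyclic), so the section of $p_1$ is unique; hence the entire biproduct structure, including $p_2$, is determined by $p_1$, and likewise for the barred decomposition. A slicker option: once you know $R_2 \simeq \overline{R}_2$ merely as $R$-modules, the smashing localizations $R_2 \otimes_R(-)$ and $\overline{R}_2 \otimes_R(-)$ coincide, whence $R_2 \simeq \overline{R}_2$ as $R$-algebras. By contrast, the paper's proof sidesteps all of this by passing to $\pi_0 R$ and showing directly that the idempotents $\varepsilon_1$ and $\overline{\varepsilon}_1$ agree, which forces $\varepsilon_2 = \overline{\varepsilon}_2$; this is more elementary but less structural than your argument.
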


\begin{proof}
    Let $\varepsilon_1, \varepsilon_2, \overline{\varepsilon}_1, \overline{\varepsilon}_2 \in \pi_0 R$ be the idempotents splitting $R_1, R_2, \overline{R}_1, \overline{R}_2$ respectively.
    Taking $\pi_0$ to the isomorphism of $R_1$ and $\overline{R}_1$ under $R$ gives a commutative diagram of commutative rings:
    % https://q.uiver.app/#q=WzAsMyxbMSwwLCJcXHBpXzAgUiJdLFswLDEsIlxccGlfMCBSW1xcdmFyZXBzaWxvbl8xXnstMX1dIl0sWzIsMSwiXFxwaV8wIFJbXFxvdmVybGluZXtcXHZhcmVwc2lsb259XzFeey0xfV0iXSxbMCwxXSxbMCwyXSxbMSwyLCJcXHNpbSJdXQ==
    \[\begin{tikzcd}
    	& {\pi_0 R} \\
    	{\pi_0 R[\varepsilon_1^{-1}]} && {\pi_0 R[\overline{\varepsilon}_1^{-1}]}
    	\arrow[from=1-2, to=2-1]
    	\arrow[from=1-2, to=2-3]
    	\arrow["\sim", from=2-1, to=2-3]
    \end{tikzcd}\]
    Since the image of $1 - \varepsilon_1$ in $\pi_0 R[\varepsilon_1^{-1}]$ is $0$, commutativity shows that the same is true in $\pi_0 R[\overline{\varepsilon}_1^{-1}]$.
    Thus, we have 
    \[
    (1 - \varepsilon_1)\overline{\varepsilon}_1^n = 0 \qin \pi_0 R
    \] 
    for some $n \geq 1$.
    Since $\overline{\varepsilon}_1$ is idempotent, we conclude that $\overline{\varepsilon}_1 = \varepsilon_1 \overline{\varepsilon}_1$.
    Symmetrically, $\varepsilon_1 = \varepsilon_1 \overline{\varepsilon}_1$.
    Thus, $\varepsilon_1 = \overline{\varepsilon}_1$.

    Since $\varepsilon_2 = 1-\varepsilon_1$ and $\overline{\varepsilon}_2 = 1-\overline{\varepsilon}_1$, we get that $\varepsilon_2 = \overline{\varepsilon}_2$.
    Thus, we get an isomorphism 
    \[
        R_2 \simeq R[\varepsilon_2^{-1}] = R[\overline{\varepsilon}_2^{-1}] \simeq \overline{R}_2
    \]
    of commutative $R$-algebras, as required.
\end{proof}

We next show that the datum of a decomposition of a commutative algebra $R$ into a product is equivalent to the datum of a map $\one_\cC^2 := \one_\cC \times \one_\cC \to R$ where $\one_\cC$ is the unit of $\cC$.

\begin{lem}\label{dec-unit-2}
    Let $\cC \in \CAlg(\Catall)$ be $0$-semiadditive.
    There is a commutative diagram
    % https://q.uiver.app/#q=WzAsMyxbMCwwLCJcXENBbGcoXFxjQyleMiJdLFsxLDEsIlxcQ0FsZyhcXGNDKSJdLFsyLDAsIlxcQ0FsZ197XFx1bml0X1xcY0NeMn0oXFxjQykiXSxbMCwyLCJcXHNpbSJdLFswLDFdLFsyLDFdXQ==
    \[\begin{tikzcd}
    	{\CAlg(\cC)^2} && {\CAlg_{\unit_\cC^2}(\cC)} \\
    	& {\CAlg(\cC)}
    	\arrow["\sim", from=1-1, to=1-3]
    	\arrow[from=1-1, to=2-2]
    	\arrow[from=1-3, to=2-2]
    \end{tikzcd}\]
    where the left diagonal map is given by the binary product and the right diagonal map is the forgetful functor.
\end{lem}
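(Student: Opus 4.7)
The plan is to factor the desired equivalence through the category of $\unit_\cC^2$-modules, exploiting the fact that $\unit_\cC^2 = \unit_\cC \oplus \unit_\cC$ is a biproduct in the $0$-semiadditive setting, and thus splits as an internal product of two copies of $\unit_\cC$ in $\CAlg(\cC)$.

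First, I would establish a symmetric monoidal equivalence
\[
    \Mod_{\unit_\cC^2}(\cC) \simeq \cC \times \cC.
\]
The two projection maps $\pi_i \colon \unit_\cC^2 \to \unit_\cC$ in $\CAlg(\cC)$ induce a pair of symmetric monoidal extension-of-scalars functors $\Mod_{\unit_\cC^2}(\cC) \to \cC$, which together define the forward functor; a standard argument (e.g.\ the ``splitting by orthogonal idempotents'' principle appearing in \cite[\S4.8.5]{HA}) shows this is an equivalence, with inverse sending $(X_1, X_2)$ to $X_1 \oplus X_2$ with its evident $\unit_\cC^2$-action. Applying $\CAlg(-)$ and using that $\CAlg$ preserves products gives
\[
    \CAlg(\Mod_{\unit_\cC^2}(\cC)) \simeq \CAlg(\cC \times \cC) \simeq \CAlg(\cC)^2.
\]

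Next, I would invoke the general identification
\[
    \CAlg(\Mod_A(\cC)) \simeq \CAlg(\cC)_{A/} = \CAlg_A(\cC)
\]
for any $A \in \CAlg(\cC)$ (a consequence of \cite[Corollary 3.4.1.7]{HA}), applied to $A = \unit_\cC^2$. Composing the two equivalences yields the desired equivalence $\CAlg(\cC)^2 \simeq \CAlg_{\unit_\cC^2}(\cC)$, sending a pair $(R_1, R_2)$ to the product $R_1 \times R_2$ equipped with the canonical map from $\unit_\cC^2$ induced by the pair of unit maps $\unit_\cC \to R_i$.

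For the triangle, since the forgetful functor $\Mod_A(\cC) \to \cC$ preserves finite products (both being computed in the underlying category), so does the induced functor on $\CAlg$. Under the equivalence above, this forgetful functor corresponds to $(R_1,R_2) \mapsto R_1 \times R_2$, i.e.\ the binary product functor in $\CAlg(\cC)$, giving commutativity.

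The main technical point is step one, the symmetric monoidal splitting of $\Mod_{\unit_\cC^2}(\cC)$; everything else is formal once this is in hand. In principle one could also argue in a more pedestrian way by constructing the two functors directly (forward: binary product with structure map from the pair of unit maps; backward: tensor along $\pi_1$ and $\pi_2$) and checking they are mutually inverse, but the module-theoretic route packages the required checks into standard statements.
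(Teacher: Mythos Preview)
Your proposal is correct and follows essentially the same route as the paper: establish the symmetric monoidal equivalence $\cC \times \cC \simeq \Mod_{\unit_\cC^2}(\cC)$ over $\cC$ (the paper cites \cite[Proposition 5.1.11]{AmbiHeight} for this, while you invoke idempotent splitting from \cite[\S4.8.5]{HA}), and then apply $\CAlg(-)$ together with the identification $\CAlg(\Mod_A(\cC)) \simeq \CAlg_A(\cC)$. The only cosmetic difference is that the paper packages the triangle commutativity into the ``over $\cC$'' statement before applying $\CAlg$, whereas you verify it separately at the end.
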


\begin{proof}
   As in the proof of \cite[Proposition 5.1.11]{AmbiHeight} we have a commutative diagram  of symmetric monoidal categories 
   % https://q.uiver.app/#q=WzAsNCxbMSwwLCJcXE1vZF97XFx1bml0X3tcXGNDfX0oXFxjQylcXHRpbWVzIFxcTW9kX3tcXHVuaXRfe1xcY0N9fShcXGNDKSJdLFsyLDAsIlxcTW9kX3tcXHVuaXRfe1xcY0N9XjJ9KFxcY0MpIl0sWzAsMCwiXFxjQ1xcdGltZXMgXFxjQyJdLFsxLDEsIlxcY0MiXSxbMiwwLCJcXHNpbSJdLFswLDEsIlxcc2ltIl0sWzEsM10sWzAsM10sWzIsM11d
    \[\begin{tikzcd}
    	{\cC\times \cC} & {\Mod_{\unit_{\cC}}(\cC)\times \Mod_{\unit_{\cC}}(\cC)} & {\Mod_{\unit_{\cC}^2}(\cC)} \\
    	& \cC
    	\arrow["\sim", from=1-1, to=1-2]
    	\arrow["\sim", from=1-2, to=1-3]
    	\arrow[from=1-3, to=2-2]
    	\arrow[from=1-2, to=2-2]
    	\arrow[from=1-1, to=2-2]
    \end{tikzcd}\]
    where the upper-right isomorphism is by the $0$-semiadditivity of $\cC$.
    The result then follows by applying $\CAlg$.
\end{proof}

Let $\cC \in \Catall$, and let $I$ be a category with an initial object denoted $i$.
Consider the adjunction
\[
    i_!\colon \cC \adj \Fun(I, \cC) \noloc i^*
\]
between evaluation at $i$ and left Kan extension along $\pt \xrightarrow{i} I$.
Observe that since $i$ is initial, for every $X \in \cC$ we have
\[
    (i_! X)(j) = \colim_{i \to j} X = X,
\]
namely $i_! X$ is the constant functor $\underline{X}\colon I \to \cC$.
We have the following corollary for decompositions of commutative algebras in the functor category $\Fun(I, \cC)$ with respect to the point-wise structure:

\begin{lem}\label{dec-initial}
    Let $I$ be a category with an initial object $i$ and let $\cC \in \CAlg(\Catall)$ be $0$-semiadditive.
    Then, a decomposition of a functor $F\colon I \to \CAlg(\cC)$ into a product is the same data as a map $\unit_\cC^2 \to F(i) \in \CAlg(\cC)$.
    That is, the following diagram commutes
    % https://q.uiver.app/#q=WzAsMyxbMCwwLCJcXEZ1bihJLCBcXENBbGcoXFxjQyleMikiXSxbMSwxLCJcXEZ1bihJLCBcXENBbGcoXFxjQykpIl0sWzIsMCwiXFxGdW4oSSwgXFxDQWxnKFxcY0MpKV97XFx1bml0X1xcY0NeMiAvfSJdLFswLDIsIlxcc2ltIl0sWzAsMV0sWzIsMV1d
    \[\begin{tikzcd}
    	{\Fun(I, \CAlg(\cC)^2)} && {\Fun(I, \CAlg(\cC))_{\unit_\cC^2 /}} \\
    	& {\Fun(I, \CAlg(\cC))}
    	\arrow["\sim", from=1-1, to=1-3]
    	\arrow[from=1-1, to=2-2]
    	\arrow[from=1-3, to=2-2]
    \end{tikzcd}\]
    where $\Fun(I, \CAlg(\cC))_{\unit_\cC^2 /}$ is the slice category of the evaluation at $i \in I$.
\end{lem}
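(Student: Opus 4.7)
The plan is to reduce the statement to \cref{dec-unit-2} applied to the functor category $\Fun(I, \cC)$ equipped with its pointwise symmetric monoidal structure. First I would observe that $\Fun(I, \cC) \in \CAlg(\Catall)$ with pointwise tensor product, and that it is $0$-semiadditive since limits, colimits, and the semiadditive structure in $\Fun(I, \cC)$ are all computed pointwise. Under this symmetric monoidal structure we have the tautological identification $\CAlg(\Fun(I, \cC)) \simeq \Fun(I, \CAlg(\cC))$, and the unit is the constant functor $\underline{\one_\cC}$; in particular $\underline{\one_\cC}^2 \simeq \underline{\one_\cC^2}$ since binary products are also computed pointwise.

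Applying \cref{dec-unit-2} to $\Fun(I, \cC)$ then yields a commutative diagram
\[
\begin{tikzcd}
    \Fun(I, \CAlg(\cC))^2 && \CAlg_{\underline{\one_\cC^2}}(\Fun(I, \cC)) \\
    & \Fun(I, \CAlg(\cC))
    \arrow["\sim", from=1-1, to=1-3]
    \arrow[from=1-1, to=2-2]
    \arrow[from=1-3, to=2-2]
\end{tikzcd}
\]
where the product on the left is the pointwise product. It therefore remains to identify the upper right corner, defined as the slice under the constant functor $\underline{\one_\cC^2} \in \Fun(I, \CAlg(\cC))$, with the slice category $\Fun(I, \CAlg(\cC))_{\one_\cC^2 /}$ defined via evaluation at the initial object $i$.

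For this final identification, I would invoke the adjunction $i_! \dashv i^*$ between left Kan extension along $i \colon \pt \to I$ and evaluation at $i$, both on $\cC$ and on $\CAlg(\cC)$. Since $i$ is initial, for any object $X$ in either category the left Kan extension $i_! X$ is the constant functor $\underline{X}$, as the colimit defining $(i_! X)(j) = \colim_{i \to j} X$ is indexed by a contractible category. Applied to $X = \one_\cC^2 \in \CAlg(\cC)$, the standard formula
\[
\Fun(I, \CAlg(\cC))_{i_! X /} \simeq \Fun(I, \CAlg(\cC)) \times_{\CAlg(\cC)} \CAlg(\cC)_{X /},
\]
which follows from the adjunction by unwinding the slice as a pullback of arrow categories, gives precisely the desired identification with $\Fun(I, \CAlg(\cC))_{\one_\cC^2 /}$. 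The resulting commutative diagram over $\Fun(I, \CAlg(\cC))$ is then the one asserted. The only mildly subtle point — and the one I would pay the most attention to — is checking that the two possible meanings of $\underline{\one_\cC^2}$ (the constant functor on the product, versus the product of the constant functor in $\Fun(I, \CAlg(\cC))$ itself) coincide, which follows because products in $\CAlg$ are underlying products and are computed pointwise in functor categories.
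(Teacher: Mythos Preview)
Your proof is correct and follows essentially the same route as the paper: apply \cref{dec-unit-2} to the pointwise symmetric monoidal functor category $\Fun(I,\cC)$, and then use the adjunction $i_! \dashv i^*$ together with $i_! X \simeq \underline{X}$ to identify the slice under $\underline{\one_\cC^2}$ with the slice of evaluation at $i$. The only cosmetic difference is that the paper justifies the lift of $i_! \dashv i^*$ to commutative algebras by observing that $i_!$ is strong (not merely oplax) symmetric monoidal, whereas you invoke the adjunction directly on $\CAlg(\cC)$; both are equally valid.
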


\begin{proof}
    Endow $\Fun(I, \cC)$ with the point-wise symmetric monoidal structure, so that
    \[
        \Fun(I, \CAlg(\cC)) \simeq \CAlg(\Fun(I, \cC)),
    \]
    whose unit is the constant functor $\underline{\unit}_\cC$.
    Thus, by \cref{dec-unit-2}, we get that
    \[
        \Fun(I, \CAlg(\cC)^2) \simeq \CAlg_{\underline{\unit}_\cC^2}(\Fun(I, \cC))
    \]
    over $\Fun(I, \CAlg(\cC))$.
    Note that the functor $i^*$ is symmetric monoidal, which endows $i_!$ with an oplax symmetric monoidal structure.
    By the discussion above, $i_!$ is given by the formation of constant functor, and we see that the oplax structure is strong.
    Hence, the adjunction $i_! \dashv i^*$ lifts to an adjunction on commutative algebras.
    Since $\underline{\unit}_\cC \simeq i_! \unit_\cC$, by the adjunction, we get an equivalence of slice categories
    \[
        \CAlg_{\underline{\unit}_\cC^2}(\Fun(I, \cC))
        \simeq \CAlg(\Fun(I, \cC))_{\unit_\cC^2 /}
    \]
    over $\Fun(I, \CAlg(\cC))$, concluding the proof.
\end{proof}

Similarly, we have the following corollary for decompositions of commutative algebras in the functor category $\Fun(I, \cC)$ with respect to the \emph{Day convolution}:

\begin{lem}\label{dec-unit}
    Let $I$ be a symmetric monoidal category whose unit $\one_I$ is initial, and let $\cC \in \CAlg(\Catall)$ be $0$-semiadditive.
    Then, a decomposition of a lax symmetric monoidal functor $F\colon I \to \cC$ into a product is the same data as a map $\unit_\cC^2 \to F(\unit_I) \in \CAlg(\cC)$.
    That is, the following diagram commutes
    % https://q.uiver.app/#q=WzAsMyxbMCwwLCJcXEZ1bl5cXGxheChJLCBcXGNDXjIpIl0sWzEsMSwiXFxGdW5eXFxsYXgoSSwgXFxjQykiXSxbMiwwLCJcXEZ1bl5cXGxheChJLCBcXGNDKV97XFx1bml0X1xcY0NeMiAvfSJdLFswLDIsIlxcc2ltIl0sWzAsMV0sWzIsMV1d
    \[\begin{tikzcd}
    	{\Fun^\lax(I, \cC^2)} && {\Fun^\lax(I, \cC)_{\unit_\cC^2 /}} \\
    	& {\Fun^\lax(I, \cC)}
    	\arrow["\sim", from=1-1, to=1-3]
    	\arrow[from=1-1, to=2-2]
    	\arrow[from=1-3, to=2-2]
    \end{tikzcd}\]
\end{lem}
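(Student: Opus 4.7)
The plan is to mirror the proof of \cref{dec-initial}, with the pointwise symmetric monoidal structure on $\Fun(I, \cC)$ replaced by the Day convolution. The two crucial inputs will be: identifying the Day convolution unit as the constant functor $\underline{\unit}_\cC$ (this is where the initiality of $\unit_I$ enters), and producing a strong symmetric monoidal left adjoint whose right adjoint computes $F(\unit_I)$. Concretely, I endow $\Fun(I, \cC)$ with the Day convolution, so that $\Fun^\lax(I, \cC) \simeq \CAlg(\Fun(I, \cC))$, and let $u\colon \pt \to I$ be the essentially unique symmetric monoidal functor picking out $\unit_I$. By the universal property of Day convolution, the left Kan extension $u_!\colon \cC \to \Fun(I, \cC)$ is strong symmetric monoidal, with lax symmetric monoidal right adjoint $u^*$ given by evaluation at $\unit_I$. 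The pointwise formula $(u_! X)(i) = \Map(\unit_I, i) \otimes X$ combined with the initiality of $\unit_I$ shows that $u_! X \simeq \underline{X}$ is the constant functor, so in particular the Day convolution unit is $u_! \unit_\cC \simeq \underline{\unit}_\cC$.

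Next, since $0$-semiadditivity is a property of the underlying category, $\Fun(I, \cC)$ is $0$-semiadditive regardless of the monoidal structure chosen, so \cref{dec-unit-2} applied to the Day convolution yields an equivalence
\[
    \Fun^\lax(I, \cC)^2 \simeq \CAlg_{\underline{\unit}_\cC^2}(\Fun(I, \cC))
\]
over $\Fun^\lax(I, \cC)$. Combining this with $\Fun^\lax(I, \cC^2) \simeq \Fun^\lax(I, \cC)^2$, which follows since $\cC^2$ is a product in $\CAlg(\Cat)$, handles the left-hand side of the desired equivalence.

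Finally, the adjunction $u_! \dashv u^*$ lifts to an adjunction $\CAlg(\cC) \rightleftarrows \Fun^\lax(I, \cC)$. Since binary products in $\Fun(I, \cC)$ are computed pointwise, we have $\underline{\unit}_\cC^2 \simeq u_!(\unit_\cC^2)$, and passing to slice categories under this object gives
\[
    \CAlg_{\underline{\unit}_\cC^2}(\Fun(I, \cC)) \simeq \Fun^\lax(I, \cC)_{\unit_\cC^2/}
\]
over $\Fun^\lax(I, \cC)$, where the right-hand slice is interpreted via evaluation at $\unit_I$ as in the statement. Composing the three equivalences concludes the proof. The main subtle point will be the identification of the Day convolution unit via the initiality of $\unit_I$; once that is in place, everything reduces to the formal manipulation of slice categories and adjunctions on commutative algebras already carried out in the proof of \cref{dec-initial}.
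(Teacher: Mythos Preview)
Your proof is correct and follows essentially the same route as the paper: use Day convolution so that lax symmetric monoidal functors are commutative algebras, identify the Day unit as the constant functor $\underline{\unit}_\cC$ via initiality of $\unit_I$, apply \cref{dec-unit-2}, and then use the symmetric monoidal adjunction $u_! \dashv u^*$ to pass to the slice. Your write-up is slightly more explicit in one place (you spell out $\Fun^\lax(I, \cC^2) \simeq \Fun^\lax(I, \cC)^2$ and $\underline{\unit}_\cC^2 \simeq u_!(\unit_\cC^2)$), but there is no substantive difference.
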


\begin{proof}
    The argument is analogous to the proof of \cref{dec-initial}, for the Day convolution.
    Recall that
    \[
        \Fun^\lax(I, \cC) \simeq \CAlg(\Fun(I, \cC)_\Day).
    \]
    Since $\unit_I$ is assumed to be initial, we see that the unit of the Day convolution is
    \[
        \Map(\unit_I, -) \otimes \unit_\cC \simeq \underline{\unit}_\cC.
    \]
    Thus, by \cref{dec-unit-2}, we get that
    \[
        \Fun^\lax(I, \cC^2) \simeq \CAlg_{\underline{\unit}_\cC^2}(\Fun(I, \cC)_\Day)
    \]
    over $\Fun^\lax(I, \cC)$.
    Since $\unit_I\colon \pt \to I$ is symmetric monoidal, by \cite[Proposition 3.34]{linskens2022global} or \cite[Proposition 3.6]{moshe2021higher}, the adjunction $(\unit_I)_! \dashv (\unit_I)^*$ is symmetric monoidal, and hence lifts to an adjunction on commutative algebras.
    By the discussion above, $\underline{\unit}_\cC \simeq (\unit_I)_! \unit_\cC$, so by adjunction, we get an equivalence of slice categories
    \[
        \CAlg_{\underline{\unit}_\cC^2}(\Fun(I, \cC)_\Day)
        \simeq \CAlg(\Fun(I, \cC)_\Day)_{\unit_\cC^2 /}
    \]
    over $\Fun^\lax(I, \cC)$, which concludes the proof.
\end{proof}

\subsubsection{Cyclotomic Extensions}\label{Sec_Cyc_Ext}

Let $\cC$ be a presentably symmetric monoidal stable $\infty$-semiadditive category with all objects of semiadditive height $n$ (e.g.\ $\SpTn$).
By \cite[Proposition 4.5]{carmeli2021chromatic} (see also \cite[Definition 4.7]{carmeli2021chromatic} and \cite[Corollary 6.7]{barthel2022chromatic}), the map
\[
    \Sigma^n \Fp \too 0
    \qin \Spcn^{B\Fp^\times}
\]
induces a decomposition
\[
    \unit_\cC[\Sigma^n \Fp] \simeq \unit_\cC \times \mdef{\cyc{\unit_\cC}{p}{n}}
    \qin \CAlg(\cC)^{B\Fp^\times},
\]
where $\cyc{\unit_\cC}{p}{n}$ is called the (height $n$) \tdef{$p$-th cyclotomic extension}.
For every $M \in \Spcn$ equipped with a map $\Sigma^n \Fp \to M$, the induced map
\[
    \unit_\cC[\Sigma^n \Fp] \too \unit_\cC[M]
    \qin \CAlg(\cC),
\]
makes $\unit_\cC[M]$ into a $\unit_\cC[\Sigma^n \Fp]$-algebra.

\begin{defn}
    For $X \in \cC$ and $M \in \Spcn$ equipped with a map $\Sigma^n \Fp \to M$, we let
    \[\begin{gathered}
        \mdef{X[M]_0} := X \otimes \unit_\cC[M] \tensunder{\unit_\cC[\Sigma^n \Fp]} \unit_\cC,
        \qquad
        \mdef{X[M]_\omega} := X \otimes \unit_\cC[M] \tensunder{\unit_\cC[\Sigma^n \Fp]} \cyc{\unit_\cC}{p}{n}
    \end{gathered}\]
    which gives a decomposition 
    \[
    X[M] \simeq X[M]_0 \times X[M]_\omega \qin \cC.
    \]
\end{defn}

Observe that this decomposition is natural in $M \in (\Spcn)_{\Sigma^n \Fp/}$ and lax symmetric monoidally natural in $X \in \cC$.

\begin{example}\label{cyclo-decs}
    Let $R \in \Alg(\SpTn)$ and let $M \in \Sp_{\geq 1}$ be equipped with a map $\Sigma^{n+1} \Fp \to M$.
    \begin{enumerate}
        \item Applying the above to $X = \KTnp(R)$ we get a decomposition
        \[
            \KTnp(R)[M] \simeq \KTnp(R)[M]_0 \times \KTnp(R)[M]_\omega
            \qin \SpTnp.
        \]
        \item Applying the above to $X = R$, and looping $M$, we get a decomposition
        \[
            R[\Omega M] \simeq R[\Omega M]_0 \times R[\Omega M]_\omega
            \qin \Alg(\SpTn),
        \]
        and since $\KTnp$ preserves finite products, we get a decomposition
        \[
            \KTnp(R[\Omega M]) \simeq \KTnp(R[\Omega M]_0) \times \KTnp(R[\Omega M]_\omega)
            \qin \SpTnp.
        \]
    \end{enumerate}
    Observe that these decompositions are natural in $M$ and lax symmetric monoidally natural in $R$. 
\end{example}

Our goal is to compare the two decompositions in the above example. 
Recall from \cref{KTnp-R-A-Lnf} that there is a map
\[
    \KTnp(R)[A] \too \KTnp(R[\Omega A])
    \qin \SpTnp,
\]
lax symmetric monoidally natural in $A \in \Spc^{\geq 1}_*$ and $R \in \Alg(\SpTn)$, which is an isomorphism when $A$ is a sifted colimit of $\pi$-finite $p$-spaces.
By pre-composing with $\Omega^\infty\colon \Sp_{\geq 1} \to \Spc^{\geq 1}_*$ and passing to objects under $\Sigma^{n+1} \Fp$, we get the following:

\begin{defn}
    Using the above, we define the map
    \[
        \KTnp(R)[M] \too \KTnp(R[\Omega M])
        \qin \SpTnp,
    \]
    natural in $M \in (\Sp_{\geq 1})_{\Sigma^{n+1} \Fp /}$ and lax symmetric monoidally natural in $R \in \Alg(\SpTn)$.
\end{defn}

Since $\Omega^\infty$ preserves filtered colimits, this map is an isomorphism when $M$ is a filtered colimit of $\pi$-finite $p$-spectra.

\begin{prop}\label{cyclo-for-cnsp}
    The map
    \[
        \KTnp(R)[M] \too \KTnp(R[\Omega M])
        \qin \SpTnp
    \]
    respects the decompositions of the source and the target from \cref{cyclo-decs}.
\end{prop}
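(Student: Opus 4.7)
The plan is to reduce the statement, via the lax symmetric monoidal naturality of the assembly map in $R$, to the case $R = \unit_{\SpTn}$, and then, via \cref{dec-initial}, to the compatibility of two product decompositions of the commutative algebras $\unit_{\SpTnp}[\Sigma^{n+1}\Fp]$ and $\KTnp(\unit_{\SpTn}[\Sigma^n\Fp])$ under the algebra map $f$ induced by the natural transformation at the initial object $M = \Sigma^{n+1}\Fp$.

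More explicitly, the source decomposition of $\KTnp(R)[M]$ is the base change along the canonical algebra map $\unit_{\SpTnp}[\Sigma^{n+1}\Fp] \to \KTnp(R)[M]$ of the height $n{+}1$ cyclotomic decomposition $\unit_{\SpTnp}[\Sigma^{n+1}\Fp] \simeq \unit_{\SpTnp} \times \cyc{\unit_{\SpTnp}}{p}{n+1}$, while the target decomposition is $\KTnp$ (which preserves finite products) applied to the base change along $\unit_{\SpTn}[\Sigma^n\Fp] \to R[\Omega M]$ of the height $n$ cyclotomic decomposition $\unit_{\SpTn}[\Sigma^n\Fp] \simeq \unit_{\SpTn} \times \cyc{\unit_{\SpTn}}{p}{n}$. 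Since base change preserves product decompositions and the assembly map is lax symmetric monoidally natural in $R$, the reduction to $R = \unit_{\SpTn}$ is immediate. For the reduction to $M = \Sigma^{n+1}\Fp$, I would view both $F_1(M) := \unit_{\SpTnp}[M]$ and $F_2(M) := \KTnp(\unit_{\SpTn}[\Omega M])$ as functors $(\Sp_{\geq 1})_{\Sigma^{n+1}\Fp /} \to \CAlg(\SpTnp)$ and apply \cref{dec-initial}. This classifies the decomposition data of such functors by maps $\unit_{\SpTnp}^2 \to F_i(\Sigma^{n+1}\Fp)$ in $\CAlg(\SpTnp)$, so compatibility of the decompositions with the natural transformation is equivalent to compatibility of these classifying maps at the initial object.

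The remaining core statement is that the two maps $\unit_{\SpTnp}^2 \rightrightarrows \KTnp(\unit_{\SpTn}[\Sigma^n\Fp])$ in $\CAlg(\SpTnp)$ -- one arising from the source decomposition composed with $f$, the other from the target decomposition -- agree, or equivalently, that $f$ carries the height $n{+}1$ cyclotomic idempotent to the image under $\KTnp$ of the height $n$ cyclotomic idempotent. For this, I would apply naturality in $M$ to the morphism $\Sigma^{n+1}\Fp \to 0$ in $(\Sp_{\geq 1})_{\Sigma^{n+1}\Fp /}$, producing a commutative square in $\CAlg(\SpTnp)$
\[
\begin{tikzcd}
\unit_{\SpTnp}[\Sigma^{n+1}\Fp] \ar[r,"f"] \ar[d] & \KTnp(\unit_{\SpTn}[\Sigma^n\Fp]) \ar[d] \\
\unit_{\SpTnp} \ar[r] & \KTnp(\unit_{\SpTn})
\end{tikzcd}
\]
whose vertical maps are the augmentations. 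By the construction in \cite[Proposition 4.5]{carmeli2021chromatic}, these augmentations are precisely the first projections of the respective cyclotomic decompositions, so the square expresses that $f$ intertwines the first projections. I would then invoke \cref{idemp-unique}, applied to the decomposition of $\KTnp(\unit_{\SpTn}[\Sigma^n\Fp])$ and the pushforward of the source decomposition along $f$, to conclude that the two decompositions of $\KTnp(\unit_{\SpTn}[\Sigma^n\Fp])$ agree, which forces $f$ to intertwine the full cyclotomic decompositions as required.

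The hard part will be the final invocation of \cref{idemp-unique}: one must check that the canonical identification of first factors coming from the augmentation square is genuinely an isomorphism of first factors under $\KTnp(\unit_{\SpTn}[\Sigma^n\Fp])$ in the sense of the lemma. Ultimately, this amounts to the uniqueness of the cyclotomic idempotent as the $\Fp^\times$-equivariant splitting of the augmentation, a property that follows from the explicit construction of the cyclotomic idempotent via the higher semiadditive integral in \cite{carmeli2021chromatic}, together with the $\infty$-semiadditivity of $\KTnp$ established in \cref{main-thm-cor}.
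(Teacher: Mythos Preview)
Your reduction via \cref{dec-initial} in $M$ and lax symmetric monoidal naturality in $R$ matches the paper's; the paper packages the $R$-reduction via \cref{dec-unit}, but the content is the same, and the naturality square obtained from $\Sigma^{n+1}\Fp \to 0$ is exactly what the paper uses.

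The gap is in the final step. You correctly reduce to comparing two decompositions of $\KTnp(\Sph_{\Tn}[\Sigma^n\Fp])$ and correctly observe that the square intertwines the augmentation projections, but your proposed verification of the hypothesis of \cref{idemp-unique}---via $\Fp^\times$-equivariance and the semiadditive integral construction of the idempotent---is both unnecessary and not clearly sufficient as stated (from the square alone one only gets that the pushed-forward idempotent refines the target idempotent, not equality). The observation you are missing is that \emph{both horizontal maps in the square are isomorphisms}: by \cref{KTnp-R-A-Lnf}, the assembly map $\KTnp(\Sph_{\Tn})[M] \to \KTnp(\Sph_{\Tn}[\Omega M])$ is an isomorphism whenever $M$ is a $\pi$-finite $p$-spectrum, and both $\Sigma^{n+1}\Fp$ and $0$ are such. (Note that the actual assembly map after reducing to $R=\Sph_{\Tn}$ has source $\KTnp(\Sph_{\Tn})[\Sigma^{n+1}\Fp]$, not $\unit_{\SpTnp}[\Sigma^{n+1}\Fp]$ as in your $F_1$; your source decomposition factors through the former anyway, so this is only a matter of phrasing.) Once the top map is invertible, the two decompositions live on the same ring, and the commutative square with invertible bottom supplies precisely the isomorphism of first factors under that ring required by \cref{idemp-unique}. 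This is the paper's argument, and it avoids any appeal to \cref{main-thm-cor} or to the explicit form of the cyclotomic idempotent.
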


\begin{proof}
    We begin by reducing to the initial case, namely $R = \Sph_{\Tn}$ and $M = \Sigma^{n+1} \Fp$.
    The map in question is a natural transformation of functors
    \[
        (\Sp_{\geq 1})_{\Sigma^{n+1} \Fp /} \too \Fun^\lax(\Alg(\SpTn), \SpTnp)).
    \]
    Since $\Sigma^{n+1} \Fp$ is the initial object of $(\Sp_{\geq 1})_{\Sigma^{n+1} \Fp /}$, by \cref{dec-initial}, it suffices to check that the map
    \[
        \KTnp(R)[\Sigma^{n+1} \Fp] \too \KTnp(R[\Sigma^n \Fp])
    \]
    of lax symmetric monoidal functors $\Alg(\SpTn) \to \SpTnp$ respects the decompositions.
    Since the unit $\Sph_{\Tn} \in \Alg(\SpTn)$ is the initial object, by \cref{dec-unit}, it suffices to check that the map
    \[
        \KTnp(\Sph_{\Tn})[\Sigma^{n+1} \Fp] \too \KTnp(\Sph_{\Tn}[\Sigma^n \Fp])
        \qin \CAlg(\SpTnp)
    \]
    respects the decompositions, which we now show.

    Applying the map in question to $R = \Sph_{\Tn}$ and $M = 0$, we get the following commutative square in $\CAlg(\SpTnp)$:
    % https://q.uiver.app/?q=WzAsNCxbMCwwLCJcXEtUbnAoXFxTcGhfe1xcVG59KVtcXFNpZ21hXntuKzF9IFxcRnBdIl0sWzAsMSwiXFxLVG5wKFxcU3BoX3tcXFRufSkiXSxbMSwxLCJcXEtUbnAoXFxTcGhfe1xcVG59KSJdLFsxLDAsIlxcS1RucChcXFNwaF97XFxUbn1bXFxTaWdtYV5uIFxcRnBdKSJdLFswLDMsIlxcc2ltIl0sWzAsMV0sWzMsMl0sWzEsMiwiXFxzaW0iXV0=
    \[\begin{tikzcd}
    	{\KTnp(\Sph_{\Tn})[\Sigma^{n+1} \Fp]} & {\KTnp(\Sph_{\Tn}[\Sigma^n \Fp])} \\
    	{\KTnp(\Sph_{\Tn})} & {\KTnp(\Sph_{\Tn})}
    	\arrow["\sim", from=1-1, to=1-2]
    	\arrow[from=1-1, to=2-1]
    	\arrow[from=1-2, to=2-2]
    	\arrow["\sim", from=2-1, to=2-2]
    \end{tikzcd}\]
    where both horizontal maps are isomorphisms because $\Sigma^{n+1} \Fp$ and $0$ are $\pi$-finite $p$-spectra.
    Thus, the two decompositions coincide by \cref{idemp-unique}.
\end{proof}

\begin{rem}\label{rem:cyclo-equiv}
    Recall that the spectrum $\Sigma^{n+1} \FF_p$ is acted by the group $\FF_p^\times$ and the splitting
    \[
        \one_\cC[\Sigma^{n+1} \FF_p] \simeq
        \one_\cC \times \cyc{\one_\cC}{p}{n+1}
    \]
    is $\FF_p^\times$-equivariant.
    Now, let $G$ be a group acting on $M \in \Sp_{\geq 1}$, equipped with a map $G \to \Fp^\times$ and a $G$-equivariant map $\Sigma^{n+1} \FF_p \to M$.
    Since the decompositions of \cref{cyclo-decs} are natural in $M$, the induced map
    \[
        \KTnp(R)[M]_\omega \too 
        \KTnp(R[\Omega M]_\omega) \qin 
        \SpTnp
    \]
    is canonically $G$-equivariant.
\end{rem}

We now specialize to the case of cyclotomic extensions. We begin by recalling some definitions and notations from \cite{barthel2022chromatic} regarding Brown--Comenetz duality.  
Let $I_{\QQ_p/\ZZ_p}$ be the $p$-local Brown--Comenetz spectrum, and let
\[
    \Dual{}{n}\colon 
    (\Sp^{[0,n]})^\op \too \Spcn
\]
be the functor sending $M$ to 
\[
    \Dual{M}{n} :=
    \hom_{\Spcn}(M, \tau_{\ge 0} \Sigma^n I_{\QQ_p/\ZZ_p}).
\]
Since $\Dual{\Fp}{n} \simeq \Sigma^n \Fp$, we get a further induced functor
\[
    \Dual{}{n}\colon 
    (\Sp^{[0,n]}_{/ \Fp})^\op \too 
    (\Sp^{[0,n]})_{\Sigma^n \Fp /}.
\]

\begin{defn}
    For $X \in \cC$ and $M \in \Sp^{[0,n]}_{/ \Fp}$ we let
    \[
        \mdef{\cyc{X}{M}{n}} := X[\Dual{M}{n}]_\omega
        \qin \cC.
    \]
\end{defn}

By \cite[Remark 6.10]{barthel2022chromatic}, when $M = \OR$ is a commutative algebra in $\Sp^{[0,n]}$ augmented over $\FF_p$, this agrees with $\cyc{X}{\OR}{n}$ of \cite[Definition 4.19]{barthel2022chromatic}.

\begin{thm}\label{cyclo-main}
    There is a map
    \[
        \cyc{\KTnp(R)}{M}{n+1} \too \KTnp(\cyc{R}{M}{n})
        \qin \SpTnp,
    \]
    natural in $M \in \Sp^{[0,n]}_{/ \Fp}$ and lax symmetric monoidally natural in $R \in \Alg(\SpTn)$.
    Moreover, when the homotopy groups of $\Dual{M}{n}$ are torsion $p$-groups, the map is an isomorphism.
\end{thm}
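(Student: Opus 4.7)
The plan is to reduce to \cref{cyclo-for-cnsp} by the observation that $\Dual{M}{n+1} \simeq \Sigma \Dual{M}{n}$ for $M \in \Sp^{[0,n]}$. Indeed, since $\Sigma^{n+1}I_{\QQ_p/\ZZ_p}$ is concentrated in degree $n+1$ and $M$ is in degrees $[0,n]$, the mapping spectrum $\hom(M, \Sigma^{n+1}I_{\QQ_p/\ZZ_p})$ lives in degrees $[1,n+1]$ and is in particular already connective, so the connective cover may be ignored and
\[
    \Dual{M}{n+1} \simeq \hom(M, \Sigma^{n+1}I_{\QQ_p/\ZZ_p}) \simeq \Sigma\hom(M,\Sigma^n I_{\QQ_p/\ZZ_p}) \simeq \Sigma\Dual{M}{n}.
\]
The augmentation $M \to \Fp$ induces $\Sigma^{n+1}\Fp = \Dual{\Fp}{n+1} \to \Dual{M}{n+1}$, exhibiting $\Dual{M}{n+1}$ as an object of $(\Sp_{\geq 1})_{\Sigma^{n+1}\Fp/}$, naturally in $M$.

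Next, I would apply \cref{cyclo-for-cnsp} to the spectrum $\Dual{M}{n+1}$ to obtain the map
\[
    \KTnp(R)[\Dual{M}{n+1}] \too \KTnp\bigl(R[\Omega\Dual{M}{n+1}]\bigr) \simeq \KTnp\bigl(R[\Dual{M}{n}]\bigr),
\]
which respects the decompositions of \cref{cyclo-decs}, using that $\KTnp$ preserves finite products on the target side. The $\omega$-component is by definition the desired map
\[
    \cyc{\KTnp(R)}{M}{n+1} \too \KTnp(\cyc{R}{M}{n}),
\]
and its naturality in $M \in \Sp^{[0,n]}_{/\Fp}$ and lax symmetric monoidal naturality in $R \in \Alg(\SpTn)$ are inherited from those of \cref{cyclo-for-cnsp}, once one checks that the augmentation $\Sigma^{n+1}\Fp \to \Dual{M}{n+1}$ used to form the source splitting agrees under the equivalence $\Dual{M}{n+1} \simeq \Sigma\Dual{M}{n}$ with the suspension of the augmentation $\Sigma^n\Fp \to \Dual{M}{n}$ defining the target splitting; this is a direct unwinding of the definition of $\Dual{(-)}{n}$.

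For the isomorphism statement, note that if the homotopy groups of $\Dual{M}{n}$ are torsion $p$-groups, then so are those of $\Dual{M}{n+1} \simeq \Sigma\Dual{M}{n}$. Since $\Dual{M}{n+1}$ is moreover bounded, writing each of its homotopy groups as the filtered colimit of its finite $p$-subgroups exhibits $\Dual{M}{n+1}$ as a filtered colimit of $\pi$-finite $p$-spectra. By the sifted-colimit assertion in \cref{KTnp-R-A-Lnf} (which is what makes the map of \cref{cyclo-for-cnsp} an isomorphism in this regime), the whole map above is already an isomorphism before passing to $\omega$-components, and hence so is its $\omega$-component.

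The only place I expect to require any real care is the compatibility check between the two augmentation data under the loop-suspension identification, which is the content making the $\omega$-part of the map from \cref{cyclo-for-cnsp} actually produce the claimed cyclotomic map; once this is verified, everything else is bookkeeping directly on top of the established machinery.
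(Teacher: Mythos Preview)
Your proposal is correct and follows essentially the same approach as the paper: precompose the natural transformation of \cref{cyclo-for-cnsp} with the functor $\Dual{}{n+1}\colon (\Sp^{[0,n]}_{/\Fp})^\op \to (\Sp_{\ge 1})_{\Sigma^{n+1}\Fp/}$ and project to the $(-)_\omega$ components, then use that $\Dual{M}{n+1}\simeq \Sigma\Dual{M}{n}$ has torsion $p$-group homotopy groups and is therefore a filtered colimit of $\pi$-finite $p$-spectra (the paper cites \cite[Lemma 6.57]{barthel2022chromatic} for this last step, which is exactly the bounded--torsion argument you sketch).
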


\begin{proof}
    The first part follows immediately from \cref{cyclo-for-cnsp} by pre-composing with
    \[\begin{gathered}
        \Dual{}{n+1}\colon 
        (\Sp^{[0,n]}_{/ \Fp})^\op \too 
        (\Sp_{\geq 1})_{\Sigma^{n+1} \Fp /}
    \end{gathered}\]
    and projecting to the $(-)_\omega$ coordinates.
    For the second part, when the homotopy groups of $\Dual{M}{n}$ are torsion $p$-groups, the same is true for $\Dual{M}{n+1} \simeq \Sigma \Dual{M}{n}$, so by \cite[Lemma 6.57]{barthel2022chromatic} it is a filtered colimit of $\pi$-finite $p$-spectra, concluding the proof.
\end{proof}

\begin{rem}\label{rem:cyclo-equiv-ring}
    By \Cref{rem:cyclo-equiv}, if a group $G$ acts on $M$ and is mapped to $\FF_p^\times$ in a way making the map $M \to \FF_p$ equivariant, then the comparison map in \Cref{cyclo-main} is $G$-equivariant.   
\end{rem}

\begin{example}\label{wpr}
    Consider the case $M = \ZZ/p^r$, and recall that by \cite[Corollary 6.7]{barthel2022chromatic} the extension $\cyc{R}{\ZZ/p^r}{n} \simeq \cyc{R}{p^r}{n}$ is the higher cyclotomic extension defined in \cite[Definition 4.7]{carmeli2021chromatic}.
    Thus, we get an isomorphism
    \[
        \cyc{\KTnp(R)}{p^r}{n+1} \iso \KTnp(\cyc{R}{p^r}{n})
        \qin \SpTnp^{B(\ZZ/p^r)^\times}.
    \]
\end{example}

\begin{example}\label{wpinfty}
    For every $r \in \NN$, we have a natural action of $\ZZ_p^\times$ on $\ZZ/p^r$, and the projection maps $\ZZ/p^{r+1} \onto \ZZ/p^r$ are $\ZZ_p^\times$-equivariant. By naturality, we get an isomorphism of sequential diagrams in $\SpTnp^{B\ZZ_p^\times}$:
    % https://q.uiver.app/#q=WzAsMTAsWzMsMCwiXFxjeWN7XFxLVG5wKFIpfXtwXnJ9e24rMX0iXSxbMSwwLCJcXGN5Y3tcXEtUbnAoUil9e3BeMn17bisxfSJdLFswLDAsIlxcY3lje1xcS1RucChSKX17cH17bisxfSJdLFswLDEsIlxcS1RucChcXGN5Y3tSfXtwfXtufSkiXSxbMSwxLCJcXEtUbnAoXFxjeWN7Un17cF4yfXtufSkiXSxbMywxLCJcXEtUbnAoXFxjeWN7Un17cF5yfXtufSkiXSxbMiwwLCJcXGNkb3RzIl0sWzIsMSwiXFxjZG90cyJdLFs0LDAsIlxcY2RvdHMiXSxbNCwxLCJcXGNkb3RzIl0sWzAsNSwiXFx3ciIsMl0sWzEsNCwiXFx3ciIsMl0sWzIsMywiXFx3ciIsMl0sWzIsMV0sWzEsNl0sWzYsMF0sWzAsOF0sWzQsN10sWzcsNV0sWzUsOV0sWzMsNF1d
    \[\begin{tikzcd}
    	{\cyc{\KTnp(R)}{p}{n+1}} & {\cyc{\KTnp(R)}{p^2}{n+1}} & \cdots & {\cyc{\KTnp(R)}{p^r}{n+1}} & \cdots \\
    	{\KTnp(\cyc{R}{p}{n})} & {\KTnp(\cyc{R}{p^2}{n})} & \cdots & {\KTnp(\cyc{R}{p^r}{n})} & \cdots
    	\arrow["\wr"', from=1-4, to=2-4]
    	\arrow["\wr"', from=1-2, to=2-2]
    	\arrow["\wr"', from=1-1, to=2-1]
    	\arrow[from=1-1, to=1-2]
    	\arrow[from=1-2, to=1-3]
    	\arrow[from=1-3, to=1-4]
    	\arrow[from=1-4, to=1-5]
    	\arrow[from=2-2, to=2-3]
    	\arrow[from=2-3, to=2-4]
    	\arrow[from=2-4, to=2-5]
    	\arrow[from=2-1, to=2-2]
    \end{tikzcd}\]
    Taking the colimit of the sequences, by \cite[Definition 4.10]{carmeli2021chromatic} (see also \cite[Corollary 6.18]{barthel2022chromatic}) and the preservation of filtered colimits under $\KTnp$, we get an isomorphism
    \[
        \cyc{\KTnp(R)}{p^\infty}{n+1} \iso \KTnp(\cyc{R}{p^\infty}{n})
        \qin \SpTnp^{B\ZZ_p^\times}.
    \]
\end{example}

Our next goal is to extend the last two examples from cyclotomic extensions to any intermediate extension.
Let $0 \leq r \leq \infty$, and let $G = (\ZZ/p^r)^\times$ if $r < \infty$ or $G = \ZZ_p^\times$ if $r = \infty$, with its canonical action on $\cyc{R}{p^r}{n}$ over $R$.
For any subgroup $H \leq G$, the assembly map for $H$-fixed points together with the (inverses of) the isomorphisms from the examples above give us a map
\[
    \KTnp(\cyc{R}{p^r}{n}^{hH})
    \too \KTnp(\cyc{R}{p^r}{n})^{hH}
    \simeq \cyc{\KTnp(R)}{p^r}{n+1}^{hH}
    \qin \SpTn^{B(G/H)}
\]
lax symmetric monoidally natural in $R \in \Alg(\SpTn)$.

We also remind the reader that $\ZZ_p^\times$ can be decomposed as
\[
    \ZZ_2^{\times} \simeq (\ZZ/4)^\times \times (1 + 4\ZZ_2),
    \qquad \ZZ_p^{\times} \simeq \Fp^\times \times (1 + p\ZZ_p).
\]
In other words, $\ZZ_p^\times \simeq T_p \times \ZZ_p$ where $T_p$ is $(\ZZ/4)^\times$ for $p = 2$ and $\Fp^\times$ for odd primes.

\begin{prop}\label{inter-galois}
    Let $0 \leq r \leq \infty$, and let $H$ be a finite subgroup of $G$, then the map
    \[
        \KTnp(\cyc{R}{p^r}{n}^{hH}) \too \cyc{\KTnp(R)}{p^r}{n+1}^{hH}
    \]
    is an isomorphism.
\end{prop}

\begin{proof}
    We begin with the case of finite $r$.
    If $r = 0$ then $G$ is trivial and there is nothing to prove, so we assume that $r \geq 1$.
    Denote $S = \cyc{R}{p^r}{n}$.
    It suffices to show that the map
    \[
        \KTnp(S^{hH}) \too \KTnp(S)^{hH}
    \]
    is an isomorphism.
    By \cite[Proposition 5.2]{carmeli2021chromatic}, the cyclotomic extension $\cyc{\Sph_{\Tnp}}{p^r}{n+1}$ is faithful. Thus, we can check if the above assembly map is an isomorphism after tensoring with it. Namely, after adding $p^r$-th roots of unity. The resulting map is the top arrow in the following diagram:
    % https://q.uiver.app/#q=WzAsNixbMCwwLCJcXGN5Y3tcXEtUbnAoU157aEh9KX17cF5yfXtuKzF9ICJdLFs0LDAsIlxcY3lje1xcS1RucChTKV57aEh9fXtwXnJ9e24rMX0iXSxbMCwxLCJcXEtUbnAoXFxjeWN7U157aEh9fXtwXnJ9e259KSAiXSxbNCwyLCJcXEtUbnAoXFxjeWN7U317cF5yfXtufSlee2hIfSJdLFs0LDEsIlxcY3lje1xcS1RucChTKX17cF5yfXtuKzF9XntoSH0iXSxbMCwyLCJcXEtUbnAoXFxjeWN7U317cF5yfXtufV57aEh9KSJdLFswLDFdLFswLDIsIlxcd3IiLDJdLFsxLDQsIlxcd3IiLDJdLFs0LDMsIlxcd3IiLDJdLFsyLDUsIlxcd3IiLDJdLFs1LDNdLFswLDRdLFsyLDNdXQ==
    \[\begin{tikzcd}
    	{\cyc{\KTnp(S^{hH})}{p^r}{n+1} } &&&& {\cyc{\KTnp(S)^{hH}}{p^r}{n+1}} \\
    	{\KTnp(\cyc{S^{hH}}{p^r}{n}) } &&&& {\cyc{\KTnp(S)}{p^r}{n+1}^{hH}} \\
    	{\KTnp(\cyc{S}{p^r}{n}^{hH})} &&&& {\KTnp(\cyc{S}{p^r}{n})^{hH}}
    	\arrow[from=1-1, to=1-5]
    	\arrow["\wr"', from=1-1, to=2-1]
    	\arrow["\wr"', from=1-5, to=2-5]
    	\arrow["\wr"', from=2-5, to=3-5]
    	\arrow["\wr"', from=2-1, to=3-1]
    	\arrow[from=3-1, to=3-5]
    	\arrow[from=1-1, to=2-5]
    	\arrow[from=2-1, to=3-5]
    \end{tikzcd}\]
    To see that the diagram commutes, note that the upper and lower triangles commute by the compatibility of assembly maps under composition, and the middle square commutes by the naturality of the cyclotomic redshift map (\cref{cyclo-main}) applied to the assembly map.
    The bottom left and top right vertical maps are isomorphism since $\SpTn$ and $\SpTnp$ are $1$-semiadditive, hence their tensor products commute with $1$-finite limits in each coordinate.
    The top left and bottom right vertical maps are isomorphisms again by cyclotomic redshift (\cref{cyclo-main}).
    Thus, the top map is an isomorphism if and only if the bottom map is an isomorphism. 
    By the second Rognes--Galois condition we have  
    \[
        \cyc{S}{p^r}{n} \simeq
        \prod_{(\ZZ/p^r)^\times} S \qin
        \Alg(\SpTnp)^{B(\ZZ/p^r)^\times}
    \]
    where the action of $(\ZZ/p^r)^\times$, hence of $H$, is given by freely permuting the factors.
    Finally, $\KTnp$ preserves finite products, so the bottom map identifies with the isomorphism
    \[
        \KTnp(\prod_{(\ZZ/p^r)^\times/H} S) \iso \prod_{(\ZZ/p^r)^\times/H} \KTnp(S).
    \]

    Now assume that $r = \infty$.
    Since $H$ is finite, it is contained in $T_p$.
    Note that for $k \geq 2$ if $p$ is even and for $k \geq 1$ if $p$ is odd, $T_p$ also injects to the Galois group of $\cyc{R}{p^k}{n}$, and the isomorphism $\cyc{R}{p^\infty}{n} \simeq \colim \cyc{R}{p^k}{n}$ is $T_p$-equivariant.
    Since $\SpTn$ and $\SpTnp$ are $1$-semiadditive we get that $(-)^{hH}$ commutes with colimits, and $\KTnp$ commutes with filtered colimits, so the result follows from the finite case.
\end{proof}

\subsubsection{Cyclotomic Completion}

Recall that given $R \in \CAlg(\SpTn)$, the infinite cyclotomic extension $\cyc{R}{p^\infty}{n}$ is not guaranteed to be faithful over $R$.
In \cite{barthel2022chromatic}, the authors introduced the \textit{cyclotomic completion} functor $\cycomp{(-)} \colon \SpTn \to \SpTn$, which is the universal localization making all infinite cyclotomic extensions faithful, namely, the $\cyc{\Sph_{\Tn}}{p^\infty}{n}$-localization.
Moreover, in \cite[Proposition 6.19]{barthel2022chromatic} the authors gave a formula for the cyclotomic completion.
Recall that $\ZZ_p^\times \simeq T_p \times \ZZ_p$.
The cyclotomic completion is given by taking fixed points with respect to the (dense) subgroup $T_p \times \ZZ$, i.e.,
\[
    \cycomp{R} \simeq 
    \cyc{R}{p^\infty}{n}^{h(T_p \times \ZZ)} \qin
    \Alg(\SpTn).
\]
By cyclotomic redshift (\cref{cyclo-main}), we have a $\ZZ_p^\times$-equivariant isomorphism
\[
    \cyc{\KTnp(R)}{p^\infty}{n+1} \iso \KTnp(\cyc{R}{p^\infty}{n}).
\]
We thus get the following formula for the cyclotomic completion of $\KTnp(R)$:
\[
    \cycomp{\KTnp(R)} \iso \KTnp(\cyc{R}{p^\infty}{n})^{h(T_p\times \ZZ)}.
\]

One can take fixed points with respect to $T_p\times \ZZ$ in two steps, by first taking the fixed points with respect to the finite group $T_p$, and then the fixed points with respect to the residual action of $\ZZ$.
This leads to consider the following intermediate extensions:

\begin{defn}\label{iwa-def}
    For $X \in \SpTn$ we let
    \[
        \mdef{X\iwa{r}{n}} := \cyc{X}{p^r}{n}^{hT_p}
        \qin \SpTn.
    \]
\end{defn}

Observe that when $p = 2$ we have $X\iwat{2}{n} \simeq X$ and $X\iwat{r}{n}$ carries a residual $(\ZZ/2^{r-2})$-action, while for odd $p$ we have $X\iwa{1}{n} \simeq X$ and $X\iwa{r}{n}$ carries a residual $(\ZZ/p^{r-1})$-action.
We also note that since $\SpTn$ is $1$-semiadditive, we have
\[
    X\iwa{\infty}{n}
    = \cyc{X}{p^\infty}{n}^{hT_p}
    = (\colim \cyc{X}{p^r}{n})^{hT_p}
    \iso \colim \cyc{X}{p^r}{n}^{hT_p}
    = \colim X\iwa{r}{n}
    \qin \SpTn^{B\ZZ_p}.
\]

\begin{prop}\label{iwa-comp}
    For every $R \in \Alg(\Sp_{\Tn})$ the map 
    \[
        \KTnp(R) \too \KTnp(R\iwa{\infty}{n})^{h\ZZ}
    \]
    exhibits the target as the cyclotomic completion of the source.
    In particular, if $R$ itself is cyclotomically complete, the assembly map
    \[
        \KTnp(R\iwa{\infty}{n}^{h\ZZ})
        \too \KTnp(R\iwa{\infty}{n})^{h\ZZ}.
    \]
    exhibits the target as the cyclotomic completion of the source.
\end{prop}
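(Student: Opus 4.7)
The plan is to start from the formula displayed just before the proposition,
\[
    \cycomp{\KTnp(R)} \iso \KTnp(\cyc{R}{p^\infty}{n})^{h(T_p\times \ZZ)},
\]
and rewrite the right-hand side into the desired form $\KTnp(R\iwa{\infty}{n})^{h\ZZ}$. Decomposing $(T_p \times \ZZ)$-fixed points via the iterated limit theorem as
\[
    (-)^{h(T_p \times \ZZ)} \iso ((-)^{hT_p})^{h\ZZ}
\]
reduces me to identifying $\KTnp(\cyc{R}{p^\infty}{n})^{hT_p}$ as a $\ZZ$-equivariant spectrum.

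The key input is \cref{inter-galois} applied with $r = \infty$ and $H = T_p$, which is a finite subgroup of $\ZZ_p^\times$. Combined with the $\ZZ_p^\times$-equivariant cyclotomic redshift isomorphism of \cref{wpinfty}, it produces a chain of isomorphisms
\begin{align*}
    \KTnp(R\iwa{\infty}{n})
    &= \KTnp(\cyc{R}{p^\infty}{n}^{hT_p}) \\
    &\iso \cyc{\KTnp(R)}{p^\infty}{n+1}^{hT_p} \\
    &\iso \KTnp(\cyc{R}{p^\infty}{n})^{hT_p},
\end{align*}
equivariant for the residual action of $\ZZ_p^\times/T_p$. Restricting this residual action along $\ZZ \hookrightarrow \ZZ_p^\times/T_p$ and taking $\ZZ$-fixed points then combines with the previous step to yield
\[
    \cycomp{\KTnp(R)} \iso \KTnp(R\iwa{\infty}{n})^{h\ZZ}.
\]

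The remaining task is to check that under this identification the cyclotomic completion unit map agrees with the map $\KTnp(R) \to \KTnp(R\iwa{\infty}{n})^{h\ZZ}$ of the proposition. This is a naturality check: both maps arise from applying $\KTnp$ to the structural algebra map $R \to \cyc{R}{p^\infty}{n}$, which carries trivial action on the source, and then restricting the $\ZZ_p^\times$-equivariance to $T_p \times \ZZ$ and passing to fixed points; each identification used above is natural in $R$. I expect this naturality bookkeeping to be the main, though not conceptually deep, obstacle. The ``in particular'' statement is then automatic: if $R$ is cyclotomically complete then $R \simeq R\iwa{\infty}{n}^{h\ZZ}$, so $\KTnp(R) = \KTnp(R\iwa{\infty}{n}^{h\ZZ})$ and the map of the first part becomes exactly the asserted assembly map $\KTnp(R\iwa{\infty}{n}^{h\ZZ}) \to \KTnp(R\iwa{\infty}{n})^{h\ZZ}$.
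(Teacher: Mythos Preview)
Your proposal is correct and follows essentially the same route as the paper: both invoke \cref{inter-galois} with $H = T_p$ to identify $\KTnp(R\iwa{\infty}{n})$ with $\KTnp(R)\iwa{\infty}{n+1}$ equivariantly for the residual $\ZZ_p$-action, and then take $\ZZ$-fixed points to recover the cyclotomic completion formula. For the ``in particular'' clause the paper makes the naturality bookkeeping you anticipate explicit via a small commutative diagram showing that the map from the first part factors through the assembly map, but the content is exactly what you sketched.
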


\begin{proof}
    For the first part, by \cite[Proposition 6.19]{barthel2022chromatic}, for any $X \in \SpTn$ the map
    \[
        X
        \iso \cyc{X}{p^\infty}{n}^{h(T_p \times \ZZ)}
        \simeq X\iwa{\infty}{n}^{h\ZZ}
    \]
    exhibits the target as the cyclotomic completion of $X$.
    By \cref{inter-galois}, we have
    \[
        \KTnp(R)\iwa{\infty}{n+1} \iso \KTnp(R\iwa{\infty}{n})
        \qin \SpTnp^{B\ZZ_p}.
    \]
    Taking $\ZZ$-fixed points and applying the above to $X = \KTnp(R)$, we conclude that
    \[
        \KTnp(R) \too \KTnp(R\iwa{\infty}{n})^{h\ZZ}
    \]
    exhibits the target as the cyclotomic completion of the source, as required.
    
    We move on to the second part.
    By the naturality of \cref{cyclo-main} in $M$ and \cref{rem:cyclo-equiv-ring}, the map from the first part is given by applying $\KTnp$ to the $\ZZ$-equivariant map $R \to R\iwa{\infty}{n}$, where $R$ is endowed with the trivial action, and passing to the mate.
    Namely, it is the bottom composition in the following diagram:
    % https://q.uiver.app/#q=WzAsNSxbMCwwLCJcXEtUbnAoUikiXSxbMSwwLCJcXEtUbnAoUl57aFxcWlp9KSJdLFsxLDEsIlxcS1RucChSKV57aFxcWlp9Il0sWzIsMSwiXFxLVG5wKFJcXGl3YXtcXGluZnR5fXtufSlee2hcXFpafSJdLFsyLDAsIlxcS1RucChSXFxpd2F7XFxpbmZ0eX17bn1ee2hcXFpafSkiXSxbMCwyXSxbMiwzXSxbMCwxXSxbMSw0XSxbNCwzXSxbMSwyXV0=&macro_url=https%3A%2F%2Fraw.githubusercontent.com%2FLior-Yanovski%2Fquiver%2Fmaster%2FLatex%2520Macros%2520preamble
    \[\begin{tikzcd}
    	{\KTnp(R)} & {\KTnp(R^{h\ZZ})} & {\KTnp(R\iwa{\infty}{n}^{h\ZZ})} \\
    	& {\KTnp(R)^{h\ZZ}} & {\KTnp(R\iwa{\infty}{n})^{h\ZZ}}
    	\arrow[from=1-1, to=2-2]
    	\arrow[from=2-2, to=2-3]
    	\arrow[from=1-1, to=1-2]
    	\arrow[from=1-2, to=1-3]
    	\arrow[from=1-3, to=2-3]
    	\arrow[from=1-2, to=2-2]
    \end{tikzcd}\]
    The left triangle clearly commutes, and the right square commutes by the naturality of the assembly map.
    Since $R$ is cyclotomically complete we know that $R \iso R\iwa{\infty}{n}^{h\ZZ}$, so that the top composition is an isomorphism.
    Therefore, by the first part, the assembly map appearing on the right exhibits the target as the cyclotomic completion of the source.
\end{proof}

\subsection{Fourier Transform}

We shall now explain the compatibility of the functor $\KTnp$ with the chromatic Fourier transform of \cite{barthel2022chromatic}.
With notation as in \Cref{Sec_Cyc_Ext}, we recall the construction of this Fourier transform.

\begin{notation}
    To avoid cumbersome notation, whenever we have a connective spectrum $M$ and an object $X$ of some category $\cC$, we shall denote by $X^M$ the constant $\Omega^\infty M$-shaped limit on $X$, instead of $X^{\Omega^\infty M}$.
\end{notation}

\begin{defn}[{\cite[Definition 3.6 and Definition 3.11]{barthel2022chromatic}}]
    Let $\cC \in \CMon(\Cat)$ and $\OR \in \CAlg(\Spcnp)$.
    The space of \tdef{$\OR$-pre-orientations} of height $n$ of $\cC$ is defined to be
    \[
        \mdef{\POr{\OR}{\cC}{n}} := 
        \Map_{\Spcn}(\Dual{\OR}{n}, \unit_\cC^\times).
    \]
    By \cite[Proposition 3.10]{barthel2022chromatic}, for $\cC \in \CAlg(\Prl)$ the space $\POr{\OR}{\cC}{n}$ is equivalent to the space of natural transformations 
     \[
        \unit_\cC[-] \too \unit_\cC^{\Dual{(-)}{n}}
    \] 
    between functors $\Mod_{\OR}^{[0,n]}\to \CAlg(\cC)$.
    We denote the natural transformation corresponding to $\omega \in \POr{\OR}{\cC}{n}$ by 
    \[
        \mdef{\Four_\omega}\colon \unit_\cC[-] \too \unit_\cC^{\Dual{(-)}{n}}
    \]
    and refer to $\Four_\omega$ as the \tdef{Fourier transform} associated with $\omega$. Finally, we say that $\omega$ is an \tdef{$\OR$-orientation}, if $\Four_\omega$ is an isomorphism for all \textit{$\pi$-finite} $M \in \Mod_{\OR}^{[0,n]}$.
\end{defn}

The main result of this section is \Cref{fourier-comp}, which shows (under certain assumptions on $\omega$ and $M$) that the image of the Fourier transform 
\[
    \Four_\omega\colon R[M]\too R^{\Dual{M}{n}}
\] 
under the functor $\KTnp\colon \calg(\SpTn) \to \calg(\SpTnp)$ is a Fourier transform with respect to a corresponding pre-orientation. 
Recall that the functor $\KTnp$ can be written as a composition of three functors: 
\[
    \calg(\SpTn) \too[\cMod_{(-)}] \calg_{\SpTn}(\Prl) \too[(-)^{\dbl}] \calg(\CatLnf) \too[\KTnp] \calg(\SpTnp). 
\]
Thus, we are led to consider the compatibility of three different operations with the formation of Fourier transforms:

\begin{enumerate}
    \item For $R\in \calg(\SpTn)$, we shall relate the Fourier transforms of $R$ and of $\cMod_R$.
    This is essentially done in \cite[\S5]{barthel2022chromatic} by discussing more generally the interaction of Fourier theory with categorification, and we review and slightly expand this discussion.  
    \item For $\cC \in \calg_{\SpTn}(\Prl)$, we shall relate the Fourier transforms of $\cC$ and of its small subcategory $\cC^\dbl \in \calg(\CatLnf)$. 
    \item For $\cD \in \calg(\CatLnf)$, we relate the Fourier transforms of $\cD$ with that of $\KTnp(\cD)$.
    More generally, we shall show that the Fourier transform is compatible with higher semiadditive lax symmetric monoidal functors. 
\end{enumerate}

We now deal with each of these points, and later combine them to prove our main result, \cref{fourier-comp}.

\subsubsection{Fourier and Categorification}

For $\cC\in \calg(\Prl)$, recall from \cite[\S5.2]{barthel2022chromatic} that taking loop spaces gives a natural map
\[
    \Omega \colon \POr{\OR}{\Mod_\cC(\Prl)}{n+1} \too \POr{\OR}{\cC}{n},
\]
taking a pre-orientation $\omega \colon \Dual{\OR}{n+1} \to  \cC^\times$ to the pre-orientation given by 
\[
\Dual{\OR}{n} \simeq \Omega \Dual{\OR}{n+1} \too[\ \Omega \omega\ ] \Omega \cC^\times \simeq \unit_\cC^\times. 
\]
The corresponding Fourier transforms are related by the following result:

\begin{prop}\label{fourier-dec}
    Let $\cC \in \CAlg(\Prl)$, let $\OR \in \CAlg(\Spcnp)$ and let $\omega \colon \Dual{\OR}{n+1}\to \cC^\times$ be a pre-orientation of $\Mod_{\cC}(\Prl)$ of height $n+1$.
    Then, we have the following commutative diagram in $\CAlg_\cC(\Prl)$
    % https://q.uiver.app/?q=WzAsNCxbMCwwLCJcXE1vZF97XFx1bml0X1xcY0NbTV19KFxcY0MpIl0sWzEsMCwiXFxNb2Rfe1xcdW5pdF9cXGNDXntcXER1YWx7TX17bn19fShcXGNDKSJdLFswLDEsIlxcY0NbXFxTaWdtYSBNXSJdLFsxLDEsIlxcY0Nee1xcRHVhbHtcXFNpZ21hIE19e24rMX19Il0sWzAsMSwiXFxGb3VyX3tcXE9tZWdhXFxvbWVnYX0iXSxbMiwzLCJcXEZvdXJfXFxvbWVnYSJdLFsyLDAsIlxcd3IiXSxbMSwzXV0=
    \[\begin{tikzcd}
    	{\Mod_{\unit_\cC[M]}(\cC)} & {\Mod_{\unit_\cC^{\Dual{M}{n}}}(\cC)} \\
    	{\cC[\Sigma M]} & {\cC^{\Dual{\Sigma M}{n+1}}}
    	\arrow["{\Four_{\Omega\omega}}", from=1-1, to=1-2]
    	\arrow["{\Four_\omega}", from=2-1, to=2-2]
    	\arrow["\wr", from=2-1, to=1-1]
    	\arrow[from=1-2, to=2-2]
    \end{tikzcd}\]
    naturally in $M \in \Mod_{\OR}^{[0,n]}$, where the right vertical map is the left adjoint of the global sections functor as in \cite[\S2.1]{barthel2022chromatic}.
\end{prop}

\begin{proof}
    Taking ($\cC$-linear) endomorphisms of the units from the categorical Fourier transform 
    \[
    \Four_\omega \colon \cC[\Sigma M] \to \cC^{\Dual{\Sigma M}{n+1}}  
    \]
    gives a morphism 
    \[
    \Dec\Four_\omega \colon \unit_\cC[\Omega \Sigma M] \simeq \End(\unit_{\cC[\Sigma M]})\to \End(\unit_{\cC^{\Dual{\Sigma M}{n+1}}}) \simeq \unit^{\Dual{M}{n}}.  
    \]
    By \cite[Proposition 5.13]{barthel2022chromatic}, this morphism fits into a commutative diagram in $\CAlg(\SpTn)$, natural in $M$:
    % https://q.uiver.app/?q=WzAsNCxbMCwxLCJcXHVuaXRfXFxjQ1tcXE9tZWdhXFxTaWdtYSBNXSJdLFsxLDEsIlxcdW5pdF9cXGNDXntcXER1YWx7XFxTaWdtYSBNfXtuKzF9fSJdLFswLDAsIlxcdW5pdF9cXGNDW01dIl0sWzEsMCwiXFx1bml0X1xcY0Nee1xcRHVhbHtNfXtufX0iXSxbMCwyLCJcXHdyIl0sWzIsMywiXFxGb3VyX3tcXE9tZWdhXFxvbWVnYX0iXSxbMywxLCJcXHdyIiwyXSxbMCwxLCJcXERlY1xcRm91cl9cXG9tZWdhIl1d
    \[
    \begin{tikzcd}
    	{\unit_\cC[M]} & {\unit_\cC^{\Dual{M}{n}}} \\
    	{\unit_\cC[\Omega\Sigma M]} & {\unit_\cC^{\Dual{\Sigma M}{n+1}}}
    	\arrow["\wr", from=2-1, to=1-1]
    	\arrow["{\Four_{\Omega\omega}}", from=1-1, to=1-2]
    	\arrow["\wr"', from=1-2, to=2-2]
    	\arrow["{\Dec\Four_\omega}", from=2-1, to=2-2]
    \end{tikzcd}
    \]
    Now, consider the following diagram in $\CAlg_\cC(\Prl)$:
    % https://q.uiver.app/?q=WzAsNixbMCwwLCJcXE1vZF97XFx1bml0X1xcY0NbTV19KFxcY0MpIl0sWzEsMCwiXFxNb2Rfe1xcdW5pdF9cXGNDXntcXER1YWx7TX17bn19fShcXGNDKSJdLFswLDIsIlxcY0NbXFxTaWdtYSBNXSJdLFsxLDIsIlxcY0Nee1xcRHVhbHtcXFNpZ21hIE19e24rMX19Il0sWzAsMSwiXFxNb2Rfe1xcdW5pdF9cXGNDW1xcT21lZ2FcXFNpZ21hIE1dfShcXGNDKSJdLFsxLDEsIlxcTW9kX3tcXHVuaXRfXFxjQ157XFxEdWFse1xcU2lnbWEgTX17bisxfX19KFxcY0MpIl0sWzAsMSwiXFxGb3VyX3tcXE9tZWdhXFxvbWVnYX0iXSxbMiwzLCJcXEZvdXJfXFxvbWVnYSJdLFs0LDIsIlxcd3IiLDJdLFs1LDNdLFsxLDUsIlxcd3IiXSxbNCwwLCJcXHdyIl0sWzQsNSwiXFxEZWNcXEZvdXJfXFxvbWVnYSJdXQ==
    \[\begin{tikzcd}
    	{\Mod_{\unit_\cC[M]}(\cC)} & {\Mod_{\unit_\cC^{\Dual{M}{n}}}(\cC)} \\
    	{\Mod_{\unit_\cC[\Omega\Sigma M]}(\cC)} & {\Mod_{\unit_\cC^{\Dual{\Sigma M}{n+1}}}(\cC)} \\
    	{\cC[\Sigma M]} & {\cC^{\Dual{\Sigma M}{n+1}}}
    	\arrow["{\Four_{\Omega\omega}}", from=1-1, to=1-2]
    	\arrow["{\Four_\omega}", from=3-1, to=3-2]
    	\arrow["\wr"', from=2-1, to=3-1]
    	\arrow[from=2-2, to=3-2]
    	\arrow["\wr", from=1-2, to=2-2]
    	\arrow["\wr", from=2-1, to=1-1]
    	\arrow["{\Dec\Four_\omega}", from=2-1, to=2-2]
    \end{tikzcd}\]
    The upper part is obtained by applying $\Mod_{(-)}(\cC)$ to the previous commutative diagram.
    The bottom commutative square is obtained by applying the counit map of the symmetric monoidal adjunction
    \[
        \LMod_{(-)}(\cC)\colon \Alg(\cC) \rightleftarrows \Mod_\cC(\Prl)_* \noloc\End(-)
    \]
    to the categorical Fourier transform, and the bottom left map is an isomorphism due to \cref{C-A}. Since the outer rectangle of this diagram is the diagram from the statement of the proposition, the result follows.
\end{proof}

\begin{rem}\label{rem:affine-four-dec}
    In the setting of \Cref{fourier-dec}, when $\Dual{\Sigma M}{n+1}$ is $\cC$-affine in the sense of \cite[Definition 2.15]{barthel2022chromatic}, the right map in the commutative square is also an isomorphism.
    Thus, in this case we get an equivalence between the functor
    \[
        \Four_{\omega}\colon \cC[\Sigma M] \too \cC^{\Dual{\Sigma M}{n+1}}
    \]
    and the functor induced from 
    \[
        \Four_{\Omega \omega}\colon \unit_\cC[M] \too \unit_\cC^{\Dual{M}{n}} 
    \]
    on module categories. 
\end{rem}

\subsubsection{Passing to Dualizable Objects}

Recall that for a symmetric monoidal category $\cC$, the natural inclusion $i\colon \cC^\dbl \into \cC$ induces a natural isomorphism $i\colon (\cC^\dbl)^\times \iso \cC^\times$.
Taking maps from $\Dual{\OR}{n}$ into this isomorphism, we obtain a natural equivalence 
\[
\POr{\OR}{\cC}{n}\simeq \POr{\OR}{\cC^\dbl}{n}.
\]

\begin{defn}
Let $\cC\in \calg(\Cat)$ and $\OR\in \calg(\Spcnp)$. For a pre-orientation $\omega \colon \Dual{\OR}{n} \to \cC^\times$, we denote by 
\[
\mdef{\omega^\dbl} \colon \Dual{\OR}{n}\too (\cC^\dbl)^{\times}
\] 
the pre-orientation corresponding to $\omega$ under the above equivalence. 
\end{defn}

The Fourier transforms of $\omega$ and $\omega^\dbl$ are related by the following result, where the left morphism in the commutative diagram is the one from \cref{Cdbl-in-out}.

\begin{prop}\label{fourier-dbl}
    Let $\cC \in \CAlg(\Prlst)$, let $\OR \in \CAlg(\Spcnp)$ and let $\omega \colon \Dual{\OR}{n} \to \cC^\times$ be an $\OR$-pre-orientation.
    Then, the following diagram commutes naturally in $M\in \Mod_\OR^{[0,n]}$:
    % https://q.uiver.app/?q=WzAsNCxbMCwwLCJcXGNDW01dXlxcZGJsIl0sWzEsMCwiKFxcY0Nee1xcRHVhbHtNfXtufX0pXlxcZGJsIl0sWzAsMSwiXFxjQ15cXGRibFtNXSJdLFsxLDEsIihcXGNDXlxcZGJsKV57XFxEdWFse019e259fSJdLFswLDEsIlxcRm91cl9cXG9tZWdhXlxcZGJsIl0sWzIsMywiXFxGb3VyX3tcXG9tZWdhXlxcZGJsfSJdLFsyLDBdLFsxLDMsIlxcd3IiXV0=
    \[\begin{tikzcd}
    	{\cC[M]^\dbl} & {(\cC^{\Dual{M}{n}})^\dbl} \\
    	{\cC^\dbl[M]} & {(\cC^\dbl)^{\Dual{M}{n}}}
    	\arrow["{\Four_\omega^\dbl}", from=1-1, to=1-2]
    	\arrow["{\Four_{\omega^\dbl}}", from=2-1, to=2-2]
    	\arrow[from=2-1, to=1-1]
    	\arrow["\wr", from=1-2, to=2-2]
    \end{tikzcd}\]
\end{prop}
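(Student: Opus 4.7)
The plan is to exploit the naturality of the Fourier transform with respect to symmetric monoidal functors. The canonical inclusion $i\colon \cC^\dbl \hookrightarrow \cC$ is a fully faithful, strong symmetric monoidal functor, and induces the equivalence $(\cC^\dbl)^\times \iso \cC^\times$ identifying $\omega^\dbl$ with $\omega$ (by the very definition of $\omega^\dbl$). By the naturality of the Fourier transform in the pair (symmetric monoidal category, pre-orientation), this should produce a commutative square, natural in $M \in \Mod_\OR^{[0,n]}$,
\[
\begin{tikzcd}
{\cC^\dbl[M]} \arrow[r, "{\Four_{\omega^\dbl}}"] \arrow[d] & {(\cC^\dbl)^{\Dual{M}{n}}} \arrow[d] \\
{\cC[M]} \arrow[r, "{\Four_\omega}"] & {\cC^{\Dual{M}{n}}}
\end{tikzcd}
\]
whose vertical arrows are induced by $i$ via the functoriality of the group algebra and cotensor constructions.

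The next step is to identify these vertical arrows. The left one, being the canonical functor $\cC^\dbl[M] \to \cC[M]$ induced by the symmetric monoidal $i$, agrees by the universal property of $(-)[M]$ as a left adjoint with the composition $\cC^\dbl[M] \to \cC[M]^\dbl \hookrightarrow \cC[M]$, where the first arrow is precisely the Beck--Chevalley transformation of \cref{Cdbl-in-out} (the latter was defined by the same adjoint construction). Analogously, since dualizable objects in a functor category are pointwise dualizable, the right vertical map factors as $(\cC^\dbl)^{\Dual{M}{n}} \iso (\cC^{\Dual{M}{n}})^\dbl \hookrightarrow \cC^{\Dual{M}{n}}$, the first arrow being the isomorphism appearing in the statement. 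By construction, the top Fourier transform $\Four_\omega$ restricts on dualizable objects to $\Four_\omega^\dbl$. Hence the outer rectangle factors through the inclusion of dualizable objects, and since the inclusions $\cC[M]^\dbl \hookrightarrow \cC[M]$ and $(\cC^{\Dual{M}{n}})^\dbl \hookrightarrow \cC^{\Dual{M}{n}}$ are fully faithful, the commutativity of the outer rectangle forces commutativity of the inner diagram in the statement.

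The main obstacle is setting up the Fourier transform for $\cC^\dbl$ and verifying the naturality square above. Although the excerpt introduces $\Four_{(-)}$ for objects of $\CAlg(\Prl)$, both $\omega^\dbl \in \POr{\OR}{\cC^\dbl}{n}$ and the constructions $\cC^\dbl[M]$, $(\cC^\dbl)^{\Dual{M}{n}}$ are defined in the setting of $\CAlg(\Catperf)$. One therefore needs the Fourier transform to be extended functorially to this setting, compatibly with symmetric monoidal functors landing in $\CAlg(\Prl)$, such as $i$. This is a mild extension of the material from \cite{barthel2022chromatic}; once it is in place, the argument above becomes essentially formal, with naturality in $M$ inherited from that of $\Four_\omega$ and of the Beck--Chevalley map of \cref{Cdbl-in-out}.
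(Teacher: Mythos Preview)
Your strategy—prove a naturality square for the Fourier transform along the symmetric monoidal inclusion $i\colon \cC^\dbl \hookrightarrow \cC$, then descend through the fully faithful inclusions of dualizable objects—is sound in outline, and the full-faithfulness argument at the end is fine. But the paper takes a different, more self-contained route that avoids precisely the ``mild extension'' you flag.

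The paper does not invoke naturality of Fourier under $i$. Instead, it uses the universal property underlying the Fourier transform: the common target $(\cC^\dbl)^{\Dual{M}{n}}$, as a functor of $M$, is right Kan extended along $\Dual{\OR}{n}\colon \pt \to \Mod_\OR^{[0,n]}$. Hence the two natural transformations into it agree if and only if they agree at $M = \Dual{\OR}{n}$ after post-composing with the canonical augmentation $(\cC^\dbl)^{\Map(\Dual{\OR}{n},\Dual{\OR}{n})} \to \cC^\dbl$. This reduces the claim to checking that two augmentations $\cC^\dbl[\Dual{\OR}{n}] \to \cC^\dbl$ coincide, which the paper verifies by passing to mates under the adjunction $\cC^\dbl[-] \dashv (-)^\times$ in $\CAlg_{\cC^\dbl}(\Catperf)$ and unwinding the definitions of $\omega^\dbl$ and $\varepsilon_\omega$ directly.

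The comparison: your approach is more conceptual and would subsume the proposition as an instance of a general naturality principle, but the extension you need—defining $\Four_{\omega^\dbl}$ for the small category $\cC^\dbl$ and establishing its compatibility with $\Four_\omega$ under $i$ (which is neither a morphism in $\Prl$ nor covered by \cref{fourier-lax-ambi}, since $\cC^\dbl$ is not presentable)—is not actually available from the cited material and is in fact close to what the proposition asserts. The paper's approach is less slick but entirely internal to the adjunction defining $\cC^\dbl[-]$, so it needs no such extension; the reduction-to-a-point via right Kan extension is the key idea you are missing.
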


\begin{proof}
    First, the target of both paths in the diagram is right Kan extended along
    \[
        \Dual{\OR}{n}\colon \pt \too \Mod_{\OR}^{[0,n]}.
    \]
    Therefore, it suffices to show that the diagram commutes after evaluation at $M = \Dual{\OR}{n}$ and post-composing with the canonical augmentation $(\cC^\dbl)^{\Map(\Dual{\OR}{n},\Dual{\OR}{n})} \to \cC^\dbl$ (see \cite[Remark 3.12]{barthel2022chromatic}). Namely, we have to show that the following diagram commutes:
    % https://q.uiver.app/?q=WzAsNCxbMCwwLCJcXGNDW1xcRHVhbHtcXE9SfXtufV1eXFxkYmwiXSxbMSwwLCJcXGNDXlxcZGJsIl0sWzAsMSwiXFxjQ15cXGRibFtcXER1YWx7XFxPUn17bn1dIl0sWzEsMSwiXFxjQ15cXGRibCJdLFswLDEsIlxcdmFyZXBzaWxvbl9cXG9tZWdhXlxcZGJsIl0sWzIsMywiXFx2YXJlcHNpbG9uX3tcXG9tZWdhXlxcZGJsfSJdLFsyLDBdLFsxLDMsIiIsMCx7ImxldmVsIjoyLCJzdHlsZSI6eyJoZWFkIjp7Im5hbWUiOiJub25lIn19fV1d
    \[\begin{tikzcd}
    	{\cC[\Dual{\OR}{n}]^\dbl} & {\cC^\dbl} \\
    	{\cC^\dbl[\Dual{\OR}{n}]} & {\cC^\dbl}
    	\arrow["{\varepsilon_\omega^\dbl}", from=1-1, to=1-2]
    	\arrow["{\varepsilon_{\omega^\dbl}}", from=2-1, to=2-2]
    	\arrow[from=2-1, to=1-1]
    	\arrow[Rightarrow, no head, from=1-2, to=2-2]
    \end{tikzcd}\]
    where $\varepsilon_\omega$ is the augmentation of $\cC[\Dual{\OR}{n}]$ corresponding to $\omega$ and similarly for $\omega^\dbl$. 
    To show that this square commutes, we can take the mates with respect to the adjunction
    \[
        \cC^\dbl[-]\colon\Spcn \adj \calg_{\cC^\dbl}(\Catperf)\noloc (-)^\times.
    \]
    
    By construction, the mate of the Beck--Chevalley map $\cC^\dbl[\Dual{\OR}{n}] \to \cC[\Dual{\OR}{n}]^\dbl$ is the map
    \[
        \Dual{\OR}{n}
        \too \cC[\Dual{\OR}{n}]^\times
        \too[i^{-1}] (\cC[\Dual{\OR}{n}]^\dbl)^\times.
    \]
    where the first map is the unit of the adjunction above.
    Consider now the diagram
    % https://q.uiver.app/?q=WzAsNixbMCwwLCIoXFxjQ1tcXER1YWx7XFxPUn17bn1dXlxcZGJsKV5cXHRpbWVzIl0sWzEsMCwiKFxcY0NeXFxkYmwpXlxcdGltZXMiXSxbMCwxLCJcXGNDW1xcRHVhbHtcXE9SfXtufV1eXFx0aW1lcyJdLFsxLDIsIihcXGNDXlxcZGJsKV5cXHRpbWVzIl0sWzAsMiwiXFxEdWFse1xcT1J9e259Il0sWzEsMSwiXFxjQ15cXHRpbWVzIl0sWzAsMSwiXFx2YXJlcHNpbG9uX1xcb21lZ2FeXFxkYmwiXSxbMiwwLCJpXnstMX0iXSxbNCwyXSxbMiw1LCJcXHZhcmVwc2lsb25fXFxvbWVnYSJdLFs1LDEsImleey0xfSJdLFszLDUsImkiXSxbNCwzLCJcXG9tZWdhXlxcZGJsIl0sWzQsNSwiXFxvbWVnYSJdXQ==
    \[\begin{tikzcd}
    	{(\cC[\Dual{\OR}{n}]^\dbl)^\times} & {(\cC^\dbl)^\times} \\
    	{\cC[\Dual{\OR}{n}]^\times} & {\cC^\times} \\
    	{\Dual{\OR}{n}} & {(\cC^\dbl)^\times}
    	\arrow["{\varepsilon_\omega^\dbl}", from=1-1, to=1-2]
    	\arrow["{i^{-1}}", from=2-1, to=1-1]
    	\arrow[from=3-1, to=2-1]
    	\arrow["{\varepsilon_\omega}", from=2-1, to=2-2]
    	\arrow["{i^{-1}}", from=2-2, to=1-2]
    	\arrow["i", from=3-2, to=2-2]
    	\arrow["{\omega^\dbl}", from=3-1, to=3-2]
    	\arrow["\omega", from=3-1, to=2-2]
    \end{tikzcd}\]
    in $\Spcn$.
    The outer rectangle is the mate of the above square, so we need to show that this diagram commutes.
    The upper square commutes by the naturality of $i$.
    The left triangle commutes by the definition of $\varepsilon_\omega$ and the right triangle commutes by the definition of $\omega^\dbl$. 
\end{proof}

\subsubsection{Fourier and Lax Functors}

We now provide a slight generalization of \cite[Proposition 3.18]{barthel2022chromatic}.
Let $F\colon \cC \to \cD$ be a lax symmetric monoidal functor between symmetric monoidal categories.
As explained in \cref{units-lax-fun}, we have a natural map $F^\times\colon R^\times \to F(R)^\times$ for $R\in \calg(\cC)$.

\begin{defn}
    For an $\OR$-pre-orientation of $\cC$, we define an $\OR$-pre-orientation of $\Mod_{F(\unit_\cC)}(\cD)$
    \[
        \mdef{F(\omega)}\colon \Dual{\OR}{n} \too[\omega] \unit_\cC^\times \too[F^\times] F(\unit_\cC)^\times.
    \]
\end{defn}

The Fourier transforms of $\omega$ and $F(\omega)$ are compatible in the following sense.

\begin{prop}\label{fourier-lax-ambi}
    Let $F\colon \cC \to \cD$ be a lax symmetric monoidal functor between presentably symmetric monoidal categories.
    Let $n \geq 0$, let $\OR \in \CAlg(\Spcnp)$, and let $\omega\colon \Dual{\OR}{n} \to \unit_\cC^\times$ be an $\OR$-pre-orientation of $\cC$.
    Then, the following diagram in $\CAlg_{F(\unit_\cC)}(\cD)$ commutes naturally in $M \in \Mod_{\OR}^{[0,n]}$
    % https://q.uiver.app/?q=WzAsNCxbMCwwLCJGKFxcdW5pdF9cXGNDW01dKSJdLFswLDEsIkYoXFx1bml0X1xcY0MpW01dIl0sWzEsMSwiRihcXHVuaXRfXFxjQylee1xcRHVhbHtNfXtufX0iXSxbMSwwLCJGKFxcdW5pdF9cXGNDXntcXER1YWx7TX17bn19KSJdLFsxLDBdLFswLDMsIkYoXFxGb3VyX1xcb21lZ2EpIl0sWzEsMiwiXFxGb3VyX3tGKFxcb21lZ2EpfSJdLFszLDJdXQ==
    \[\begin{tikzcd}
    	{F(\unit_\cC[M])} & {F(\unit_\cC^{\Dual{M}{n}})} \\
    	{F(\unit_\cC)[M]} & {F(\unit_\cC)^{\Dual{M}{n}}}
    	\arrow[from=2-1, to=1-1]
    	\arrow["{F(\Four_\omega)}", from=1-1, to=1-2]
    	\arrow["{\Four_{F(\omega)}}", from=2-1, to=2-2]
    	\arrow[from=1-2, to=2-2]
    \end{tikzcd}\]
\end{prop}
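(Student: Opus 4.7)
The plan is to reduce the commutativity of the square to an equality of $\OR$-pre-orientations via the classifying property of \cite[Proposition 3.10]{barthel2022chromatic}. Both composites around the square are natural transformations of functors $\Mod_{\OR}^{[0,n]} \to \CAlg_{F(\unit_\cC)}(\cD)$ of the shape $F(\unit_\cC)[-] \Rightarrow F(\unit_\cC)^{\Dual{(-)}{n}}$, and such natural transformations are classified by pre-orientations $\Dual{\OR}{n} \to F(\unit_\cC)^\times$, with a pre-orientation $\omega'$ corresponding to its Fourier transform $\Four_{\omega'}$. The bottom composite $\Four_{F(\omega)}$ corresponds by construction to $F(\omega) = F^\times \circ \omega$, so it suffices to verify that the top composite corresponds to the same pre-orientation.

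To extract the pre-orientation of the top composite, I would evaluate at $M = \Dual{\OR}{n}$ (so $\Dual{M}{n} \simeq \OR$), post-compose with the canonical augmentation $F(\unit_\cC)^{\OR} \to F(\unit_\cC)$ associated to the basepoint, and take the mate under the group algebra--units adjunction $F(\unit_\cC)[-] \dashv (-)^\times$. The last two maps $F(\unit_\cC^{\OR}) \to F(\unit_\cC)^{\OR} \to F(\unit_\cC)$, after the basepoint projection, visibly collapse to $F$ applied to the augmentation $\unit_\cC^{\OR} \to \unit_\cC$, by functoriality of the lax comparison map in the chosen basepoint. Combined with the fact that $\Four_\omega$ evaluated at $\Dual{\OR}{n}$ and post-composed with this augmentation is by construction the counit $\epsilon_\omega\colon \unit_\cC[\Dual{\OR}{n}] \to \unit_\cC$ adjoint to $\omega$, the evaluated-and-augmented top composite becomes $F(\epsilon_\omega) \circ \mathrm{asm}$, where $\mathrm{asm}$ is the assembly map from Section 3.

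Taking the mate of $F(\epsilon_\omega) \circ \mathrm{asm}$ and invoking \cref{units-group-algebras} identifies the mate of the assembly map, so that the overall mate factors as
\[
    \Dual{\OR}{n} \too \unit_\cC[\Dual{\OR}{n}]^\times \too F(\unit_\cC[\Dual{\OR}{n}])^\times \too F(\unit_\cC)^\times,
\]
where the first arrow is the unit of the $\unit_\cC[-] \dashv (-)^\times$ adjunction, the second is $F^\times$, and the third is $(-)^\times$ applied to $F(\epsilon_\omega)$. Naturality of $F^\times$ in the ring allows commuting past $F(\epsilon_\omega)^\times$, after which the composite becomes $F^\times$ applied to $\epsilon_\omega^\times$ composed with the adjunction unit, which equals $\omega$ by the very definition of $\omega$ as the mate of $\epsilon_\omega$. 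This yields $F^\times \circ \omega = F(\omega)$, as required. The main obstacle I anticipate is verifying cleanly that the basepoint projection of the lax comparison $F(\unit_\cC^{\OR}) \to F(\unit_\cC)^{\OR}$ reduces to $F$ of the corresponding augmentation, and pinning down the precise recipe for the classifying bijection of \cite[Proposition 3.10]{barthel2022chromatic} used above; everything else is formal manipulation of mates and an application of \cref{units-group-algebras}.
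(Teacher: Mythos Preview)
Your proposal is correct and follows essentially the same approach as the paper: both reduce via \cite[Proposition 3.10]{barthel2022chromatic} to identifying the pre-orientation classified by the top composite with $F(\omega)$, and both do so by evaluating at $M = \Dual{\OR}{n}$ and invoking naturality of $F^\times\colon R^\times \to F(R)^\times$. The paper packages the computation as a single commutative diagram expressing this naturality, whereas you unpack the mate more explicitly and cite \cref{units-group-algebras}; your worry about the basepoint projection is handled in the paper simply as an instance of this same naturality applied to the augmentation map.
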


\begin{proof}
    We essentially repeat the proof of \cite[Proposition 3.18]{barthel2022chromatic}, we give the details for the convenience of the reader.
    First, note that the diagram is indeed natural in $M$, as the assembly maps and Fourier transforms are natural in $M$.
    To see that it commutes, recall from \cite[Proposition 3.10]{barthel2022chromatic} that the space of natural maps
    \[
        F(\unit_\cC)[-] \too F(\unit_\cC)^{(-)}
    \]
    between functors $\Mod_{\OR}^{[0,n]} \to \CAlg_{F(\unit_\cC)}(\cD)$ is equivalent to the space of $\OR$-pre-orientations of $\Mod_{F(\unit_\cC)}(\cD)$.
    Thus, it suffices to prove that the pre-orientations corresponding to the two maps coincide.
    
    By construction, the pre-orientation corresponding to $\Four_{F(\omega)}$ is $F(\omega)$.
    The pre-orientation corresponding to the other composition is the lower composition in following diagram:
    % https://q.uiver.app/#q=WzAsNyxbMCwxLCJcXER1YWx7XFxPUn17bn0iXSxbMSwwLCJcXHVuaXRfXFxjQ1tcXER1YWx7XFxPUn17bn1dXlxcdGltZXMiXSxbMiwwLCIoXFx1bml0X1xcY0Nee1xcTWFwKFxcRHVhbHtcXE9SfXtufSxcXER1YWx7XFxPUn17bn0pfSleXFx0aW1lcyJdLFszLDAsIlxcdW5pdF9cXGNDXlxcdGltZXMiXSxbMywxLCJGKFxcdW5pdF9cXGNDKV5cXHRpbWVzIl0sWzEsMSwiRihcXHVuaXRfXFxjQ1tcXER1YWx7XFxPUn17bn1dKV5cXHRpbWVzIl0sWzIsMSwiRihcXHVuaXRfXFxjQ157XFxNYXAoXFxEdWFse1xcT1J9e259LFxcRHVhbHtcXE9SfXtufSl9KV5cXHRpbWVzIl0sWzAsMV0sWzEsMl0sWzIsM10sWzMsNF0sWzEsNV0sWzUsNl0sWzYsNF0sWzAsNV0sWzIsNl1d
    \[\begin{tikzcd}
    	& {\unit_\cC[\Dual{\OR}{n}]^\times} & {(\unit_\cC^{\Map(\Dual{\OR}{n},\Dual{\OR}{n})})^\times} & {\unit_\cC^\times} \\
    	{\Dual{\OR}{n}} & {F(\unit_\cC[\Dual{\OR}{n}])^\times} & {F(\unit_\cC^{\Map(\Dual{\OR}{n},\Dual{\OR}{n})})^\times} & {F(\unit_\cC)^\times}
    	\arrow[from=2-1, to=1-2]
    	\arrow[from=1-2, to=1-3]
    	\arrow[from=1-3, to=1-4]
    	\arrow[from=1-4, to=2-4]
    	\arrow[from=1-2, to=2-2]
    	\arrow[from=2-2, to=2-3]
    	\arrow[from=2-3, to=2-4]
    	\arrow[from=2-1, to=2-2]
    	\arrow[from=1-3, to=2-3]
    \end{tikzcd}\]
    This diagram commutes because of the naturality of the map $F^\times\colon R^\times \to F(R)^\times$.
    Again, by construction of $\Four_\omega$, the composition from $\Dual{\OR}{n}$ to $\unit_\cC^\times$ is $\omega$, which when composed with $\unit_\cC^\times \to F(\unit_\cC)^\times$, gives $F(\omega)$.
\end{proof}

\subsubsection{Fourier and $K$-theory}

We turn to prove the compatibility of the functor $\KTnp$ with the Fourier transform. 

\begin{thm}\label{fourier-comp}
    Let $R \in \calg(\SpTn)$, let $\OR \in \CAlg(\Spcnp)$ and let $\omega$ be an $\OR$-pre-orientation of $\cMod_R \in \CAlg(\PrlTn)$ of height $n+1$.
    \begin{enumerate}[ref=(\arabic*)]    
        \item\label{fourier-comp-K} The following diagram commutes naturally in $M \in \Mod_{\OR}^{[0,n]}$:
        % https://q.uiver.app/#q=WzAsNCxbMCwwLCJcXEtUbnAoUltNXSkiXSxbMiwwLCJcXEtUbnAoUl57XFxEdWFse019e259fSkiXSxbMCwxLCJcXEtUbnAoUilbXFxTaWdtYSBNXSJdLFsyLDEsIlxcS1RucChSKV57XFxEdWFse1xcU2lnbWEgTX17bisxfX0iXSxbMCwxLCJcXEtUbnAoXFxGb3VyX3tcXE9tZWdhXFxvbWVnYX0pIl0sWzIsMywiXFxGb3VyX3tcXEtUbnAoXFxvbWVnYV5cXGRibCl9Il0sWzIsMF0sWzEsM11d
        \[\begin{tikzcd}[column sep=large]
        	{\KTnp(R[M])} && {\KTnp(R^{\Dual{M}{n}})} \\
        	{\KTnp(R)[\Sigma M]} && {\KTnp(R)^{\Dual{\Sigma M}{n+1}}}
        	\arrow["{\KTnp(\Four_{\Omega\omega})}", from=1-1, to=1-3]
        	\arrow[from=1-3, to=2-3]
        	\arrow[from=2-1, to=1-1]
        	\arrow["{\Four_{\KTnp(\omega^\dbl)}}", from=2-1, to=2-3]
        \end{tikzcd}\]
        \item\label{M-pi-fin} When $M$ is $\pi$-finite, the vertical maps in the above diagram are isomorphisms, giving an equivalence
        \[
        \KTnp(\Four_{\Omega\omega}) \simeq \Four_{\KTnp(\omega^\dbl)}
        \]
        of maps in $\calg(\SpTnp)$.
        \item\label{Omega-omega} If $\Omega \omega$ is an orientation, then $\KTnp(\omega^\dbl)$ is an orientation. That is, for every $\pi$-finite $M \in \Mod_{\OR}^{[0,n+1]}$ the Fourier transform
        \[
            \Four_{\KTnp(\omega^\dbl)}\colon \KTnp(R)[M] \iso \KTnp(R)^{\Dual{M}{n+1}}
        \]
        is an isomorphism.
    \end{enumerate}
\end{thm}

\begin{proof}
    We begin by establishing part \ref{fourier-comp-K}.
    By \cref{fourier-dec} applied to $\cC = \cMod_R$, we have the following commutative diagram in $\CAlg(\PrlTn)$, natural in $M$:
    % https://q.uiver.app/#q=WzAsNCxbMCwwLCJcXGNNb2Rfe1JbTV19Il0sWzEsMCwiXFxjTW9kX3tSXntcXER1YWx7TX17bn19fSJdLFswLDEsIlxcY01vZF9SW1xcU2lnbWEgTV0iXSxbMSwxLCJcXGNNb2RfUl57XFxEdWFse1xcU2lnbWEgTX17bisxfX0iXSxbMCwxLCJcXEZvdXJfe1xcT21lZ2FcXG9tZWdhfSJdLFsyLDMsIlxcRm91cl9cXG9tZWdhIl0sWzEsM10sWzIsMCwiXFx3ciJdXQ==
    \[\begin{tikzcd}
    	{\cMod_{R[M]}} & {\cMod_{R^{\Dual{M}{n}}}} \\
    	{\cMod_R[\Sigma M]} & {\cMod_R^{\Dual{\Sigma M}{n+1}}}
    	\arrow["{\Four_{\Omega\omega}}", from=1-1, to=1-2]
    	\arrow["{\Four_\omega}", from=2-1, to=2-2]
    	\arrow[from=1-2, to=2-2]
    	\arrow["\wr", from=2-1, to=1-1]
    \end{tikzcd}\]
    Taking dualizable objects and pasting with the commutative square from \cref{fourier-dbl}, we get the following commutative diagram in $\CAlg(\Catperf)$:
    % https://q.uiver.app/#q=WzAsNixbMCwxLCIoXFxjTW9kX1JbXFxTaWdtYSBNXSleXFxkYmwiXSxbMSwxLCIoXFxjTW9kX1Jee1xcRHVhbHtcXFNpZ21hIE19e24rMX19KV5cXGRibCJdLFswLDAsIlxcY01vZF5cXGRibF97UltNXX0iXSxbMSwwLCJcXGNNb2ReXFxkYmxfe1Jee1xcRHVhbHtNfXtufX19Il0sWzAsMiwiXFxjTW9kXlxcZGJsX1JbXFxTaWdtYSBNXSJdLFsxLDIsIihcXGNNb2ReXFxkYmxfUilee1xcRHVhbHtcXFNpZ21hIE19e24rMX19Il0sWzIsMywiXFxGb3VyX3tcXE9tZWdhXFxvbWVnYX1eXFxkYmwiXSxbMCwxLCJcXEZvdXJfXFxvbWVnYV5cXGRibCJdLFs0LDBdLFsxLDUsIlxcd3IiXSxbNCw1LCJcXEZvdXJfe1xcb21lZ2FeXFxkYmx9Il0sWzAsMiwiXFx3ciJdLFszLDFdXQ==
    \[\begin{tikzcd}
    	{\cMod^\dbl_{R[M]}} & {\cMod^\dbl_{R^{\Dual{M}{n}}}} \\
    	{(\cMod_R[\Sigma M])^\dbl} & {(\cMod_R^{\Dual{\Sigma M}{n+1}})^\dbl} \\
    	{\cMod^\dbl_R[\Sigma M]} & {(\cMod^\dbl_R)^{\Dual{\Sigma M}{n+1}}}
    	\arrow["{\Four_{\Omega\omega}^\dbl}", from=1-1, to=1-2]
    	\arrow["{\Four_\omega^\dbl}", from=2-1, to=2-2]
    	\arrow[from=3-1, to=2-1]
    	\arrow["\wr", from=2-2, to=3-2]
    	\arrow["{\Four_{\omega^\dbl}}", from=3-1, to=3-2]
    	\arrow["\wr", from=2-1, to=1-1]
    	\arrow[from=1-2, to=2-2]
    \end{tikzcd}\]
    Applying $\KTnp$ and pasting the commutative square from \cref{fourier-lax-ambi} for the lax symmetric monoidal functor $\KTnp$, we get the following commutative diagram in $\CAlg(\SpTnp)$, in which the outer rectangle is the required diagram:
    % https://q.uiver.app/#q=WzAsOCxbMCwxLCJcXEtUbnAoKFxcY01vZF9SW1xcU2lnbWEgTV0pXlxcZGJsKSJdLFsyLDEsIlxcS1RucCgoXFxjTW9kX1Jee1xcRHVhbHtcXFNpZ21hIE19e24rMX19KV5cXGRibCkiXSxbMCwwLCJcXEtUbnAoUltNXSkiXSxbMiwwLCJcXEtUbnAoUl57XFxEdWFse019e259fSkiXSxbMCwzLCJcXEtUbnAoUilbXFxTaWdtYSBNXSJdLFswLDIsIlxcS1RucChcXGNNb2ReXFxkYmxfUltcXFNpZ21hIE1dKSJdLFsyLDMsIlxcS1RucChSKV57XFxEdWFse1xcU2lnbWEgTX17bisxfX0iXSxbMiwyLCJcXEtUbnAoKFxcY01vZF5cXGRibF9SKV57XFxEdWFse1xcU2lnbWEgTX17bisxfX0pIl0sWzIsMywiXFxLVG5wKFxcRm91cl97XFxPbWVnYVxcb21lZ2F9XlxcZGJsKSJdLFswLDEsIlxcS1RucChcXEZvdXJfXFxvbWVnYV5cXGRibCkiXSxbNCw1XSxbNSwwXSxbNCw2LCJcXEZvdXJfe1xcS1RucChcXG9tZWdhXlxcZGJsKX0iXSxbNSw3LCJcXEtUbnAoXFxGb3VyX3tcXG9tZWdhXlxcZGJsfSkiXSxbMSw3LCJcXHdyIl0sWzcsNl0sWzAsMiwiXFx3ciJdLFszLDFdXQ==
    \[\begin{tikzcd}
    	{\KTnp(R[M])} && {\KTnp(R^{\Dual{M}{n}})} \\
    	{\KTnp((\cMod_R[\Sigma M])^\dbl)} && {\KTnp((\cMod_R^{\Dual{\Sigma M}{n+1}})^\dbl)} \\
    	{\KTnp(\cMod^\dbl_R[\Sigma M])} && {\KTnp((\cMod^\dbl_R)^{\Dual{\Sigma M}{n+1}})} \\
    	{\KTnp(R)[\Sigma M]} && {\KTnp(R)^{\Dual{\Sigma M}{n+1}}}
    	\arrow["{\KTnp(\Four_{\Omega\omega}^\dbl)}", from=1-1, to=1-3]
    	\arrow["{\KTnp(\Four_\omega^\dbl)}", from=2-1, to=2-3]
    	\arrow[from=4-1, to=3-1]
    	\arrow[from=3-1, to=2-1]
    	\arrow["{\Four_{\KTnp(\omega^\dbl)}}", from=4-1, to=4-3]
    	\arrow["{\KTnp(\Four_{\omega^\dbl})}", from=3-1, to=3-3]
    	\arrow["\wr", from=2-3, to=3-3]
    	\arrow[from=3-3, to=4-3]
    	\arrow["\wr", from=2-1, to=1-1]
    	\arrow[from=1-3, to=2-3]
    \end{tikzcd}\]

    For part \ref{M-pi-fin}, recall that when $M$ is $\pi$-finite, the left vertical composition is an isomorphism by \cref{KTnp-R-A-Lnf} (see also \cref{KTnp-cMod}), and similarly the right vertical composition is an isomorphism by \cref{KTnp-CAlg-lim}.

    Finally, we prove part \ref{Omega-omega}.
    By assumption, $\Omega\omega$ is an orientation, so that the upper map in the diagram from part \ref{fourier-comp-K} is an isomorphism for every $\pi$-finite $M \in \Mod_{\OR}^{[0,n]}$.
    By part \ref{M-pi-fin}, we conclude that for every $\pi$-finite $N := \Sigma M \in \Mod_{\OR}^{[1,n+1]}$ the Fourier transform
    \[
        \Four_{\KTnp(\omega^\dbl)}\colon \KTnp(R)[N] \iso \KTnp(R)^{\Dual{N}{n+1}}
    \]
    is an isomorphism.
    It remains to show that this holds for every $\pi$-finite $N \in \Mod_{\OR}^{[0,n+1]}$.
    This follows from the case of $1$-connective $N$ as in the proof of \cite[Theorem 5.15]{barthel2022chromatic}, which we briefly recall for the readers convenience.
    Consider the exact sequence
    \[
        \tau_{\geq 1} N \too N \too \tau_{\leq 0} N
        \qin \Mod_{\OR}^{[0,n+1]}.
    \]
    By the $1$-connective case, the Fourier transform is an isomorphism at $\tau_{\geq 1} N$.
    By the duality from \cite[Proposition 4.9]{barthel2022chromatic}, the Fourier transform is also an isomorphism at $\pi$-finite objects of $\Mod_{\OR}^{[0,n]}$, and in particular at $\tau_{\leq 0} N$.
    Since $\cMod_{\KTnp(R)}$ is semiadditive, the underlying space of $\tau_{\leq 0} N$, which is a finite set, is $\cMod_{\KTnp(R)}$-affine by \cite[Example 2.35]{barthel2022chromatic}.
    Thus, by \cite[Proposition 4.12]{barthel2022chromatic}, the Fourier transform is an isomorphism at $N$, as required.
\end{proof}

\subsection{Kummer Theory} 

Let $R \in \calg(\SpTn)$.
As shown in \cite[Proposition 4.32]{barthel2022chromatic},
for $\OR \in \CAlg(\Spcnp)$, an $\OR$-orientation $\omega\colon \Dual{\OR}{n} \to R^\times$ provides a natural equivalence 
\[
    \GalExt{R; \SpTn}{\Omega^\infty M}
    \simeq \Map_{\Spcn}(\Dual{M}{n}, R^\times)
\]
for $\pi$-finite $M\in \Mod_\OR^{[1,n]}$, 
referred to as the ``Kummer equivalence''. 
Here, the left-hand side is the space of local systems $\Omega^\infty M\to \calg_R(\cC)$ which are Galois extensions of $R$ (with Galois group $\Omega^\infty \Sigma^{-1}M$). 

Our goal is to compare the Kummer equivalences of $R$ and $\KTnp(R)$.
We first relate their sources and targets, starting with the sources.
As we have shown in \cref{galois-comp}, the functor
\[
    \KTnp \colon \calg_R(\SpTn) \to \calg_{\KTnp(R)}(\SpTnp)
\]
carries Galois extensions to Galois extensions for $n$-finite $p$-groups, and in particular restricts to a natural map 
\[
    \KTnp\colon \GalExt{R; \SpTn}{\Omega^\infty M} \too \GalExt{\KTnp(R); \SpTnp}{\Omega^\infty M}.
\]
We now consider the targets.
As explained in \cref{units-lax-fun}, since $K\colon \Catperf \to \Spcn$ is a lax symmetric monoidal functor, there is a natural map $\cC^\times \to K(\cC)^\times$ for $\cC \in \CAlg(\Catperf)$.
Moreover, the equivalence 
$\unit_\cC^\times \simeq \Omega\cC^\times$
corresponds to a natural map
$\Sigma \unit_\cC^\times \to  \cC^\times$
of connective spectra.
Composing them, we get a natural map
\[
    \Sigma \unit_\cC^\times \too K(\cC)^\times.
\]
For any $S \in \CAlg_R(\SpTn)$, applying the above to $\cC = \cMod_S^\dbl$ and post-composing with the map $K\to \KTnp$, we obtain a natural map 
\[
    \Sigma S^\times \too \KTnp(S)^\times.
\]
Consider also the two adjunctions
\begin{gather*}
    R[-]\colon \Spcn \adj \calg_R(\SpTn)\noloc (-)^\times
    \\
    \KTnp(R)[-]\colon \Spcn \adj \calg_{\KTnp(R)}(\SpTnp)\noloc (-)^\times
\end{gather*}
These adjunctions are compatible via the map above in the following sense:

\begin{lem}\label{kummer-bottom}
    Let $R \in \CAlg(\SpTn)$ and let $M \in \Spcn$.
    Then, the following diagram commutes naturally in $S \in \CAlg_R(\SpTn)$
    % https://q.uiver.app/#q=WzAsNixbMCwwLCJcXE1hcChSW01dLFMpIl0sWzEsMCwiXFxNYXAoXFxLVG5wKFJbTV0pLFxcS1RucChTKSkiXSxbMCwxLCJcXE1hcChNLFNeXFx0aW1lcykiXSxbMiwxLCJcXE1hcChcXFNpZ21hIE0sXFxLVG5wKFMpXlxcdGltZXMpIl0sWzIsMCwiXFxNYXAoXFxLVG5wKFIpW1xcU2lnbWEgTV0sXFxLVG5wKFMpKSJdLFsxLDEsIlxcTWFwKFxcU2lnbWEgTSwgXFxTaWdtYSBTXlxcdGltZXMpIl0sWzAsMiwiXFx3ciIsMl0sWzAsMV0sWzEsNCwiXFxzaW0iXSxbNCwzLCJcXHdyIl0sWzIsNV0sWzUsM11d
    \[\begin{tikzcd}
    	{\Map(R[M],S)} & {\Map(\KTnp(R[M]),\KTnp(S))} & {\Map(\KTnp(R)[\Sigma M],\KTnp(S))} \\
    	{\Map(M,S^\times)} & {\Map(\Sigma M, \Sigma S^\times)} & {\Map(\Sigma M,\KTnp(S)^\times)}
    	\arrow["\wr"', from=1-1, to=2-1]
    	\arrow[from=1-1, to=1-2]
    	\arrow["\sim", from=1-2, to=1-3]
    	\arrow["\wr", from=1-3, to=2-3]
    	\arrow[from=2-1, to=2-2]
    	\arrow[from=2-2, to=2-3]
    \end{tikzcd}\]
    where the upper right isomorphism is the one from \cref{KTnp-R-A-Lnf}.
\end{lem}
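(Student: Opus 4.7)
The plan is to reduce via the Yoneda lemma to comparing two maps $\Sigma M \to \KTnp(R[M])^\times$, and then to deduce their agreement from \cref{units-group-algebras} applied at the categorical level.

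By the naturality of every map in the displayed diagram in $S \in \CAlg_R(\SpTn)$ and the corepresentability of $\Map_{\CAlg_R(\SpTn)}(R[M], -)$, the Yoneda lemma reduces the claim to the universal case $S = R[M]$ evaluated at the identity. Tracing the two composites, the top route yields the adjoint of the assembly isomorphism from \cref{KTnp-R-A-Lnf}, namely
\[
\Sigma M \xrightarrow{\eta} \KTnp(R)[\Sigma M]^\times \too \KTnp(R[M])^\times,
\]
while the bottom route yields
\[
\Sigma M \xrightarrow{\Sigma \eta'} \Sigma R[M]^\times \too \KTnp(R[M])^\times;
\]
here $\eta$ and $\eta'$ denote the units of the group algebra--units adjunctions in $\CAlg_{\KTnp(R)}(\SpTnp)$ and $\CAlg_R(\SpTn)$, respectively, and in each case the second arrow is induced by the relevant natural map.

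Next, both composites are factored through the categorified Picard space $(\cMod_{R[M]}^\dbl)^\times$. For the bottom this is immediate from the definition of $\Sigma(-)^\times \to \KTnp(-)^\times$ via the equivalence $\Sigma R[M]^\times \iso (\cMod_{R[M]}^\dbl)^\times$ and the Picard-to-$K$-theory comparison $\cC^\times \to \KTnp(\cC)^\times$. For the top, \cref{KTnp-cMod} exhibits the assembly isomorphism as $\KTnp$ applied to the composite categorical assembly
\[
\cMod_R^\dbl[\Sigma M] \too \cMod_R[\Sigma M]^\dbl \iso \cMod_{R[M]}^\dbl
\]
(using \cref{Cdbl-in-out} and \cref{Mod-R-A}), and the naturality of $\cC^\times \to \KTnp(\cC)^\times$ rewrites the top composite through $(\cMod_{R[M]}^\dbl)^\times$ as well. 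The equality of the two resulting maps $\Sigma M \to (\cMod_{R[M]}^\dbl)^\times$ is then an instance of \cref{units-group-algebras} applied to the lax symmetric monoidal functor $\KTnp\colon \cMod_{\cMod_R^\dbl}(\Catperf) \to \cMod_{\KTnp(R)}(\SpTnp)$ (obtained by restriction from $\KTnp\colon \Catperf \to \SpTnp$), evaluated at $\Sigma M \in \Spcn$.

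The principal bookkeeping obstacle is to verify that the unit of the \emph{categorical} group algebra--units adjunction, $\Sigma M \to \cMod_R^\dbl[\Sigma M]^\times$, corresponds under the equivalences $\cMod_R^\dbl[\Sigma M] \iso \cMod_{R[M]}^\dbl$ and $\Sigma R[M]^\times \iso (\cMod_{R[M]}^\dbl)^\times$ to the suspension $\Sigma \eta'$ of the spectral unit. This is a formal consequence of the fact that the two group algebra--units adjunctions are related via the categorification $\cMod_{(-)}^\dbl$ and the equivalence $\Omega \cC^\times \simeq \unit_\cC^\times$.
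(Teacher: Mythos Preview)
Your overall strategy matches the paper's: reduce via Yoneda to $S = R[M]$ at the identity, identify the two composites as maps $\Sigma M \to \KTnp(R[M])^\times$, and factor through the categorical level using \cref{KTnp-cMod}. However, the execution has two problems.

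First, you misplace the role of \cref{units-group-algebras}. Applied to $F = \KTnp$ (from $\cMod_R^\dbl$-modules in $\Catperf$ to $\KTnp(R)$-modules in $\SpTnp$), that lemma compares the unit $\Sigma M \to \KTnp(R)[\Sigma M]^\times$ with the composite $\Sigma M \to (\cMod_R^\dbl[\Sigma M])^\times \to \KTnp(\cMod_R^\dbl[\Sigma M])^\times$; its target is $\KTnp(-)^\times$, not $(\cMod_{R[M]}^\dbl)^\times$. In the paper's argument this is the ``upper (triangle-shaped) square'': together with naturality of $\cC^\times \to \KTnp(\cC)^\times$ it is what factors the top route through $(\cMod_R^\dbl[\Sigma M])^\times \to (\cMod_{R[M]}^\dbl)^\times$. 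It does \emph{not} itself establish the equality of the two maps $\Sigma M \to (\cMod_{R[M]}^\dbl)^\times$, contrary to what you assert.

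Second, that remaining equality --- that $\Sigma M \to (\cMod_R^\dbl[\Sigma M])^\times \to (\cMod_{R[M]}^\dbl)^\times$ agrees with $\Sigma M \xrightarrow{\Sigma\eta'} \Sigma R[M]^\times \iso (\cMod_{R[M]}^\dbl)^\times$ --- is exactly your ``bookkeeping obstacle'', and the paper does not treat it as a formality. It devotes a second diagram to it: one passes through the large categories $\cMod_R[\Sigma M]$ and $\cMod_{R[M]}$, uses that the outer square commutes by the construction of the equivalence in \cref{Mod-R-A} (cf.\ \cite[Proposition 5.11]{barthel2022chromatic}), and then deduces the claim from naturality of the inclusion $\cC^\dbl \hookrightarrow \cC$ and the definition of the Beck--Chevalley map of \cref{Cdbl-in-out}. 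Your sentence ``the two adjunctions are related via $\cMod_{(-)}^\dbl$ and $\Omega\cC^\times \simeq \unit_\cC^\times$'' points at the right ingredients but does not substitute for this verification; in particular, the compatibility of the map $\cC^\dbl[\Sigma M] \to \cC[\Sigma M]^\dbl$ with the two units is precisely what needs to be checked.
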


\begin{proof}
    Note that the functor
    \[
        \Map(R[M], -)\colon \calg_R(\SpTn) \to \Spc
    \]
    in the upper left corner of the diagram is co-represented by $R[M]$, so it suffices to prove the commutativity in the special case $S=R[M]$, and after evaluating both of the compositions at $\Id_{R[M]}$. 

    The value of the upper-right composite at $\Id_{R[M]}$ is the map
    \[
        \Sigma M \too \KTnp(R)[\Sigma M]^\times  \iso \KTnp(R[M])^\times,
    \]
    while the value of the left-bottom composite is the map 
    \[
        \Sigma M \too \Sigma R[M]^\times \too \KTnp(R[M])^\times.     
    \]
    In light of \cref{KTnp-cMod}, it thus suffices to show that the following diagram commutes:
    % https://q.uiver.app/?q=WzAsNyxbMCwxLCJcXFNpZ21hIE0iXSxbMCwyLCJcXFNpZ21hIFJbTV1eXFx0aW1lcyJdLFsxLDIsIihcXGNNb2Rfe1JbTV19XlxcZGJsKV5cXHRpbWVzIl0sWzEsMSwiKFxcY01vZF9SXlxcZGJsW1xcU2lnbWEgTV0pXlxcdGltZXMiXSxbMiwxLCJcXEtUbnAoXFxjTW9kX1JeXFxkYmxbXFxTaWdtYSBNXSleXFx0aW1lcyJdLFsyLDIsIlxcS1RucChcXGNNb2Rfe1JbTV19XlxcZGJsKV5cXHRpbWVzIl0sWzIsMCwiXFxLVG5wKFxcY01vZF9SXlxcZGJsKVtcXFNpZ21hIE1dXlxcdGltZXMiXSxbMCwxXSxbMSwyXSxbMywyXSxbMCwzXSxbMyw0XSxbMiw1XSxbNiw0LCJcXHdyIl0sWzQsNSwiXFx3ciJdLFswLDYsIiIsMix7ImN1cnZlIjotMn1dXQ==
    \[\begin{tikzcd}
    	&& {\KTnp(\cMod_R^\dbl)[\Sigma M]^\times} \\
    	{\Sigma M} & {(\cMod_R^\dbl[\Sigma M])^\times} & {\KTnp(\cMod_R^\dbl[\Sigma M])^\times} \\
    	{\Sigma R[M]^\times} & {(\cMod_{R[M]}^\dbl)^\times} & {\KTnp(\cMod_{R[M]}^\dbl)^\times}
    	\arrow[from=2-1, to=3-1]
    	\arrow[from=3-1, to=3-2]
    	\arrow[from=2-2, to=3-2]
    	\arrow[from=2-1, to=2-2]
    	\arrow[from=2-2, to=2-3]
    	\arrow[from=3-2, to=3-3]
    	\arrow["\wr", from=1-3, to=2-3]
    	\arrow["\wr", from=2-3, to=3-3]
    	\arrow[curve={height=-12pt}, from=2-1, to=1-3]
    \end{tikzcd}\]
    The commutativity of the upper (triangle-shaped) square follows from \cref{units-group-algebras}.
    The right square commutes by naturality of the map $\cC^\times \to \KTnp(\cC)^\times$, so we are left with the left square.
    Finally, using \cref{Mod-R-A}, we embed this square as the left square in the following diagram:
    % https://q.uiver.app/#q=WzAsNyxbMCwwLCJcXFNpZ21hIE0iXSxbMSwwLCIoXFxjTW9kX1JeXFxkYmxbXFxTaWdtYSBNXSleXFx0aW1lcyJdLFswLDEsIlxcU2lnbWEgUltNXV5cXHRpbWVzIl0sWzEsMSwiKFxcY01vZF97UltNXX1eXFxkYmwpXlxcdGltZXMiXSxbMiwwLCIoXFxjTW9kX1JbXFxTaWdtYSBNXV5cXGRibCleXFx0aW1lcyJdLFszLDAsIihcXGNNb2RfUltcXFNpZ21hIE1dKV5cXHRpbWVzIl0sWzMsMSwiXFxjTW9kX3tSW01dfV5cXHRpbWVzIl0sWzAsMV0sWzAsMl0sWzIsM10sWzEsNF0sWzQsNV0sWzMsNiwiXFxzaW0iXSxbNSw2XSxbMSwzXSxbNCwzXV0=
    \[\begin{tikzcd}
    	{\Sigma M} & {(\cMod_R^\dbl[\Sigma M])^\times} & {(\cMod_R[\Sigma M]^\dbl)^\times} & {(\cMod_R[\Sigma M])^\times} \\
    	{\Sigma R[M]^\times} & {(\cMod_{R[M]}^\dbl)^\times} && {\cMod_{R[M]}^\times}
    	\arrow[from=1-1, to=1-2]
    	\arrow[from=1-1, to=2-1]
    	\arrow[from=2-1, to=2-2]
    	\arrow[from=1-2, to=1-3]
    	\arrow[from=1-3, to=1-4]
    	\arrow["\sim", from=2-2, to=2-4]
    	\arrow[from=1-4, to=2-4]
    	\arrow[from=1-2, to=2-2]
    	\arrow[from=1-3, to=2-2]
    \end{tikzcd}\]
    Recall that every invertible object is dualizable, which shows that the bottom right horizontal map is an isomorphism.
    Therefore, to show that the left square commutes, it suffices to show that the outer rectangle and the right rectangle commute.
    The commutativity of the outer square follows from the construction of the right map as in \cite[Proposition 5.11]{barthel2022chromatic} (see also \cref{variants-remark}).
    The middle triangle commutes since the vertical map is defined as the composition of the other two.
    Finally, the right (trapezoid-shaped) square commutes because of the naturality of the embedding $\cC^\dbl \into \cC$.
\end{proof}

Recall that we have a map $\Sigma R^\times \to \KTnp(R)^\times$.
Post-composing with this map gives a map
\begin{align*}
    \Map(\Dual{M}{n}, R^\times)
    &\too \Map(\Sigma \Dual{M}{n}, \Sigma R^\times)\\
    &\iso \Map(\Dual{M}{n+1}, \Sigma R^\times)\\
    &\too \Map(\Dual{M}{n+1}, \KTnp(R)^\times).
\end{align*}
Also recall that for any $\OR$-orientation $\omega$ of $R$, \cref{fourier-comp} gives an $\OR$-orientation $\KTnp(\omega)$ of $\KTnp(R)$.
We can now compare the Kummer equivalences corresponding to these orientations.

\begin{thm}\label{kummer-comp}
    Let $\OR\in \calg(\Spcnp)$, let $R\in \calg(\Sp_{\Tn})$ and let $\omega$ be an $\OR$-orientation of $\cMod_R$ of height $n+1$.
    Then, the following diagram commutes for any $\pi$-finite $M\in \Mod_{\OR}^{[1,n]}$
    % https://q.uiver.app/#q=WzAsNCxbMCwwLCJcXEdhbEV4dHtSOyBcXFNwVG59e1xcT21lZ2FeXFxpbmZ0eSBNfSJdLFsxLDAsIlxcR2FsRXh0e1xcS1RucChSKTsgXFxTcFRucH17XFxPbWVnYV5cXGluZnR5IE19Il0sWzAsMSwiXFxNYXAoXFxEdWFse019e259LFJeXFx0aW1lcykiXSxbMSwxLCJcXE1hcChcXER1YWx7TX17bisxfSxcXEtUbnAoUileXFx0aW1lcykiXSxbMCwxXSxbMiwzXSxbMCwyLCJcXHdyIiwyXSxbMSwzLCJcXHdyIl1d
    \[\begin{tikzcd}
    	{\GalExt{R; \SpTn}{\Omega^\infty M}} & {\GalExt{\KTnp(R); \SpTnp}{\Omega^\infty M}} \\
    	{\Map(\Dual{M}{n},R^\times)} & {\Map(\Dual{M}{n+1},\KTnp(R)^\times)}
    	\arrow[from=1-1, to=1-2]
    	\arrow[from=2-1, to=2-2]
    	\arrow["\wr"', from=1-1, to=2-1]
    	\arrow["\wr", from=1-2, to=2-2]
    \end{tikzcd}\]
\end{thm} 

\begin{proof}
    We exhibit the required commutative square as the pasting of the following squares:
    % https://q.uiver.app/#q=WzAsOCxbMCwwLCJcXEdhbEV4dHtSOyBcXFNwVG59e1xcT21lZ2FeXFxpbmZ0eSBNfSJdLFsxLDAsIlxcR2FsRXh0e1xcS1RucChSKTsgXFxTcFRucH17XFxPbWVnYV5cXGluZnR5IE19Il0sWzAsMSwiXFxNYXAoUl57XFxPbWVnYV5cXGluZnR5IE19LFIpIl0sWzEsMSwiXFxNYXAoXFxLVG5wKFIpXntcXE9tZWdhXlxcaW5mdHkgTX0sXFxLVG5wKFIpKSJdLFswLDIsIlxcTWFwKFJbXFxEdWFse019e259XSxSKSJdLFsxLDIsIlxcTWFwKFxcS1RucChSKVtcXER1YWx7TX17bisxfV0sXFxLVG5wKFIpKSJdLFswLDMsIlxcTWFwKFxcRHVhbHtNfXtufSxSXlxcdGltZXMpIl0sWzEsMywiXFxNYXAoXFxEdWFse019e24rMX0sXFxLVG5wKFIpXlxcdGltZXMpIl0sWzIsNCwiXFx3ciIsMl0sWzMsNSwiXFx3ciJdLFsyLDNdLFs0LDYsIlxcd3IiLDJdLFs0LDVdLFs1LDcsIlxcd3IiXSxbMCwyLCJcXHdyIiwyXSxbMSwzLCJcXHdyIl0sWzAsMV0sWzYsN11d
    \[\begin{tikzcd}
    	{\GalExt{R; \SpTn}{\Omega^\infty M}} & {\GalExt{\KTnp(R); \SpTnp}{\Omega^\infty M}} \\
    	{\Map(R^{\Omega^\infty M},R)} & {\Map(\KTnp(R)^{\Omega^\infty M},\KTnp(R))} \\
    	{\Map(R[\Dual{M}{n}],R)} & {\Map(\KTnp(R)[\Dual{M}{n+1}],\KTnp(R))} \\
    	{\Map(\Dual{M}{n},R^\times)} & {\Map(\Dual{M}{n+1},\KTnp(R)^\times)}
    	\arrow["\wr"', from=2-1, to=3-1]
    	\arrow["\wr", from=2-2, to=3-2]
    	\arrow[from=2-1, to=2-2]
    	\arrow["\wr"', from=3-1, to=4-1]
    	\arrow[from=3-1, to=3-2]
    	\arrow["\wr", from=3-2, to=4-2]
    	\arrow["\wr"', from=1-1, to=2-1]
    	\arrow["\wr", from=1-2, to=2-2]
    	\arrow[from=1-1, to=1-2]
    	\arrow[from=4-1, to=4-2]
    \end{tikzcd}\]
    The bottom square is the commutative square of \cref{kummer-bottom} applied to the ring $R$ and connective spectrum $\Dual{M}{n}$.
    The upper horizontal morphism comes from \cref{galois-comp}.
    The second and third horizontal morphisms are obtained by applying $\KTnp$ and using the isomorphisms of \cref{KTnp-R-A-Lnf} and \cref{KTnp-CAlg-lim}.
    The left upper vertical isomorphism is given by taking the limit over $\Omega^\infty M$ to a Galois extension $R \to S$, giving a map $R^{\Omega^\infty M} \to \lim_{\Omega^\infty M} S \simeq R$, and similarly for the right upper vertical isomorphism.
    The upper square commutes by naturality of limits.
    The middle vertical isomorphisms are the Fourier transforms, which are isomorphisms as in \cite[Theorem 7.33]{barthel2022chromatic}, and the middle square commutes because of \cref{fourier-comp}.
\end{proof}

\section{Cyclotomic Hyperdescent}

In this section we explain the intimate relationship between cyclotomic completion and hyperdescent, and use it to rephrase the results of the previous section in this language.
These should be compared with the general results on hyperdescent for algebraic $K$-theory as in \cite{clausen2021hyperdescent}. We refer the reader also to \cite{mor2023picard} for a discussion of hypersheaves and hyperdescent in the chromatic context. 

The starting point of this section is the observation that for $X \in \SpTn$ the $\ZZ_p^\times$-action on the infinite cyclotomic extension $\cyc{X}{p^\infty}{n}$ is \emph{continuous}.
As a definition of continuous $\ZZ_p^\times$-actions, we consider sheaves on the site $\cT_{\ZZ_p^\times}$ of continuous finite $\ZZ_p^\times$-sets (see for example \cite[Definition 4.1]{clausen2021hyperdescent}).
Indeed, the intermediate cyclotomic extensions $\cyc{X}{p^r}{n}$ arrange into the \emph{cyclotomic sheaf} $\cyc{X}{p^{(-)}}{n}$, whose stalk is $\cyc{X}{p^\infty}{n}$.
In this language, cyclotomic redshift (\cref{cyclo-main}) says that $\KTnp$ takes cyclotomic sheaves to cyclotomic sheaves (see \cref{cyclosheaf-shift}).

The main connection between cyclotomic completion and hyperdescent is established in \cref{cyclo-sheaf-galois}, showing that $X$ is cyclotomically complete if and only if the cyclotomic sheaf $\cyc{X}{p^{(-)}}{n}$ is a hypersheaf.
As explained in \cite[\S7.3]{barthel2022chromatic}, by the Devinatz--Hopkins theorem \cite{devinatz2004homotopy}, all $\Kn$-local spectra are cyclotomically complete.
Thus, we deduce that $\KKnp(\cyc{R}{p^{(-)}}{n})$ is a hypersheaf for any $R \in \Alg(\SpTn)$, namely that $\Knp$-local algebraic $K$-theory satisfies hyperdescent along the cyclotomic tower (see \cref{Knp-hyper}).

As continuous finite $\ZZ_p$-sets are simpler than continuous finite $\ZZ_p^\times$-sets, we consider the corresponding restriction of the cyclotomic sheaf, whose values are the extensions $R\iwa{r}{n}$ of \cref{iwa-def}.
We leverage the connection between cyclotomic completion and hyperdescent to reinterpret and strengthen \cref{iwa-comp} to \cref{K-hyper-cyclo}, stating that the map
\[
    \KTnp(R\iwa{(-)}{n}) \too \KTnp(R\iwa{\infty}{n})^{h(p^{(-)}\ZZ)}
\]
exhibits the target both as the hypersheafification and as the level-wise cyclotomic completion of the source.

\subsection{Continuous Group Actions}

In this subsection we study continuous group actions of profinite groups $G$, and in particular continuous Galois extensions.
To achieve this, we begin by recalling the general setup of (hyper)sheaves on a site, and then specialize to the site $\cT_G$ of continuous finite $G$-sets.
We say that a sheaf $\sR$ of commutative algebras is a continuous $G$-Galois extension if its value at any finite $G$-set is a faithful Galois extension.
The two main results are \cref{Prof_Galois_Descent}, which is a form of Galois descent for continuous $G$-Galois extensions, and \cref{hyper-R-loc-dd}, which shows that for modules over $\sR$, hypersheafification coincides with level-wise localization at the stalk of $\sR$.

\subsubsection{Sheaves and Hypersheaves}

We begin by recalling the relevant setup of (hyper)sheaves and their formal properties.
We refer the reader to \cite[\S6.5]{HA} for a comprehensive study of sheaves and hypersheaves and to the discussion in \cite[\S1]{haine2020homotopy} for a short overview of (hyper)sheaves with coefficients.

For a site $\cT$  consider the $\infty$-topos $\Shv(\cT)$ of sheaves on $\cT$.
For a presentable category $\cC$ we let
\[
    \Shv(\cT; \cC) := \Shv(\cT) \otimes \cC \qin \Prl
\]
denote the category of $\cC$-valued sheaves on $\cT$.
This can alternatively be defined as the full subcategory of presheaves that satisfy descent, namely the sheaf condition (see \cite[Remark 1.3.1.6]{SAG}).

We now wish to define the full subcategory of hypersheaves.
In the case of $\Spc$-valued sheaves, they can be defined intrinsically -- we denote by $\Shv^\hyp(\cT) \subset \Shv(\cT)$ the full subcategory of hypercomplete objects, namely those sheaves that are local with respect to $\infty$-connected morphisms (see \cite[\S6.5.2]{HTT}).
As above, we define
\[
    \Shv^\hyp(\cT; \cC) := \Shv^\hyp(\cT) \otimes \cC \qin \Prl.
\]
This can alternatively be defined as the full subcategory of presheaves that satisfy \emph{hyperdescent}, the analogue of the sheaf condition for hypercovers (this follows from \cite[Corollary 6.5.3.13]{HTT} and the formula for the Lurie tensor product \cite[Proposition 4.8.1.17]{HA}).
The inclusion $\Shv^\hyp(\cT; \cC) \sseq \Shv(\cT; \cC)$ admits a left adjoint
\[
    (-)^\hyp\colon \Shv(\cT; \cC) \too \Shv^\hyp(\cT; \cC)
\]
called \emph{hypersheafification}.
We note that pushforwards of sheaves preserve hypersheaves (see for example \cite[Proposition 6.5.2.13]{HTT}).
Observe that when $\cC$ is presentably symmetric monoidal, then the category of (hyper)sheaves is presentably symmetric monoidal as well, and the hypersheafification functor is symmetric monoidal.

We shall be primarily interested in sites with a finitary Grothendieck topology in the sense of \cite[Definition A.3.1.1]{SAG}.
One of the key properties of hypersheaves is the Deligne completeness theorem saying that equivalences can be checked on stalks, a version of which we now recall:

\begin{prop}\label{deligne}
    Let $\cT$ be a category with pullbacks endowed with a finitary Grothendieck topology, and let $\cC$ be a compactly generated presentable category.
    Then $\Shv^\hyp(\cT; \cC)$ has enough points in the sense that the collection of functors 
    \[
        \cC \otimes f^*\colon \Shv^\hyp(\cT; \cC) \to \cC,
    \]
    where $f^*$ ranges over the points of $\Shv^\hyp(\cT)$, is jointly conservative.
\end{prop}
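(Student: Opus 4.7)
The plan is to reduce the statement to the classical Deligne completeness theorem for space-valued hypersheaves (e.g.\ \cite[Theorem A.4.0.5]{SAG}), which asserts that if $\cT$ has pullbacks and a finitary topology, then the points of $\Shv^\hyp(\cT)$ are jointly conservative. To propagate this conservativity from $\Spc$-coefficients to $\cC$-coefficients, I would exploit the compact generation of $\cC$: fixing a small set of compact generators $c \in \cC$, the functors $\Map_\cC(c,-)\colon \cC \to \Spc$ are jointly conservative, preserve all limits, and (by compactness of $c$) preserve filtered colimits. Identifying $\Shv^\hyp(\cT;\cC) \simeq \Shv^\hyp(\cT) \otimes \cC$ with the full subcategory of $\Fun(\cT^\op, \cC)$ satisfying hyperdescent, post-composition with each $\Map_\cC(c,-)$ therefore induces a limit-preserving functor $\Shv^\hyp(\cT;\cC) \to \Shv^\hyp(\cT)$.

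The key computational input is that this post-composition is compatible with stalks. Concretely, any point $f^*$ of $\Shv^\hyp(\cT)$ is presented as a filtered colimit over a cofiltered system of neighborhoods, and under the identification $\Shv^\hyp(\cT;\cC) \simeq \Shv^\hyp(\cT) \otimes \cC$ the extended stalk $\cC \otimes f^*$ is computed by the analogous filtered colimit taken in $\cC$. Since $c$ is compact, the canonical comparison map
\[
\Map_\cC\bigl(c,\, (\cC \otimes f^*)(\mathcal{F})\bigr) \too f^*\bigl(\Map_\cC(c, \mathcal{F}(-))\bigr)
\]
is an isomorphism, naturally in $\mathcal{F} \in \Shv^\hyp(\cT;\cC)$.

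Combining the ingredients, suppose $\phi\colon \mathcal{F} \to \mathcal{G}$ is a morphism in $\Shv^\hyp(\cT;\cC)$ whose image under every $\cC \otimes f^*$ is an isomorphism. For each compact generator $c$, applying $\Map_\cC(c,-)$ levelwise yields a morphism of space-valued hypersheaves which is an isomorphism on every stalk; by the classical Deligne theorem, this morphism is already an isomorphism in $\Shv^\hyp(\cT)$. Evaluating at each object $U \in \cT$ and using that the functors $\Map_\cC(c,-)$ are jointly conservative on $\cC$, we conclude that $\phi(U)$ is an isomorphism for every $U$, hence $\phi$ itself is an isomorphism. I expect the main technical point to be the stalk--compactness compatibility of the second step, which rests on the finitary hypothesis on $\cT$ ensuring that points are computed by filtered colimits of sections; once this is in place, the rest of the argument is a formal combination of joint conservativity of compact generators and the Deligne theorem for $\Spc$-valued hypersheaves.
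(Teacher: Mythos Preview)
Your proposal is correct and follows essentially the same strategy as the paper: reduce to the case $\cC = \Spc$ and then invoke the Deligne completeness theorem. The paper compresses your reduction into a single citation of \cite[Lemma 2.12]{haine2021nonabelian}, whereas you unpack that lemma by hand via compact generators and the compatibility of $\Map_\cC(c,-)$ with stalks; conversely, the paper spells out the coherence chain (finitary site $\Rightarrow$ locally coherent topos $\Rightarrow$ locally coherent hypercompletion $\Rightarrow$ Deligne) that you absorb into a single reference. One small imprecision: your claim that \emph{every} point of $\Shv^\hyp(\cT)$ is a filtered colimit of sections is stronger than what you need and not obviously true in general---but it suffices that the points produced by Deligne's theorem have this form (they arise from pro-objects), and in fact the compatibility $\Map_\cC(c,(\cC\otimes f^*)\cF)\simeq f^*\Map_\cC(c,\cF)$ holds for arbitrary points by a direct adjunction argument, so the gap is cosmetic rather than substantive.
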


\begin{proof}
    By \cite[Lemma 2.12]{haine2021nonabelian}, it suffices to prove the result for $\cC = \Spc$.
    By \cite[Proposition A.3.1.3]{SAG}, the $\infty$-topos $\Shv(\cT)$ is locally coherent, whence by \cite[Proposition A.2.2.2]{SAG} so is $\Shv^\hyp(\cT)$.
    The claim then follows from the Deligne completeness theorem \cite[Theorem A.4.0.5]{SAG}.
\end{proof}

By \cite[p.\ 669]{HTT}, hypersheafification is a geometric morphism.
Thus, by composing, any point of $\Shv^\hyp(\cT)$ gives a point of $\Shv(\cT)$.
Therefore, under the assumptions of \cref{deligne}, given a map $\sF \to \sG$ in $\Shv(\cT; \cC)$ from a sheaf to a hypersheaf, to check that it exhibits $\sG$ as the hypersheafification of $\sF$ it suffices to check that it is an isomorphism on all points coming from $\Shv^\hyp(\cT)$.

\subsubsection{Sheaves on Continuous $G$-sets}

We now review the setup of continuous $G$-actions where $G$ is a profinite group, as developed in the $\infty$-categorical setting for example in \cite[\S4.1]{clausen2021hyperdescent}.
We denote by $\cT_G$ the site of continuous finite $G$-sets, endowed with the Grothendieck topology generated by the jointly surjective finite families of maps, which is finitary by construction.
For a sheaf $\sF \in \Shv(\cT_G; \cC)$ and an open subgroup $U \le G$, observe that $\sF(G/U)$ has a residual $N_G(U)/U$-action.
Furthermore, there is an equivalence (see \cite[Construction 4.5]{clausen2021hyperdescent})
\[
    \Shv(\cT_G; \cC) \simeq 
    \colim_{U \unlhd G \text{ open }} \cC^{B(G/U)} \qin \Prl.
\]
In other words, the data of a sheaf $\sF$ is precisely the data of $\sF(G/U) \in \cC^{B(G/U)}$ together with coherent compatibility maps. In particular, by \cite[Theorem 6.3.3.1]{HTT} the $\infty$-topos 
$\Shv(\cT_G; \Spc)$ can be presented as a filtered limit formed in the category of $\infty$-topoi: 
\[
    \Shv(\cT_G; \Spc) \simeq 
    \invlim_{U \unlhd G \text{ open }} \Spc^{B(G/U)}. 
\] 
By \cite[Remark 6.3.5.10]{HTT} applied to the $\infty$-topos of spaces, the space of points of $\Spc^{B(G/U)}$ is $B(G/U)$. Consequently, the space of points of $\Shv(\cT_G; \Spc)$ is the filtered limit $\invlim B(G/U)$, which is a connected space with a canonical basepoint. We denote the distinguished point of this topos by 
\[
e_*\colon \Spc \too \Shv(\cT_G; \Spc).
\] 
For $\cC\in \Prl$, the corresponding stalk functor $e^*\colon \Shv(\cT_G; \cC) \to \cC$ is given by
\[
    e^* \sF \simeq \colim_{U \unlhd G \text{ open }} \sF(G/U).
\]
Since there is a canonical map $BG \to \invlim B(G/U)$, the stalk has a canonical $G$-action (where $G$ is regarded as a discrete group), namely $e^*$ lifts to a functor $\oeu\colon \Shv(\cT_G; \cC) \to \cC^{BG}$.
The right adjoint $\oe_*\colon \cC^{BG} \to \Shv(\cT_G; \cC)$ sends $X \in \cC^{BG}$ to the sheaf whose values are
\[
    (\oe_* X)(G/U) = X^{hU}
    \qin \cC^{B(G/U)}.
\]
Since the right adjoint to the forgetful $\cC^{BG} \to \cC$ is given by $X \mapsto \coind{G}{X}$, we see that the right adjoint of $e^*$ sends $X \in \cC$ to the sheaf whose values are
\[
    e_* X(G/U) = \oe_*\big(\coind{G}{X}\big)(G/U) = \big(\coind{G}{X}\big)^{hU} \simeq \coind{G/U}{X}
    \qin \cC^{B(G/U)}.
\]
Also note that every object of $\cC^{BG}$ is a hypersheaf since $\Spc^{BG}$ is hypercomplete, and since $\oe_*$ is a geometric morphism it sends hypersheaves to hypersheaves (see \cite[Proposition 6.5.2.13]{HTT}).

\subsubsection{Continuous Galois Extensions and Hypersheafification}

We now consider Galois extensions with respect to profinite groups using the above setting.

\begin{defn}
    Let $G$ be a profinite group, let $\cC \in \CAlg(\Prl)$ be semiadditive and let $\sR \in \CAlg(\Shv(\cT_G; \cC))$.
    We say that $\sR$ is a \tdef{continuous $G$-Galois extension} if for every open normal subgroup $U \unlhd G$ the object $\sR(G/U) \in \CAlg(\cC)^{B(G/U)}$ is a \emph{faithful} Galois extension.
\end{defn}

We observe that a collection of compatible faithful Galois extensions automatically assembles into a sheaf in the following sense:

\begin{prop}\label{Galois_Sheaf}
    Let $\sF\colon \cT_G^\op \to \CAlg(\cC)$ be a finite product preserving functor such that $\sF(G/U)$ is a faithful $G/U$-Galois extension for any open normal subgroups $U \unlhd G$.
    Then $\sF$ satisfies the sheaf condition, and hence is a continuous $G$-Galois extension. 
\end{prop}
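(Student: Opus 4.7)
The plan is to reduce the sheaf condition, via finite product preservation of $\sF$, to an identification of $\sF(G/U)$ with a $K$-homotopy fixed point of a faithful Galois extension, and to verify this identification using Galois descent (\Cref{Galois_Descent}) applied to $\sF(G/V')$ for a suitable open normal subgroup $V' \unlhd G$.

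Since the topology on $\cT_G$ is generated by finite jointly surjective families and $\sF$ already preserves finite products (in particular, empty ones), the sheaf condition reduces to descent along single surjective maps of transitive continuous $G$-sets, and after refining through a small enough open normal subgroup, may further be reduced to covers of the form $G/V' \to G/U$ with $V' \unlhd G$ open normal and $V' \le U$ open. Such a cover is a torsor for $K := U/V'$ (viewed inside $H := G/V'$ via the inclusion $V' \unlhd U$, inherited from $V' \unlhd G$), and its \v{C}ech nerve identifies with the simplicial $G$-set $G/V' \times K^{\bullet}$. Applying $\sF$ and invoking finite product preservation turns the augmented \v{C}ech complex into the cobar complex
\[
    \sF(G/U) \too S \rightrightarrows S^K \too \cdots,
\]
where $S := \sF(G/V')$; its totalization beyond $\sF(G/U)$ is $S^{hK}$ with respect to the restriction to $K \le H$ of the $H$-action on $S$. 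Hence the sheaf condition for this cover is equivalent to the canonical map
\[
    \sF(G/U) \too S^{hK} \qin \calg(\cC)
\]
being an isomorphism.

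By hypothesis, $S$ is a faithful $H$-Galois extension of $\sF(G/G) = \unit_\cC$, so \Cref{Galois_Descent} provides an equivalence $\cC \simeq \Mod_S(\cC)^{hH}$ and hence $\calg(\cC) \simeq \calg_S(\cC)^{hH}$. Under this equivalence both sides of the map correspond to $H$-equivariant commutative $S$-algebras. The right-hand side computes as $\prod_{H/K} S$ with $H$ permuting the index, by combining \Cref{Galois_Coind} with the commutation of $(-) \otimes S$ with $(-)^{hK}$ for finite $K$ (from $K$-ambidexterity in the present semiadditive setting) and the Galois identity $S \otimes S \simeq \prod_H S$. For the left-hand side, using $V' \le U$ together with the normality of $V'$ in $G$, every $G$-orbit on $G/U \times G/V'$ has stabilizer $V'$ and orbit space $H/K$; hence $G/U \times G/V' \simeq \coprod_{H/K} G/V'$ $G$-equivariantly, and finite product preservation of $\sF$ yields
\[
    \sF(G/U \times G/V') \simeq \prod_{H/K} S
\]
with matching $H$-action.

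It remains to identify $\sF(G/U) \otimes S$ with $\sF(G/U \times G/V')$ via the canonical lax-monoidal comparison map $\sF(G/U) \otimes \sF(G/V') \to \sF(G/U \times G/V')$ from the projections, and to check that the resulting equivalence in $\calg_S(\cC)^{hH}$ matches the comparison with $\prod_{H/K} S$ from the Galois-descent side. This last step is the main obstacle, since it is itself a controlled instance of the sheaf property we are trying to prove; I expect to bootstrap it by tensoring the comparison map once more with $S$, whereupon the Galois identity $S \otimes S \simeq \prod_H S$ and finite product preservation of $\sF$ together pin down the structure $H$-equivariantly and faithfulness of $S$ over $\unit_\cC$ promotes the resulting isomorphism back to the original map.
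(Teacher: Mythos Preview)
Your reduction to checking $\sF(G/U) \to S^{hK}$ for a single cover $G/V' \to G/U$ with $V'$ normal is fine, and passing through \Cref{Galois_Descent} for $S = \sF(G/V')$ is the right reflex. But the proof is genuinely incomplete at the point you flag, and the bootstrap you propose does not close the gap. Tensoring the comparison map $\sF(G/U)\otimes S \to \sF(G/U\times G/V')$ once more with $S$ produces, on the left, $\prod_H(\sF(G/U)\otimes S)$ via $S\otimes S \simeq \prod_H S$; the unknown object $\sF(G/U)\otimes S$ reappears unchanged in every factor. Nothing in the hypotheses lets you identify it with $\prod_{H/K}S$ without already knowing the descent statement you are after, so the argument is circular. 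The underlying reason is that you are only using the Galois property of $S$, whereas $\sF(G/U)$ is an a priori unconstrained piece of data unless you also invoke the Galois hypothesis at the level of $U$.

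The paper's proof avoids this entirely by making one extra reduction you omit: it takes \emph{both} subgroups to be normal, say $U \unlhd U' \unlhd G$ (your $V' \le U$ with $U$ also normal). Then, rather than trying to compute both sides as $S$-modules, it uses only the \emph{conservativity} half of \Cref{Galois_Descent}: since $\sF(G/U')$ is faithful Galois, the functor $(-)^{h(G/U')}$ is conservative on $\CAlg_{\sF(G/U')}(\cC^{B(G/U')})$, so it suffices to check the map after applying it. But then the source becomes $\sF(G/U')^{h(G/U')}\simeq \unit_\cC$ and the target becomes $\big(\sF(G/U)^{h(U'/U)}\big)^{h(G/U')}\simeq \sF(G/U)^{h(G/U)}\simeq \unit_\cC$, both by the first Rognes--Galois condition, and the map is the identity of $\unit_\cC$. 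This uses the Galois hypothesis at $U$ as well as at $U'$, which is exactly the input your argument never deploys.
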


\begin{proof}
    Since $\sF$ preserves finite products, it remains to show that for every inclusion of open normal subgroups $U \unlhd U' \unlhd G$ the canonical map
    \[
        \sF(G/U') \too \sF(G/U)^{h(U'/U)}
        \qin \CAlg(\cC^{B(G/U')})
    \]
    is an isomorphism.
    This map makes the target into a commutative $\sF(G/U')$-algebra.
    Since $\sF(G/U')$ is a faithful Galois extension, \cref{Galois_Descent} implies that
    \[
        (-)^{h(G/U')}\colon \CAlg_{\sF(G/U')}(\cC^{B(G/U')}) \iso \CAlg(\cC)
    \]
    is an equivalence, and in particular is conservative.
    Therefore, it suffices to check that
    \[
        \sF(G/U')^{h(G/U')} \too (\sF(G/U)^{h(U'/U)})^{h(G/U')} \simeq \sF(G/U)^{h(G/U)}
        \qin \CAlg(\cC)
    \]
    is an isomorphism.
    Indeed, by assumption $\sF(G/U')$ and $\sF(G/U)$ are $G/U'$- and $G/U$-Galois extensions respectively, and in particular satisfy the first Rognes--Galois condition. Namely, the above map identifies with the identity map of the unit $\unit_\cC$ and so, in particular, is an isomorphism. 
\end{proof}

For every continuous $G$-Galois extension $\sR$ and $X \in \cC$, we have the presheaf $X \otimes \sR$ formed by tensoring $\sR$ with $X$ level-wise.
We show that it is in fact a sheaf, and generalize the Galois descent result of \cref{Galois_Descent} to profinite groups.

\begin{prop}\label{Prof_Galois_Descent}
    Let $G$ be a profinite group, let $\cC \in \CAlg(\Prl)$ be semiadditive and let $\sR$ be a continuous $G$-Galois extension.
    Then, there is a symmetric monoidal equivalence
    \[
        - \otimes \sR\colon \cC \adj \Mod_{\sR}(\Shv(\cT_G; \cC))\noloc (-)(G/G)
    \]
\end{prop}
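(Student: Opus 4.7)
The plan is to reduce the claim to the finite-level Galois descent of \cref{Galois_Descent} via the presentation of $\Shv(\cT_G;\cC)$ as a cofiltered limit along the open normal subgroups of $G$. Converting the colimit description $\Shv(\cT_G;\cC)\simeq \colim_{U}\cC^{B(G/U)}$ in $\Prl$ into the corresponding limit gives
\[
\Shv(\cT_G;\cC)\simeq \invlim_{U\unlhd G\text{ open}}\cC^{B(G/U)},
\]
taken along the right-adjoint transition functors $(-)^{h(U'/U)}\colon \cC^{B(G/U)}\to \cC^{B(G/U')}$ for nested $U\subseteq U'$, under which the projection to the $U$-factor is the evaluation $\sF\mapsto \sF(G/U)$. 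Applying the sheaf condition for $\sR$ to the $(U/V)$-Galois covers $G/V\twoheadrightarrow G/U$ yields $\sR(G/V)^{h(U/V)}\simeq \sR(G/U)$, so these transition functors restrict to functors $\Mod_{\sR(G/U)}(\cC^{B(G/U)}) \to \Mod_{\sR(G/U')}(\cC^{B(G/U')})$, and functoriality of module categories in $\Prl$ over the tower gives
\[
\Mod_\sR(\Shv(\cT_G;\cC))\simeq \invlim_{U\unlhd G\text{ open}}\Mod_{\sR(G/U)}(\cC^{B(G/U)}).
\]

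Next, I would invoke \cref{Galois_Descent} (combined with \cref{Shay}) for each faithful Galois extension $\sR(G/U)$ to obtain a symmetric monoidal equivalence $F_U\colon \cC\simeq \Mod_{\sR(G/U)}(\cC^{B(G/U)})$ with $F_U(X)=X\otimes \sR(G/U)$ and right adjoint $M\mapsto M^{h(G/U)}$. I would then verify that the transition functors $(-)^{h(U'/U)}$ intertwine $F_U$ and $F_{U'}$ with the identity on $\cC$: for $X\in \cC$ the projection formula (valid because $X$ is pulled back along $B(G/U)\to \pt$ and $(-)^{h(U'/U)}$ is $\cC$-linear) yields
\[
(X\otimes \sR(G/U))^{h(U'/U)}\simeq X\otimes \sR(G/U)^{h(U'/U)}\simeq X\otimes \sR(G/U')=F_{U'}(X).
\]

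With every transition functor identified with the identity, the limit collapses and $\Mod_\sR(\Shv(\cT_G;\cC))\simeq \invlim_U \cC \simeq \cC$. Tracking through the identifications, the left adjoint $\cC \to \Mod_\sR(\Shv(\cT_G;\cC))$ is the base change $X\mapsto \underline{X}\otimes \sR$, which is symmetric monoidal by construction, while the right adjoint is the projection to the $U=G$ component, i.e.\ evaluation at $G/G$. Its value is an $\sR(G/G)$-module, and the first Rognes--Galois condition at $U=G$ gives $\sR(G/G)\simeq \one_\cC$, so the right adjoint is just $\sM\mapsto \sM(G/G)\in\cC$ as required.

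The principal subtlety is making rigorous the commutation of $\Mod_{(-)}$ with the $\Prl$-limit over the tower and verifying that the transition functors on modules lift to coherent natural compatibilities between the $F_U$'s under the identity on $\cC$. Once these bookkeeping issues are settled, the substance of the argument is a uniform application of finite Galois descent along the cofinal tower of open normal subgroups of $G$, together with the projection formula that keeps the $\cC$-linear structures in sync across the tower.
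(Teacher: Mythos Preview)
Your proposal is correct and follows essentially the same strategy as the paper: reduce to finite Galois descent (\cref{Galois_Descent}) at each open normal subgroup via the presentation of $\Shv(\cT_G;\cC)$ as a filtered (co)limit of the categories $\cC^{B(G/U)}$. The only difference is that the paper works on the \emph{colimit} side in $\Prl$ rather than the limit side: it invokes that $\Mod_{(-)}(-)\colon \PrlCAlg \to \CAlg(\Prl)$ is a symmetric monoidal left adjoint (\cite[Theorem 4.8.5.11]{HA}) and hence commutes with colimits, giving
\[
\Mod_{\sR}(\Shv(\cT_G;\cC)) \simeq \Mod_{\sR}\bigl(\colim_U \cC^{B(G/U)}\bigr) \simeq \colim_U \Mod_{\sR(G/U)}(\cC^{B(G/U)}) \simeq \colim_U \cC \simeq \cC,
\]
with the transition functors on the left being the symmetric monoidal pullback-and-base-change functors. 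This sidesteps precisely the ``principal subtlety'' you flag---the commutation of $\Mod$ with the limit and the coherent identification of transition maps via the projection formula---by replacing it with a clean citation. Your limit-side argument is valid, but the colimit-side framing makes the bookkeeping automatic.
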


\begin{proof}
    Recall that
    \[
        \Shv(\cT_G; \cC) \simeq 
        \colim_{U \unlhd G \text{ open }} \cC^{B(G/U)} \qin \Prl.
    \]
    By \cref{Galois_Descent}, for every open normal subgroup $U \unlhd G$, taking homotopy fixed points induces a symmetric monoidal equivalence
    \[
        (-)^{h(G/U)}\colon \Mod_{\sR(G/U)}(\cC^{B(G/U)}) \iso \cC.
    \]
    Also, recall that $\Mod_{(-)}(-)$ is a symmetric monoidal left adjoint functor from $\PrlCAlg$ to $\CAlg(\Prl)$
    (\cite[Theorem 4.8.5.11]{HA}) and hence commutes with colimits.
    Finally, we obtain the required equivalence in $\CAlg(\Prl)$:
    \begin{align*}
        \Mod_{\sR}(\Shv(\cT_G; \cC))
        &\simeq \Mod_{\sR}(\colim_{U \unlhd G \text{ open }} \cC^{B(G/U)})\\
        &\simeq \colim_{U \unlhd G \text{ open }} \Mod_{\sR(G/U)}(\cC^{B(G/U)})\\
        &\simeq \colim_{U \unlhd G \text{ open }} \cC\\
        &\simeq \cC.
    \end{align*}
\end{proof}

For a continuous $G$-Galois extension $\sR$ as above, consider the stalk $R := e^* \sR \in \CAlg(\cC)$.
We denote by $L_R\colon \cC \to L_R\cC$ the Bousfield localization with respect to $R$.
Our next goal is to show that, under some hypothesis, the hypersheafification of a sheaf which is a module over $\sR$ is given by applying $L_R$ level-wise.
For that purpose, we begin with the following lemma:

\begin{lem}\label{mod-sheaf}
    Let $G$ be a profinite group, let $F\colon \cC \to \cD \in \CAlg(\Prl)$ with $\cC$ and $\cD$ semiadditive, and let $\sR$ be a continuous $G$-Galois extension.
    For any $\sR$-module sheaf $\sM \in \Mod_{\sR}(\Shv(\cT_G; \cC))$, the presheaf $F(\sM)$ obtained by applying $F$ level-wise is a sheaf, and $F(\sR)$ is a continuous $G$-Galois extension.
    This assembles into a functor
    \[
        F\colon \Mod_{\sR}(\Shv(\cT_G; \cC)) \too \Mod_{F(\sR)}(\Shv(\cT_G; \cD)).
    \]
\end{lem}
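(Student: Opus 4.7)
The plan is to reduce the lemma to \cref{colim_sym_mon_preserve_Gal} and \cref{Prof_Galois_Descent} in two steps. First, I would show that the level-wise image $F(\sR)$ is itself a continuous $G$-Galois extension. For each open normal subgroup $U \unlhd G$, the value $F(\sR)(G/U) = F(\sR(G/U))$ is a $G/U$-Galois extension of $\one_\cD$ by \cref{colim_sym_mon_preserve_Gal}. Faithfulness requires a separate argument, as it is not built into \cref{colim_sym_mon_preserve_Gal}: I would extract from its proof the equivalence
\[
-\otimes F(\sR(G/U))\colon \cD \iso \Mod_{F(\sR(G/U))}(\cD^{B(G/U)}),
\]
with inverse $(-)^{h(G/U)}$. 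If $M \otimes F(\sR(G/U)) \simeq 0$ in $\cD$, then conservativity of the forgetful functors $\Mod_{F(\sR(G/U))}(\cD^{B(G/U)}) \to \cD^{B(G/U)} \to \cD$ propagates the vanishing into the module category, and applying $(-)^{h(G/U)}$ yields $M \simeq 0$. Since $\cC$ and $\cD$ are semiadditive, the colimit preserving functor $F$ preserves finite products, and since products in $\CAlg$ are computed in the underlying category, $F(\sR)$ is a finite-product preserving presheaf; \cref{Galois_Sheaf} then guarantees it is a sheaf, and hence a continuous $G$-Galois extension.

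Having established this, I would apply \cref{Prof_Galois_Descent} to both $\sR$ and $F(\sR)$ to obtain symmetric monoidal equivalences
\[
\cC \iso \Mod_\sR(\Shv(\cT_G; \cC)), \qquad \cD \iso \Mod_{F(\sR)}(\Shv(\cT_G; \cD)),
\]
given by tensoring with $\sR$ and $F(\sR)$ respectively. Thus any $\sM \in \Mod_\sR(\Shv(\cT_G; \cC))$ is of the form $X \otimes \sR$ for a unique $X \in \cC$, with values $\sM(G/U) \simeq X \otimes \sR(G/U)$. Applying $F$ level-wise and using symmetric monoidality of $F$ gives
\[
F(\sM)(G/U) \simeq F(X) \otimes F(\sR)(G/U),
\]
so $F(\sM) \simeq F(X) \otimes F(\sR)$ as presheaves. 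The right-hand side is manifestly a sheaf of $F(\sR)$-modules, so the construction assembles into the desired functor, which under the two equivalences corresponds to $F\colon \cC \to \cD$ itself.

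The main obstacle is the faithfulness of $F(\sR(G/U))$, which does not follow formally from it being a $G/U$-Galois extension; but this point is rescued by the descent equivalence already established inside the proof of \cref{colim_sym_mon_preserve_Gal}. Once that is handled, the remainder is a formal consequence of the two invocations of \cref{Prof_Galois_Descent}.
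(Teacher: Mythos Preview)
Your proposal is correct and follows essentially the same strategy as the paper: both arguments invoke \cref{colim_sym_mon_preserve_Gal} and \cref{Galois_Sheaf} to show $F(\sR)$ is a continuous $G$-Galois extension, and then use \cref{Prof_Galois_Descent} on both sides to identify the module categories with $\cC$ and $\cD$, so that level-wise $F$ corresponds to $F\colon \cC \to \cD$. Your explicit treatment of faithfulness for $F(\sR(G/U))$ --- extracting it from the descent equivalence inside the proof of \cref{colim_sym_mon_preserve_Gal} --- is a genuine improvement, as the paper applies \cref{Galois_Sheaf} (which requires faithfulness) without addressing this point.
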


\begin{proof}
    Since $F$ is a colimit preserving functor between semiadditive categories, it also preserves finite products, so that $F(\sR)$ preserves products.
    By \Cref{colim_sym_mon_preserve_Gal}, the functor $F$ sends (finite) Galois extensions to Galois extensions, so that $F(\sR(G/U))$ is a $G/U$-Galois extension for every open normal subgroup $U \unlhd G$.
    Thus, \Cref{Galois_Sheaf} implies that $F(\sR)$ is a continuous $G$-Galois extension.

    Now, since the symmetric monoidal structure on $\PSh(\cT_G; \cC)$ is compatible with the sheafification $\PSh(\cT_G; \cC) \to \Shv(\cT_G; \cC)$, and $\sR$ is a sheaf, we get an induced localization $\Mod_\sR(\PSh(\cT_G; \cC)) \to \Mod_\sR(\Shv(\cT_G; \cC))$, and similarly for $F(\sR)$ and $\cD$.
    Post-composition with $F$ induces a symmetric monoidal functor
    \[
        F\colon \PSh(\cT_G; \cC) \too \PSh(\cT_G; \cD),
    \]
    and in particular sends $\sR$-modules to $F(\sR)$-modules.
    Since $F\colon \cC \to \cD$ is symmetric monoidal we see that the outer square in the following diagram commutes:
    % https://q.uiver.app/#q=WzAsNixbMCwwLCJcXGNDIl0sWzAsMSwiXFxjRCJdLFsyLDAsIlxcTW9kX3tcXHNSfShcXFNodihcXGNUX0c7IFxcY0MpKSJdLFsyLDEsIlxcTW9kX3tGKFxcc1IpfShcXFNodihcXGNUX0c7IFxcY0MpKSJdLFs0LDAsIlxcTW9kX3tcXHNSfShcXFBTaChcXGNUX0c7IFxcY0MpKSJdLFs0LDEsIlxcTW9kX3tGKFxcc1IpfShcXFBTaChcXGNUX0c7IFxcY0QpKSJdLFswLDEsIkYiLDJdLFswLDIsIi0gXFxvdGltZXMgXFxzUiJdLFsxLDMsIi0gXFxvdGltZXMgRihcXHNSKSIsMl0sWzIsNCwiIiwwLHsic3R5bGUiOnsidGFpbCI6eyJuYW1lIjoiaG9vayIsInNpZGUiOiJ0b3AifX19XSxbMyw1LCIiLDIseyJzdHlsZSI6eyJ0YWlsIjp7Im5hbWUiOiJob29rIiwic2lkZSI6InRvcCJ9fX1dLFsyLDMsIiIsMSx7InN0eWxlIjp7ImJvZHkiOnsibmFtZSI6ImRhc2hlZCJ9fX1dLFs0LDUsIkYiXV0=
    \[\begin{tikzcd}
    	\cC && {\Mod_{\sR}(\Shv(\cT_G; \cC))} && {\Mod_{\sR}(\PSh(\cT_G; \cC))} \\
    	\cD && {\Mod_{F(\sR)}(\Shv(\cT_G; \cC))} && {\Mod_{F(\sR)}(\PSh(\cT_G; \cD))}
    	\arrow["F"', from=1-1, to=2-1]
    	\arrow["{- \otimes \sR}", from=1-1, to=1-3]
    	\arrow["{- \otimes F(\sR)}"', from=2-1, to=2-3]
    	\arrow[hook, from=1-3, to=1-5]
    	\arrow[hook, from=2-3, to=2-5]
    	\arrow[dashed, from=1-3, to=2-3]
    	\arrow["F", from=1-5, to=2-5]
    \end{tikzcd}\]
    \cref{Prof_Galois_Descent} shows that the horizontal morphisms factor as depicted in the diagram, thus the right vertical morphism
    \[
        F\colon \Mod_\sR(\PSh(\cT_G; \cC)) \too \Mod_{F(\sR)}(\PSh(\cT_G; \cD))
    \]
    restricts to the full subcategories of sheaves in the source and target, giving the dashed morphism in the diagram.
\end{proof}

\begin{prop}\label{LR-hyper}
    Let $G$ be a profinite group of finite virtual $p$-cohomological dimension, and let $\cC \in \CAlg(\Prlst)$ be $p$-local.
    Let $\sR$ be a continuous $G$-Galois extension with stalk $R := e^*\sR$.
    For every $\sM \in \Mod_{\sR}(\Shv(\cT_G; \cC))$, the presheaf $L_R \sM$ is a hypersheaf and the map
    \[
        \sM \too L_R \sM
    \]
    exhibits the target as the hypersheafification of the source.
\end{prop}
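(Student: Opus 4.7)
The plan is to transport $L_R$ through the sheaf structure via \ref{mod-sheaf}, identify $L_R\sM$ with the image of $\oe_*$ applied to the stalk, and conclude via Deligne completeness.

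By \ref{Prof_Galois_Descent}, every $\sR$-module sheaf has the form $\sM \simeq M \otimes \sR$ with $M := \sM(G/G)$. Applying \ref{mod-sheaf} to the colimit-preserving symmetric monoidal localization $L_R \colon \cC \to L_R\cC$ shows that the presheaf $L_R\sM$ is already a sheaf, namely a sheaf of modules over the continuous $G$-Galois extension $L_R\sR$ in $L_R\cC$. The stalk functor $e^* = \colim_{U \unlhd G \text{ open}}(-)(G/U)$ is a left adjoint, so it commutes with the colimit-preserving $L_R$, yielding
\[
    e^*(L_R\sM) \simeq L_R(e^*\sM) \simeq L_R(M \otimes R) \simeq M \otimes R,
\]
the final isomorphism holding because $M \otimes R$ is an $R$-module in $\cC$ and hence already $R$-local. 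In particular $\sM \to L_R\sM$ is a stalk-wise isomorphism.

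To show $L_R\sM$ is a hypersheaf, I will use the right adjoint $\oe_* \colon \cC^{BG} \to \Shv(\cT_G; \cC)$ to $\oeu$, which, being a geometric morphism with hypercomplete source $\cC \otimes \Spc^{BG}$, lands in $\Shv^\hyp(\cT_G; \cC)$. The adjunction unit $L_R\sM \to \oe_*\oeu L_R\sM \simeq \oe_*(M \otimes R)$ has value at $G/U$ the natural map $L_R(M \otimes \sR(G/U)) \to (M \otimes R)^{hU}$. Using the continuous Galois-descent identification $\sR(G/U) \simeq R^{hU}$, I then verify that this is an equivalence via a projection formula for the $U$-Galois extension $R^{hU} \to R$: the finite $p$-cohomological dimension of the open subgroup $U$ (inherited from $G$) ensures the descent tower for computing $L_R$ on an $R^{hU}$-module stabilizes at a finite stage, and the $p$-locality of $\cC$ aligns the relevant Bousfield classes. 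This produces the identification $L_R\sM \simeq \oe_*(M \otimes R)$, so $L_R\sM$ is a hypersheaf.

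Combining these steps, the map $\sM \to L_R\sM$ is a stalk-wise isomorphism into a hypersheaf. The induced factorization $\sM \to \sM^\hyp \to L_R\sM$ through the hypersheafification is therefore a stalk-wise isomorphism of hypersheaves, hence an isomorphism by \ref{deligne}, which exhibits $L_R\sM$ as $\sM^\hyp$. The main obstacle will be the projection-formula identification $L_R(M \otimes R^{hU}) \simeq (M \otimes R)^{hU}$: this is the technical heart of the argument and is where the finite virtual $p$-cohomological dimension of $G$ and the $p$-locality of $\cC$ are essentially used; the remaining steps are formal consequences of adjunctions and the Deligne completeness theorem.
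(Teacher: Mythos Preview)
Your overall architecture is sound---use \cref{mod-sheaf} to see that $L_R\sM$ is a sheaf, aim to identify it with something in the image of $\oe_*$ (hence a hypersheaf), and finish via \cref{deligne}---and the stalk computation is correct. The gap is in the step you yourself flag as the main obstacle: you do not actually establish that the unit $L_R\sM \to \oe_*\oeu L_R\sM$ is an equivalence. The intermediate claim ``$\sR(G/U) \simeq R^{hU}$'' is either tautological (if $R^{hU}$ means the sheaf value, i.e.\ continuous fixed points) or unproven and in general false (if it means discrete $U$-fixed points, which is what $\oe_*$ produces); in the latter reading it is essentially the hyperdescent statement you are trying to prove. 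The ``projection formula'' and the appeal to a stabilizing descent tower are gestures rather than arguments, and it is not clear how finite virtual $p$-cohomological dimension would yield them in this form. Note also that even if you had $L_R\sM \simeq \oe_*(M\otimes R)$, you would still need $\colim_U (M\otimes R)^{hU} \simeq M\otimes R$ to close the loop via \cref{deligne}, and you have not addressed this.

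The paper uses the cohomological-dimension hypothesis quite differently: it invokes \cite[Corollary 4.28]{clausen2021hyperdescent}, which says that hypersheafification on $\Shv(\cT_G;\Sp_{(p)})$ is a \emph{smashing} localization. This has two consequences. First, since $L_R\sM$ is an $L_R\sR$-module, it suffices to show that $L_R\sR$ is hypercomplete. Second, level-wise tensoring with $R$ commutes with hypersheafification, so (using that $R\otimes-$ is conservative on $R$-local objects) it suffices to show that $R\otimes L_R\sR \simeq R\otimes\sR$ is a hypersheaf. At this point the second Rognes--Galois condition gives $R\otimes \sR(G/U)\simeq R^{G/U}$ naturally in $U$, i.e.\ $R\otimes\sR \simeq e_*R$, which is a hypersheaf because $e_*$ is a geometric pushforward. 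The key point is that the reduction to $e_*R$ (a co-induced object from the \emph{trivial} subgroup) completely avoids discrete fixed points by open profinite subgroups; the finite virtual $p$-cohomological dimension enters only through the smashing property, not through any convergence of a fixed-point spectral sequence.
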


\begin{proof}
    Since the inclusion $L_R\cC \sseq \cC$ is limit preserving, for an $L_R\cC$-valued presheaf, the (hyper)sheaf condition is the same whether we view it as valued in $\cC$ or in $L_R\cC$. Hence, in either interpretation, we get by \cref{mod-sheaf} that $L_R \sM$ is a sheaf.
    
    We now show that $L_R \sM$ is a hypersheaf. We begin by reducing to $\sM = \sR$.
    By \cite[Corollary 4.28]{clausen2021hyperdescent}, hypersheaves form a smashing localization of $\Shv(\cT_G; \Sp_{(p)})$.
    Consequently, the same holds for $\Shv(\cT_G; L_R\cC)$ since smashing localizations are closed under tensor products in $\CAlg(\Prl)$.
    Consequently, since $L_R \sM$ is a module over $L_R \sR$, it suffices to prove that the latter is a hypersheaf. 

    We shall reduce it to showing that the level-wise tensor product $R \otimes \sR$ is a hypersheaf as follows. The sheaf $L_R \sR$ is a hypersheaf if and only if the map
    \[
        L_R \sR \too (L_R \sR)^\hyp 
    \]
    is an isomorphism, where the hypersheafification is formed in $\Shv(\cT_G;L_R\cC)$. 
    Since the functor 
    \[
        R\otimes - \colon L_R \cC \too \Mod_R(\cC)
    \]
    is conservative, we can check this after tensoring level-wise with $R$. 
    We proceed by showing that this operations sends both sides to \textit{sheaves}, and moreover $R \otimes (L_R \sR)^\hyp$ is the hypersheafification of $R \otimes L_R\sR$.
    Recall that the category of $L_R \cC$-valued sheaves is $L_R \cC$-linear, with the action given by tensoring with $X \in L_R \cC$ level-wise and sheafifying, and since hypersheafification is smashing, it commutes with this action.
    That is, for a sheaf $\sF$, the hypersheaf $(X \otimes \sF)^\hyp$ is given by the sheafification of $X \otimes \sF^\hyp$.
    Now, observe that both $L_R \sR$ and $ (L_R \sR)^\hyp$ are $\sR$-modules. Thus, by \cref{mod-sheaf}, applying 
    \[
        R \otimes -\colon 
        L_R \cC \too
        \Mod_R(L_R \cC)
    \] 
    level-wise sends them to sheaves (note that the tensor product in $L_R \cC$ is obtained by applying $L_R$ to the tensor product in $\cC$, but $R$-modules are already $R$-local). So does applying the limit preserving right adjoint of the above displayed functor.
    As a result, both presheaves $R \otimes L_R \sR$ and $R \otimes (L_R \sR)^\hyp$ are sheaves, and the latter is isomorphic to $(R \otimes L_R \sR)^\hyp$.
    Moreover, since the map $\sR(G/U) \to L_R \sR(G/U)$ is an $R$-equivalence for every open normal subgroup $U$ of $G$, we also get that $R \otimes \sR \simeq R \otimes L_R \sR$.
    That is, it suffices to show that $R \otimes \sR$ is a hypersheaf.

    By the $\sR(G/U)$-algebra structure of $R$, we have a map 
    \[
        R \otimes \sR(G/U) \iso  
        R \otimes_{\sR(G/U)} (\sR(G/U)\otimes \sR(G/U)) \too
        R \otimes_{\sR(G/U)} \coind{G/U}{\sR(G/U)} \iso
        \coind{G/U}{R}
    \]
    depending functorially on $U$, which is an isomorphism by virtue of the second Rognes--Galois condition for $\sR(G/U)$.
    That is, we have
    \[
    R \otimes \sR \simeq e_* R \simeq \oe_* \coind{G}{R}.
    \]
    Recalling that $\oe_*$ lands in hypersheaves, we conclude that $R \otimes \sR$ is indeed a hypersheaf, concluding the proof that $L_R \sM$ is a hypersheaf.

    Finally, we need to check that $\sM \to L_R \sM$ exhibits the target as the hypersheafification of the source.
    Since the target is a hypersheaf, by \cref{deligne} it suffices to check that the map is an isomorphism on stalks, namely that $e^* \sM \to e^* L_R \sM$ is an isomorphism, or equivalently that its cofiber is zero.
    On the one hand, since $e^*$ is symmetric monoidal, both the source and the target are modules over $R := e^*\sR$, so in particular they are $R$-local and hence so is the cofiber. On the other hand, for each open normal subgroup $U$ of $G$, the cofiber of $\sM(G/U) \to L_R\sM(G/U)$ is $R$-acyclic, and as the cofiber of $e^*\sM \to e^*L_R\sM$ is given by their colimit, it is itself $R$-acyclic.
\end{proof}

As a sheaf $\sF \in \Shv(\cT_G; \cC)$ can be thought of as an object $X = e^*\sF \in \cC$ endowed with a ``continuous action'' of the profinite group $G$, the value $\cF(G/U)$ is the ``continuous $U$-fixed points'' of said action of $G$ on $X$. We shall hence introduce a suggestive notation, which is less likely to cause confusion now that we are done with the technical discussion above. 

\begin{notation}
    For $X \in \Shv(\cT_G; \cC)$, we shall abuse notation and denote by $X$ also the underlying object $e^*X$ and for every open $U \leq G$, denote by $X^{hU}$ the value of $X$ at $G/U \in \cT_G$.
\end{notation}

\subsubsection{Continuous $\ZZ_p$-actions}

The case $G = \ZZ_p$ is notably easier to analyze than the general case for two reasons.
First, because the transitive continuous finite $\ZZ_p$-sets are the orbits $\ZZ/p^r$ for $r \geq 0$, making (pre)sheaves simpler to handle.
Particularly, we have a sequential colimit presentation of the category of sheaves
\[
    \Shv(\cT_{\ZZ_p}; \cC) \simeq 
    \colim \cC^{B(\ZZ/p^r)} \qin \Prl,
\]
i.e.\ the data of a sheaf is the data of local systems $X^{h(p^r\ZZ)} \in \cC^{B(\ZZ/p^r)}$ together with isomorphisms (and no further coherence data)
\[
    X^{h(p^r\ZZ)} \iso
    (X^{h(p^{r+1}\ZZ)})^{h(p^r\ZZ/p^{r+1}\ZZ)}.
\]
Second, because $\ZZ_p$ is freely topologically generated by a single generator via the dense inclusion $\ZZ \le \ZZ_p$.
We denote by
\[
    d^*\colon \Shv(\cT_{\ZZ_p}; \cC) \adj \cC^{B\ZZ}\noloc d_*
\]
the adjunction obtained from the adjunction $\oeu \dashv \oe_*$ by further restricting and right Kan extending along the inclusion $\ZZ \le \ZZ_p$.
We now show that, under certain hypotheses, a concrete description of the hypersheafification can be given in terms of this adjunction.

\begin{prop}\label{pull-push-hyper}
    Let $\cC$ be a compactly generated $p$-complete stable category.
    Then, for every $X \in \Shv(\cT_{\ZZ_p}; \cC)$ the unit map
    \[
        X \too d_* d^* X
    \]
    exhibits $d_* d^* X$ as the hypersheafification of $X$.
\end{prop}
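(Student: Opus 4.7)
The plan is to verify two properties: first, that $d_*d^*X$ is always a hypersheaf, and second, that the map $X^\hyp \to d_*d^*X$ factoring through the hypersheafification is an equivalence. For the first, note that $\cC^{B\ZZ} \simeq \cC \otimes \Spc^{B\ZZ}$ is hypercomplete since $\Spc^{B\ZZ}$ is, so every object of $\cC^{B\ZZ}$ is a hypersheaf; because $d$ is a geometric morphism, the pushforward $d_*$ preserves hypersheaves (cf.\ \cite[Proposition 6.5.2.13]{HTT}), so $d_*d^*X \in \Shv^\hyp(\cT_{\ZZ_p}; \cC)$.

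The technical core is showing that the counit $\varepsilon\colon d^*d_*Y \to Y$ is an equivalence for every $Y \in \cC^{B\ZZ}$, or equivalently, that $d_*$ is fully faithful. Unwinding, $d_*Y$ has value $Y^{h(p^r\ZZ)}$ at $\ZZ_p/p^r\ZZ_p$, so the stalk computation gives $d^*d_*Y \simeq \colim_r Y^{h(p^r\ZZ)}$ with its residual $\ZZ$-action, and $\varepsilon$ is the natural augmentation. Using the fiber sequence
\[
    Y^{h(p^r\ZZ)} \too Y \too[\sigma^{p^r}-1] Y
\]
(with $\sigma$ the generator of $\ZZ$) and the ladder of transition maps whose right column is multiplication by the norm $N_{p,r} := 1 + \sigma^{p^r} + \cdots + \sigma^{(p-1)p^r}$, the cofiber of $\varepsilon$ identifies with a sequential colimit of copies of $Y$ along the $N_{p,r}$. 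In the trivial-action case $N_{p,r} = p$ and this colimit is $Y[1/p]$, which vanishes in the $p$-complete category $\cC$ (as $p$-completion kills uniquely $p$-divisible objects); the general case is analogous. This invocation of $p$-completeness is the main technical obstacle.

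Given the counit equivalence, for any hypersheaf $Z$ the unit $Z \to d_*d^*Z$ is an equivalence: both sides are hypersheaves, and by Deligne completeness (\cref{deligne}) it suffices to check on stalks, where the map becomes the identity via the triangle identity and the counit. Finally, for a general sheaf $X$, the canonical map $X \to X^\hyp$ is $\infty$-connective; the pullback $d^*$ of the geometric morphism preserves $\infty$-connective morphisms, and since $\cC^{B\ZZ}$ is hypercomplete such morphisms are equivalences there. Hence $d^*X \iso d^*X^\hyp$, yielding $d_*d^*X \simeq d_*d^*X^\hyp \simeq X^\hyp$, which is the desired identification.
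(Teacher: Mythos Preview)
Your overall architecture is close to the paper's, but the step you flag as ``the technical core'' contains a genuine error: it is \emph{not} true that the counit $d^*d_*Y \to Y$ is an equivalence for every $Y \in \cC^{B\ZZ}$, so $d_*$ is not fully faithful in general. Take $\cC = \Mod_{\FF_p}$ (compactly generated, stable, and $p$-complete since $p=0$) and $Y = \bigoplus_{n \in \ZZ} \FF_p$ with $\sigma$ the shift. Writing $Y = \FF_p[t^{\pm 1}]$ with $\sigma$ acting by multiplication by $t$, your own identification shows the cofiber of $\varepsilon$ is the colimit of $Y$ along the norms $N_{p,r} = 1 + t^{p^r} + \cdots + t^{(p-1)p^r}$; over $\FF_p$ one has $N_{p,r} = (t-1)^{p^r(p-1)}$, so this colimit inverts $(t-1)$ and equals $\FF_p[t^{\pm 1},(t-1)^{-1}] \neq 0$. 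Thus ``the general case is analogous'' to the trivial-action case fails: the norms $N_{p,r}$ behave like multiplication by $p$ only when $\sigma^{p^r}$ approaches the identity, which is precisely the statement that the $\ZZ$-action extends to a continuous $\ZZ_p$-action---a property your argument never invokes.

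The paper avoids this by a sharper reduction. Since $d_*d^*X$ is already a hypersheaf, \cref{deligne} reduces the claim to checking the unit is an isomorphism on stalks; by the triangle identity this amounts to showing the counit is an equivalence \emph{only at objects of the form $d^*X$}, not at arbitrary $Y$. After reducing to $\cC = \Sp_p^\wedge$ via compact generators and then modding out by $p$, one lands on $Y = d^*(X/p)$, whose homotopy groups are $p^2$-torsion and, crucially, carry a \emph{continuous} $\ZZ_p$-action. The continuity is exactly what forces $\colim_r H^0(p^r\ZZ;\pi_*Y) \to \pi_*Y$ to be an isomorphism (every element is eventually fixed) and $\colim_r H^1(p^r\ZZ;\pi_*Y)$ to vanish. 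Your norm/cofiber approach can be made to work, but only after restricting to $Y$ in the essential image of $d^*$ and using this continuity; the final paragraph invoking $\infty$-connectivity then becomes unnecessary, since the stalk argument already handles arbitrary sheaves directly.
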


\begin{proof}
    As mentioned above, $d_* d^* X$ is a hypersheaf.
    Therefore, by \cref{deligne}, it suffices to show that $X \to d_* d^* X$ induces an isomorphism on stalks, namely after applying $e^*$.
    Since $e^*$ is obtained by applying $d^*$ and forgetting the $\ZZ$-action, it suffices to check that $X \to d_* d^* X$ induces an isomorphism after applying $d^*$.
    By the zigzag identity and $2$-out-of-$3$, it suffices to check that the counit map
    \[
        (d^* d_*) d^* X \too d^* X
    \]
    is an isomorphism.
    
    Next, we reduce to the case $\cC = \Spp$.
    Since $\cC$ is compactly generated, the functors
    \[
        \hom(K, -)\colon \cC \too \Spp
    \]
    ranging over all $K \in \cC^\omega$ are jointly conservative, and are colimit and limit preserving.
    Thus, they commute with $d_*$, $d^*$ and hypersheafification, so we are reduced to the case $\cC = \Spp$.
    
    Now, we can take the cofiber of multiplication by $p$ to reduce to the claim that the map
    \[
        d^* d_* d^* (X/p) \too d^* (X/p)
    \]
    is an isomorphism for every $\Sp$-valued sheaf $X$ on $\cT_{\ZZ_p}$.
    Observe that the homotopy groups of the spectrum 
    \[
        Y := d^* (X/p) \simeq \colim_r (X/p)^{h(p^r\ZZ_p)}
    \]
    are $p^2$-torsion and the $\ZZ$-action on them is restricted from a continuous $\ZZ_p$-action.
    We will prove that for any $Y$ with such homotopy groups the map
    \[
        \colim_r Y^{h(p^r \ZZ)} \simeq d^* d_* Y \too Y
    \]
    is an isomorphism by showing that it induces an isomorphism on homotopy groups.
    Note that homotopy groups commute with filtered colimits, so that $\colim_r \pi_i(Y^{h(p^r \ZZ)}) \iso \pi_i(\colim_r Y^{h(p^r \ZZ)})$, and the $i$-th homotopy group of the source of the map above fits into a short exact sequence 
    \[
        0
        \too \colim_r H^1(p^r\ZZ; \pi_{i+1}(Y))
        \too \pi_i(\colim_r Y^{h(p^r\ZZ)})
        \too \colim_r H^0(p^r\ZZ; \pi_i(Y))
        \too 0
    \]
    and the map $\pi_i(\colim_r Y^{h(p^r\ZZ)}) \to \pi_i(Y)$ factors through
    \[
        \colim_r H^0(p^r\ZZ; \pi_i(Y)) \too \pi_i(Y).
    \]
    This map is an isomorphism, since the action on $\pi_i(Y)$ comes from a continuous $\ZZ_p$-action, so that every element is fixed by $p^r\ZZ$ for large enough $r$.
    It remains to show that
    \[
        \colim_r H^1(p^r\ZZ; \pi_{i+1}(Y)) = 0.
    \]
    Since $\pi_{i+1}(Y)$ is $p^2$-torsion and the action comes from a continuous $\ZZ_p$-action, $\pi_{i+1}(Y)$ can be written as a filtered colimit of finite $p^2$-torsion groups $M_j$ with a continuous $\ZZ_p$-action.
    Since $H^1(p^r\ZZ; -)$ commutes with filtered colimits, by exchanging the order of colimits we are reduced to showing that for every $j$ we have
    \[
        \colim_r H^1(p^r\ZZ; M_j) = 0.
    \]
    Indeed, for large enough $r$, the action of $p^r\ZZ$ on $M_j$ is trivial.
    Thus, for large enough $r$, this is the $1$-st cohomology of $S^1 \simeq B(p^r\ZZ)$ with coefficients in $M_j$.
    The transitions maps going from $r$ to $r+1$ correspond to the $p$-fold covering map of the circle, thus induce multiplication by $p$ on $M_j$.
    Since $M$ is $p^2$-torsion, the result follows.
\end{proof}

Combining \cref{LR-hyper} and \cref{pull-push-hyper}, we immediately deduce the following:

\begin{cor}\label{hyper-R-loc-dd}
    Let $\cC \in \CAlg(\Prlst)$ be compactly generated and $p$-complete, and let $R$ be a continuous $\ZZ_p$-Galois extension with stalk $e^* R \in \CAlg(\cC)$.
    For $M \in \Mod_{\sR}(\Shv(\cT_{\ZZ_p}; \cC))$, the map
    \[
        M \too d_* d^* M
    \]
    exhibits the target both as the hypersheafification and as the level-wise $e^* R$-localization of $M$.
\end{cor}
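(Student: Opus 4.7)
The plan is to combine \cref{LR-hyper} and \cref{pull-push-hyper}, which provide two a priori distinct descriptions of the hypersheafification functor under overlapping hypotheses, and then invoke the uniqueness of left adjoints.

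First, I would verify the hypotheses of both results in the present setting. The group $\ZZ_p$ has $p$-cohomological dimension $1$, hence in particular finite virtual $p$-cohomological dimension; and since $\cC$ is $p$-complete it is automatically $p$-local. Thus \cref{LR-hyper} applies to any $\sR$-module sheaf on $\cT_{\ZZ_p}$. Separately, $\cC$ is compactly generated and $p$-complete by assumption, so \cref{pull-push-hyper} applies to any sheaf on $\cT_{\ZZ_p}$ with values in $\cC$.

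Next, I would apply both results to $M$. By \cref{LR-hyper}, the level-wise localization map $M \to L_{e^* R}(M)$ exhibits its target as the hypersheafification of $M$. By \cref{pull-push-hyper}, the unit map $M \to d_* d^* M$ also exhibits its target as the hypersheafification of $M$. By the universal property of the left adjoint to the inclusion $\Shv^\hyp(\cT_{\ZZ_p}; \cC) \hookrightarrow \Shv(\cT_{\ZZ_p}; \cC)$, these two hypersheafifications are canonically identified under $M$, yielding an isomorphism $d_* d^* M \simeq L_{e^*R}(M)$ compatible with the maps from $M$. This proves both claims of the corollary simultaneously.

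The only conceptual point is the observation that the hypotheses on $\cC$ and on $\ZZ_p$ are simultaneously sufficient to invoke both preceding propositions; there is no substantive obstacle beyond this, as all the real work has been absorbed into \cref{LR-hyper} and \cref{pull-push-hyper}.
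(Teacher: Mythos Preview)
Your proposal is correct and follows exactly the paper's approach: the paper simply states that the corollary is obtained by combining \cref{LR-hyper} and \cref{pull-push-hyper}, and you have spelled out precisely how that combination works, including the hypothesis verification and the appeal to uniqueness of hypersheafification.
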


\subsection{Hyperdescent and Cyclotomic Completion}

We begin by constructing the cyclotomic sheaf.
By \cite[Construction 4.5]{clausen2021hyperdescent} and cofinality there is an equivalence
\[
    \Shv(\cT_{\ZZ_p^\times}; \SpTn)
    \simeq \colim_{U \unlhd \ZZ_p^\times \text{ open }} \SpTn^{B(\ZZ_p^\times/U)}
    \simeq \colim \SpTn^{B(\ZZ/p^r)^\times}
    \qin \Prl.
\]

\begin{defn}\label{cyclosheaf}
    For $X \in \SpTn$ we define the \tdef{cyclotomic sheaf}
    \[
        \mdef{\cyc{X}{p^{(-)}}{n}}
        \qin \Shv(\cT_{\ZZ_p^\times}; \SpTn)
    \]
    to be the sheaf determined by the cyclotomic extensions $\cyc{X}{p^r}{n}$ and the isomorphisms 
    \[
        \cyc{X}{p^r}{n} \iso \cyc{X}{p^{r+1}}{n}^{h(\ZZ/p)}.
    \] 
\end{defn}

Note that the stalk of the cyclotomic sheaf is given by
\[
    \oeu \cyc{X}{p^{(-)}}{n} \simeq \colim \cyc{X}{p^r}{n} \simeq \cyc{X}{p^\infty}{n}
    \qin \SpTn^{B\ZZ_p^\times}.
\]

\begin{prop}\label{cyclo-sheaf-galois}
    The cyclotomic sheaf $\cyc{\Sph_{\Tn}}{p^{(-)}}{n}$ is a continuous $\ZZ_p^\times$-Galois extension and there is a symmetric monoidal equivalence
    \[
        \SpTn \iso \Mod_{\cyc{\Sph_{\Tn}}{p^{(-)}}{n}}(\Shv(\cT_{\ZZ_p}; \SpTn)),
        \qquad X \mapsto \cyc{X}{p^{(-)}}{n}
    \]
    Moreover, $X \in \SpTn$ is cyclotomically complete if and only the cyclotomic sheaf $\cyc{X}{p^{(-)}}{n}$ is hypercomplete.
\end{prop}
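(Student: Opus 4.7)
The plan is to prove the three parts in sequence, with \Cref{Galois_Sheaf}, \Cref{Prof_Galois_Descent}, and \Cref{LR-hyper} serving as the main inputs. For the first claim, I would apply \Cref{Galois_Sheaf} to the underlying presheaf of $\cyc{\Sph_{\Tn}}{p^{(-)}}{n}$: preservation of finite products follows from the construction (disjoint unions of orbits in $\cT_{\ZZ_p^\times}$ correspond to products in $\CAlg(\SpTn)$), and by \cite[Corollary 6.7]{barthel2022chromatic} combined with \cite[Proposition 5.2]{carmeli2021chromatic}, each value $\cyc{\Sph_{\Tn}}{p^r}{n}$ is a faithful $(\ZZ/p^r)^\times$-Galois extension of $\Sph_{\Tn}$.

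For the symmetric monoidal equivalence I would invoke \Cref{Prof_Galois_Descent} directly with $G = \ZZ_p^\times$, $\cC = \SpTn$, and $\sR = \cyc{\Sph_{\Tn}}{p^{(-)}}{n}$. It remains only to identify $X \otimes \sR$ with the cyclotomic sheaf $\cyc{X}{p^{(-)}}{n}$, which follows level-wise from the natural isomorphism $X \otimes \cyc{\Sph_{\Tn}}{p^r}{n} \simeq \cyc{X}{p^r}{n}$ arising from the lax symmetric monoidal naturality (in $X$) of the idempotent decomposition defining the cyclotomic extension.

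For the final assertion, I would apply \Cref{LR-hyper} to $\sM := \cyc{X}{p^{(-)}}{n}$, viewed as an $\sR$-module via the previous step. The hypotheses hold since $\ZZ_p^\times \simeq T_p \times \ZZ_p$ has finite virtual $p$-cohomological dimension and $\SpTn$ is compactly generated; the stalk is $e^*\sR \simeq \colim_r \cyc{\Sph_{\Tn}}{p^r}{n} \simeq \cyc{\Sph_{\Tn}}{p^\infty}{n} =: R$. The lemma identifies the hypersheafification of $\sM$ with the level-wise Bousfield localization $L_R \sM$, so $\sM$ is hypercomplete iff each $\cyc{X}{p^r}{n}$ is $R$-local, equivalently cyclotomically complete (since cyclotomic completion is the smashing localization at $R$, as recalled in \S\ref{tel-conj}). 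The value of $\sM$ at the terminal orbit is $X$ itself, so hypercompleteness of $\sM$ implies $X$ is cyclotomically complete; conversely, smashingness of the cyclotomic completion guarantees that if $X$ is $R$-local, then so is every $\cyc{X}{p^r}{n} \simeq X \otimes \cyc{\Sph_{\Tn}}{p^r}{n}$.

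The main obstacle I anticipate is in Step 1: ensuring that the homotopy-coherent descent data packaged in the colimit presentation $\Shv(\cT_{\ZZ_p^\times};\SpTn) \simeq \colim_r \SpTn^{B(\ZZ/p^r)^\times}$ really matches the Galois-descent data that \Cref{Galois_Sheaf} extracts from the finite cyclotomic extensions. Everything else is a formal consequence of the descent and hypersheafification machinery already set up.
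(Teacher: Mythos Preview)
Your proposal is correct and follows essentially the same route as the paper: faithfulness of the finite cyclotomic extensions (\cite[Proposition 5.2]{carmeli2021chromatic}) gives the continuous Galois property, \Cref{Prof_Galois_Descent} gives the symmetric monoidal equivalence, and \Cref{LR-hyper} handles the hypercompleteness statement. Your anticipated obstacle in Step~1 is not actually present: the cyclotomic sheaf is \emph{defined} (\Cref{cyclosheaf}) via the colimit presentation $\Shv(\cT_{\ZZ_p^\times};\SpTn)\simeq\colim_r\SpTn^{B(\ZZ/p^r)^\times}$, so the sheaf condition and the coherence data are automatic and there is no need to invoke \Cref{Galois_Sheaf}; one only has to check that each value is a faithful Galois extension, which is exactly what the paper does.
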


\begin{proof}
    For the first part, $\cyc{\Sph_{\Tn}}{p^{(-)}}{n}$ is Galois because each finite cyclotomic extension is a faithful Galois extension by \cite[Proposition 5.2]{carmeli2021chromatic}, and the equivalence follows from \cref{Prof_Galois_Descent}.

    For the second part, by the first part $\cyc{X}{p^{(-)}}{n}$ is a module over $\cyc{\Sph_{\Tnp}}{p^{(-)}}{n+1}$, whose stalk is $\cyc{\Sph_{\Tnp}}{p^\infty}{n+1}$.
    Since cyclotomic completion is $\cyc{\Sph_{\Tnp}}{p^\infty}{n+1}$-localization, the result follows from \cref{LR-hyper}.
\end{proof}

For $R \in \Alg(\SpTn)$, consider the cyclotomic sheaf $\cyc{R}{p^{(-)}}{n}$.
By applying $\KTnp$ level-wise we get the $\SpTnp$-valued presheaf $\KTnp(\cyc{R}{p^{(-)}}{n})$.

\begin{prop}\label{cyclosheaf-shift}
    The functor $\KTnp$ sends cyclotomic sheaves to cyclotomic sheaves, that is, there is an isomorphism
    \[
        \cyc{\KTnp(R)}{p^{(-)}}{n+1} \iso \KTnp(\cyc{R}{p^{(-)}}{n}) \qin \Shv(\cT_{\ZZ_p^{\times}}; \SpTnp) 
    \]
\end{prop}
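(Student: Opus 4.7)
The plan is to recognize the desired isomorphism as the assembly of the level-wise comparisons already established. First I would recall the identification
\[
\Shv(\cT_{\ZZ_p^\times}; \SpTnp) \simeq \colim_r \SpTnp^{B(\ZZ/p^r)^\times} \qin \Prl,
\]
under which the cyclotomic sheaf $\cyc{\KTnp(R)}{p^{(-)}}{n+1}$ corresponds (by Definition \ref{cyclosheaf}) to the compatible system of $(\ZZ/p^r)^\times$-equivariant objects $\cyc{\KTnp(R)}{p^r}{n+1}$ with the sheaf-condition isomorphisms $\cyc{\KTnp(R)}{p^r}{n+1} \iso \cyc{\KTnp(R)}{p^{r+1}}{n+1}^{h K_{r,r+1}}$, where $K_{r,s} := \ker\bigl((\ZZ/p^s)^\times \onto (\ZZ/p^r)^\times\bigr)$.

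Next, cyclotomic redshift in its finite form (Example \ref{wpr}) supplies, for every $r$, a $(\ZZ/p^r)^\times$-equivariant isomorphism
\[
\cyc{\KTnp(R)}{p^r}{n+1} \iso \KTnp(\cyc{R}{p^r}{n}) \qin \SpTnp^{B(\ZZ/p^r)^\times},
\]
and Example \ref{wpinfty} already records that these isomorphisms fit into a map of sequential diagrams in $r$, hence in particular intertwine the restriction maps along $(\ZZ/p^{r+1})^\times \onto (\ZZ/p^r)^\times$. Once this is in place, the only remaining point is to verify that $\KTnp(\cyc{R}{p^{(-)}}{n})$ actually satisfies the sheaf condition, so that the assembled map lives in $\Shv(\cT_{\ZZ_p^\times}; \SpTnp)$; for this, apply Proposition \ref{inter-galois} to $H = K_{r,s}$ (which is a finite subgroup of $(\ZZ/p^s)^\times$), giving
\[
\KTnp(\cyc{R}{p^r}{n}) \simeq \KTnp(\cyc{R}{p^s}{n}^{h K_{r,s}}) \iso \KTnp(\cyc{R}{p^s}{n})^{h K_{r,s}},
\]
where the first isomorphism uses that $\cyc{R}{p^r}{n} \simeq \cyc{R}{p^s}{n}^{h K_{r,s}}$ by the Galois property of finite cyclotomic extensions (\cite[Proposition 5.2]{carmeli2021chromatic}).

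The main subtlety — if one is to be pedantic — is that a sheaf on $\cT_{\ZZ_p^\times}$ encodes more than a sequential diagram of equivariant objects: one must produce all the higher coherences implicit in the colimit presentation above. However, this coherence is built in from the start, because the comparison map in Theorem \ref{cyclo-main} was constructed as a natural transformation of functors on $(\Sp^{[0,n]}_{/\FF_p})^{\op}$, and the cyclotomic tower $\{\ZZ/p^r\}_r$ with its $\ZZ_p^\times$-action arises as a diagram in this source category; the naturality (together with Remark \ref{rem:cyclo-equiv-ring}) therefore automatically upgrades the pointwise isomorphisms to the structured data of a morphism of sheaves. Concretely, I would package this by writing down a single natural transformation of functors from the poset of open subgroups $U \unlhd \ZZ_p^\times$ (equivalently, the poset indexing the $p^r$-cyclotomic tower) to $\SpTnp^{B(\ZZ_p^\times/U)}$, whose components are the isomorphisms from Example \ref{wpr}, and then pass to the colimit in $\Prl$.
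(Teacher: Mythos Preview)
Your proposal is correct and follows essentially the same route as the paper: reduce to the values on transitive finite $\ZZ_p^\times$-sets (via product preservation, or equivalently the colimit presentation you use) and invoke the level-wise isomorphisms supplied by Proposition~\ref{inter-galois}. The paper's proof is terser---it simply notes that both presheaves preserve finite products and then cites \ref{inter-galois} for the comparison on orbits---whereas you unpack the coherence via the naturality of Theorem~\ref{cyclo-main}; this extra care is fine but not strictly necessary, since the comparison map is a natural transformation of presheaves from the outset.
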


\begin{proof}
    We need to show that the two presheaves on $\cT_{\ZZ_p^\times}$ agree.
    The source is a sheaf and in particular preserves products.
    Since $\KTnp$ preserves products, the target preserves products as well.
    Therefore, it remains to show that they agree on transitive continuous finite $\ZZ_p^\times$-sets, which is the content of \cref{inter-galois}.
\end{proof}

\begin{cor}\label{Knp-hyper}
    $\KKnp(\cyc{R}{p^{(-)}}{n})$ is a hypersheaf of $\Knp$-local spectra for any $R \in \Alg(\SpTn)$.
\end{cor}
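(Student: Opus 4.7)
The plan is to combine cyclotomic redshift (\cref{cyclosheaf-shift}) with the characterization of hypercompleteness for cyclotomic sheaves (\cref{cyclo-sheaf-galois}), using the fact that all $K(n+1)$-local spectra are cyclotomically complete.

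More precisely, I would first apply \cref{cyclosheaf-shift} to obtain an isomorphism of sheaves
\[
    \KTnp(\cyc{R}{p^{(-)}}{n}) \iso \cyc{\KTnp(R)}{p^{(-)}}{n+1}
    \qin \Shv(\cT_{\ZZ_p^\times}; \SpTnp).
\]
Next, I would apply $\LKnp$ level-wise. Since $\LKnp\colon \SpTnp \to \SpKnp$ is a symmetric monoidal localization, it commutes with the formation of group algebras over $\pi$-finite spaces and with the splitting off of the cyclotomic idempotent; therefore the level-wise $\LKnp$ of the sheaf $\cyc{\KTnp(R)}{p^{(-)}}{n+1}$ is identified with the cyclotomic sheaf $\cyc{\KKnp(R)}{p^{(-)}}{n+1}$ formed inside $\SpKnp$ (which is $\infty$-semiadditive of height $n+1$). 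Thus
\[
    \KKnp(\cyc{R}{p^{(-)}}{n}) \iso \cyc{\KKnp(R)}{p^{(-)}}{n+1}
    \qin \Shv(\cT_{\ZZ_p^\times}; \SpKnp).
\]

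Finally, since $\KKnp(R) \in \SpKnp$, as recalled in \cref{tel-conj} the Devinatz--Hopkins theorem \cite{devinatz2004homotopy} implies that $\KKnp(R)$ is cyclotomically complete. By \cref{cyclo-sheaf-galois}, the cyclotomic sheaf of a cyclotomically complete spectrum is a hypersheaf, which yields the claim.

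The only non-trivial point is the compatibility of $\LKnp$ with the cyclotomic construction at the level of sheaves, but this is formal: the construction $\cyc{(-)}{p^r}{n+1}$ is natural with respect to symmetric monoidal, colimit-preserving functors between presentably symmetric monoidal stable $\infty$-semiadditive categories of height $n+1$, and then the corresponding compatibility on sheaves follows by applying this pointwise. All other ingredients have already been assembled; the redshift step is the only geometric input needed and it has been established in the preceding section.
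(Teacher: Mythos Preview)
Your proof is correct and follows essentially the same route as the paper's: cyclotomic redshift identifies the sheaf with a cyclotomic sheaf at height $n+1$, and then \cref{cyclo-sheaf-galois} together with the cyclotomic completeness of all $\Knp$-local spectra finishes the argument. The paper's proof is simply terser, leaving the level-wise application of $\LKnp$ and its compatibility with the cyclotomic construction implicit.
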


\begin{proof}
    By \cref{cyclosheaf-shift} we know that $\KKnp(\cyc{R}{p^{(-)}}{n})$ is a cyclotomic sheaf.
    Since all $\Knp$-local spectra are cyclotomically complete (see the discussion above \cite[Question 7.36]{barthel2022chromatic}) the result follows from \cref{cyclo-sheaf-galois}.
\end{proof}

We now push the cyclotomic sheaf to $\ZZ_p$, giving a sheaf whose values are the extensions from \cref{iwa-def}.
Recall that $\ZZ_p^\times \simeq T_p \times \ZZ_p$ where $T_p$  is $(\ZZ/4)^\times$ for $p = 2$ and $\Fp^\times$ for odd primes.
Let $\pi\colon \ZZ_p^\times \to \ZZ_p$ denote the projection, and consider the functor $\cT_{\ZZ_p} \to \cT_{\ZZ_p^\times}$ given by restriction along $\pi$.
Restricting (pre)sheaves along this functor induces a geometric morphism denoted
\[
    \pi_*\colon \Shv(\cT_{\ZZ_p^\times}; \SpTn) \too \Shv(\cT_{\ZZ_p}; \SpTn).
\]

\begin{defn}\label{iwa-sheaf-def}
    For $X \in \SpTn$ we let
    \[
        \mdef{X\iwa{(-)}{n}} := \pi_*(\cyc{X}{p^{(-)}}{n})
        \qin \Shv(\cT_{\ZZ_p}; \SpTn).
    \]
\end{defn}

\begin{war}
    The notation might be somewhat confusing.
    For odd primes, the value of $X\iwa{(-)}{n}$ at the $\ZZ_p$-set $\ZZ/p^r$ is $X\iwa{r+1}{n}$, and for $p = 2$ the value at the $\ZZ_2$-set $\ZZ/2^r$ is $X\iwat{r+2}{n}$.
    In particular the value at the trivial $\ZZ_p$-set is $X$.
\end{war}

Recall that the functor
\[
    d^*\colon \Shv(\cT_{\ZZ_p}; \Sp_{\Tnp}) \too  (\Sp_{\Tnp})^{B\ZZ}
\]
is computed by a filtered colimit $d^* \sF \simeq \colim \sF^{h(p^r\ZZ_p)}$, and thus commutes with $\KTnp$.
This gives us a map
\[
    \KTnp(R\iwa{(-)}{n})
    \too d_* d^* \KTnp(R\iwa{(-)}{n})
    \iso d_* \KTnp(d^* R\iwa{(-)}{n}).
\]

\begin{thm}\label{K-hyper-cyclo}
    For any $R \in \Alg(\SpTn)$ the map
    \[
        \KTnp(R\iwa{(-)}{n})
        \too d_* \KTnp(d^* R\iwa{(-)}{n})
    \]
    exhibits the target both as the hypersheafification and as the level-wise cyclotomic completion of the source.
\end{thm}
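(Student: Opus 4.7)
The strategy is to recognize $\KTnp(R\iwa{(-)}{n})$ as a module sheaf over a continuous $\ZZ_p$-Galois extension of $\Sph_{\Tnp}$, and then invoke the general hyperdescent machinery of \cref{hyper-R-loc-dd}. As a preliminary, observe that the pushforward $\pi_*\colon \Shv(\cT_{\ZZ_p^\times};\cC) \to \Shv(\cT_{\ZZ_p};\cC)$ is given by restriction along the functor $\pi\colon \cT_{\ZZ_p}\to\cT_{\ZZ_p^\times}$ and therefore commutes with level-wise application of any functor. Combined with cyclotomic redshift (\cref{cyclosheaf-shift}) this yields an identification
\[
\KTnp(R\iwa{(-)}{n}) \;\simeq\; \pi_*\KTnp(\cyc{R}{p^{(-)}}{n}) \;\simeq\; \pi_*\cyc{\KTnp(R)}{p^{(-)}}{n+1} \;=\; \KTnp(R)\iwa{(-)}{n+1}
\]
in $\Shv(\cT_{\ZZ_p};\SpTnp)$, so that $\sM := \KTnp(R)\iwa{(-)}{n+1}$ is a module sheaf over $\sR := \Sph_{\Tnp}\iwa{(-)}{n+1}$.

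The next step is to verify that $\sR$ is itself a continuous $\ZZ_p$-Galois extension of $\Sph_{\Tnp}$. Each of its values is the $T_p$-homotopy fixed points of a value of the continuous $\ZZ_p^\times$-Galois extension $\cyc{\Sph_{\Tnp}}{p^{(-)}}{n+1}$ from \cref{cyclo-sheaf-galois}; since those values are faithful finite Galois extensions of $\Sph_{\Tnp}$ whose Galois group splits as $T_p \times (\ZZ_p/p^r\ZZ_p)$, applying \cref{Galois_Descent} to the $T_p$-subextension shows that the intermediate extensions are themselves faithful $\ZZ_p/p^r\ZZ_p$-Galois, and \cref{Galois_Sheaf} promotes this collection to a sheaf. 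The stalk is visibly $\Sph_{\Tnp}\iwa{\infty}{n+1} = \cyc{\Sph_{\Tnp}}{p^\infty}{n+1}^{hT_p}$.

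We are now in position to apply \cref{hyper-R-loc-dd}, noting that $\SpTnp$ is compactly generated by the dualizable unit $\Tnp$ and is $p$-complete because every $T(k)$-local spectrum is $p$-complete for $k\geq 1$. This gives at once that $\sM \to d_*d^*\sM$ is both the hypersheafification and the level-wise Bousfield localization at the stalk $\Sph_{\Tnp}\iwa{\infty}{n+1}$. It only remains to identify this localization with cyclotomic completion, i.e., with Bousfield localization at $\cyc{\Sph_{\Tnp}}{p^\infty}{n+1}$. One inclusion of Bousfield classes is immediate since $\cyc{\Sph_{\Tnp}}{p^\infty}{n+1}$ is an $\Sph_{\Tnp}\iwa{\infty}{n+1}$-module; for the reverse, the $\infty$-semiadditivity of $\SpTnp$ and the finiteness of $T_p$ imply that tensoring with any $X$ commutes with $T_p$-homotopy fixed points, whence
\[
X \otimes \Sph_{\Tnp}\iwa{\infty}{n+1} \;\simeq\; (X\otimes \cyc{\Sph_{\Tnp}}{p^\infty}{n+1})^{hT_p},
\]
and vanishing of the right-hand side forces vanishing of the left. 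The main technical hurdle is the second step: carefully tracking the indexing shift under $\pi^{-1}$ and accommodating the slightly different $\ZZ_p^\times$ decomposition at $p=2$ when verifying faithfulness at each finite level of the $\ZZ_p$-tower.
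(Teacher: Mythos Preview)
Your proposal is correct and follows essentially the same approach as the paper: identify $\KTnp(R\iwa{(-)}{n})$ via cyclotomic redshift as a module over $\Sph_{\Tnp}\iwa{(-)}{n+1}$, then apply \cref{hyper-R-loc-dd}. The paper's proof is terser in two respects: it does not spell out why $\Sph_{\Tnp}\iwa{(-)}{n+1}$ is a continuous $\ZZ_p$-Galois extension (implicitly leaning on \cref{cyclo-sheaf-galois} and pushforward along $\pi$), and it identifies the stalk localization with cyclotomic completion by citing \cite[Proposition 6.19]{barthel2022chromatic} directly rather than giving your Bousfield class comparison.
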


\begin{proof}
    It suffices to show that
    \[
        \KTnp(R\iwa{(-)}{n})
        \too d_* d^* \KTnp(R\iwa{(-)}{n})
    \]
    exhibits the target both as the hypersheafification and as the level-wise cyclotomic completion of the source.
    By \cref{cyclosheaf-shift} we know that $\KTnp(R\iwa{(-)}{n})$ is module over $\Sph_{\Tnp}\iwa{(-)}{n+1}$, whose stalk is $\Sph_{\Tnp}\iwa{\infty}{n+1}$.
    By \cite[Proposition 6.19]{barthel2022chromatic}, being cyclotomically complete is also equivalent to being local with respect to $\Sph_{\Tnp}\iwa{\infty}{n+1} := \cyc{\Sph_{\Tnp}}{p^\infty}{n+1}^{hT_p}$.
    The claim now follows from \cref{hyper-R-loc-dd}.
\end{proof}

\appendix
\section{Group Algebras}

As explained in the introduction, at the core of our study is the map
\[
    \KTnp(R)[A] \too \KTnp(R[\Omega A])
\]
for a ring spectrum $R$ and a pointed connected space $A$.
The construction of this map is the combination of two ingredients.
First, for every functor $F\colon \cC \to \cD$ between categories with $A$-shaped colimits and $X\in \cC$, we have an assembly map 
\[
    F(X)[A] := \colim_AF(X) \too F(\colim_A X) =: F(X[A]).
\] 
Second, for every ring spectrum $R$, the equivalence between local systems over $A$ and representations of the group $\Omega A$ in $R$-modules gives, in the realm of small stable categories, an equivalence 
\[
    \Perf(R[\Omega A]) \simeq \Perf(R)[A]. 
\]
In \cref{asm-maps-sec} and \cref{grp-alg-sec} we review these standard constructions, and verify some of their basic naturality and multiplicativity properties necessary for our applications to $K$-theory.
Finally, for $\Tn$-local commutative rings we shall also employ a variant of the above equivalence, replacing perfect modules by dualizable $\Tn$-local modules, which we discuss in \cref{units-dbl-sec}.

\subsection{Multiplicativity of Assembly Maps}\label{asm-maps-sec}

Recall that given a functor $F \colon \cC \to \cD$, for every $X \in \cC$ and $A \in \Spc$ we have a natural assembly map
\[
    F(X)[A] \too F(X[A]).
\]
Furthermore, if $F$ is lax symmetric monoidal, we shall show that the assembly map is canonically lax symmetric monoidally natural in both $X$ and $A$.

First, we want to exhibit the source and target of the assembly map as symmetric monoidal functors in both variables. 
Let $\Catall$ be the category of cocomplete categories and colimit preserving functors. 
For every $\cC \in \Catall$ we can define the functor $\Spc \times \cC \to \cC$ given by $(A,X) \mapsto X[A]$ as a left Kan extension in the following way. Consider the functor $i: \cC \to \Spc \times \cC$ that is constant on the point in $\Spc$ and is the identity on $\cC$. The left Kan extension along $i$ is a functor of the form
\[
    i_!\colon \Fun(\cC, \cC) \too \Fun(\Spc \times \cC, \cC)
\]
which gives $(i_!\Id_\cC)(X,A) = X[A]$.
Now if $\cC \in \CAlg(\Catall)$, then the functor $i$ is symmetric monoidal, by \cite[Proposition 3.34]{linskens2022global} or \cite[Proposition 3.6]{moshe2021higher}, the functor $i_!$ is symmetric monoidal, with lax symmetric monoidal right adjoint $i^*$, with respect to the Day convolution on the source and target. 
Since commutative algebras in the Day convolution are lax symmetric monoidal functors, we get an induced adjunction
\[
    i_!\colon \Fun^\lax(\cC, \cC) \rightleftarrows \Fun^\lax(\cC \times \Spc, \cC)\noloc i^*
\]
lifting the adjunction on (not lax symmetric monoidal) functors.
Thus, the functor
\[
    \Spc \times \cC \too \cC,
    \qquad (A, X) \mapsto X[A]
\]
given by $i_! \Id_\cC$
acquires a lax symmetric monoidal structure. Since the symmetric monoidal structure of $\cC$ preserves colimits in each variable, the lax symmetric monoidal structure on $i_! \Id_\cC$ is strong.

Now, let $\cC, \cD \in \CAlg(\Catall)$ and let $F\colon \cC \to \cD$ be a lax symmetric monoidal functor.
Denote by $\widetilde{F}$ the functor given by post-composition with $F$, and consider the following commutative square:
% https://q.uiver.app/#q=WzAsNCxbMCwwLCJcXEZ1bl5cXGxheChcXGNDLCBcXGNDKSJdLFswLDEsIlxcRnVuXlxcbGF4KFxcY0MsIFxcY0QpIl0sWzEsMCwiXFxGdW5eXFxsYXgoXFxjQyBcXHRpbWVzIFxcU3BjLCBcXGNDKSJdLFsxLDEsIlxcRnVuXlxcbGF4KFxcY0MgXFx0aW1lcyBcXFNwYywgXFxjRCkiXSxbMiwwLCJpXioiLDJdLFszLDEsImleKiIsMl0sWzIsMywiXFx3aWRldGlsZGV7Rn0iXSxbMCwxLCJcXHdpZGV0aWxkZXtGfSIsMl1d
\[\begin{tikzcd}
	{\Fun^\lax(\cC, \cC)} & {\Fun^\lax(\cC \times \Spc, \cC)} \\
	{\Fun^\lax(\cC, \cD)} & {\Fun^\lax(\cC \times \Spc, \cD)}
	\arrow["{i^*}"', from=1-2, to=1-1]
	\arrow["{i^*}"', from=2-2, to=2-1]
	\arrow["{\widetilde{F}}", from=1-2, to=2-2]
	\arrow["{\widetilde{F}}"', from=1-1, to=2-1]
\end{tikzcd}\]
Passing to horizontal left adjoints, we obtain a Beck--Chevalley map
\[
    i_! \widetilde{F} \too \widetilde{F} i_!.
\]
Evaluating this map at $\Id_\cC \in \Fun^\lax(\cC, \cC)$, and noting that $\widetilde{F}(\Id_\cC) = F$, gives a map between lax symmetric monoidal functors
\[
    i_! F \too F \circ i_! \Id_\cC.
\]
Unwinding the definitions, this is precisely the assembly map.

\begin{defn}
    For $\cC, \cD \in \CAlg(\Catall)$ and a lax symmetric monoidal functor $F\colon \cC \to \cD$ we define the map
    \[
        F(X)[A] \too F(X[A])
        \qin \cD
    \]
    to be the map $i_! F \to F \circ i_! \Id_\cC$ constructed above, so in particular as a lax symmetric monoidally natural transformation.
\end{defn}

\begin{rem}
    When $F$ is colimit preserving the assembly map defined above is an isomorphism.
\end{rem}

\subsection{Modules over Group Algebras}\label{grp-alg-sec}

We now specialize to the case where the category itself is $\Prl$ (which lives in a large version of $\Catall$) giving us the construction $\cC[A]$ symmetric monoidally naturally in $\cC \in \Prl$ and $A \in \Spc$.
If we further assume that $A$ is pointed connected, we have an identification of $\cC[A]$ with $\LMod_{\Omega A}(\cC)$.
We would like to make this identification symmetric monoidal as well.
The argument will require the following rigidity property of the category of pointed connected spaces:

\begin{lem}\label{auto-equiv-unique}
    The identity functor is the only (symmetric monoidal) auto-equivalence of the category $\Spc^{\ge 1}$, up to isomorphism.
\end{lem}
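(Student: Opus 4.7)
The plan is to use the adjoint equivalence between $\Spc^{\ge 1}$ and $\mathrm{Grp}(\Spc)$ (the $\infty$-category of group objects in spaces), given by $\Omega$ and $B$, under which the Cartesian symmetric monoidal structures correspond. It therefore suffices to show that any (symmetric monoidal) auto-equivalence $F$ of $\mathrm{Grp}(\Spc)$ is naturally isomorphic to the identity.

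Such an $F$ preserves mapping spaces, hence $n$-truncatedness for every $n$. In particular, it restricts to an auto-equivalence of the full subcategory $\mathrm{Grp}(\Spc)_{\le 0}$ of discrete group objects, which identifies with the $(2,1)$-categorical enhancement of the ordinary $1$-category $\mathrm{Grp}$ of groups (with $2$-morphisms given by conjugations). By the classical rigidity of $\mathrm{Grp}$ (upgraded to this $(2,1)$-categorical setting), the only auto-equivalence up to natural isomorphism is the identity, so $F$ is canonically the identity on all discrete groups; in particular $F(\mathbb{Z}) \simeq \mathbb{Z}$ naturally, where $\mathbb{Z} \in \mathrm{Grp}(\Spc)$ is the free group on one generator.

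To extend the identification to all of $\mathrm{Grp}(\Spc)$, I would argue that $\mathbb{Z}$ generates the category under colimits. The forgetful functor $U \colon \mathrm{Grp}(\Spc) \to \Spc$ is corepresented by $\mathbb{Z}$, is conservative, and preserves sifted colimits, so by Beck's monadicity theorem it is monadic. Hence every group object admits a canonical presentation as a geometric realization of a simplicial diagram whose levels are free groups of the form $\widehat{F}(X) = \Omega\Sigma(X_+)$, where $\widehat{F} \dashv U$. Since $\widehat{F}$ preserves colimits and sends $\pt$ to $\mathbb{Z}$, each $\widehat{F}(X)$ is itself a colimit of copies of $\mathbb{Z}$ in $\mathrm{Grp}(\Spc)$. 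Combining this with preservation of colimits by $F$ and the identification $F(\mathbb{Z}) \simeq \mathbb{Z}$ yields the desired natural isomorphism $F \simeq \mathrm{id}$.

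The main obstacle is to carry out this naturality coherently: one must upgrade the classical rigidity of $\mathrm{Grp}$ to the $(2,1)$-categorical level (to account for conjugations) and assemble the pointwise identifications into a coherent natural transformation of endofunctors of $\mathrm{Grp}(\Spc)$, rather than a mere family of isomorphisms. This is the kind of coherence argument standard in the theory of presentable $\infty$-categories generated under colimits by a single object.
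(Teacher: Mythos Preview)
Your approach is essentially the same as the paper's: reduce to $\Grp(\Spc)$, invoke Freyd's rigidity of the ordinary category of groups, and then extend using that $\Grp(\Spc)$ is generated under (sifted) colimits by free groups. The paper packages the last two steps in one stroke by observing that $\Grp(\Spc)$ is the \emph{animation} (non-abelian derived category, i.e.\ $\mathcal{P}_\Sigma$) of $\Grp(\Set)$; since auto-equivalences preserve compact projectives and $\mathcal{P}_\Sigma$ is fully faithful on sifted-colimit-preserving functors, the result follows immediately from Freyd.

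Two remarks on your write-up. First, your worry about a $(2,1)$-categorical upgrade is unnecessary: the full subcategory of $0$-truncated objects in $\Grp(\Spc)$ \emph{is} the ordinary $1$-category $\Grp$, with discrete mapping spaces. Indeed, under $\Grp(\Spc)\simeq \Spc^{\ge 1}_*$ one has $\Map_{\Grp(\Spc)}(G,H)\simeq \Map_*(BG,BH)$, and the fibre of $\Map(BG,BH)\to BH$ over the basepoint is the discrete set $\Hom(G,H)$; the conjugation $2$-morphisms live only in the \emph{unpointed} mapping space. So Freyd applies directly, no enhancement needed. Second, the ``coherence obstacle'' you flag at the end is precisely what the universal property of $\mathcal{P}_\Sigma$ resolves: once $F|_{\text{f.g.\ free}}\simeq \mathrm{id}$, the equivalence $\Fun^{\Sigma}(\mathcal{P}_\Sigma(\mathcal{F}),\mathcal{D})\simeq \Fun(\mathcal{F},\mathcal{D})$ upgrades this to $F\simeq \mathrm{id}$ as endofunctors, with all coherences for free. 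Your monadicity argument is a hands-on rederivation of the same universal property. Finally, for the parenthetical symmetric monoidal claim, the paper notes that with respect to the cartesian structure every functor carries a \emph{unique} lax symmetric monoidal structure (\cite[Proposition~2.4.3.8]{HA}), so no extra work is required there either.
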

\begin{proof}
    The category 
    \[
        \Spc^{\ge 1}_* \simeq
        \Grp(\Spc)
    \] 
    is the non-abelian derived category (a.k.a \textit{animation}) of the ordinary category of groups $\Grp(\Set)$ (see, e.g., \cite[Example 5.1.6(1)]{cesnavicius2019purity}), and it is known that the latter has no non-identity auto-equivalences (see \cite[p.31 example F]{freyd1964abelian}). By (the dual version of) \cite[Proposition 2.4.3.8]{HA}, every functor admits a unique oplax symmetric monoidal structure with respect to the Cartesian monoidal structure on the source and the target, which implies the symmetric monoidal version of the claim. 
\end{proof}

\begin{prop}\label{C-A}
    There is an equivalence
    \[
        \LMod_{\Omega A}(\cC) \simeq \cC[A]
        \qin \Prl,
    \]
    symmetric monoidally natural in $A \in \Spc^{\geq 1}_*$ and $\cC \in \Prl$.
\end{prop}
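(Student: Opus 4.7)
My plan is to reduce the desired equivalence to the universal case $\cC = \Spc$, where it reduces to a classical statement about left fibrations, and then invoke \cref{auto-equiv-unique} to upgrade it to a symmetric monoidal natural equivalence in $A$. The argument has three parts: reduction in $\cC$, pointwise equivalence, and symmetric monoidal naturality in $A$.

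First I would reduce to the case $\cC = \Spc$. By construction, $\cC[A] = \colim_A \cC$ in $\Prl$, and since $\Spc \in \Prl$ is the unit, this identifies with $\cC \otimes \Spc_{/A}$ via $\Spc_{/A} \simeq \colim_A \Spc$. Similarly, for any small category $K$ and $\cC \in \Prl$, one has $\Fun(K, \cC) \simeq \cC \otimes \Fun(K, \Spc)$ by the free cocompletion universal property of presheaf categories, and taking $K = B\Omega A$ yields $\LMod_{\Omega A}(\cC) \simeq \cC \otimes \LMod_{\Omega A}(\Spc)$. Both sides are therefore symmetric monoidal natural in $\cC$ via the $\Prl$-tensor, so it suffices to produce a symmetric monoidal natural equivalence $\Spc_{/A} \simeq \LMod_{\Omega A}(\Spc)$ in $\Prl$, natural in $A \in \Spc^{\ge 1}_*$.

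For the pointwise equivalence, unstraightening provides the canonical $\Spc_{/A} \simeq \Fun(A, \Spc)$, and for pointed connected $A$, the identification $A \simeq B\Omega A$ combined with $\LMod_{\Omega A}(\Spc) := \Fun(B\Omega A, \Spc)$ completes the chain. For the symmetric monoidal naturality in $A$, I would observe that $A \mapsto \Spc_{/A}$ naturally upgrades to a symmetric monoidal functor $\Spc^{\ge 1}_* \to \Prl$ (with Cartesian structure on the source), using the equivalences $\Spc_{/(A \times B)} \simeq \Fun(A, \Spc) \otimes \Fun(B, \Spc) \simeq \Spc_{/A} \otimes \Spc_{/B}$. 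On the other hand, $A \mapsto \LMod_{\Omega A}(\Spc)$ factors as $\Spc^{\ge 1}_* \xrightarrow{B\Omega} \Spc^{\ge 1}_* \xrightarrow{\Spc_{/(-)}} \Prl$, and by \cref{auto-equiv-unique} the auto-equivalence $B\Omega$ is canonically symmetric-monoidally isomorphic to the identity. Hence the two functors agree as symmetric monoidal functors of $A$.

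The main obstacle is the symmetric monoidal naturality in $A$ rather than the pointwise equivalence, which is essentially classical. \cref{auto-equiv-unique} is the key tool: it canonically rigidifies the delooping identification $A \simeq B\Omega A$ to a symmetric-monoidal auto-equivalence of $\Spc^{\ge 1}_*$, bypassing what would otherwise require a tedious direct coherence verification across the entire category of pointed connected spaces.
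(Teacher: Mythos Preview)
Your reduction to $\cC = \Spc$ is fine and essentially matches the paper's (the paper phrases it via the coproduct decomposition of $\Prl \times \Spc^{\ge 1}_*$ in symmetric monoidal categories, but the content is the same). The gap is in the step where you claim symmetric monoidal naturality in $A$.

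You assert that $A \mapsto \LMod_{\Omega A}(\Spc)$ factors, \emph{as a symmetric monoidal functor}, as $\Spc^{\ge 1}_* \xrightarrow{B\Omega} \Spc^{\ge 1}_* \xrightarrow{\Spc_{/(-)}} \Prl$. But producing this factorization symmetric monoidally is equivalent to producing a symmetric monoidal natural equivalence $\LMod_G(\Spc) \simeq \Spc_{/BG}$ in $G \in \Grp(\Spc)$, which after precomposing with the equivalence $\Omega$ is precisely the statement you are trying to prove. Your use of \cref{auto-equiv-unique} then collapses to the triviality $B\Omega \simeq \Id$ on $\Spc^{\ge 1}_*$, which is just the recognition principle and does not require any rigidity. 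Put differently: the pointwise equivalence $\LMod_G(\Spc) \simeq \Fun(BG,\Spc)$ is indeed classical, but the entire content of the proposition is upgrading it to a symmetric monoidal natural equivalence, and your argument assumes this upgrade rather than supplying it.

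The paper's proof avoids this circularity by first promoting both functors to land in $\Prl_*$ (presentable categories with a distinguished object): $\Spc[A]$ is pointed by the image of $\Spc$ under the basepoint of $A$, and $\LMod_{\Omega A}(\Spc)$ is pointed by the regular representation. It then shows that both functors $\Spc^{\ge 1}_* \to \Prl_*$ are \emph{fully faithful} with the same essential image. Only then does composing one with the inverse of the other yield a genuine symmetric monoidal auto-equivalence of $\Spc^{\ge 1}_*$, to which \cref{auto-equiv-unique} nontrivially applies. The lift to $\Prl_*$ and the full-faithfulness check are the ingredients your argument is missing; without them there is no way to transport the comparison back to an automorphism of $\Spc^{\ge 1}_*$.
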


\begin{proof}
    Both sides are symmetric monoidal functors 
    \[
        \Prl \times \Spc^{\ge 1}_* \too \Prl.
    \]
    That is, morphisms in a larger version of $\CMon(\Cat)$.
    As the source is a coproduct of $\Prl$ and $\Spc_*^{\ge 1}$, it suffices to identify the two functors on each coordinate separately.
    On the $\Pr^L$ factor both functors are the identity so the claim holds trivially.
    We are thus reduced to proving the claim for $\cC = \Spc$. Note that $\LMod_{\Omega A}(\Spc)$ is naturally pointed by $\Omega A$ with its regular left action on itself. Also, $\Spc[A]$ is pointed by the map $\Spc \to \Spc[A]$ induced from the basepoint of $A$. Hence, both promote to symmetric monoidal functors 
    \[
        \Spc^{\ge 1}_* \too \Prl_*,
    \]
    where $\Prl_*$ is the category of presentable categories with a chosen object (equivalently, the under category $\Prl_{\Spc/}$).
    It would suffice to show they are naturally isomorphic as such. 

    We begin by showing that both of them are fully faithful. For $\LMod_{\Omega(-)}(\Spc)$ this follows from \cite[Theorem 4.8.5.11]{HA}. For $\Spc[-]$ we argue as follows. First, for all $A,B \in \Spc^{\ge 1}_*$, restriction along the Yoneda embedding $A \into \Spc[A]$ provides an isomorphism
    \[
        \Map_{\Prl_*}(\Spc[A],\Spc[B]) \simeq
        \Map_{\widehat{\Cat}_*}(A,\Spc[B])
    \]
    by the universal property of $\Spc[A]$ as a free co-completion of $A$. Now, since $A$ is \textit{connected}, every \textit{pointed} functor $A \to \Spc[B]$ factors uniquely through the Yoneda embedding $B \into \Spc[B]$, and hence provides an isomorphism
    \[
        \Map_{\widehat{\Cat}_*}(A,\Spc[B]) \simeq
        \Map_{\Spc^{\ge 1}_*}(A,B).
    \]
    It thus follows that $\Spc[-]$ is fully faithful as well.
    Furthermore, the essential images of the two functors are the same (see, e.g.,  \cite[Proposition 4.4]{carmeli2021chromatic}), so the composition of one with the inverse image of the other is an auto-equivalence of $\Spc^{\ge 1}_*$, which by \Cref{auto-equiv-unique} must be the identity.
\end{proof}

For $\cC \in \CAlg(\Catall)$ 
applying $\Alg(-)$ to the symmetric monoidal functor $(A, X) \mapsto X[A]$ and pre-composing with $\Omega\colon \Spc_*^{\ge 1} \to \Mon(\Spc)$ gives a symmetric monoidal functor
\[
    \Spc^{\geq 1}_* \times \Alg(\cC) \too \Alg(\cC),
    \quad (A, R) \mapsto R[\Omega A].
\]
Similarly, applying $\CAlg(-)$ and pre-composing with $\Spcn \to \CMon(\Spc)$ we get a symmetric monoidal functor
\[
    \Spcn \times \CAlg(\cC) \too \CAlg(\cC),
    \quad (M, R) \mapsto R[M].
\]

\begin{cor}\label{Mod-R-A}
    There is an equivalence
    \[
        \LMod_{R[\Omega A]}(\cC) \simeq 
        \LMod_R(\cC)[A]
        \qin \Prl_*,
    \]
    symmetric monoidally natural in $A \in \Spc^{\geq 1}_*$ and $(\cC, R) \in \PrlAlg$. 
    Similarly, there is an equivalence
    \[
        \Mod_{R[M]}(\cC) \simeq \Mod_R(\cC)[\Sigma M]
        \qin \CAlg(\Prl),
    \]
    natural in $M \in \Spcn$ and $(\cC, R) \in \PrlCAlg$.
\end{cor}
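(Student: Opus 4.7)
The plan is to deduce both equivalences from Proposition \ref{C-A} by applying it to module categories rather than to $\cC$ directly. For the first equivalence, apply Proposition \ref{C-A} to the presentable category $\LMod_R(\cC)$, pointed by $R$ as a left module over itself. This yields a symmetric monoidal equivalence
\[
    \LMod_{\Omega A}(\LMod_R(\cC)) \simeq \LMod_R(\cC)[A]
    \qin \Prl_*,
\]
symmetric monoidally natural in $A \in \Spc^{\geq 1}_*$. To promote this to naturality in $(\cC, R) \in \PrlAlg$, invoke the symmetric monoidal enhancement of the assignment $(\cC, R) \mapsto \LMod_R(\cC)$ from Proposition \ref{LMod} and its natural extension to algebras in arbitrary presentable symmetric monoidal categories.

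The remaining identification $\LMod_{\Omega A}(\LMod_R(\cC)) \simeq \LMod_{R[\Omega A]}(\cC)$ follows from the fact that the functor $\LMod_{(-)}\colon \Alg(\cC) \to \Mod_\cC(\Prl)$ is symmetric monoidal (see \cite[Theorem 4.8.5.16]{HA}), so that for the $\mathbb{E}_1$-algebra $R[\Omega A] = R \otimes \Sigma^{\infty}_+ \Omega A$ one has
\[
    \LMod_{R[\Omega A]}(\cC) \simeq \LMod_R(\cC) \otimes_\cC \LMod_{\Sigma^\infty_+ \Omega A}(\cC) \simeq \LMod_{\Sigma^\infty_+ \Omega A}(\LMod_R(\cC)).
\]
Since in any presentable symmetric monoidal category $\cD$, modules over $\Sigma^\infty_+ \Omega A$ coincide with modules over the $\mathbb{E}_1$-monoid $\Omega A$ itself, this yields the desired equivalence.

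For the second equivalence, restrict to the commutative setting: the delooping $\Sigma\colon \Spcn \to \Spc^{\geq 1}_*$ satisfies $\Omega \Sigma M \simeq M$ as $\mathbb{E}_\infty$-monoids, so substituting $A = \Sigma M$ in the first equivalence produces the claim in $\CAlg(\Prl)$, with naturality automatic from the symmetric monoidality of Proposition \ref{C-A}. The main obstacle is a coherence check: verifying that the $\mathbb{E}_1$-algebra structure on $R[\Omega A] = R \otimes \Sigma^\infty_+ \Omega A$ used in the iterated-module identification agrees symmetric monoidally with the ad hoc construction $(A, R) \mapsto R[\Omega A]$ defined in the paragraph preceding the corollary. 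This reduces, via the universal property of the free symmetric monoidal category on a pointed object and an $\mathbb{E}_1$-algebra, to comparison on the generating object $(\Sph, S^1)$, where both constructions visibly produce the same group ring $\Sph[\mathbb{Z}]$.
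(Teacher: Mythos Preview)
Your overall strategy matches the paper's: apply Proposition~\ref{C-A} to $\LMod_R(\cC)$ (pointed by $R$) via the symmetric monoidal functor $\PrlAlg \to \Prl_*$, and then identify $\LMod_{\Omega A}(\LMod_R(\cC))$ with $\LMod_{R[\Omega A]}(\cC)$ as symmetric monoidal functors in both variables. Your explicit chain of equivalences for this identification (via symmetric monoidality of $\LMod_{(-)}$ and base change) is a reasonable alternative route at the level of objects.

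The gap is in your coherence check. Reducing to the single ``generating object'' $(\Sph, S^1)$ does not work: neither $\Spc^{\ge 1}_*$ nor $\PrlAlg$ is the free symmetric monoidal category on a single object, so evaluating at one point does not pin down a symmetric monoidal functor, let alone a symmetric monoidal natural transformation between two such functors. The paper handles this with the same trick already used in the proof of Proposition~\ref{C-A}: the product $\PrlAlg \times \Spc^{\ge 1}_*$ is a \emph{coproduct} in symmetric monoidal categories, so a pair of symmetric monoidal functors out of it agree as soon as they agree after restricting to each factor separately (setting the other to the unit). On each factor the identification is immediate. This is both simpler and actually valid; your explicit chain could also be pushed through, but you would have to verify symmetric monoidal naturality of each step, not just check at one object.

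For the second equivalence, the paper simply applies $\CAlg(-)$ to the first one; your substitution $A = \Sigma M$ is essentially the same move expressed differently, though again the symmetric monoidal naturality is what carries the argument, not a pointwise check.
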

\begin{proof}
    By composing (the pointed version of) the equivalence of \Cref{C-A} with the symmetric monoidal functor $\PrlAlg \to \Prl_*$ taking $(\cC, R)$ to $\LMod_R(\cC)$ pointed by $R$ we obtain a symmetric monoidal equivalence 
    \[
        \LMod_{\Omega A}(\LMod_R(\cC)) \simeq 
        \LMod_R(\cC)[A]
        \qin \Prl_*.
    \]
    It thus remains to identify the left-hand side above with $\LMod_{R[\Omega A]}(\cC)$. As in the proof of \Cref{C-A}, these are two symmetric monoidal functors 
    \[
        \PrlAlg \times \Spc_*^{\ge 1} \too \Prl_*,
    \]
    and since the source is a coproduct in the category of symmetric monoidal categories, it suffices to make the identification separately in each coordinate, both of which are obvious. 
    
    The second part follows by applying $\CAlg$ to the above equivalence. 
\end{proof}

\begin{cor}\label{perf-R-A}
    There is an equivalence
    \[
        \Perf(R[\Omega A]) \simeq \Perf(R)[A]
        \qin \Catperf,
    \]
    symmetric monoidally natural in $A \in \Spc^{\geq 1}_*$ and $R \in \Alg(\Sp)$.
\end{cor}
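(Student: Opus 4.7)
The plan is to deduce this corollary from \cref{Mod-R-A} by transporting the $\LMod$ version of the equivalence along the symmetric monoidal equivalence $(-)^\omega\colon \Prlstw \simeq \Catperf$ of \cref{ind-w}. Concretely, specializing \cref{Mod-R-A} to $\cC = \Sp$ yields a symmetric monoidally natural equivalence
\[
    \LMod_{R[\Omega A]}(\Sp) \simeq \LMod_R(\Sp)[A]
    \qin \Prl_*,
\]
natural in $A \in \Spc^{\geq 1}_*$ and $R \in \Alg(\Sp)$, and the task is to show that this equivalence lives in the subcategory $\Prlstw \subset \Prl$ and that applying $(-)^\omega$ to it reproduces the asserted equivalence in $\Catperf$.

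First I would check that both sides lie in $\Prlstw$: the left-hand side is $\Perf(R[\Omega A])$-based and therefore lies in $\Prlstw$ by \cref{LMod}, and since this holds under the equivalence, so does $\LMod_R(\Sp)[A]$. Next, I would argue that the group-algebra construction $(A,\cC) \mapsto \cC[A]$ on $\Catperf$ is compatible with the group-algebra construction on $\Prlstw$ under the equivalence $\Ind \colon \Catperf \simeq \Prlstw$. Since both are defined as (constant) colimits over $A$, and since the inclusions $\Catperf \xrightarrow{\Ind} \Prlstw \hookrightarrow \Prlst \hookrightarrow \Prl$ are all colimit-preserving (by \cref{ind-w} and the standard fact that $\Prlst \into \Prl$ preserves colimits), the constant colimit $\cC[A]$ computed in any of these categories yields the same answer. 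In particular,
\[
    \Ind(\Perf(R)[A]) \simeq \LMod_R(\Sp)[A] \qin \Prlstw,
\]
so passing through $(-)^\omega$ identifies $(\LMod_R(\Sp)[A])^\omega$ with $\Perf(R)[A]$ and $\LMod_{R[\Omega A]}(\Sp)^\omega$ with $\Perf(R[\Omega A])$.

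For the symmetric monoidal naturality, I would observe that every ingredient in the above chain is itself symmetric monoidal and natural: the equivalence from \cref{Mod-R-A} is symmetric monoidally natural in $(A, R)$, the equivalence $\Ind\colon \Catperf \simeq \Prlstw$ is symmetric monoidal by \cref{ind-w}, and the group-algebra functor $(A,\cC) \mapsto \cC[A]$ was constructed as a symmetric monoidal functor in both variables in \cref{asm-maps-sec}. Assembling these, the resulting equivalence $\Perf(R[\Omega A]) \simeq \Perf(R)[A]$ acquires the desired symmetric monoidal naturality in $A \in \Spc^{\geq 1}_*$ and $R \in \Alg(\Sp)$.

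The main obstacle is bookkeeping rather than content: one must ensure that ``$\cC[A]$'' defined abstractly in $\CAlg(\Catall)$ restricts coherently to both $\Catperf$ and $\Prlstw$, and that the $(-)^\omega \dashv \Ind$ equivalence intertwines these restrictions as symmetric monoidal functors of $A$ and $\cC$. Once these compatibilities are in hand, the corollary reduces to a direct application of $(-)^\omega$ to \cref{Mod-R-A}.
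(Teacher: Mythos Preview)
Your proposal is correct and follows essentially the same route as the paper: specialize \cref{Mod-R-A} to $\cC = \Sp$, observe via \cref{LMod} and \cref{ind-w} that both sides and the equivalence lie in $\Prlstw$, and then apply the symmetric monoidal equivalence $(-)^\omega$. The paper's proof is terser about the compatibility of $\cC[A]$ across $\Catperf$, $\Prlstw$, and $\Prlst$, but your more explicit bookkeeping is exactly what underlies their one-line appeal to colimit preservation.
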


\begin{proof}
    Consider the natural equivalence of functors obtained from the case $\cC = \Sp$ of \cref{Mod-R-A}.
    Observe that by \cref{ind-w}, the symmetric monoidal inclusion $\Prlstw \into \Prlst$ is colimit preserving, and by \cref{LMod}, the two functors factor through it.
    The result then follows by applying the equivalence $(-)^\omega$ of \cref{ind-w}.
\end{proof}

\begin{rem}\label{variants-remark}
    Various versions of \Cref{C-A} and its corollaries were formulated and proved in the literature, including two different papers of the authors (\cite[Proposition 4.4]{carmeli2021chromatic}, \cite[Proposition 5.11]{barthel2022chromatic}).
    We hope that this version, which is compatible with the cited ones, but also
    includes the multiplicative naturality in all variables, will meet all our future requirements, so that we shall not have to prove it again. 
\end{rem}

\subsection{Dualizable Modules}\label{units-dbl-sec}

We first recall some generalities on Beck--Chevalley maps.
Let
% https://q.uiver.app/#q=WzAsNCxbMCwwLCJcXGNDXzAiXSxbMiwwLCJcXGNDXzEiXSxbMCwxLCJcXGNEXzAiXSxbMiwxLCJcXGNEXzEiXSxbMCwxLCJMXlxcY0MiXSxbMCwyLCJGXzAiLDJdLFsxLDMsIkZfMSJdLFsyLDMsIkxeXFxjRCIsMl0sWzIsMSwiXFxhbHBoYSIsMCx7ImxldmVsIjoyfV1d
\[\begin{tikzcd}
	{\cC_0} && {\cC_1} \\
	{\cD_0} && {\cD_1}
	\arrow["{L^\cC}", from=1-1, to=1-3]
	\arrow["{F_0}"', from=1-1, to=2-1]
	\arrow["{F_1}", from=1-3, to=2-3]
	\arrow["{L^\cD}"', from=2-1, to=2-3]
	\arrow["\alpha", Rightarrow, from=2-1, to=1-3]
\end{tikzcd}\]
be a lax commutative square of categories, and assume that $L^\cC$ admits a right adjoint $R^\cC$ with unit $u^\cC$ and counit $c^\cC$, and similarly for $L^\cD$.
Then, there is a Beck--Chevalley transformation given by the composition
\[
    F_0 R^\cC
    \xRightarrow{u^\cD} R^\cD L^\cD F_0 R^\cC
    \xRightarrow{\alpha} R^\cD F_1 L^\cC R^\cC
    \xRightarrow{c^\cC} R^\cD F_1.
\]
The Beck--Chevalley map is compatible with the unit maps in the sense that the following diagram commutes
% https://q.uiver.app/#q=WzAsNCxbMCwwLCJGXzAiXSxbMSwwLCJSXlxcY0QgTF5cXGNEIEZfMCJdLFswLDEsIkZfMCBSXlxcY0MgTF5cXGNDIl0sWzEsMSwiUl5cXGNEIEZfMSBMXlxcY0MiXSxbMCwxLCJ1XlxcY0QiLDAseyJsZXZlbCI6Mn1dLFsyLDMsIiIsMCx7ImxldmVsIjoyfV0sWzAsMiwidV5cXGNDIiwyLHsibGV2ZWwiOjJ9XSxbMSwzLCJcXGFscGhhIiwwLHsibGV2ZWwiOjJ9XV0=
\[\begin{tikzcd}
	{F_0} & {R^\cD L^\cD F_0} \\
	{F_0 R^\cC L^\cC} & {R^\cD F_1 L^\cC}
	\arrow["{u^\cD}", Rightarrow, from=1-1, to=1-2]
	\arrow[Rightarrow, from=2-1, to=2-2]
	\arrow["{u^\cC}"', Rightarrow, from=1-1, to=2-1]
	\arrow["\alpha", Rightarrow, from=1-2, to=2-2]
\end{tikzcd}\]

We now apply this in a specific context.
Let $\cC \in \CAlg(\Catall)$.
Recall that there is a group algebra--units adjunction:
\[
    \unit_\cC[-]\colon \Spcn \adj \CAlg(\cC)\noloc (-)^\times.
\]
For $\cC, \cD \in \CAlg(\Catall)$ and a lax symmetric monoidal functor $F\colon \cC \to \cD$ there is a natural assembly map
\[
    F(\unit_\cC)[M] \too F(\unit_\cC[M])
\]
of functors $\Spcn \to \CAlg_{F(\unit_\cC)}(\cD)$.
In other words, this provides the $2$-morphism depicted in the following diagram:
% https://q.uiver.app/#q=WzAsNCxbMCwwLCJcXFNwY24iXSxbMCwxLCJcXFNwY24iXSxbMiwwLCJcXENBbGcoXFxjQykiXSxbMiwxLCJcXENBbGdfe0YoXFx1bml0X1xcY0MpfShcXGNEKSJdLFsyLDMsIkYiLDJdLFswLDIsIlxcdW5pdF9cXGNDWy1dIl0sWzEsMywiRihcXHVuaXRfXFxjQylbLV0iLDJdLFswLDEsIiIsMCx7ImxldmVsIjoyLCJzdHlsZSI6eyJoZWFkIjp7Im5hbWUiOiJub25lIn19fV0sWzEsMiwiIiwxLHsibGV2ZWwiOjJ9XV0=
\[\begin{tikzcd}
	\Spcn && {\CAlg(\cC)} \\
	\Spcn && {\CAlg_{F(\unit_\cC)}(\cD)}
	\arrow["F"', from=1-3, to=2-3]
	\arrow["{\unit_\cC[-]}", from=1-1, to=1-3]
	\arrow["{F(\unit_\cC)[-]}"', from=2-1, to=2-3]
	\arrow[Rightarrow, no head, from=1-1, to=2-1]
	\arrow[Rightarrow, from=2-1, to=1-3]
\end{tikzcd}\]

\begin{defn}\label{units-lax-fun}
    Let $\cC, \cD \in \CAlg(\Catall)$ and let $F\colon \cC \to \cD$ be a lax symmetric monoidal functor.
    We define the natural map
    \[
        R^\times \too F(R)^\times
    \]
    of functors $\CAlg(\cC) \to \Spcn$ to be the Beck--Chevalley map associated to the above lax commutative square under the group algebra--units adjunctions.
\end{defn}

Namely, at $R\in \calg(\cC)$, the resulting map $R^\times \to F(R)^\times$ is the composition
\[
    R^\times
     \iso \hom(\unit_\cC[\Sph], R)
     \too \hom(F(\unit_\cC[\Sph]), F(R))
     \too \hom(\unit_\cD[\Sph], F(R))
     \iso F(R)^\times
\]
where the third morphism is obtained by lax unitality of $F$ and the assembly map
\[
    \unit_\cD[\Sph]
    \too F(\unit_\cC)[\Sph]
    \too F(\unit_\cC[\Sph]).
\]

\begin{lem}\label{units-group-algebras}
    Let $\cC, \cD \in \CAlg(\Catall)$ and let $F\colon \cC \to \cD$ be a lax symmetric monoidal functor.
    Then the following diagram commutes naturally in $M \in \Spcn$:
    % https://q.uiver.app/?q=WzAsNCxbMCwwLCJNIl0sWzAsMSwiXFxvbmVfXFxjQ1tNXV5cXHRpbWVzIl0sWzEsMSwiRihcXG9uZV9cXGNDW01dKV5cXHRpbWVzIl0sWzEsMCwiRihcXG9uZV9cXGNDKVtNXV5cXHRpbWVzIl0sWzAsMV0sWzEsMl0sWzMsMl0sWzAsM11d
    \[\begin{tikzcd}
    	M & {F(\one_\cC)[M]^\times} \\
    	{\one_\cC[M]^\times} & {F(\one_\cC[M])^\times}
    	\arrow[from=1-1, to=2-1]
    	\arrow[from=2-1, to=2-2]
    	\arrow[from=1-2, to=2-2]
    	\arrow[from=1-1, to=1-2]
    \end{tikzcd}\]
\end{lem}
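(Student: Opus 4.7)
The plan is to deduce the lemma from the general compatibility between Beck--Chevalley transformations and the units of the two adjunctions involved. This compatibility was explicitly recorded in the excerpt in the paragraph immediately preceding \cref{units-lax-fun}, in the form of a square
\[
\begin{tikzcd}
	{F_0} & {R^\cD L^\cD F_0} \\
	{F_0 R^\cC L^\cC} & {R^\cD F_1 L^\cC}
	\arrow["{u^\cD}", Rightarrow, from=1-1, to=1-2]
	\arrow[Rightarrow, from=2-1, to=2-2]
	\arrow["{u^\cC}"', Rightarrow, from=1-1, to=2-1]
	\arrow["\alpha", Rightarrow, from=1-2, to=2-2]
\end{tikzcd}
\]
associated to any lax commutative square of left adjoints. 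The lemma will follow by instantiating this at the specific lax commutative square used in \cref{units-lax-fun} and evaluating at $M \in \Spcn$.

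Concretely, I take $F_0 = \id_\Spcn$ and $F_1 \colon \CAlg(\cC) \to \CAlg_{F(\one_\cC)}(\cD)$ to be the functor induced by the lax symmetric monoidal $F$ together with its unit map $\one_\cD \to F(\one_\cC)$ (which promotes every $F(R)$ to an algebra under $F(\one_\cC)$). The horizontal left adjoints are $L^\cC = \one_\cC[-]$ and $L^\cD = F(\one_\cC)[-]$, whose right adjoints in both cases are given by $(-)^\times$ (in the slice case, this uses that $\Hom_{F(\one_\cC)/}(F(\one_\cC)[M], A) \simeq \Hom_\Spcn(M, A^\times)$, which is immediate from the group algebra--units adjunction in $\cD$). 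The lax $2$-cell $\alpha$ is taken to be the assembly map $F(\one_\cC)[M] \to F(\one_\cC[M])$ from \cref{asm-maps-sec}. By construction, the resulting Beck--Chevalley transformation is exactly the map $R^\times \to F(R)^\times$ of \cref{units-lax-fun}.

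Evaluating the general unit-compatibility square at $M \in \Spcn$ then reproduces precisely the diagram in the statement: the top edge $u^\cD$ becomes the unit $M \to F(\one_\cC)[M]^\times$; the left edge $u^\cC$ becomes the unit $M \to \one_\cC[M]^\times$; the bottom edge becomes the Beck--Chevalley map $\one_\cC[M]^\times \to F(\one_\cC[M])^\times$, i.e.\ the map of \cref{units-lax-fun} at $R = \one_\cC[M]$; and the right edge becomes $(-)^\times$ applied to the assembly map $F(\one_\cC)[M] \to F(\one_\cC[M])$. Naturality in $M$ is automatic, since the entire construction is that of a natural transformation of functors $\Spcn \to \Spcn$. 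The only point requiring any care is the bookkeeping around the lift of $F$ to the slice $\CAlg_{F(\one_\cC)}(\cD)$ and the identification of its right adjoint with $(-)^\times$; both are formal and present no real obstacle.
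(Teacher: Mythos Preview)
Your proof is correct and follows exactly the paper's approach: the paper's proof consists of the single sentence ``This is the compatibility of the Beck--Chevalley map with units,'' and your argument is precisely a detailed unpacking of that sentence, instantiating the general unit-compatibility square recorded before \cref{units-lax-fun} at the specific lax square defining the map $R^\times \to F(R)^\times$.
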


\begin{proof}
    This is the compatibility of the Beck--Chevalley map with units.
\end{proof}

Consider now the group algebra--unit adjunction applied to the category $\Cat \in \CAlg(\Catall)$:
\[
    \pt[-]\colon \Spcn \adj \CAlg(\Cat)\noloc (-)^\times.
\]
Every invertible object of a symmetric monoidal category is in particular dualizable, that is, the inclusion $\cC^\dbl \into \cC$ induces an isomorphism $(\cC^\dbl)^\times \iso \cC^\times$.
In other words, we have a commutative square of categories:
\[
\begin{tikzcd}
	{\CAlg(\Cat)} & \Spcn \\
	{\CAlg(\Cat)} & \Spcn 
	\arrow["{(-)^\times}", from=1-1, to=1-2]
	\arrow[""{name=1, anchor=center, inner sep=0}, "{(-)^\dbl}"', from=1-1, to=2-1]
	\arrow["{(-)^\times}"', from=2-1, to=2-2]
 \arrow[equal, from=1-2, to=2-2]
\end{tikzcd}
\]
Now, for $\cC\in \calg(\Prlst)$, we have $\cC^\dbl \in \calg(\Catperf)$, and the formation of dualizable objects refines to a functor 
\[
(-)^\dbl \colon \calg_{\cC}(\Prlst) \to \calg_{\cC^\dbl}(\Catperf). 
\]
By restricting the (large categories version) of the commutative square above to presentable stable categories, we obtain the following commutative square:
\[
\begin{tikzcd}
	{\CAlg_\cC(\Prlst)} & \Spcn \\
	{\CAlg_{\cC^\dbl}(\Catperf)} & \Spcn 
	\arrow["{(-)^\times}", from=1-1, to=1-2]
	\arrow[""{name=1, anchor=center, inner sep=0}, "{(-)^\dbl}"', from=1-1, to=2-1]
	\arrow["{(-)^\times}"', from=2-1, to=2-2]
 \arrow[equal, from=1-2, to=2-2]
\end{tikzcd}
\]
Recall that the horizontal maps admit left adjoints $\cC[-]$ and $\cC^\dbl[-]$.

\begin{defn}\label{Cdbl-in-out}
    For $\cC \in \CAlg(\Prlst)$ we define the natural transformation
    \[
    \cC^\dbl[-] \too \cC[-]^\dbl 
    \]
    between functors $\Spcn \to \calg_{\cC^\dbl}(\Catperf)$ to be the Beck--Chevalley transformation corresponding to the square above.
\end{defn}

Observe that for a colimit preserving symmetric monoidal functor $F\colon \cC \to \cD \in \Prlst$ the assembly map is an isomorphism
\[
    F(R)[\Omega A] \iso F(R[\Omega A]).
\]
Namely, the group algebra is unambiguously defined. 

\begin{prop}\label{Mod-R-dbl-F}
    Let $F\colon \cC \to \cD \in \CAlg(\Prlst)$ be a colimit preserving symmetric monoidal functor between presentably symmetric monoidal stable categories.
    Then, there is a commutative diagram in $\Catperf$
    % https://q.uiver.app/?q=WzAsNixbMCwwLCJcXE1vZF9SXlxcZGJsKFxcY0MpW1xcU2lnbWEgTV0iXSxbMSwwLCJcXE1vZF9SKFxcY0MpW1xcU2lnbWEgTV1eXFxkYmwiXSxbMCwxLCJcXE1vZF97RihSKX1eXFxkYmwoXFxjRClbXFxTaWdtYSBNXSJdLFsyLDAsIlxcTW9kX3tSW01dfV5cXGRibChcXGNDKSJdLFsxLDEsIlxcTW9kX3tGKFIpfShcXGNEKVtcXFNpZ21hIE1dXlxcZGJsIl0sWzIsMSwiXFxNb2Rfe0YoUilbTV19XlxcZGJsKFxcY0QpIl0sWzAsMl0sWzEsNF0sWzMsNV0sWzAsMV0sWzIsNF0sWzEsMywiXFxzaW0iXSxbNCw1LCJcXHNpbSJdXQ==
    \[\begin{tikzcd}
    	{\Mod_R^\dbl(\cC)[\Sigma M]} & {\Mod_R(\cC)[\Sigma M]^\dbl} & {\Mod_{R[M]}^\dbl(\cC)} \\
    	{\Mod_{F(R)}^\dbl(\cD)[\Sigma M]} & {\Mod_{F(R)}(\cD)[\Sigma M]^\dbl} & {\Mod_{F(R)[M]}^\dbl(\cD)}
    	\arrow[from=1-1, to=2-1]
    	\arrow[from=1-2, to=2-2]
    	\arrow[from=1-3, to=2-3]
    	\arrow[from=1-1, to=1-2]
    	\arrow[from=2-1, to=2-2]
    	\arrow["\sim", from=1-2, to=1-3]
    	\arrow["\sim", from=2-2, to=2-3]
    \end{tikzcd}\]
    lax symmetric monoidally natural in $M \in \Spcn$ and $R \in \CAlg(\cC)$.
\end{prop}

\begin{proof}
    Applying \cref{Mod-R-A} to $F\colon \cC \to \cD$ gives the commutative square in $\CAlg(\Prlst)$
    % https://q.uiver.app/?q=WzAsNCxbMCwwLCJcXE1vZF9SKFxcY0MpW1xcU2lnbWEgTV0iXSxbMSwwLCJcXE1vZF97UltNXX0oXFxjQykiXSxbMCwxLCJcXE1vZF97RihSKX0oXFxjRClbXFxTaWdtYSBNXSJdLFsxLDEsIlxcTW9kX3tGKFIpW01dfShcXGNEKSJdLFswLDJdLFsxLDNdLFswLDEsIlxcc2ltIl0sWzIsMywiXFxzaW0iXV0=
    \[\begin{tikzcd}
    	{\Mod_R(\cC)[\Sigma M]} & {\Mod_{R[M]}(\cC)} \\
    	{\Mod_{F(R)}(\cD)[\Sigma M]} & {\Mod_{F(R)[M]}(\cD)}
    	\arrow[from=1-1, to=2-1]
    	\arrow[from=1-2, to=2-2]
    	\arrow["\sim", from=1-1, to=1-2]
    	\arrow["\sim", from=2-1, to=2-2]
    \end{tikzcd}\]
    lax symmetric monoidally naturally in $M \in \Spcn$ and $R \in \CAlg(\cC)$.
    Taking dualizable objects, we get the right commutative square in question.
    The left commutative square in question is the naturality of the map $\cE^\dbl[\Sigma M] \to \cE[\Sigma M]^\dbl$ in $\cE$, applied to
    \[
        \Mod_R(\cC) \too \Mod_{F(R)}(\cD).
    \]
\end{proof}

\bibliographystyle{alpha}
\phantomsection\addcontentsline{toc}{section}{\refname}
\bibliography{cyclored}

\newcommand{\etalchar}[1]{$^{#1}$}
\begin{thebibliography}{LMMT24}

\bibitem[ABM22]{ausoni2022adjunction}
Christian Ausoni, Haldun~\"Ozg\"ur Bay{\i}nd{\i}r, and Tasos Moulinos.
\newblock {Adjunction of roots, algebraic $K$-theory and chromatic redshift},
  2022.

\bibitem[AR02]{AR02}
Christian Ausoni and John Rognes.
\newblock {Algebraic K-theory of topological K-theory}.
\newblock {\em Acta Mathematica}, 188(1):1 -- 39, 2002.

\bibitem[AR08]{ausoni2008chromatic}
Christian Ausoni and John Rognes.
\newblock {The chromatic red-shift in algebraic K-theory}.
\newblock {\em L`Enseignement Math{\'e}matique}, 54(2):13--15, 2008.

\bibitem[Ban17]{banerjee2017galois}
Romie Banerjee.
\newblock {Galois descent for real spectra}.
\newblock {\em Journal of Homotopy and Related Structures}, 12(2):273--297,
  2017.

\bibitem[BCM20]{bhatt2020remarks}
Bhargav Bhatt, Dustin Clausen, and Akhil Mathew.
\newblock {Remarks on $K(1)$-local $K$-theory}.
\newblock {\em Selecta Mathematica}, 26(3):39, 2020.

\bibitem[BCSY24]{barthel2022chromatic}
Tobias Barthel, Shachar Carmeli, Tomer~M. Schlank, and Lior Yanovski.
\newblock {The Chromatic Fourier Transform}.
\newblock {\em Forum of Mathematics, Pi}, 12:e8, 2024.

\bibitem[BGT13]{blumberg2013universal}
Andrew~J Blumberg, David Gepner, and Gon{\c{c}}alo Tabuada.
\newblock {A universal characterization of higher algebraic K-theory}.
\newblock {\em Geometry \& Topology}, 17(2):733--838, 2013.

\bibitem[BGT14]{blumberg2014uniqueness}
Andrew~J Blumberg, David Gepner, and Gon{\c{c}}alo Tabuada.
\newblock {Uniqueness of the multiplicative cyclotomic trace}.
\newblock {\em Advances in Mathematics}, 260:191--232, 2014.

\bibitem[BHLS23]{Telefalse}
Robert Burklund, Jeremy Hahn, Ishan Levy, and Tomer~M. Schlank.
\newblock {$K$-theoretic counterexamples to Ravenel's telescope conjecture}.
\newblock {\em arXiv preprint arXiv:2310.17459}, 2023.

\bibitem[BMS24]{moshe2021higher}
Shay Ben-Moshe and Tomer~M. Schlank.
\newblock {Higher semiadditive algebraic K-theory and redshift}.
\newblock {\em Compositio Mathematica}, 160(2):237--287, 2024.

\bibitem[Bou01]{bousfield2001telescopic}
A~Bousfield.
\newblock {On the telescopic homotopy theory of spaces}.
\newblock {\em Transactions of the American Mathematical Society},
  353(6):2391--2426, 2001.

\bibitem[BSY22]{Null}
Robert {Burklund}, Tomer~M. {Schlank}, and Allen {Yuan}.
\newblock {The Chromatic Nullstellensatz}.
\newblock {\em arXiv e-prints}, page arXiv:2207.09929, July 2022.

\bibitem[CCRY22]{cnossen2022characters}
Bastiaan Cnossen, Shachar Carmeli, Maxime Ramzi, and Lior Yanovski.
\newblock {Characters and transfer maps via categorified traces}.
\newblock {\em arXiv preprint arXiv:2210.17364}, 2022.

\bibitem[CDH{\etalchar{+}}20]{hermitian-2}
Baptiste Calm\`es, Emanuele Dotto, Yonatan Harpaz, Fabian Hebestreit, Markus
  Land, Kristian Moi, Denis Nardin, Thomas Nikolaus, and Wolfgang Steimle.
\newblock {Hermitian K-theory for stable $\infty$-categories II: Cobordism
  categories and additivity}.
\newblock {\em arXiv preprint arXiv:2009.07224}, 2020.

\bibitem[CM21]{clausen2021hyperdescent}
Dustin Clausen and Akhil Mathew.
\newblock {Hyperdescent and {\'e}tale K-theory}.
\newblock {\em Inventiones mathematicae}, pages 1--96, 2021.

\bibitem[CMNN24]{clausen2020descent}
Dustin Clausen, Akhil Mathew, Niko Naumann, and Justin Noel.
\newblock {Descent and vanishing in chromatic algebraic $K$-theory via group
  actions}.
\newblock {\em Annales Scientifiques de l'{\'E}cole Normale Sup{\'e}rieure},
  57(4):1135--1190, 2024.

\bibitem[{\v{C}}S24]{cesnavicius2019purity}
K{\k{e}}stutis {\v{C}}esnavi{\v{c}}ius and Peter Scholze.
\newblock {Purity for flat cohomology}.
\newblock {\em Annals of Mathematics}, 199(1):51--180, 2024.

\bibitem[CSY21a]{AmbiHeight}
Shachar Carmeli, Tomer~M. Schlank, and Lior Yanovski.
\newblock {Ambidexterity and height}.
\newblock {\em Advances in Mathematics}, 385:107763, 2021.

\bibitem[CSY21b]{carmeli2021chromatic}
Shachar Carmeli, Tomer~M. Schlank, and Lior Yanovski.
\newblock {Chromatic Cyclotomic Extensions}.
\newblock {\em arXiv preprint arXiv:2103.02471}, 2021.

\bibitem[CSY22]{TeleAmbi}
Shachar Carmeli, Tomer~M. Schlank, and Lior Yanovski.
\newblock {Ambidexterity in chromatic homotopy theory}.
\newblock {\em Inventiones mathematicae}, 228(3):1145--1254, 2022.

\bibitem[DH04]{devinatz2004homotopy}
Ethan~S Devinatz and Michael~J Hopkins.
\newblock {Homotopy fixed point spectra for closed subgroups of the Morava
  stabilizer groups}.
\newblock {\em Topology}, 43(1):1--47, 2004.

\bibitem[DM98]{dwyer1998k}
William~G Dwyer and Stephen~A Mitchell.
\newblock {On the K-theory spectrum of a ring of algebraic integers}.
\newblock {\em K-theory}, 14(3):201--264, 1998.

\bibitem[Fre64]{freyd1964abelian}
Peter~J Freyd.
\newblock {\em {Abelian categories}}, volume 1964.
\newblock Harper \& Row New York, 1964.

\bibitem[GL21]{gepner2021brauer}
David Gepner and Tyler Lawson.
\newblock {Brauer groups and Galois cohomology of commutative ring spectra}.
\newblock {\em Compositio Mathematica}, 157(6):1211--1264, 2021.

\bibitem[Hai21]{haine2021nonabelian}
Peter~J Haine.
\newblock {From nonabelian basechange to basechange with coefficients}.
\newblock {\em arXiv preprint arXiv:2108.03545}, 2021.

\bibitem[Har20]{harpaz2020ambidexterity}
Yonatan Harpaz.
\newblock {Ambidexterity and the universality of finite spans}.
\newblock {\em Proceedings of the London Mathematical Society},
  121(5):1121--1170, 2020.

\bibitem[HL13]{AmbiKn}
Michael Hopkins and Jacob Lurie.
\newblock {Ambidexterity in $K(n)$-local stable homotopy theory}.
\newblock {\em preprint}, 2013.

\bibitem[HPT23]{haine2020homotopy}
Peter~J Haine, Mauro Porta, and Jean-Baptiste Teyssier.
\newblock {The homotopy-invariance of constructible sheaves}.
\newblock {\em Homology, Homotopy and Applications}, 25(2):97--128, 2023.

\bibitem[HW22]{hahn-wilson}
Jeremy Hahn and Dylan Wilson.
\newblock {Redshift and multiplication for truncated Brown--Peterson spectra}.
\newblock {\em Annals of Mathematics}, 196(3):1277 -- 1351, 2022.

\bibitem[LMMT24]{land2020purity}
Markus Land, Akhil Mathew, Lennart Meier, and Georg Tamme.
\newblock {Purity in chromatically localized algebraic $K$-theory}.
\newblock {\em Journal of the American Mathematical Society}, 37(4):1011--1040,
  2024.

\bibitem[LNP22]{linskens2022global}
Sil Linskens, Denis Nardin, and Luca Pol.
\newblock {Global homotopy theory via partially lax limits}.
\newblock {\em arXiv preprint arXiv:2206.01556}, 2022.

\bibitem[LT19]{land2019k}
Markus Land and Georg Tamme.
\newblock {On the K-theory of pullbacks}.
\newblock {\em Annals of Mathematics}, 190(3):877--930, 2019.

\bibitem[Lura]{HA}
Jacob Lurie.
\newblock {Higher Algebra}.
\newblock {http://www.math.harvard.edu/~lurie/}.

\bibitem[Lurb]{SAG}
Jacob Lurie.
\newblock {Spectral Algebraic Geometry}.
\newblock {https://www.math.ias.edu/~lurie/papers/SAG-rootfile.pdf}.

\bibitem[Lur09]{HTT}
Jacob Lurie.
\newblock {\em {Higher topos theory}}, volume 170 of {\em Annals of Mathematics
  Studies}.
\newblock Princeton University Press, Princeton, NJ, 2009.

\bibitem[Mat16]{AkhilGalois}
Akhil Mathew.
\newblock {The Galois group of a stable homotopy theory}.
\newblock {\em Advances in Mathematics}, 291:403--541, 2016.

\bibitem[Mil92]{miller1992finite}
Haynes Miller.
\newblock {Finite localizations}.
\newblock {\em Bol. Soc. Mat. Mexicana (2)}, 37(1-2):383--389, 1992.

\bibitem[Mit00]{mitchell2000topological}
Stephen~A Mitchell.
\newblock {Topological K-theory of algebraic K-theory spectra}.
\newblock {\em K-theory}, 21(3):229--248, 2000.

\bibitem[Mor23]{mor2023picard}
Itamar Mor.
\newblock {Picard and Brauer groups of $K(n)$-local spectra via profinite
  Galois descent}.
\newblock {\em arXiv preprint arXiv:2306.05393}, 2023.

\bibitem[Rog08]{RognesGal}
John Rognes.
\newblock {\em {Galois Extensions of Structured Ring Spectra/Stably Dualizable
  Groups: Stably Dualizable Groups}}, volume 192.
\newblock American Mathematical Soc., 2008.

\bibitem[SS03]{schwede2003stable}
Stefan Schwede and Brooke Shipley.
\newblock {Stable model categories are categories of modules}.
\newblock {\em Topology}, 42(1):103--153, 2003.

\bibitem[Wal84]{waldhausen1984algebraic}
Friedhelm Waldhausen.
\newblock {Algebraic K-theory of spaces, localization, and the chromatic
  filtration of stable homotopy}.
\newblock In {\em Algebraic Topology Aarhus 1982: Proceedings of a conference
  held in Aarhus, Denmark, August 1--7, 1982}, pages 173--195. Springer, 1984.

\bibitem[Wes17]{Westerland}
Craig Westerland.
\newblock {A higher chromatic analogue of the image of J}.
\newblock {\em Geometry \& Topology}, 21(2):1033--1093, 2017.

\bibitem[Yua24]{yuan2021examples}
Allen Yuan.
\newblock {Examples of chromatic redshift in algebraic $K$-theory}.
\newblock {\em Journal of the European Mathematical Society}, 2024.

\end{thebibliography}

\end{document}